\newcommand\numberthis{\addtocounter{equation}{1}\tag{\theequation}}
\newcommand{\un}{\underline}
\newtheorem{theorem}{Theorem}[section]
\newtheorem{lemma}[theorem]{Lemma}
\newtheorem{proposition}[theorem]{Proposition}
\newtheorem{corollary}[theorem]{Corollary}
\newtheorem{remark}[theorem]{Remark}
\newtheorem{example}[theorem]{Example}
\numberwithin{equation}{section}
\begin{document}

\title{Furstenberg sumset conjecture and Mandelbrot percolations}

\author{Catherine Bruce}
\address{Department of Mathematics, University of Manchester, Oxford Road, Manchester M13 9PL, United Kingdom}
\email{catherine.bruce@manchester.ac.uk}

\author{Xiong Jin}
\address{Department of Mathematics, University of Manchester, Oxford Road, Manchester M13 9PL, United Kingdom}
\email{xiong.jin@manchester.ac.uk}

\begin{abstract}
In this paper we extend Hochman and Shmerkin's projection theorem to product measures of Mandelbrot cascades acting on ergodic measures imaged through canonical mappings of one-dimensional iterated function systems without any separation conditions. Consequently we extend Furstenberg sumset theorem to images of subshifts on symbolic spaces, and to Mandelbrot percolations on invariant sets. We also obtain dimension results for convolutions of Bernoulli convolutions and that of Mandelbrot cascade measures.
\end{abstract}
\subjclass{37C45 (Primary) 28A80, 60G57 (Secondary)}

\maketitle

\section{Introduction}
In their seminal paper \cite{HS12} Hochman and Shmerkin proved the Furstenberg's sumset conjecture, that for two closed subsets $X,Y$ of the unit circle $\mathbb{T}=[0,1] \mod 1$ that are $\times 2$ and $\times 3$ invariant respectively, their summation has the largest possible dimension. More precisely, they showed that for all $s\neq 0$,
\begin{equation}\label{FSC}
\dim (X+sY)=\min\{1,\dim X+\dim Y\},
\end{equation}
where $X+sY=\{x+sy:x\in X,y\in Y\}$, and for a set $E$ we denote by $\dim E$ the exact dimension of $E$ when the box-counting and Hausdorff dimension of $E$ are equal, i.e., $\dim_BE=\dim_HE=\dim E$.

This formula is derived from a more general result on the dimension of projections of product of invariant measures. Here for future reference we shall use symbolic dynamical systems to introduce these invariant measures: For an integer $a\ge 2$ let $\Sigma^+_a=\{1,\cdots,a\}^\mathbb{N}$ denote the one-sided symbolic space of $a$-letters. Let
\[
\bm{I}_a=\{f_i(x)=a^{-1}x+(i-1)a^{-1}\}_{i=1}^a
\]
be the iterated function system (IFS) that represents the $a$-tilling of the unit interval $[0,1]$. For an infinite word $\bm{i}^+=i_1i_2\cdots\in\Sigma_a^+$ define
\[
\Phi_{\bm{I}_a}(\bm{i}^+)=\lim_{n\to \infty} f_{i_1}\circ\cdots\circ f_{i_n}(0).
\]
Then $\Phi_{\bm{I}_a}$ forms a canonical mapping from $\Sigma_a^+$ to $[0,1]$. Let $\sigma: i_1i_2\cdots \to i_2i_3\cdots$ denote the left-shift operator on $\Sigma_a^+$. Let $\mu^+$ be a $\sigma$-invariant  probability measure on $\Sigma_a^+$. Then the push-forward measure
\[
\Phi_{\bm{I}_a}(\mu^+):=\mu^+\circ \Phi_{\bm{I}_a}^{-1}
\]
is a $\times a$-invariant measure on $\mathbb{T}$.

Consider another integer $b\ge 2$ and its corresponding symbolic space $\Sigma_b^+$, let $\Phi_{\bm{I}_b}:\Sigma_b^+\to[0,1]$ be its associated canonical mapping. Let $\nu^+$ be a $\sigma$-invariant probability measure on $\Sigma_b^+$.  Let $\Pi$ be the family of orthogonal projections from $\mathbb{R}^2$ to its $1$-dimensional subspaces. Hochman and Shmerkin \cite[Theorem 1.3]{HS12} proved that if $\log a/\log b$ is irrational, then
\begin{equation}\label{HST}
\underline{\dim}_H \pi(\Phi_{\bm{I}_a}(\mu^+)\times \Phi_{\bm{I}_b}(\nu^+))=\min\{1,\underline{\dim}_H \Phi_{\bm{I}_a}(\mu^+)\times \Phi_{\bm{I}_b}(\nu^+)\}
\end{equation}
for all $\pi\in\Pi\setminus\{\pi_1,\pi_2\}$, where $\pi_1$, $\pi_2$ denote the horizontal and vertical projections, and $\underline{\dim}_H \lambda=\inf \{\dim_H E: \lambda(E)>0\}$ denotes the lower Hausdorff dimension of a measure $\lambda$. By using the variational principle one can easily get \eqref{FSC} from \eqref{HST}.

The proof of \eqref{HST} used the CP-chains method introduced by Furstenberg \cite{Furst2}, and the local entropy averages and $\rho$-tree method introduced by Hochman and Shmerkin \cite{HS12}. Such a mechanism turns out to be very useful to derive lower bounds of Hausdorff dimension of dynamically driven fractal measures. In particular it led Shmerkin \cite{Shmer} and Wu \cite{Wu} to solve, independently, another conjecture of Furstenberg on intersections of invariant sets.

\medskip

Obviously the $a$-tilling $\bm{I}_a$ is a special case of contracting IFSs on $\mathbb{R}$ and it is natural to ask if one can extend \eqref{HST} to more general IFSs. In their paper Hochman and Shmerkin also proved \eqref{HST} for a class of more general IFSs satisfying the open set condition (OSC), but only for less general quasi-Bernoulli measures. Here we shall consider this problem for a class of general IFSs without any separation conditions, and for arbitrary invariant measures. In particular we shall consider IFSs $\bm{I}$ and $\bm{J}$ of the form
\[
\bm{I}=\{\delta x+t_i\}_{i=1}^a;\\ \bm{J}=\{\rho x+s_j\}_{j=1}^b
\]
for $\delta,\rho\in(0,1)$ and $t_i,s_j\in\mathbb{R}$. Let $\Phi_{\bm{I}}$ and $\Phi_{\bm{J}}$ be their canonical mappings respectively (see Section \ref{ifsed} for definition). Our first result is the following.

\begin{theorem}\label{thm1}
Let $\mu^+$ and $\nu^+$ be two invariant measures on $(\Sigma^+_a,\sigma)$ and $(\Sigma^+_b,\sigma)$ respectively. If $\log \delta/\log \rho$ is irrational, then
\begin{equation}\label{e1}
\underline{\dim}_H \pi(\Phi_{\bm{I}}(\mu^+)\times \Phi_{\bm{J}}(\nu^+))=\min\{1,\underline{\dim}_H \Phi_{\bm{I}}(\mu^+)\times \Phi_{\bm{J}}(\nu^+)\}
\end{equation}
for all $\pi\in\Pi\setminus\{\pi_1,\pi_2\}$.
\end{theorem}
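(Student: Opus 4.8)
The plan is to run the CP-chain formalism of Furstenberg together with the local entropy averages of Hochman and Shmerkin recalled above; the point is that $\bm{I}$ and $\bm{J}$ each have a \emph{single} contraction ratio, so the images $\Phi_{\bm{I}}(\mu^+)$ and $\Phi_{\bm{J}}(\nu^+)$ are uniformly scaling measures whose magnification dynamics are governed directly by the shift systems $(\Sigma^+_a,\sigma,\mu^+)$ and $(\Sigma^+_b,\sigma,\nu^+)$. First I would reduce to ergodic $\mu^+,\nu^+$: by the ergodic decomposition theorem and continuity of $\Phi_{\bm{I}},\Phi_{\bm{J}}$, the measure $\Phi_{\bm{I}}(\mu^+)\times\Phi_{\bm{J}}(\nu^+)$ is an integral of products of images of ergodic components; since the lower Hausdorff dimension of a measure is the essential infimum of those of its ergodic components, and since the upper bound in \eqref{e1} is automatic ($\pi$ being Lipschitz onto a one-dimensional space), it suffices to treat ergodic $\mu^+,\nu^+$.

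Second comes the structural observation underlying everything. Writing $\Phi_{\bm{I}}(i_1i_2\cdots)=\sum_{k\ge1}\delta^{k-1}t_{i_k}$ and $c_n(\bm{i})=\sum_{k=1}^n\delta^{k-1}t_{i_k}$, conditioning on a cylinder $[i_1\cdots i_n]$ gives $\Phi_{\bm{I}}(\bm{i}^+)=c_n(\bm{i})+\delta^n\,\Phi_{\bm{I}}(\sigma^n\bm{i}^+)$, so magnifying $\Phi_{\bm{I}}(\mu^+)$ by $\delta^{-n}$ about a typical point and recentring reproduces $\Phi_{\bm{I}}$ pushed forward from the conditional measure of $\mu^+$ on that cylinder, shifted by $\sigma^n$. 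Combined with the Shannon--McMillan--Breiman theorem for $(\Sigma^+_a,\sigma,\mu^+)$, this presents $\Phi_{\bm{I}}(\mu^+)$ as a uniformly scaling measure that generates an ergodic CP-distribution at scale ratio $\delta$ (being a factor of the ergodic system $(\Sigma^+_a,\sigma,\mu^+)$), and similarly $\Phi_{\bm{J}}(\nu^+)$ at ratio $\rho$; in particular each of these images is exact-dimensional. No separation condition enters here — overlaps only decrease the actual value of $\dim\Phi_{\bm{I}}(\mu^+)$, which the CP-distribution records faithfully.

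Third, pass to the product $\lambda:=\Phi_{\bm{I}}(\mu^+)\times\Phi_{\bm{J}}(\nu^+)$ on $\mathbb{R}^2$ and magnify both coordinates along a common sequence of scales $r_n\to0$. Then the first coordinate has been magnified by $\delta^{-m_1(n)}$ and the second by $\rho^{-m_2(n)}$ with $m_i(n)$ asymptotically linear in $n$, and the residual scaling phase $\{m_1(n)\log\delta/\log\rho\}$ equidistributes $\bmod\,1$ by Weyl's theorem \emph{precisely because} $\log\delta/\log\rho$ is irrational; this exhibits the joint magnification as a skew product of the two factor CP-dynamics over an irrational rotation, which together with ergodicity of the factors is itself ergodic. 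One thus obtains an ergodic CP-distribution $P$ for $\lambda$ whose $P$-typical measures are genuine products $\theta_1\times\theta_2$, hence never concentrated with positive dimension on a translate of $\ker\pi$ when $\pi\notin\{\pi_1,\pi_2\}$. Now invoke the local entropy averages inequality: for each fixed $\pi$,
\[
\un{\dim}_H\pi\lambda \;\ge\; \liminf_{N\to\infty}\frac1N\sum_{n=1}^{N}\frac{H\!\big(\pi\lambda,\ \mathcal{D}_{r_n}\mid\mathcal{D}_{r_{n-1}}\big)}{\log(1/r_n)},
\]
with $\mathcal{D}_r$ a common adic grid, and evaluate the right-hand side by the ergodic theorem along $P$: it converges to $\int\dim\pi\theta\,dP(\theta)$, which by the projection estimate for CP-distributions (no $P$-typical $\theta$ being compatible with $\ker\pi$) equals $\min\{1,\dim\theta\}=\min\{1,\dim\lambda\}$ for $P$-a.e.\ $\theta$. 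Since $\lambda$ is a product of exact-dimensional measures, $\dim\lambda=\un{\dim}_H\lambda$, and running the estimate pointwise via the a.e.\ ergodic theorem secures the conclusion for $\un{\dim}_H$ rather than merely for the dimension of the measure, proving \eqref{e1}.

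The step I expect to be the main obstacle is the combination of \emph{no separation condition} with \emph{arbitrary invariant} (rather than quasi-Bernoulli) measures. Non-injectivity of $\Phi_{\bm{I}}$ means the magnified-and-recentred pieces of $\Phi_{\bm{I}}(\mu^+)$ cannot be read off a clean self-similar identity on the geometric side; one must build the CP-chain on the symbolic system $(\Sigma^+_a,\sigma,\mu^+)$ and transport it through $\Phi_{\bm{I}}$, checking that the resulting measures depend measurably on the base point, form a compact family, and generate a genuine CP-distribution in Furstenberg's sense. And because the base is then a general ergodic system rather than a Bernoulli shift, ergodicity of the diagonal magnification of the product must be argued abstractly — this is exactly where the irrationality of $\log\delta/\log\rho$ is essential, and where one must additionally exclude, or neutralise, possible resonances between the base dynamics of $\mu^+$ and of $\nu^+$. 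Granting the machinery summarised in the Introduction, the remaining ingredients — the ergodic reduction, the local entropy averages bound, and the trivial upper bound — are routine.
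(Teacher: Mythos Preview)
Your overall strategy matches the paper's --- build a CP-type dynamics on the symbolic side, push through $\Phi$, and apply the local-entropy-averages/Marstrand machinery --- but there is one genuine gap and one point you underestimate.

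\textbf{The ergodicity claim is false as stated.} You write that the joint magnification ``is a skew product of the two factor CP-dynamics over an irrational rotation, which together with ergodicity of the factors is itself ergodic.'' This is not true in general: a skew product over an irrational rotation with ergodic fibre maps need not be ergodic. The paper works with exactly the system you describe, $X=\mathbb{T}\times\Sigma_a\times\Sigma_b$ with $T(t,\bm{i},\bm{j})=(t+\alpha,\sigma_t(\bm{i},\bm{j}))$ and $\Theta=\ell\times\mu\times\nu$, and explicitly notes that $\Theta$ is $T$-invariant but \emph{not necessarily $T$-ergodic}. Your final paragraph hints that ``resonances'' may need neutralising, but the body of your argument assumes ergodicity and never returns to repair this.

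\textbf{The actual work lies in the repair.} The paper's fix is to take the ergodic decomposition $\theta=\Theta^\zeta_z$ and then prove that the conditional measures $(\theta)^\eta_x$ on the fibres $\{t\}\times\{\bm{v}^-\}\times\Sigma_{a,b}^+$, pushed through $\Phi$, are exact-dimensional with the \emph{same} dimension as $\Phi(\mu^+\times\nu^+)$ (their Theorems~\ref{ed3} and~\ref{ed2}). This is what makes Marstrand applicable componentwise and lets the lower bound transfer back to the full product. With no separation condition this step is, in the paper's own words, ``highly non-trivial'': it uses the $\gamma$-Morse covering machinery (Proposition~\ref{P3.5}) to control conditional information along rectangles rather than balls, together with a Ledrappier--Young-type argument. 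Your sketch does not provide any mechanism for this dimension preservation under ergodic decomposition, and without it the local-entropy-averages bound you write down need not evaluate to $\min\{1,\dim\lambda\}$.

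In short: the reduction to ergodic $\mu^+,\nu^+$, the symbolic CP-chain idea, and the endgame are all correct and agree with the paper, but the assertion that the product dynamics is ergodic is the crux and is wrong; the substantive content of the proof is establishing that the ergodic components of $\Theta$ still carry the correct dimension after conditioning and pushing through $\Phi$.
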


\begin{remark}
Note that in this projection theorem the relevant parameters are the contraction ratios $\delta$ and $\rho$, not the mapping counting numbers $a$ and $b$. A more general setting would be IFSs with arbitrary contraction ratios. We believe that a similar result would hold in such a setting, but it would involve more complicated conditions rather than just $\log \delta/\log \rho$ being irrational. One can imagine an example where the invariant measures $\mu^+$ and $\nu^+$ whose dimensions are reached on some subshifts of $\Sigma_a^+$ and $\Sigma_b^+$ respectively, and the contraction ratios of the IFSs restricted on these subshifts are actually log-rationally dependent.  
\end{remark}

As Theorem \ref{thm1} applies to any invariant measures, we may use the variational principle (see \cite[Theorem 2.13]{FH09} for example) to obtain an analogous projection theorem for invariant sets. In particular we have the following extension of Furstenburg's sumset theorem: 
\begin{corollary}\label{co1}
Let $X$ and $Y$ be two closed $\sigma$-invariant sets (subshifts) on $\Sigma_a^+$ and $\Sigma_b^+$ respectively. If $\log \delta/\log \rho$ is irrational, then
\begin{equation*}\label{e2}
\begin{split}
\dim \pi(\Phi_{\bm{I}}(X)\times \Phi_{\bm{J}}(Y))=&\ \min\{1,\dim \Phi_{\bm{I}}(X)\times \Phi_{\bm{J}}(Y)\} \\
=&\ \min\{1,\dim \Phi_{\bm{I}}(X)+\dim \Phi_{\bm{J}}(Y)\}
\end{split}
\end{equation*}
for all $\pi\in\Pi\setminus\{\pi_1,\pi_2\}$.
\end{corollary}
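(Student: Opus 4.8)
The plan is to deduce Corollary~\ref{co1} from Theorem~\ref{thm1} by the variational principle, so that all but one ingredient is routine. Fix $\pi\in\Pi\setminus\{\pi_1,\pi_2\}$ and write $A=\Phi_{\bm{I}}(X)$ and $B=\Phi_{\bm{J}}(Y)$; these are compact since $X,Y$ are closed and $\Phi_{\bm{I}},\Phi_{\bm{J}}$ are continuous. The first step is to invoke the variational principle for these sub-self-similar sets (\cite[Theorem~2.13]{FH09}, together with the dimension theory developed in Section~\ref{ifsed}): it yields $\dim_H A=\dim_B A=:\dim A$ and $\dim_H B=\dim_B B=:\dim B$, so that the quantities in the statement are well defined, and it provides ergodic $\sigma$-invariant measures $\mu_n^+$ on $X$ and $\nu_n^+$ on $Y$ with $\dim\Phi_{\bm{I}}(\mu_n^+)\to\dim A$ and $\dim\Phi_{\bm{J}}(\nu_n^+)\to\dim B$.

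The second step is to feed these approximants into Theorem~\ref{thm1}. Since $\bm{I}$ and $\bm{J}$ are similarity IFSs, the pushforwards of the ergodic measures $\mu_n^+,\nu_n^+$ are exact-dimensional, hence so are the product measures $\lambda_n:=\Phi_{\bm{I}}(\mu_n^+)\times\Phi_{\bm{J}}(\nu_n^+)$, with $\underline{\dim}_H\lambda_n=\dim\Phi_{\bm{I}}(\mu_n^+)+\dim\Phi_{\bm{J}}(\nu_n^+)$. Applying Theorem~\ref{thm1} to $\mu_n^+,\nu_n^+$ gives
\[
\underline{\dim}_H\pi(\lambda_n)=\min\bigl\{1,\dim\Phi_{\bm{I}}(\mu_n^+)+\dim\Phi_{\bm{J}}(\nu_n^+)\bigr\},
\]
and since $\pi(\lambda_n)$ is carried by $\pi(A\times B)$, the left-hand side is at most $\dim_H\pi(A\times B)$; letting $n\to\infty$ we get $\dim_H\pi(A\times B)\ge\min\{1,\dim A+\dim B\}$. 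For the reverse bound I would use only that $\pi$ is Lipschitz, that subsets of $\mathbb{R}$ have box dimension at most $1$, and that upper box dimension is subadditive on products, giving $\overline{\dim}_B\pi(A\times B)\le\min\{1,\overline{\dim}_B A+\overline{\dim}_B B\}=\min\{1,\dim A+\dim B\}$. Since $\dim_H\le\underline{\dim}_B\le\overline{\dim}_B$, these two estimates pin $\dim_H\pi(A\times B)$, $\underline{\dim}_B\pi(A\times B)$ and $\overline{\dim}_B\pi(A\times B)$ all to $\min\{1,\dim A+\dim B\}$, so $\dim\pi(A\times B)$ exists and equals this value. The last equality in the statement, $\min\{1,\dim(A\times B)\}=\min\{1,\dim A+\dim B\}$, then follows from $\dim_H A+\dim_H B\le\dim_H(A\times B)\le\overline{\dim}_B(A\times B)\le\overline{\dim}_B A+\overline{\dim}_B B$ and the equalities $\dim_H A=\dim_B A$, $\dim_H B=\dim_B B$.

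I expect the main obstacle to be the input used in the first step: that $A=\Phi_{\bm{I}}(X)$ and $B=\Phi_{\bm{J}}(Y)$ are dimension-regular and obey the measure-theoretic variational principle \emph{with no separation hypothesis on $\bm{I}$ or $\bm{J}$}. Since all maps of $\bm{I}$ contract by the common ratio $\delta$, covering $A$ by its $N_n(X)$ cylinder images of diameter $\asymp\delta^n$, where $N_n(X)$ is the number of length-$n$ words occurring in $X$, already gives the easy half $\overline{\dim}_B A\le h(X)/\log(1/\delta)$ with $h(X)=\lim_n n^{-1}\log N_n(X)$; the substantive point is the matching inequality $\dim_H A\ge\overline{\dim}_B A$ in the presence of overlaps, which is exactly where Feng--Hu's dimension theory --- exact-dimensionality of ergodic pushforwards and attainment of the supremum in the variational principle --- comes in. Once that is granted, the rest is the formal argument above, and the exclusion $\pi\notin\{\pi_1,\pi_2\}$ is inherited directly from Theorem~\ref{thm1}.
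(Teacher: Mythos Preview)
Your proposal is correct and follows essentially the same route as the paper: invoke the variational principle of \cite[Theorem~2.13]{FH09} to produce invariant measures on $X$ and $Y$ whose pushforward dimensions approximate $\dim\Phi_{\bm{I}}(X)$ and $\dim\Phi_{\bm{J}}(Y)$, apply Theorem~\ref{thm1}, and combine with the trivial upper bound from the Lipschitz property of $\pi$. The paper is terser and, in the remark following the corollary, additionally observes that since Theorem~\ref{thm1} holds for \emph{all} invariant (not only ergodic) measures, one can in fact use a single invariant measure attaining the supremum rather than an approximating sequence; but this is a refinement, not a different strategy.
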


\begin{remark}
This corollary can also be deduced from \cite{HS12} by constructing invariant measures carried by sub-subshifts on which the IFSs satisfy OSC that approach the box-counting dimension of these image sets from below. The slightly new thing here is that, as Theorem \ref{thm1} applies to all invariant measures, the above dimension is actually reached by two invariant measures carried by $X$ and $Y$ respectively, namely the variational principle holds in this situation.
\end{remark}

Another application of Theorem \ref{thm1} is to Bernoulli convolutions: For $\beta\in (0,1)$ consider the IFS consisting of two maps $\{\beta x -1,\beta x+1\}$. Let $\Phi_\beta$ denote its canonical mapping from $\Sigma_2^+$ to $\mathbb{R}$. For $p\in(0,1)$ let $\mu_p^+$ denote the $p$-Bernoulli measure on $\Sigma_2^+$. The push-forward measure $\mu^+_{\beta,p}=\Phi_\beta(\mu^+_p)$ is called a $(\beta,p)$-Bernoulli convolution.

A measure $\lambda$ on a metric space $(E,d)$ is said to be exact-dimensional with dimension $D\ge 0$ if for $\lambda$-a.e. $x\in E$,
\begin{equation}\label{eddef}
\lim_{r\to0} \frac{\log \lambda(B_d(x,r))}{\log r}=D,
\end{equation}
where $B_d(x,r)$ stands for the closed ball in $(E,d)$ centred at $x$ with radius $r$. In this case we write $\dim \lambda=D$.

As used in \cite{HS12} to reprove the Rudolph-Johnson theorem, the projection of a product measure $\mu\times \nu$ on $\mathbb{R}^2$ to the line $y=x$ is equal to the convolution $\mu*\nu$, up to an affine map. Hence we have the following corollary from Theorem \ref{thm1}.

\begin{corollary}\label{cobc}
If $\log\beta/\log \beta'$ is irrational, then 
\[
\dim \mu^+_{\beta,p}*\nu^+_{\beta',p'}=\min\{1,\dim \mu^+_{\beta,p}+\dim \nu^+_{\beta',p'}\}.
\]
\end{corollary}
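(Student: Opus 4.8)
The plan is to deduce Corollary \ref{cobc} from Theorem \ref{thm1} by realizing the convolution as a diagonal projection of a product measure, and then upgrading the lower-Hausdorff-dimension conclusion of Theorem \ref{thm1} to a genuine exact-dimensionality statement for the convolution.

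First, I would observe that $\mu^+_{\beta,p}=\Phi_\beta(\mu^+_p)$, where $\mu^+_p$ is the $p$-Bernoulli measure on $\Sigma_2^+$ (hence $\sigma$-invariant) and $\Phi_\beta$ is the canonical mapping of the IFS $\bm I=\{\beta x-1,\beta x+1\}$, which is of the form $\{\delta x+t_i\}_{i=1}^2$ with $\delta=\beta$; likewise $\mu^+_{\beta',p'}=\Phi_{\beta'}(\mu^+_{p'})$ arises from $\bm J=\{\beta' x-1,\beta' x+1\}$ with $\rho=\beta'$. Since $\log\beta/\log\beta'$ is irrational, Theorem \ref{thm1} applies to the pair $(\mu^+_p,\mu^+_{p'})$. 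Taking $\pi\in\Pi$ to be the orthogonal projection onto the line $\{y=x\}$ — which is neither $\pi_1$ nor $\pi_2$, and which in the arc-length coordinate is the map $(x,y)\mapsto(x+y)/\sqrt2$ — we see that $\pi(\mu^+_{\beta,p}\times\mu^+_{\beta',p'})$ equals $\mu^+_{\beta,p}*\mu^+_{\beta',p'}$ up to the scaling $t\mapsto t/\sqrt2$, so the two measures have the same lower/upper Hausdorff and packing dimensions.

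Next I would bring in exact-dimensionality of the factors. Each $\mu^+_{\beta,p}$ is a self-similar measure, hence exact-dimensional by the Feng--Hu theory \cite{FH09}; set $D_1=\dim\mu^+_{\beta,p}$ and $D_2=\dim\mu^+_{\beta',p'}$. A routine Fubini argument using the local-dimension characterization of dimension (together with $B_\infty((x,y),r)=B(x,r)\times B(y,r)$) shows that $\mu^+_{\beta,p}\times\mu^+_{\beta',p'}$ is exact-dimensional with $\dim=D_1+D_2$, so in particular $\underline{\dim}_H(\mu^+_{\beta,p}\times\mu^+_{\beta',p'})=D_1+D_2$. Feeding this into \eqref{e1} gives $\underline{\dim}_H(\mu^+_{\beta,p}*\mu^+_{\beta',p'})=\min\{1,D_1+D_2\}$. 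For the matching upper bound, the upper packing dimension of a measure does not increase under a Lipschitz map, so $\overline{\dim}_P(\mu^+_{\beta,p}*\mu^+_{\beta',p'})\le\overline{\dim}_P(\mu^+_{\beta,p}\times\mu^+_{\beta',p'})=D_1+D_2$, and trivially this is also at most $1$ since the convolution lives on $\mathbb R$.

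Finally I would close by squeezing. For any finite measure $\lambda$ on $\mathbb R$ one has $\underline{\dim}_H\lambda\le\overline{\dim}_P\lambda$, and equality forces $\lambda$ to be exact-dimensional — this follows from the characterizations of $\underline{\dim}_H\lambda$ and $\overline{\dim}_P\lambda$ as the essential infimum of the lower local dimension and the essential supremum of the upper local dimension, respectively. Applying this to $\lambda=\mu^+_{\beta,p}*\mu^+_{\beta',p'}$, whose lower Hausdorff dimension and upper packing dimension both equal $\min\{1,D_1+D_2\}$, yields that the convolution is exact-dimensional with $\dim=\min\{1,D_1+D_2\}$, which is precisely the claimed identity. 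The only step that is more than bookkeeping is this last passage from Theorem \ref{thm1}'s lower-dimension estimate to genuine exact-dimensionality of the convolution; it is the packing-dimension upper bound, together with exact-dimensionality of the two factors (and hence of their product), that makes the squeeze go through.
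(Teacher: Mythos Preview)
Your proof is correct and follows the same route as the paper: realize the convolution as (an affine image of) the projection of $\Phi_\beta(\mu_p^+)\times\Phi_{\beta'}(\mu_{p'}^+)$ onto the diagonal, then apply Theorem~\ref{thm1}. The paper leaves the passage from the $\underline{\dim}_H$ statement of Theorem~\ref{thm1} to the exact-dimensionality claim of the corollary implicit; your Feng--Hu/Lipschitz/squeeze argument is a clean and correct way to supply that detail.
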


\begin{remark}
The case of $\beta,\beta'<1/2$ is covered by \cite{HS12} as in this case the IFSs satisfy the OSC. It is worth noting that the above statement still holds when replacing the Bernoulli measures $\mu_p^+$ and $\nu_{p'}^+$ with arbitrary ergodic measures $\mu^+$ and $\nu^+$ on $(\Sigma_a^+,\sigma)$ and $(\Sigma_b^+,\sigma)$ respectively.
\end{remark}

\medskip

Similarly as in \cite{FJ14}, the method of firstly constructing CP-chains on symbolic spaces turns out to be robust enough to extend \eqref{e1} to more general fractal measures, in particular to Mandelbrot cascades acting on ergodic measures: For an ergodic measure $\mu^+$ on $\Sigma_a^+$ and $k\ge 1$ let
\[
\widetilde{\mu}^+_k(\mathrm{d}\bm{i}^+)=V_{i_1}V_{i_1i_2}\cdots V_{i_1\cdots i_k} \mu^+(\mathrm{d}\bm{i}^+),\ \bm{i}^+=i_1i_2\cdots\in\Sigma_a^+,
\]
where $\{V_u: u\in \Sigma_a^{+,*}\}$ is a sequence of independently and identically distributed (i.i.d.) random variables with common law $V$, indexed by the set $\Sigma_a^{+,*}$ of all finite words in $\Sigma_a^+$ (see Section \ref{sds} for definition). We assume that
\[
V\ge 0, \ \mathbb{E}(V)=1 \text{  and  } h_V:=\mathbb{E}(V\log V)< h_\mu,
\]
where $h_\mu$ denotes the the measure-theoretic entropy of $\mu^+$ (see \eqref{mtemu} for definition). Under such an assumption, by the recent work of Barral and Jin \cite[Theorem 2.3]{BJ21} we have that the measure-valued martingale $\{\widetilde{\mu}^+_k\}_{k\ge 1}$ almost surely converges to a non-trivial limit $\widetilde{\mu}^+$. We call the limiting measure $\widetilde{\mu}^+$ a Mandelbrot cascade measure with respect to $(V,\mu^+)$.

Mandelbrot cascade measures (w.r.t. uniform measures) were introduced by Mandelbrot \cite{Man74} to simulate the energy dissipation phenomenon in fully developed turbulences, and were developed by Kahane and Peyri\`ere \cite{KP76}. When the random weights $\{V_u: u\in \Sigma_a^{+,*}\}$ have log-normal distribution, these random measures are considered as a simplified model of the Gaussian multiplicative chaos built by Kahane \cite{K85} (motivated by a model introduce by Mandelbrot \cite{Man72}), which has been playing a very important role in statistical physics and quantum gravity, see for example \cite{DuSh,KRV,BeWeWo}.

The projection theorem of Mandelbrot cascade measures on self-similar sets obtained in \cite{FJ14} was used in \cite{FJ15} to study the dimension of fibres. In particular in \cite{FJ15} a weak dimension conservation for planar self-similar sets with dense rotations was obtained. This motives us to extend Hochman and Shmerkin's projection theorem \eqref{HST} to Mandelbrot cascades, as a step towards studying the dimension of intersection of invariant sets.

\medskip

As an extension of Theorem \ref{thm1}, and as the main result of this paper, we have the following projection theorem for images of Mandelbrot cascades acting on ergodic measures through the canonical mappings $\Phi_{\bm{I}}$ and $\Phi_{\bm{J}}$ (the exact-dimensionality of these measures, which is also a highly non-trivial extension of the previous results in \cite{FH09,FJ14}, is proved in Theorem \ref{ed22}).

\begin{theorem}\label{mthm}
Let $\mu^+$ and $\nu^+$ be two ergodic measures on $(\Sigma^+_a,\sigma)$ and $(\Sigma^+_b,\sigma)$ respectively. Let $\widetilde{\mu}^+$ and $\widetilde{\nu}^+$ be the Mandelbrot cascade measures with respect to $(V,\mu^+)$ and $(W,\nu^+)$ respectively.
Assume that $\log \delta/\log \rho$ is irrational, and that $V$ and $W$ are independent. Then almost surely conditioned on non-degeneracy, i.e., $\widetilde{\mu}^+(\Sigma_a^+)\cdot\widetilde{\nu}^+(\Sigma_b^+)>0$,
\begin{equation*}\label{mlb}
\begin{split}
\dim \pi(\Phi_{\bm{I}}(\widetilde{\mu}^+)\times \Phi_{\bm{J}}(\widetilde{\nu}^+))=&\ \min\{1,\dim \Phi_{\bm{I}}(\widetilde{\mu}^+)\times \Phi_{\bm{J}}(\widetilde{\nu}^+)\}\\
=&\ \min\{1,\dim \Phi_{\bm{I}}(\widetilde{\mu}^+)+\dim \Phi_{\bm{J}}(\widetilde{\nu}^+)\}
\end{split}
\end{equation*}
for all $\pi\in\Pi\setminus\{\pi_1,\pi_2\}$.
\end{theorem}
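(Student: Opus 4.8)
The plan is to reduce the statement to the CP-chain machinery that underlies Theorem \ref{thm1}, the only new ingredient being that the driving measures are now random Mandelbrot cascades rather than deterministic invariant measures. First I would record what is genuinely needed: since the upper bound in the projection formula is automatic (a Lipschitz image cannot raise dimension, and the exact-dimensionality of $\Phi_{\bm I}(\widetilde\mu^+)\times\Phi_{\bm J}(\widetilde\nu^+)$ gives $\dim\big(\Phi_{\bm I}(\widetilde\mu^+)\times\Phi_{\bm J}(\widetilde\nu^+)\big)=\dim\Phi_{\bm I}(\widetilde\mu^+)+\dim\Phi_{\bm J}(\widetilde\nu^+)$ via Theorem \ref{ed22}), everything comes down to the lower bound $\underline\dim_H\pi\big(\Phi_{\bm I}(\widetilde\mu^+)\times\Phi_{\bm J}(\widetilde\nu^+)\big)\ge\min\{1,\dim\Phi_{\bm I}(\widetilde\mu^+)+\dim\Phi_{\bm J}(\widetilde\nu^+)\}$ holding almost surely on non-degeneracy, simultaneously for all $\pi\in\Pi\setminus\{\pi_1,\pi_2\}$ (a countable dense argument plus continuity of $\pi\mapsto\underline\dim_H\pi(\cdot)$ upgrades a fixed-$\pi$ statement to the uniform one).

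Next I would build, on the symbolic spaces $\Sigma_a^+$ and $\Sigma_b^+$, the CP-chain / $\rho$-tree structure exactly as in the proof of Theorem \ref{thm1}, but now carried by the cascade measures $\widetilde\mu^+$ and $\widetilde\nu^+$. The point is that a Mandelbrot cascade over an ergodic base is itself, after passing to the natural extension and using the scaling/statistical self-similarity of the construction $\widetilde\mu^+_k(\mathrm d\bm i^+)=V_{i_1}\cdots V_{i_1\cdots i_k}\,\mu^+(\mathrm d\bm i^+)$, a measure whose scenery along $\sigma$ generates an ergodic CP-chain: the magnification of $\widetilde\mu^+$ at a cylinder $[i_1\cdots i_n]$ is, up to the deterministic renormalisation coming from $\mu^+$, a fresh independent copy of a cascade over the shifted base $\sigma^n\mu^+$, so the Markov chain on (measure, cylinder)-pairs has a stationary ergodic law. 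This is the symbolic analogue of the construction in \cite{FJ14}; here one needs the integrability hypothesis $h_V<h_\mu$ (and the Barral--Jin non-degeneracy and $L\log L$ control from \cite{BJ21}) to guarantee that the relevant entropies and the Furstenberg-type measure on measures are well-defined and that $\dim\Phi_{\bm I}(\widetilde\mu^+)=(h_\mu-h_V)/(-\log\delta)$ as in Theorem \ref{ed22}. Pushing these CP-chains forward through $\Phi_{\bm I}$ and $\Phi_{\bm J}$ and taking the product, the irrationality of $\log\delta/\log\rho$ lets me invoke the Hochman--Shmerkin local-entropy-averages argument (the $\times\delta$ and $\times\rho$ sceneries are ``independent'' so the product CP-chain is not supported on any line) to conclude $\dim_H\pi(\text{fibre})\ge\min\{1,\dim\text{(product)}\}$ for every $\pi\ne\pi_1,\pi_2$, first for the CP-distribution-typical magnification and then, by the standard transference from CP-chains back to the original measure, for $\pi\big(\Phi_{\bm I}(\widetilde\mu^+)\times\Phi_{\bm J}(\widetilde\nu^+)\big)$ itself.

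Finally I would address the ``almost surely'' quantifier, which is where the randomness interacts with the dimension conclusion. The independence of $V$ and $W$ ensures the two cascades are built on a product probability space, so conditioned on joint non-degeneracy the pair $(\widetilde\mu^+,\widetilde\nu^+)$ still has the product structure needed for the CP-chain on $\Sigma_a^+\times\Sigma_b^+$ to be a genuine product CP-chain; I would check that the ergodic averages defining the CP-distribution converge for a.e. realisation (Birkhoff applied on the extended space incorporating the random environment, as in \cite{FJ14}), so that the deterministic conclusion of the previous paragraph applies realisation-by-realisation. The main obstacle, I expect, is precisely this step: showing that the scenery flow of a cascade measure genuinely generates an ergodic CP-chain with the right dimension, i.e. controlling the magnifications of $\widetilde\mu^+$ uniformly enough — the weights $V_u$ can be unbounded, so the renormalised magnified measures need an $L\log L$-type domination (supplied by \cite{BJ21}) to keep entropies from degenerating, and one must verify that the exceptional null set where the CP-chain fails to form does not depend on $\pi$. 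Once the cascade CP-chains are in hand, the rest is a routine repetition of the deterministic argument behind Theorem \ref{thm1}.
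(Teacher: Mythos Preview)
Your broad outline—build a CP-chain/Peyri\`ere structure on the symbolic side, push through $\Phi_{\bm I}\times\Phi_{\bm J}$, use local entropy averages and Marstrand—matches the paper's strategy. But two of your reductions contain genuine gaps.

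First, the step ``a countable dense argument plus continuity of $\pi\mapsto\underline\dim_H\pi(\cdot)$ upgrades a fixed-$\pi$ statement to the uniform one'' does not work: $\pi\mapsto\underline\dim_H\pi(\lambda)$ is not continuous, so you cannot pass from a.e.\ $\pi$ to all $\pi$ this way. The paper does \emph{not} prove the bound for a fixed $\pi$ and then upgrade. Instead, the local entropy averages $E_{q,\theta}(\pi_s)$ give a lower bound for $\underline\dim_H\pi_s S_t\Phi(\bar\theta^+_{t,\bm v^-,\bm\omega})$ that holds \emph{simultaneously for all $s$} (Theorem~\ref{thm6}, via the lower semicontinuity of $H_{\rho^q}(\pi_s(\cdot))$ and the $\rho$-tree/faithful-map machinery); then Marstrand applied to the fibre measures identifies $E_\theta(\pi_s)=\min\{1,D_{\bar\mu,\bar\nu,\Phi}\}$ for Lebesgue-a.e.\ $s$; and finally the projection of $\theta^-$ to $\mathbb{T}$ being Lebesgue (by unique ergodicity of the irrational rotation) together with $\pi_s S_t=\pi_{s+t}$ turns ``a.e.\ $s$'' into ``all $s$'' for the original measure. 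The uniformity is baked into the entropy-average machinery, not recovered afterwards by continuity.

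Second, and more seriously, your identification of the ``main obstacle'' misses the actual one. The $L\log L$ control from \cite{BJ21} handles non-degeneracy and exact-dimensionality on the symbolic space, but that is not where the difficulty lies. The crucial technical input (Corollary~\ref{com}) is Theorem~\ref{ed2}: the push-forward of the \emph{fibre ergodic decomposition} cascade measures $\Phi(\bar\theta^+_{t,\bm v^-,\bm\omega})$ must be exact-dimensional \emph{with the same dimension} $D_{\bar\mu,\bar\nu,\Phi}$ as $\Phi(\widetilde\mu^+\times\widetilde\nu^+)$. This is what lets you apply Marstrand to the fibre measures and then transfer the bound back. The obstruction to proving this is that the skew product $\widetilde T$ on $\widetilde X=X\times\Omega_{a,b}$ is \emph{not invertible} (the environment shifts $\kappa_{t,\bm v}$ are many-to-one), so the conditional Shannon--McMillan--Breiman result (Proposition~\ref{SMB}) is unavailable. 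The paper circumvents this via a Ledrappier--Young counting argument (Theorem~\ref{ed22}) for the original cascade, and for the fibre ergodic components by a two-sided inequality on conditional entropies: one direction from the concavity of $\lambda\mapsto H_\lambda(\mathcal P_t\mid\mathcal B_{\bm I}\otimes\mathcal B_{\bm J})$, the other from the dimension comparison in \eqref{EQF}, together with Lemma~\ref{erg3} relating the ergodic decomposition of $\mathbb{Q}_\Theta$ to $\mathbb{Q}_{\Theta^\zeta_x}$ and the $\gamma$-Morse covering machinery of Proposition~\ref{P3.5} to handle the non-ball rectangles arising from the affine zooming. Your observation that ``the magnification is a fresh independent copy of a cascade over the shifted base'' captures the symbolic self-similarity \eqref{ss11}, but it does not by itself yield equality of dimensions after pushing through a possibly-overlapping $\Phi$; that equality is precisely the content of Theorem~\ref{ed2}, and your proposal has no substitute for it.
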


\begin{remark}
Note that Theorem \ref{mthm} includes the deterministic case for ergodic measures. One may simply take $V=W\equiv 1$.
\end{remark}

When we take the uniform (Lebesgue) measures $\ell_a$ and $\ell_b$ on $\Sigma_a^+$ and $\Sigma_b^+$, and the tilling IFSs $\bm{I}_a$ and $\bm{I}_b$, the random measures $\widetilde{M}_{V,a}:=\Phi_{\bm{I}_a}(\widetilde{\ell}_a)$ and $\widetilde{M}_{W,b}:=\Phi_{\bm{I}_b}(\widetilde{\ell}_b)$ become the classical Mandelbrot cascade measures \cite{KP76} defined on the unit interval $[0,1]$. As a corollary of Theorem \ref{mthm} we have the following result about the convolution of $\widetilde{M}_{V,a}$ and $\widetilde{M}_{W,b}$.

\begin{corollary}
Assume that $V$ and $W$ are independent. If $\log a/\log b$ is irrational, then almost surely conditioned on $\widetilde{M}_{V,a}([0,1])\cdot\widetilde{M}_{W,b}([0,1])>0$,
\begin{equation*}\label{mcc}
\begin{split}
\dim \widetilde{M}_{V,a}*\widetilde{M}_{W,b}=&\ \min\{1,\dim \widetilde{M}_{V,a}+\dim \widetilde{M}_{W,b}\}\\
=&\ \min\Big\{1,2-\frac{h_V}{\log a}-\frac{h_W}{\log b}\Big\}.
\end{split}
\end{equation*}
\end{corollary}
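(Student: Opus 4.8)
The plan is to realise the two classical Mandelbrot cascades $\widetilde{M}_{V,a}=\Phi_{\bm{I}_a}(\widetilde{\ell}_a)$ and $\widetilde{M}_{W,b}=\Phi_{\bm{I}_b}(\widetilde{\ell}_b)$ as instances of the measures in Theorem \ref{mthm}, and then to recast the convolution as an orthogonal projection. First I would check that the hypotheses match: the base measures $\ell_a$ and $\ell_b$ are Bernoulli, hence ergodic, on $(\Sigma_a^+,\sigma)$ and $(\Sigma_b^+,\sigma)$; the tilling IFSs $\bm{I}_a$, $\bm{I}_b$ have contraction ratios $\delta=1/a$ and $\rho=1/b$, so $\log\delta/\log\rho=\log a/\log b$, and it is irrational exactly under the stated assumption; the weights $V$, $W$ are independent; and the standing requirements $h_V<h_{\ell_a}=\log a$, $h_W<h_{\ell_b}=\log b$ are precisely the Barral--Jin non-degeneracy conditions. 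I would also note that the conditioning events coincide: since $\Phi_{\bm{I}_a}$ is onto and pushing forward preserves total mass, $\widetilde{M}_{V,a}([0,1])=\widetilde{\ell}_a(\Sigma_a^+)$ and similarly for $b$, so $\{\widetilde{M}_{V,a}([0,1])\cdot\widetilde{M}_{W,b}([0,1])>0\}$ is the same event as the non-degeneracy event in Theorem \ref{mthm}.

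Next I would use, as recalled in the introduction and in \cite{HS12}, that $\widetilde{M}_{V,a}*\widetilde{M}_{W,b}$ is the image of $\widetilde{M}_{V,a}\times\widetilde{M}_{W,b}$ under the addition map $(x,y)\mapsto x+y$, which differs from the orthogonal projection $\pi$ onto the line $\{y=x\}$ only by an invertible affine rescaling of the target line. Since such a map preserves both exact-dimensionality and the value of the dimension, $\dim(\widetilde{M}_{V,a}*\widetilde{M}_{W,b})=\dim\pi(\Phi_{\bm{I}_a}(\widetilde{\ell}_a)\times\Phi_{\bm{I}_b}(\widetilde{\ell}_b))$. As $\pi\in\Pi\setminus\{\pi_1,\pi_2\}$, Theorem \ref{mthm} applies and gives, almost surely conditioned on non-degeneracy,
\[
\dim(\widetilde{M}_{V,a}*\widetilde{M}_{W,b})=\min\{1,\dim\widetilde{M}_{V,a}+\dim\widetilde{M}_{W,b}\}.
\]

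It then remains to identify the two summands. By Theorem \ref{ed22} the measures $\Phi_{\bm{I}_a}(\widetilde{\ell}_a)$ and $\Phi_{\bm{I}_b}(\widetilde{\ell}_b)$ are exact-dimensional. For the tilling IFS $\bm{I}_a$ the canonical map $\Phi_{\bm{I}_a}$ is, off a countable set, the base-$a$ coding bijection, sending an $a$-adic cylinder of $\ell_a$-mass $a^{-k}$ to an interval of length $a^{-k}$ carrying $\widetilde{\ell}_a$-mass $V_{i_1}\cdots V_{i_1\cdots i_k}\cdot a^{-k}$; hence the local dimension of $\widetilde{M}_{V,a}$ at a typical point is $1-\lim_{k}\frac{1}{k\log a}\log\big(V_{i_1}\cdots V_{i_1\cdots i_k}\big)$, and by the law of large numbers under the associated Peyri\`ere measure this limit equals $h_V/\log a$. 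Thus $\dim\widetilde{M}_{V,a}=(h_{\ell_a}-h_V)/\log a=1-h_V/\log a$ and, symmetrically, $\dim\widetilde{M}_{W,b}=1-h_W/\log b$. Substituting into the displayed identity yields $\min\{1,\,2-h_V/\log a-h_W/\log b\}$, which is the claim.

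Almost all of the content is already carried by Theorems \ref{mthm} and \ref{ed22}, so I do not expect a serious obstacle here. The only points requiring care are the bookkeeping above: matching the irrationality and non-degeneracy hypotheses, verifying that the convolution is the stated affine image of a generic projection, and reading off the dimension in the uniform case — the last of which can alternatively be quoted directly from the statement of Theorem \ref{ed22} specialised to $\mu^+=\ell_a$, $\delta=1/a$, rather than recomputed via the Peyri\`ere measure.
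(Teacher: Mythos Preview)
Your proposal is correct and follows exactly the route the paper takes: the corollary is stated immediately after Theorem \ref{mthm} as a direct specialisation to $\mu^+=\ell_a$, $\nu^+=\ell_b$, $\bm{I}=\bm{I}_a$, $\bm{J}=\bm{I}_b$, combined with the convolution-as-projection identification already invoked before Corollary \ref{cobc}. The paper does not spell out the dimension computation; your justification via Theorem \ref{ed22} (with $h_{\mu,V,\bm{I}}=0$ since the tilling IFS satisfies the OSC) or via the Peyri\`ere-measure law of large numbers is the intended content, and your check that the non-degeneracy events coincide is a useful detail the paper leaves implicit.
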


One particular interesting example of Mandelbrot cascades is the case when the random weights are Bernoulli, with our normalisation that is
\[
V=\left\{\begin{array}{ll} 1/p, & \text{with probability } p;\\ 0, & \text{with probability } 1-p\end{array}  \right.
\]
for some $p\in(0,1]$. This case corresponds to the $p$-Mandelbrot percolation on $\Sigma_a^+$:
\[
\Sigma_{a,p}^+=\{\bm{i}^+\in\Sigma_a^+: V_{i_1}V_{i_1i_2}\cdots V_{i_1\cdots i_n} >0 \text{ for all } n\ge 1\}.
\]
One can view $\Sigma_{a,p}^+$ as the boundary of a Galton-Watson branching tree with offspring number $N=\#\{i=1,\ldots,a: V_i>0\}$. This is a classical statistically self-similar fractal set, and its canonical image to Euclidean spaces shares many intersection properties with the sample path of Brownian motion \cite{Per96}. 

Here we shall consider Mandelbrot percolations on a subshift $X$ of $\Sigma_a^+$:
\[
X_p=X\cap \Sigma^+_{a,p}.
\]
We would like to apply Theorem \ref{mthm} to obtain a similar result to Corollary \ref{co1} for the images of these random sets through canonical mappings. But we reach a complication from the possible overlaps in the IFSs: if $\bm{I}$ has overlaps, then for a cylinder $[u]\subset \Sigma_a^+$ (see Section \ref{sds} for definition), even if $[u]\cap X=\emptyset$, this does not necessarily imply that $\Phi_{\bm{I}}(X_p)\cap \Phi_{\bm{I}}([u])=\emptyset$, which makes it difficult to obtain a sharp upper bound of the dimension of $\Phi_{\bm{I}}(X_p)$ (see Example \ref{example1}).

To overcome this difficulty we shall introduce the following quantity: for $X\subset \Sigma_a^+$ and $n\ge 1$ let
\[
t_{X,\bm{I},n}=\sup_{x\in \mathbb{R}}\#\{u \in X_n: \Phi_{\bm{I}}([u])\cap B(x,\delta^n)\neq \emptyset\},
\]
where we denote by $X_n=\{u\in \{1,\cdots,a\}^n:[u]\cap X\neq \emptyset\}$ the set of admissible words of length $n$ in $X$. Let
\[
\gamma_{X,\bm{I}}=\limsup_{n\to\infty} \frac{\log t_{X,\bm{I},n}}{-n\log \delta}.
\]
The condition $\gamma_{X,\bm{I}}=0$ is similar to the so-called asymptotically weak separation condition (AWSC) introduced by Feng \cite{Feng07} but it is slightly stronger as we are counting all different letters, whereas in AWSC one only counts all different composed maps; they are equivalent when the IFS has no exact overlaps, in particular when the IFS satisfies the exponential separation condition (ESC). By \cite[Theorem 1.3]{BF21} such an example with non-trivial overlaps can be found when $\delta\cdot a<1$.

If we assume that $\gamma_{X,\bm{I}}=0$, then it is not hard to show (see Corollary \ref{bdp}) that, almost surely conditioned on $X_p\neq \emptyset$,
\[
\dim \Phi_{\bm{I}}(X_p)= \dim \Phi_{\bm{I}}(X)-\frac{\log p}{\log \delta}.
\]
Then by using Theorem \ref{mthm} we obtain another extension of Furstenburg's sumset theorem.

\begin{corollary}\label{co2}
Let $X$ and $Y$ be two subshifts on $\Sigma_a^+$ and $\Sigma_b^+$ respectively. Suppose that $\gamma_{X,\bm{I}}=\gamma_{Y,\bm{J}}=0$. Let $X_p$ and $Y_{p'}$ be two independent Mandelbrot percolations on $X$ and $Y$ respectively. If $\log \delta/\log \rho$ is irrational, then almost surely conditioned on $X_p\neq \emptyset$ and $Y_{p'}\neq \emptyset$,
\begin{equation*}\label{e3}
\begin{split}
\dim \pi(\Phi_{\bm{I}}(X_p)\times \Phi_{\bm{J}}(Y_{p'}))=&\ \min\{1,\dim \Phi_{\bm{I}}(X_p)+\dim \Phi_{\bm{J}}(Y_{p'})\}\\
=&\ \min\left\{1,\dim \Phi_{\bm{I}}(X)+\dim \Phi_{\bm{J}}(Y)-\frac{\log p}{\log \delta}-\frac{\log p'}{\log \rho}\right\}
\end{split}
\end{equation*}
for all $\pi\in\Pi\setminus\{\pi_1,\pi_2\}$.
\end{corollary}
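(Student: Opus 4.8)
The plan is to deduce Corollary~\ref{co2} from Theorem~\ref{mthm} by viewing $\Phi_{\bm{I}}(X_p)$ and $\Phi_{\bm{J}}(Y_{p'})$ as the supports of Bernoulli Mandelbrot cascades, and combining this with the variational principle on the subshifts $X,Y$ and with Corollary~\ref{bdp}. Since $X_p$ and $Y_{p'}$ are measurable with respect to the independent families $\{V_u\}$ and $\{W_u\}$, the two coordinates may be treated separately. Because $X_p\ne\emptyset$ forces the Galton--Watson tree counting surviving cylinders to be supercritical (its expected number of length-$n$ survivors is $p^n\#X_n$, which must therefore grow exponentially), we may assume $h_{\mathrm{top}}(X)>-\log p$ and $h_{\mathrm{top}}(Y)>-\log p'$; otherwise $\{X_p\ne\emptyset\}$ or $\{Y_{p'}\ne\emptyset\}$ is null and there is nothing to prove. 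By Corollary~\ref{bdp} (which uses $\gamma_{X,\bm{I}}=\gamma_{Y,\bm{J}}=0$), almost surely on $\{X_p\ne\emptyset,\ Y_{p'}\ne\emptyset\}$ the sets $\Phi_{\bm{I}}(X_p)$ and $\Phi_{\bm{J}}(Y_{p'})$ have equal box and Hausdorff dimension, namely $\dim\Phi_{\bm{I}}(X)-\tfrac{\log p}{\log\delta}$ and $\dim\Phi_{\bm{J}}(Y)-\tfrac{\log p'}{\log\rho}$; this identifies the last expression in the statement with the middle one, so it remains to prove the first equality, $\dim\pi(\Phi_{\bm{I}}(X_p)\times\Phi_{\bm{J}}(Y_{p'}))=\min\{1,\dim\Phi_{\bm{I}}(X_p)+\dim\Phi_{\bm{J}}(Y_{p'})\}$.

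The upper bound is soft: $\pi$ is $1$-Lipschitz and maps into $\mathbb{R}$, and upper box dimension is subadditive under products, so $\overline{\dim}_B\pi(\Phi_{\bm{I}}(X_p)\times\Phi_{\bm{J}}(Y_{p'}))\le\min\{1,\ \overline{\dim}_B\Phi_{\bm{I}}(X_p)+\overline{\dim}_B\Phi_{\bm{J}}(Y_{p'})\}$, and by the previous paragraph these box dimensions coincide with the exact dimensions on the right of the claimed identity; since $\dim_H\le\overline{\dim}_B$ this settles the ``$\le$'' direction (and also shows the projection set has equal box and Hausdorff dimension once the lower bound is in place).

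For the lower bound, fix $\varepsilon>0$ and, using the variational principle and $h_{\mathrm{top}}(X)>-\log p$, pick an ergodic measure $\mu^+$ on $X$ with $-\log p<h_\mu$ and $h_{\mathrm{top}}(X)-\varepsilon\le h_\mu$, and similarly $\nu^+$ on $Y$. Let $V$ and $W$ be the Bernoulli weights of parameters $p$ and $p'$, so that $h_V=-\log p<h_\mu$ and $h_W=-\log p'<h_\nu$, and build the cascades $\widetilde{\mu}^+,\widetilde{\nu}^+$ on the same probability space as the percolations, driven by the same families $\{V_u\},\{W_u\}$. Then $\widetilde{\mu}^+$ is supported on $X\cap\Sigma_{a,p}^+=X_p$ and $\widetilde{\nu}^+$ on $Y_{p'}$; by Theorem~\ref{ed22} (together with $\gamma_{X,\bm{I}}=0$, which rules out dimension loss under $\Phi_{\bm{I}}$) the image $\Phi_{\bm{I}}(\widetilde{\mu}^+)$ is exact-dimensional of dimension $\min\{1,\tfrac{h_\mu+\log p}{-\log\delta}\}$, which tends as $h_\mu\uparrow h_{\mathrm{top}}(X)$ to the value $\dim\Phi_{\bm{I}}(X_p)$ of Corollary~\ref{bdp}, and likewise for $\Phi_{\bm{J}}(\widetilde{\nu}^+)$. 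The hypotheses of Theorem~\ref{mthm} now hold ($\mu^+,\nu^+$ ergodic, $\log\delta/\log\rho$ irrational, $V$ and $W$ independent), so almost surely on $\{\widetilde{\mu}^+(\Sigma_a^+)\,\widetilde{\nu}^+(\Sigma_b^+)>0\}$,
\[
\dim\pi\big(\Phi_{\bm{I}}(\widetilde{\mu}^+)\times\Phi_{\bm{J}}(\widetilde{\nu}^+)\big)=\min\big\{1,\ \dim\Phi_{\bm{I}}(\widetilde{\mu}^+)+\dim\Phi_{\bm{J}}(\widetilde{\nu}^+)\big\},
\]
and since this measure is carried by $\pi(\Phi_{\bm{I}}(X_p)\times\Phi_{\bm{J}}(Y_{p'}))$ we get, on that event, $\dim_H\pi(\Phi_{\bm{I}}(X_p)\times\Phi_{\bm{J}}(Y_{p'}))$ within $O(\varepsilon)$ of the target.

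The remaining and main difficulty is that $\{\widetilde{\mu}^+(\Sigma_a^+)>0\}$ is a priori only a sub-event of $\{X_p\ne\emptyset\}$, so the last step controls the dimension only there. I expect to bridge this by a restart argument together with the fact that Theorem~\ref{mthm} holds for \emph{every} $\pi\in\Pi\setminus\{\pi_1,\pi_2\}$: conditioned on $X_p\ne\emptyset$ the cluster has infinitely many vertices, and below each surviving vertex $u$ the percolation, paired with a freshly chosen near-maximal ergodic measure on the subshift $\sigma^{|u|}X$ and with the independent weights $\{V_{uv}\}_v$, is an independent copy of the present set-up whose cascade is non-degenerate with probability bounded below (a quantitative form of the Kahane--Peyri\`ere criterion in terms of the gap $h_{\mathrm{top}}(\sigma^{|u|}X)+\log p\ge h_{\mathrm{top}}(X)+\log p>0$); hence almost surely on $\{X_p\ne\emptyset\}$ some vertex $U$ carries a non-degenerate restarted cascade, and similarly some $U'$ for $Y_{p'}$. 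As all maps in $\bm{I}$ (resp.\ $\bm{J}$) share the contraction ratio $\delta$ (resp.\ $\rho$), one has $\Phi_{\bm{I}}([U]\cap X_p)=f_U(\Phi_{\bm{I}}((\sigma^{|U|}X)_p))$ with $f_U(x)=\delta^{|U|}x+c_U$, and then $\pi\circ(f_U\times f_{U'})$ equals, up to an affine rescaling of $\mathbb{R}$, a projection $\pi'\in\Pi\setminus\{\pi_1,\pi_2\}$ with $\tan\theta'=\delta^{-|U|}\rho^{|U'|}\tan\theta$; applying Theorem~\ref{mthm} to the restarted cascades and to $\pi'$ transfers the lower bound to all of $\{X_p\ne\emptyset,\ Y_{p'}\ne\emptyset\}$. (Alternatively one may prove directly a Kesten--Stigum type statement, that for Bernoulli weights and $h_\mu>-\log p$ the event $\{\widetilde{\mu}^+(\Sigma_a^+)>0\}$ coincides with $\{X_p\ne\emptyset\}$ up to a null set, the entropy gap forcing a $\mu^+$-typical sub-population of the cluster to carry positive mass on survival.) Finally, letting $\varepsilon\downarrow0$ along a sequence, combining with the upper bound, and substituting the values from Corollary~\ref{bdp} yields all three expressions in the statement.
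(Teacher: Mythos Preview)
Your strategy coincides with the paper's one-line derivation (variational principle, Theorem~\ref{mthm} on the Bernoulli cascades, Corollary~\ref{bdp} to pin down the dimensions). Your reading of $h_{\mu,V,\bm{I}}=0$ directly from $\gamma_{X,\bm{I}}=0$ is a plausible shortcut compared to the paper's detour through strongly separated $\sigma^k$-subsystems in~\eqref{vpc2}, though it merits a line of justification.

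The restart argument, however, contains a concrete error, and the gap you flag is in fact not closable in the stated generality. The percolation below a surviving vertex $u$ lives on the follower set $F(u)=\{v:uv\in X\}$, not on $\sigma^{|u|}X$; for a general subshift $F(u)\subsetneq X$ is closed but not $\sigma$-invariant, an ergodic measure chosen on $X$ need not be supported on $F(u)$, and your identity $\Phi_{\bm{I}}([U]\cap X_p)=f_U\bigl(\Phi_{\bm{I}}((\sigma^{|U|}X)_p)\bigr)$ holds only as the inclusion $\subseteq$, so the restarted cascade is not carried by $X_p$ and cannot witness a lower bound there. More seriously, the second equality of the corollary (and Corollary~\ref{bdp} read as conditioned on $X_p\neq\emptyset$) already fails for reducible subshifts: with $a=5$, $\bm{I}=\bm{I}_5$, $X=\Sigma_{\{1,2,3\}}\cup\Sigma_{\{4,5\}}$ and $p\in(\tfrac12,1)$ one has $\gamma_{X,\bm{I}}=0$, the two components percolate independently, and on the positive-probability event $\{(\Sigma_{\{1,2,3\}})_p=\emptyset,\ (\Sigma_{\{4,5\}})_p\neq\emptyset\}\subset\{X_p\neq\emptyset\}$ one gets $\dim\Phi_{\bm{I}}(X_p)=\log(2p)/\log5<\log(3p)/\log5=\dim\Phi_{\bm{I}}(X)-\tfrac{\log p}{\log\delta}$. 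Under an irreducibility hypothesis your Kesten--Stigum alternative is the right repair: choose $\mu^+$ ergodic of full support and near-maximal entropy, note the branching has uniformly bounded offspring so the (multitype) $L\log L$ criterion applies, and conclude $\{\|\widetilde{\mu}^+\|>0\}=\{X_p\neq\emptyset\}$ almost surely.
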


\begin{remark}
There have been extensive studies on the geometric properties of classical Mandelbrot percolations, see \cite{RS,SV,FJ15,SO,SS} for example. The projection result stated in Corollary \ref{co2} dealing with IFSs with possible non-trivial overlaps is new to our best knowledge.
\end{remark}

In particular, let $E_{a,p}=\Phi_{\bm{I}_a}(\Sigma^+_{a,p})$ and $E_{b,p'}=\Phi_{\bm{I}_b}(\Sigma^+_{b,p'})$ be two independent Mandelbrot percolations on the whole interval, i.e., the classical Mandelbrot percolations along the $a$-adic and $b$-adic grids of $[0,1]$. Since obviously in this setting $\gamma_{\Sigma^+_a,\bm{I}_a}=\gamma_{\Sigma^+_b,\bm{I}_b}=0$, by Corollary \ref{co2} we have

\begin{corollary}\label{co3}
If $\log a/\log b$ is irrational, then almost surely conditioned on $E_{a,p}\neq \emptyset$ and $E_{b,p'}\neq \emptyset$, for all $s\neq 0$,
\begin{equation*}
\dim (E_{a,p}+sE_{b,p'})=\min\{1,\dim E_{a,p}+\dim E_{b,p'}\}=\min\left\{1,2+\frac{\log p}{\log a}+\frac{\log p'}{\log b}\right\}.
\end{equation*}
\end{corollary}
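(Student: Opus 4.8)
The plan is to specialise Corollary \ref{co2} to the case $X=\Sigma_a^+$, $Y=\Sigma_b^+$, $\bm{I}=\bm{I}_a$, $\bm{J}=\bm{I}_b$ (so that $\delta=1/a$ and $\rho=1/b$), and to translate the orthogonal projections appearing there into sumsets. First I would check the separation hypothesis of Corollary \ref{co2}: for the $a$-tiling, the image $\Phi_{\bm{I}_a}([u])$ of a cylinder of length $n$ is one of the $a^{n}$ intervals of length $a^{-n}=\delta^{n}$ that tile $[0,1]$, so a ball of radius $\delta^{n}$ can meet at most a bounded number of them, uniformly in $n$; hence $t_{\Sigma_a^+,\bm{I}_a,n}$ stays bounded and $\gamma_{\Sigma_a^+,\bm{I}_a}=0$, and symmetrically $\gamma_{\Sigma_b^+,\bm{I}_b}=0$. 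Since $\log\delta/\log\rho=\log a/\log b$ is irrational and the two percolations are independent by hypothesis, Corollary \ref{co2} applies with $X_p=\Sigma^+_{a,p}$ and $Y_{p'}=\Sigma^+_{b,p'}$. Moreover $\Phi_{\bm{I}_a}$ sends $\emptyset$ to $\emptyset$ and nonempty sets to nonempty sets, so the conditioning event $E_{a,p}\neq\emptyset$ coincides with $\Sigma^+_{a,p}\neq\emptyset$, matching the conditioning in Corollary \ref{co2}; likewise for $b$.

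Next I would pass from sumsets to projections. For $s\neq 0$ the linear surjection $L_s\colon(x,y)\mapsto x+sy$ equals $\sqrt{1+s^{2}}$ times the orthogonal projection $\pi_\theta$ onto the line through $(1,s)$, and since $s\neq 0$ one has $\pi_\theta\in\Pi\setminus\{\pi_1,\pi_2\}$. Dilations preserve both box and Hausdorff dimension, so $E_{a,p}+sE_{b,p'}=L_s(E_{a,p}\times E_{b,p'})$ has the same box and Hausdorff dimensions as $\pi_\theta(E_{a,p}\times E_{b,p'})$; by Corollary \ref{co2} these two coincide, so the exact dimension $\dim(E_{a,p}+sE_{b,p'})$ exists and equals $\dim\pi_\theta(E_{a,p}\times E_{b,p'})$. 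Since the almost sure event of Corollary \ref{co2} is valid simultaneously for all $\pi\in\Pi\setminus\{\pi_1,\pi_2\}$, the resulting identity holds for all $s\neq 0$ on a single almost sure event.

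It remains to compute the right-hand side. By Corollary \ref{bdp} applied to $X=\Sigma_a^+$, on $\{\Sigma^+_{a,p}\neq\emptyset\}$ we have $\dim E_{a,p}=\dim\Phi_{\bm{I}_a}(\Sigma_a^+)-\frac{\log p}{\log\delta}=1+\frac{\log p}{\log a}$, using $\Phi_{\bm{I}_a}(\Sigma_a^+)=[0,1]$ and $\delta=1/a$; likewise $\dim E_{b,p'}=1+\frac{\log p'}{\log b}$. Plugging these into the formula of Corollary \ref{co2} gives
\[
\dim(E_{a,p}+sE_{b,p'})=\min\{1,\dim E_{a,p}+\dim E_{b,p'}\}=\min\Big\{1,\,2+\tfrac{\log p}{\log a}+\tfrac{\log p'}{\log b}\Big\},
\]
which is the claim. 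There is no genuine obstacle here: the entire argument is a translation of Corollary \ref{co2}. The only points needing a word of care are the sum-to-projection dictionary in the second paragraph and the fact, already contained in Corollary \ref{co2}, that the exceptional null set can be chosen uniformly in the projection direction, so that ``for all $s\neq 0$'' holds almost surely rather than merely for each fixed $s$.
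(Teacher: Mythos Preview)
Your proof is correct and follows exactly the approach the paper uses: it derives the corollary from Corollary \ref{co2} by noting that for the tilings $\bm{I}_a$, $\bm{I}_b$ one has $\gamma_{\Sigma_a^+,\bm{I}_a}=\gamma_{\Sigma_b^+,\bm{I}_b}=0$, and then identifies the sumset $E_{a,p}+sE_{b,p'}$ with a non-principal projection of the product set. The paper records this as a one-line deduction ("since obviously in this setting $\gamma_{\Sigma_a,\bm{I}_a}=\gamma_{\Sigma_b,\bm{I}_b}=0$, by Corollary \ref{co2} we have\ldots"); you have simply spelled out the verification of $\gamma=0$, the sumset-to-projection dictionary, and the dimension computation via Corollary \ref{bdp}.
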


\begin{remark}
This result can also be deduced from \cite{SS}, where Shmerkin and Suomala proved that Mandelbrot percolations on the whole interval are Salem sets.
\end{remark}

\medskip

\subsection*{Strategy of proofs}

To prove our main result Theorem \ref{mthm} we shall follow the framework of Hochman and Shmerkin \cite{HS12} to build up a relevant CP-chain and estimate its local entropy averages, then use Marstrand's projection theorem to find the right dimension. The main difficulty of extending this method to our setting comes from the possible overlaps in the IFSs and the random extension due to the Mandelbrot cascade action, which we shall explain here.

\medskip

Similar to \cite{FJ14}, to deal with possible overlaps in the IFSs, we shall first apply the CP-chain method at the symbolic space level, then we map the CP-chain to the Euclidean space through the canonical mappings $\Phi_{\bm{I}}$ and $\Phi_{\bm{J}}$. For IFSs with different contraction ratios $\delta$ and $\rho$ (an affine case), the underlying dynamical system is the product space
\[
X=\mathbb{T}\times\Sigma_a\times\Sigma_b
\]
where $\Sigma_a=\{1,\cdots,a\}^\mathbb{Z}$ and $\Sigma_b=\{1,\cdots,b\}^\mathbb{Z}$ are the two-sided symbolic spaces, and the invertible transformation $T$ on $X$ is the skew-product
\[
T(t,\bm{v})=(R(t),\sigma_t(\bm{v})),
\]
where $R(t)=t+\alpha \mod 1$ is the irrational rotation on $\mathbb{T}$ with $\alpha=\log \delta/\log \rho \in(0,1)$, and for $\bm{v}=(\bm{i},\bm{j})\in \Sigma_a\times\Sigma_b$ and $t\in \mathbb{T}$, the skewed map $\sigma_t$ is
\[
\sigma_t(\bm{v})=\left\{\begin{array}{ll}(\sigma(\bm{i}),\sigma(\bm{j})),& \text{if } 1-\alpha\le t<1;\\
(\bm{i},\sigma(\bm{j})),& \text{if } 0\le t<1-\alpha,
\end{array}\right.
\]
where $\sigma$ denotes the left-shift operator.

Let $\mu^+$ and $\nu^+$ be two $\sigma$-ergodic measures on $\Sigma_a^+$ and $\Sigma_b^+$ respectively. We first need to use Rokhlin's natural extension \cite{Roh60} to extend $\mu^+$ and $\nu^+$ on one-sided symbolic spaces to ergodic measures $\mu$ and $\nu$ on two-sided symbolic spaces $\Sigma_a$ and $\Sigma_b$. The invariant measure to consider on $X$ is then the product measure
\[
\Theta=\ell\times\mu\times \nu,
\]
where $\ell$ denotes the Lebesgue measure on $\mathbb{T}$. It is easy to verify that $\Theta$ is $T$-invariant, but it is not necessarily $T$-ergodic. Therefore we need to use ergodic decomposition to obtain an ergodic measure $\theta$ with respect to $\Theta$ (see Section \ref{dfedm} for precise definition).

Next we shall consider the canonical projection $\phi$ from $X$ to its subspace $\mathbb{T}\times\Sigma_a^-\times\Sigma_b^-$, where $\Sigma_a^-=\{1,\cdots,a\}^{\mathbb{Z}\setminus\mathbb{N}}$ and $\Sigma_b^-=\{1,\cdots,b\}^{\mathbb{Z}\setminus\mathbb{N}}$. The pre-images
\[
\eta=\{\phi^{-1}(t,\bm{i}^-,\bm{j}^-): (t,\bm{i}^-,\bm{j}^-)\in \mathbb{T}\times\Sigma_a^-\times\Sigma_b^-\}
\]
then forms a measurable partition of $X$. By Rokhlin's conditional measure theorem \cite{Roh} (see also Theorem \ref{Rohthm}) the conditional measures $\theta^\eta_x$ of $\theta$ with respect to $\eta$ for $\theta$-a.e. $x\in X$ defines a family of probability measures $\theta^+_{t,\bm{i}^-,\bm{j}^-}$ carried by $\Sigma_a^+\times\Sigma_b^+$, and these measures will be used to define an ergodic CP-chain along certain partition of $\Sigma_a^+\times\Sigma_b^+$ depending on $t$, then we shall map this CP-chain through the canonical mapping $\Phi=\Phi_{\bm{I}}\times\Phi_{\bm{J}}$ from $\Sigma_a^+\times\Sigma_b^+$ to $\mathbb{R}^2$ and estimate its associated local entropy averages.

In order to obtain a sharp lower bound of the Hausdorff dimension of the projection, when applying Marstrand's projection theorem, we need that both $\Phi(\theta^+_{t,\bm{i}^-,\bm{j}^-})$ and $\Phi(\mu^+\times\nu^+)$ are exact-dimensional with the same dimension. When the IFSs satisfy the OSC, this fact is relatively easier to prove (may still need some work), but when there are possible overlaps in the IFSs, this fact is highly non-trivial. We shall use the methods developed in \cite{FH09,Feng20} to prove such a result (Theorem \ref{ed2}).

Another difficulty created by the possible overlaps in the IFSs comes from the CP-chain construction of $\theta^+_{t,\bm{i}^-,\bm{j}^-}$ on $\Sigma_a^+\times\Sigma_b^+$ according to $\Phi$. To capture the dynamics of the zooming-in procedure for $\Phi(\theta^+_{t,\bm{i}^-,\bm{j}^-})$, one needs to use a family of rectangles $\mathcal{U}$ (see \eqref{Ug} for definition) instead of balls on the Euclidean space. This creates problems to the dominated convergence of its associated conditional information (\cite[Proposition 3.5]{FH09}), where the differentiation theory of measures, Besicovitch covering lemma and Hardy-Littlewood maximal inequality are all applied in terms of balls. Fortunately we have a complete analogue of these theorems in terms of the so-called $\gamma$-Morse sets, which covers our family of rectangles $\mathcal{U}$. See \cite[Section 1.2]{FL07} and Section \ref{mct}.

\medskip

The difficulty created by the random extension due to Mandelbrot cascade action is the following. To capture the dynamics of the Mandelbrot cascade version of $\mu^+$ and $\nu^+$ we need to consider the following skew-product space (a random extension)
\[
\widetilde{X}=X\times\Omega_{a,b}
\]
and a skew-product transformation
\[
\widetilde{T}(x,\bm{\omega})=(T(x),\kappa_x(\bm{\omega})).
\]
See Section \ref{esps} for more precise definition. The problem of this random extension is that the transformation $\widetilde{T}$ is no longer invertible, therefore results like Shannon-McMillan-Breiman theorem for conditional measures (Proposition \ref{SMB}) cannot be used here, hence the Mandelbrot cascade version of the fact that $\Phi(\widetilde{\theta}^+_{t,\bm{i}^-,\bm{j}^-})$ and $\Phi(\widetilde{\mu}^+\times\widetilde{\nu}^+)$ are exact-dimensional with the same dimension, or even the simpler statement for $\Phi_{\bm{I}}(\widetilde{\mu}^+_{\bm{i}^-})$ and $\Phi_{\bm{I}}(\widetilde{\mu}^+)$ as in Theorem \ref{ed22}, becomes much harder to prove. We will bypass this difficulty by combining the methods developed in \cite{LY85II,FH09,Feng20}.

\medskip

The rest of the paper is organised as follows.

In Section \ref{sec:pre} we introduce Rokhlin's conditional measure theorem, Shannon-McMillan-Breiman theorem for invariant measures, and we prove in Proposition \ref{SMB} a Shannon-McMillan-Breiman type theorem for conditional measures, which we shall use frequently later on. In Section \ref{mct} we introduce the $\gamma$-Morse covering and its associated dominated convergence result for conditional information, which will be essential for us to prove Theorem \ref{ed2}. At last we present a version of Maker's ergodic theorem that will be used in the proofs of Lemma \ref{lem1}, Theorem \ref{ed1} and Theorem \ref{ed2}.
 
In Section \ref{sec3} we introduce the natural extension of invariant measures from one-sided symbolic space to two-sided symbolic space, and their fibre measures with respect to the left-infinite words. We prove in Lemma \ref{lem1} a folklore result that these fibre measures have the same dimension as the original invariant measure on the one-sided symbolic space when it is ergodic or exact-dimensional. We shall use this dimension result to build the fibre Mandelbrot cascade measures and their associated Peyri\`ere measure on the random extension skew-product space. We show in Lemma \ref{lminv} and Lemma \ref{erg} that this Peyri\`ere measure is invariant and ergodic.

In Section \ref{ifsed} we prove the exact-dimensionality of push-forward original and fibre Mandelbrot cascade measures, and that their dimension are the same. We first show in Theorem \ref{ed1} the exact-dimensionality of the fibre cascade measures, following the method of Feng and Hu \cite{FH09}. Then we show in Theorem \ref{ed22} that the original cascade measure is exact-dimensional with the same dimension, following the arguments of Ledrappier and Young \cite{LY85II}.

In Section \ref{pir} we introduce the skew-product space $X$ and we prove in Theorem \ref{ed3} that the fibre ergodic decomposition measure $\theta^\eta_x$ have the same dimension as $\mu^+\times\nu^+$. Then we use this dimension result to build their corresponding Mandelbrot cascade measures and associated Peyri\`ere measure, we show that this Peyri\`ere measure is also invariant and ergodic.

In Section \ref{mcfm} we mainly show in Theorem \ref{ed2} that the push-forward fibre ergodic decomposition Mandelbrot cascade measure is exact-dimensional with the same dimension as $\Phi(\widetilde{\mu}^+\times\widetilde{\nu}^+)$, which is crucial for us to derive a sharp lower bound of the Hausdorff dimension of projection in Section \ref{DoP}. For this we need to make clear the relation between the ergodic decomposition and the random extension (see Lemma \ref{erg3}).

In Section \ref{DoP} we prove our main theorem following \cite{HS12}, with several modifications due to the fact that we are not allowed to use the ``cut-line" method as in \cite{FJ14} to enlarge the symbolic spaces by generations. Finally in Section \ref{sec:perc} we apply our main result to Mandelbrot percolations.

\medskip

\noindent {\bf Acknowledgement. } The authors would like to thank Julien Barral, De-Jun Feng and Kenneth Falconer for their many useful comments and suggestions for improving an early version of this paper.

\section{Preliminaries}\label{sec:pre}

In this section we present some results related to Rokhlin's conditional measures, Shannon-McMillan-Breiman theorem, conditional information and entropy. For details one can check the nice book \cite{VO} of Viana and Oliveira, see also \cite[Section 1]{Roh67}, \cite[Section 3]{FH09} and \cite[Section 2]{Feng20}.
	
\subsection{Rokhlin's conditional measures}\label{sec:rokhlin}

A Lebesgue space is a probability space that is isomorphic to the union of an interval $[0,s]$ for some $0\le s\le 1$ with Lebesgue measure and a countable number of atoms. For example, a Borel probability measure on a standard Borel space (a metric space whose Borel $\sigma$-algebra makes the space complete and separable) is isomorphic to a Lebesgue space, and a countable product of Lebesgue spaces is still a Lebesgue space. 

Let $(X, \mathcal{B}, \mu)$ be a Lebesgue space. A measurable partition $\eta$ of $(X, \mathcal{B}, \mu)$ is a partition of some full $\mu$-measure subset $X'$ of $X$ such that there is a sequence of countable partitions $\eta_1 \prec \eta_2 \prec\cdots$ of $X'$ consisting of elements of $\mathcal{B}$ such that
\[
\bigvee_{i} \eta_i=\eta,
\]
where $\eta_i\prec \eta_j$ means that every element of $\eta_j$ is contained in some element of $\eta_i$, and $\bigvee_{i} \eta_i$ denotes the partition obtained by taking intersections of all elements of $\eta_i$ for all $i$. For $x\in X$ denote by $\eta(x)$ the unique element in $\eta$ containing $x$. Denote by $\widehat{\eta}$ the sub $\sigma$-algebra of $\mathcal{B}$ consisting of unions of elements of $\eta$. We have the following Rokhlin's theorem on existence and uniqueness of conditional measures.

\begin{theorem}[Rokhlin \cite{Roh}]\label{Rohthm}
There is a family of probability measures $\mu^{\eta}_{x}$ defined on $\eta(x)$ for $\mu$-a.e. $x\in X$. These measures are uniquely determined (up to sets of $\mu$-measure zero) by the following: if $A \subset X$ is a measurable set, then $x\mapsto \mu^{\eta}_{x}(A)$ is $\widehat{\eta}$-measurable and $\mu(A) =\int_X \mu^{\eta}_{x}(A) \, \mathrm{d}\mu(x)$. These properties implies that for any $f\in L^1(X,\mathcal{B},\mu)$, $E_\mu(f|\widehat{\eta})(x)=\int_X f \,\mathrm{d} \mu^{\eta}_{x}$ for $\mu$-a.e. $x\in X$, and $\int_X f \,\mathrm{d}\mu=\int_X E_\mu(f|\widehat{\eta}) \, \mathrm{d}\mu$.
\end{theorem}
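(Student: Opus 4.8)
The plan is to derive existence from the theory of regular conditional probabilities on a standard Borel space and then to show that the resulting measures are carried by the atoms of $\eta$; uniqueness I would dispatch first, using only the almost-everywhere uniqueness of conditional expectations.

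For uniqueness, suppose two families $\{\mu^{\eta}_x\}$ and $\{\widetilde{\mu}^{\eta}_x\}$ satisfy the stated properties. Fix a countable algebra $\mathcal{A}\subset\mathcal{B}$ generating $\mathcal{B}$, which exists because a Lebesgue space is standard Borel. For each $A\in\mathcal{A}$ the maps $x\mapsto\mu^{\eta}_x(A)$ and $x\mapsto\widetilde{\mu}^{\eta}_x(A)$ are $\widehat{\eta}$-measurable with the same integral over every element of $\widehat{\eta}$, namely $\mu(A\cap\,\cdot\,)$, so they coincide for $\mu$-a.e. $x$; intersecting the countably many full-measure sets on which equality holds, for $\mu$-a.e. $x$ the measures $\mu^{\eta}_x$ and $\widetilde{\mu}^{\eta}_x$ agree on $\mathcal{A}$, hence on $\mathcal{B}$ by the monotone class theorem.

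For existence I would use that $(X,\mathcal{B},\mu)$, being a Lebesgue space, may be taken to be a compact metric space with $\mathcal{B}$ its Borel $\sigma$-algebra. On such a space the sub-$\sigma$-algebra $\widehat{\eta}$ admits a regular conditional probability: a map $(x,A)\mapsto P_x(A)$ with $P_x$ a Borel probability measure for $\mu$-a.e. $x$, with $x\mapsto P_x(A)$ being $\widehat{\eta}$-measurable, and with $\int_B P_x(A)\,\mathrm{d}\mu(x)=\mu(A\cap B)$ for all $A\in\mathcal{B}$ and $B\in\widehat{\eta}$; equivalently, $x\mapsto P_x(A)$ is a version of $E_\mu(\mathbf{1}_A\mid\widehat{\eta})$. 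One constructs $P_x$ by prescribing its values on a fixed countable generating algebra through conditional expectations, verifying finite additivity and continuity at the empty set off a single $\mu$-null set by using the inner regularity of $\mu$, and then extending via Carath\'eodory's theorem. To localise to the atoms, write $\eta=\bigvee_i\eta_i$ with $\eta_1\prec\eta_2\prec\cdots$ countable partitions into $\mathcal{B}$-sets. Every $C\in\eta_i$ lies in $\widehat{\eta}$, so $P_x(C)=E_\mu(\mathbf{1}_C\mid\widehat{\eta})(x)=\mathbf{1}_C(x)$ for $\mu$-a.e. $x$; discarding the countable union over $i$ and $C\in\eta_i$ of exceptional sets, for $\mu$-a.e. $x$ one gets $P_x(\eta_i(x))=1$ for every $i$, and hence $P_x(\eta(x))=\lim_i P_x(\eta_i(x))=1$ by monotone continuity along the decreasing sequence $\eta(x)=\bigcap_i\eta_i(x)$. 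Setting $\mu^{\eta}_x:=P_x|_{\eta(x)}$ yields the asserted family, whose defining properties are exactly the $\widehat{\eta}$-measurability of $x\mapsto P_x(A)$ and the identity $\mu(A)=\int_X P_x(A)\,\mathrm{d}\mu(x)$. The displayed consequences then follow by approximation: the identity $E_\mu(f\mid\widehat{\eta})(x)=\int f\,\mathrm{d}\mu^{\eta}_x$ holds for indicators by definition, extends to simple functions by linearity, to nonnegative $f$ by monotone convergence, and to $f\in L^1$ by decomposing into positive and negative parts; integrating and using $\int E_\mu(f\mid\widehat{\eta})\,\mathrm{d}\mu=\int f\,\mathrm{d}\mu$ gives the last identity.

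The hard part will be the construction of the regular conditional probability $P_x$: one must promote the collection $\{E_\mu(\mathbf{1}_A\mid\widehat{\eta})\}_{A\in\mathcal{B}}$, each member of which is only defined up to a null set depending on $A$, into a genuine Borel measure for $\mu$-almost every fixed $x$. This is precisely where the Lebesgue-space hypothesis is essential: it reduces the problem to a countable generating algebra and supplies the regularity needed to check countable additivity of $A\mapsto P_x(A)$ off one null set. The subsequent localisation step is where the hypothesis that $\eta$ is a \emph{measurable} partition --- an increasing limit of countable partitions --- is used in an essential way.
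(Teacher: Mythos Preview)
Your proof sketch is correct and follows the standard textbook route to Rokhlin's disintegration theorem. However, the paper does not supply its own proof of this statement: Theorem~\ref{Rohthm} is stated as a classical result, attributed directly to Rokhlin~\cite{Roh}, and the reader is referred to Viana--Oliveira~\cite{VO} for background. So there is no proof in the paper to compare against; your outline is essentially the argument one finds in such references.
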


Rokhlin's conditional measures can be used to define the disintegration of measures. Let $Y$ be a Besicovitch space, that is a complete separable metric space on which the Besicovitch covering lemma holds, such as Euclidean spaces or ultra-metric spaces. Let $\phi:X\to Y$ be a measurable map. By Rokhlin \cite{Roh60}, the family
\[
\eta=\{\phi^{-1}(y):y\in Y\}
\]
forms a measurable partition of $X$, and the $\sigma$-algebra $\widehat{\eta}$ coincide with the sub $\sigma$-algebra generated by $\phi^{-1}(\mathcal{B}_Y)$ module $\mu$-measure zero sets, where $\mathcal{B}_Y$ is the Borel $\sigma$-algebra of $Y$. For such a measurable partition $\eta$, the conditional measures $\mu^\eta_x$ can be viewed as a disintegration of $\mu$ with respect to $\mu\circ \phi^{-1}$: we denote by $B(y,r)$ the closed ball in $Y$ centred at $y$ with radius $r$. For $r>0$ and $x\in X$ denote by
\[
B_\phi(x,r)=\phi^{-1}(B(\phi(x),r)).
\]
For $A\in \mathcal{B}$ with $\mu(A)>0$ denote by
\begin{equation*}\label{cdm1}
\mu_A(B)=\frac{\mu(B\cap A)}{\mu(A)}, \ B\in\mathcal{B}
\end{equation*}
the conditional measure of $\mu$ given $A$. For $x\in X$, the weak limit
\[
\lim_{r\to 0}\mu_{B_\phi(x,r)},
\]
if exists, defines a probability measure on the fibre $\phi^{-1}(\phi(x))=\eta(x)$, as the disintegration of $\mu$ with respect to $\mu\circ \phi^{-1}$ at $\phi(x)$. By the differentiation theory of measures (see \cite[Corollary 2.14 (2)]{Mattila} for example) and the uniqueness of conditional measures, these two family of measures coincide on a set of full $\mu$-measure. In other words, we may say that for $\mu$-a.e. $x\in X$,
\begin{equation*}\label{cdint}
\mu^{\eta}_{x}=\lim_{r\to 0}\mu_{B_\phi(x,r)}.
\end{equation*}
See also \cite{Simmons} for a proof of this statement. 

\subsection{Shannon-McMillan-Breiman theorem}

Consider a Lebesgue space  $(X,\mathcal{B},\mu)$ and let $(X,\mathcal{B},\mu,T)$ be an invertible measure-preserving dynamical system (m.p.d.s.). For a countable measurable partition $\xi$ denote by
\[
H_\mu(\xi)=-\sum_{B\in\xi} \mu(B)\log \mu(B)
\]
the entropy of $\xi$ with respect to $\mu$. Let $\mathcal{P}$ be a countable measurable partition of $X$. For $n\ge 1$ define
\[
\mathcal{P}^{n}=\bigvee_{k=0}^{n-1}T^{-k}\mathcal{P}.
\]
Note that for $x\in X$ we have
\[
\mathcal{P}^{n}(x)=\mathcal{P}(x)\vee T^{-1}\mathcal{P}(T(x))\vee \cdots \vee T^{-n+1}\mathcal{P}(T^{n-1}(x)).
\]
Let 
\[
h_{\mu}(T,\mathcal{P})=\lim_{n\to\infty} \frac{1}{n} H_{\mu}(\mathcal{P}^{n})
\]
be the entropy of $T$ and $\mathcal{P}$ with respect to $\mu$. We state here the classical Shannon-McMillan-Breiman (SMB) theorem for invariant measures (see \cite[Theorem 9.3.1]{VO} for example).

\begin{theorem}[SMB Theorem]\label{SMBinv}
If $H_\mu(\mathcal{P})<\infty$, then the limit
\[
h_\mu(T,\mathcal{P},x)=\lim_{n\to\infty} \frac{-\log \mu(\mathcal{P}^n(x))}{n}
\]
exists for $\mu$-a.e. $x\in X$. Moreover,
\[
h_\mu(T,\mathcal{P})=\int_X h_\mu(T,\mathcal{P},x) \, \mu(\mathrm{d}x).
\]
If $\mu$ is $T$-ergodic, then $h_\mu(T,\mathcal{P},x)=h_\mu(T,\mathcal{P})$ for $\mu$-a.e. $x\in X$.
\end{theorem}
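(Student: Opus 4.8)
The plan is to prove this by the classical Breiman-type argument (see \cite[Theorem 9.3.1]{VO}), combining the increasing martingale convergence theorem for conditional information with a pointwise ergodic theorem; since the result is standard I will only outline the structure. First I would record the telescoping identity expressing $-\log\mu(\mathcal P^n(x))$ as a Birkhoff-type sum along the orbit of $x$. Using $\mathcal P^{k+1}=\mathcal P^{k}\vee T^{-k}\mathcal P$ together with the chain rule for the conditional information function $I_\mu(\cdot\mid\cdot)$, one obtains
\[
-\log\mu(\mathcal P^n(x))=\sum_{k=0}^{n-1} f_k(T^k x),\qquad f_k:=I_\mu\Big(\mathcal P\ \Big|\ \bigvee_{j=1}^{k}T^{j}\mathcal P\Big),
\]
with $f_0=-\log\mu(\mathcal P(\cdot))$; here the invertibility of $T$ and the $T$-invariance of $\mu$ are used to translate the conditional information of $T^{-k}\mathcal P$ given $\bigvee_{j=0}^{k-1}T^{-j}\mathcal P$ back by $T^{k}$. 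Thus the problem reduces to a Ces\`aro-type almost-sure limit of $f_k\circ T^k$, the subtlety being that the function being averaged depends on the index.

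Next I would handle the convergence of the integrands. The finite partitions $\bigvee_{j=1}^{k}T^{j}\mathcal P$ increase (as $\sigma$-algebras) to $\bigvee_{j=1}^{\infty}T^{j}\mathcal P$, so by the increasing martingale convergence theorem the associated conditional probabilities of the atoms of $\mathcal P$ converge $\mu$-a.e., whence $f_k\to f_\infty:=I_\mu\big(\mathcal P\mid\bigvee_{j=1}^{\infty}T^{j}\mathcal P\big)$ $\mu$-a.e. The essential quantitative input is the domination $\sup_{k\ge 0}f_k\in L^1(\mu)$, which follows from Chung's maximal inequality for information functions and is precisely the place where the hypothesis $H_\mu(\mathcal P)<\infty$ is used; in particular $f_\infty\in L^1(\mu)$ and $f_k\to f_\infty$ in $L^1(\mu)$ as well.

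Finally, with $f_k\to f_\infty$ $\mu$-a.e.\ and the uniform $L^1$-domination in hand, I would invoke Maker's ergodic theorem (the version stated below in this section) to conclude that $\frac1n\sum_{k=0}^{n-1}f_k(T^k x)\to\mathbb{E}_\mu(f_\infty\mid\mathcal I)(x)$ for $\mu$-a.e.\ $x$, where $\mathcal I$ is the $\sigma$-algebra of $T$-invariant sets; this shows the limit $h_\mu(T,\mathcal P,x)$ exists a.e.\ and equals $\mathbb{E}_\mu(f_\infty\mid\mathcal I)(x)$. Integrating and using the standard identity $\int f_\infty\,\mathrm{d}\mu=H_\mu\big(\mathcal P\mid\bigvee_{j\ge1}T^{j}\mathcal P\big)=h_\mu(T^{-1},\mathcal P)=h_\mu(T,\mathcal P)$ (valid because $T$ is invertible) gives the integral formula, and when $\mu$ is $T$-ergodic the $\sigma$-algebra $\mathcal I$ is trivial, so the limit is the constant $h_\mu(T,\mathcal P)$. (One can bypass Maker's theorem by instead writing $f_k\le f_\infty+\sup_{m\ge N}(f_m-f_\infty)^+$, applying Birkhoff's ergodic theorem to each piece, and letting $N\to\infty$ with dominated convergence.) I expect the domination $\sup_k f_k\in L^1(\mu)$ to be the only genuine obstacle; the telescoping identity and the use of the martingale and ergodic theorems are routine.
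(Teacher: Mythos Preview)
Your outline is the standard Breiman-type proof and is correct. Note, however, that the paper does not give its own proof of this statement: it is stated as the classical SMB theorem with a reference to \cite[Theorem~9.3.1]{VO}, so there is nothing to compare against beyond observing that your sketch is essentially the argument found in that reference.
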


Let $\eta$ be a measurable partition. For $\mu$-a.e. $x\in X$ define the conditional information
\[
I_\mu(\mathcal{P}|\widehat{\eta})(x)=-\log \mu^\eta_{x}(\mathcal{P}(x));
\]
the local conditional entropy
\[
h_{\mu}(\mathcal{P}|\widehat{\eta})(x)=\mathbb{E}_\mu(I_\mu(\mathcal{P}|\widehat{\eta}) | \mathcal{I})(x),
\]
where $\mathcal{I}=\{B\in\mathcal{B}: T^{-1}B=B\}$ is the $T$-invariant $\sigma$-algebra of $X$. Define the conditional entropy
\[
H_\mu(\mathcal{P}|\widehat{\eta})=\mathbb{E}_\mu(I_\mu(\mathcal{P}|\widehat{\eta}))=\mathbb{E}_\mu(h_{\mu}(\mathcal{P}|\widehat{\eta})).
\]
We have the following analogue of Shannon-McMillan-Breiman theorem for conditional measures, that we will use frequently later on.

\begin{proposition}\label{SMB}
If for $\mu$-a.e. $x\in X$ and $n\ge 1$
\begin{equation}
T^{-n}(\eta(T^n(x)))=\mathcal{P}^{n}(x)\cap \eta(x), \label{TetaP}
\end{equation}
and $H_\mu(\mathcal{P}|\widehat{\eta})<\infty$, then for $\mu$-a.e. $x\in X$,
\[
\lim_{n\to\infty} \frac{-\log \mu^{\eta}_x(\mathcal{P}^{n}(x))}{n}=h_{\mu}(\mathcal{P}|\widehat{\eta})(x).
\]
\end{proposition}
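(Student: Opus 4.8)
The plan is to prove the sharper statement that, under hypothesis \eqref{TetaP}, the function $-\log\mu^{\eta}_x(\mathcal{P}^n(x))$ is \emph{exactly} a Birkhoff sum,
\[
-\log\mu^{\eta}_x(\mathcal{P}^n(x))=\sum_{k=0}^{n-1}I_\mu(\mathcal{P}|\widehat{\eta})(T^kx)\qquad\text{for $\mu$-a.e. }x,
\]
after which the proposition is immediate from Birkhoff's ergodic theorem.

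First I would note that \eqref{TetaP} says precisely that $T^{-n}\eta=\mathcal{P}^n\vee\eta$ as measurable partitions modulo $\mu$-null sets; in particular, with $n=1$, $T^{-1}\eta=\mathcal{P}\vee\eta$, so $\eta\prec T^{-1}\eta\prec T^{-2}\eta\prec\cdots$, i.e.\ $\widehat{\eta}\subset T^{-1}\widehat{\eta}\subset\cdots$ is an increasing filtration and the atoms $(T^{-n}\eta)(x)=T^{-n}(\eta(T^nx))$ decrease with $n$. Since $\mu^{\eta}_x$ is carried by $\eta(x)$, \eqref{TetaP} gives $\mu^{\eta}_x(\mathcal{P}^n(x))=\mu^{\eta}_x((\mathcal{P}^n\vee\eta)(x))=\mu^{\eta}_x((T^{-n}\eta)(x))$.

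The key step is a telescoping identity for the conditional measures. I would apply the tower property of Rokhlin conditional measures along the chain $\eta\prec T^{-1}\eta\prec\cdots\prec T^{-(n-1)}\eta$ — namely $\mu^{\zeta}_x(A)=\int_{\zeta(x)}\mu^{\zeta'}_y(A)\,\mathrm{d}\mu^{\zeta}_x(y)$ whenever $\zeta\prec\zeta'$ — together with the fact that $(T^{-(k+1)}\eta)(x)\subset(T^{-k}\eta)(x)$, which forces the integrand $\mu^{T^{-(k+1)}\eta}_y((T^{-n}\eta)(x))$ to vanish unless $(T^{-(k+1)}\eta)(y)=(T^{-(k+1)}\eta)(x)$ and to be constant in $y$ on that atom. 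Peeling off one factor at a time yields
\[
\mu^{\eta}_x\bigl((T^{-n}\eta)(x)\bigr)=\prod_{k=0}^{n-1}\mu^{T^{-k}\eta}_x\bigl((T^{-(k+1)}\eta)(x)\bigr).
\]
Next I would invoke the $T$-equivariance of Rokhlin conditional measures, $\mu^{T^{-1}\zeta}_x(T^{-1}B)=\mu^{\zeta}_{Tx}(B)$ for $\mu$-a.e.\ $x$ (valid since $T$ is invertible and measure-preserving, by uniqueness of conditional measures), applied $k$ times, to rewrite the $k$-th factor as $\mu^{T^{-k}\eta}_x((T^{-(k+1)}\eta)(x))=\mu^{\eta}_{T^kx}((T^{-1}\eta)(T^kx))$. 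Applying \eqref{TetaP} with $n=1$ at the point $T^kx$ gives $(T^{-1}\eta)(T^kx)=\mathcal{P}(T^kx)\cap\eta(T^kx)$, and since $\mu^{\eta}_{T^kx}$ lives on $\eta(T^kx)$ this factor equals $\mu^{\eta}_{T^kx}(\mathcal{P}(T^kx))=\exp(-I_\mu(\mathcal{P}|\widehat{\eta})(T^kx))$. Taking $-\log$ of the product gives the displayed cocycle identity.

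Finally, since $H_\mu(\mathcal{P}|\widehat{\eta})=\mathbb{E}_\mu(I_\mu(\mathcal{P}|\widehat{\eta}))<\infty$, the function $I_\mu(\mathcal{P}|\widehat{\eta})$ lies in $L^1(\mu)$, so Birkhoff's ergodic theorem applied to the cocycle identity gives, for $\mu$-a.e.\ $x$,
\[
\frac{1}{n}\sum_{k=0}^{n-1}I_\mu(\mathcal{P}|\widehat{\eta})(T^kx)\longrightarrow \mathbb{E}_\mu(I_\mu(\mathcal{P}|\widehat{\eta}) | \mathcal{I})(x)=h_\mu(\mathcal{P}|\widehat{\eta})(x),
\]
which is exactly the assertion. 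The only point requiring care, and the place where a careless argument would slip, is the telescoping identity: one must keep track of the countably many $\mu$-null exceptional sets produced by the uniqueness of conditional measures and by $T$-equivariance, and check precisely the vanishing and constancy of the relevant integrand off and on the atom $(T^{-(k+1)}\eta)(x)$. Beyond \eqref{TetaP} and $T$-equivariance no further ingredient is needed; in particular, unlike in the classical Shannon--McMillan--Breiman theorem, there is no error term to control here, because hypothesis \eqref{TetaP} already builds the entire ``past'' into $\eta$.
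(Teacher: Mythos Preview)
Your proof is correct and follows essentially the same route as the paper: both arguments establish the exact product identity
\[
\mu^{\eta}_x(\mathcal{P}^{n}(x))=\prod_{k=0}^{n-1}\mu^{\eta}_{T^k x}(\mathcal{P}(T^k x))
\]
using \eqref{TetaP} and the $T$-equivariance of Rokhlin conditional measures, and then finish with Birkhoff's ergodic theorem. The only cosmetic difference is that the paper derives the product via the one-step recursion $\mu^{\eta}_x(\mathcal{P}^n(x))=\mu^{\eta}_x(\mathcal{P}(x))\cdot\mu^{\eta}_{T x}(\mathcal{P}^{n-1}(T x))$ obtained directly from the identity $\mu^{\eta}_{T^n x}(T^n B)=\mu^{\eta}_x(B\cap\mathcal{P}^n(x))/\mu^{\eta}_x(\mathcal{P}^n(x))$, whereas you telescope along the increasing filtration $(T^{-k}\eta)_{k\ge 0}$ via the tower property before invoking equivariance; these are two packagings of the same computation.
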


\begin{proof}
By \eqref{TetaP} we have that for $n\ge 1$,
\begin{equation}\label{T-neta}
T^{-n}\eta=\mathcal{P}^{n}\vee \eta.
\end{equation}
It is well-known (see \cite{Parry81} for example) that for $f\in L^1(X,\mathcal{B},\mu)$ and a sub $\sigma$-algebra $\mathcal{A}$, we have
\[
\mathbb{E}_\mu(f|\mathcal{A})\circ T^n=\mathbb{E}_\mu(f\circ T^n|T^{-n}\mathcal{A}),
\]
where $T^{-n}\mathcal{A}$ is the sub-$\sigma$-algebra $\{T^{-n}(B):B\in \mathcal{A}\}$. It is easy to see that $\widehat{T^{-n}\eta}=T^{-n}\widehat{\eta}$. By Theorem \ref{Rohthm} and the fact that $\mathbf{1}_{T^n(B)}\circ T^n=\mathbf{1}_{B}$ (since $T$ is invertible) we have for $n\ge 1$ and $B\in\mathcal{B}$, for $\mu$-a.e. $x$,
\begin{align*}
\mu^{\eta}_{T^n(x)}(T^n(B))=&\ \mathbb{E}_{\mu}(\mathbf{1}_{T^n(B)}|\widehat{\eta})(T^n(x))\\
=&\ \mathbb{E}_{\mu}(\mathbf{1}_{T^n(B)}\circ T^n|T^{-n}\widehat{\eta})(x)\\
=&\ \mathbb{E}_{\mu}(\mathbf{1}_{B}|\widehat{T^{-n}\eta})(x).
\end{align*}
Furthermore, by \eqref{T-neta}, $T^{-n}\eta=\mathcal{P}^{n}\vee \eta$, then by Theorem \ref{Rohthm} and the definition of conditional expectation, for $\mu$-a.e. $x$,
\begin{align*}
\mu^{\eta}_{T^n(x)}(T^n(B))=&\ \mathbb{E}_{\mu}(\mathbf{1}_B|\widehat{ \mathcal{P}^{n}\vee \eta})(x)\\
=&\ \mathbb{E}_{\mu}(\mathbf{1}_B|\widehat{ \mathcal{P}^{n}}\vee \widehat{\eta})(x)\\
=&\ \mathbb{E}_{\mu^\eta_x}(\mathbf{1}_B|\widehat{\mathcal{P}^{n}})(x)\\
=&\ \frac{\mu^\eta_x(B\cap \mathcal{P}^{n}(x))}{\mu^\eta_x(\mathcal{P}^{n}(x))}.\numberthis\label{inid}
\end{align*}
Applying \eqref{TetaP} to $T(x)$ we get for $n\ge 2$,
\begin{align*}
\mathcal{P}^{n-1}(T(x))\cap \eta(T(x))&=\ T^{-n+1}\eta(T^{n-1}(T(x)))\\
&=\ T(T^{-n}\eta(T^{n}(x)))\\
&=\ T(\mathcal{P}^{n}(x)\cap \eta(x)).
\end{align*}
Note that by $\mathcal{P}(x)\supset \mathcal{P}^{n}(x)$, and by \eqref{TetaP} for $n=1$, we have
\begin{align*}
\mathcal{P}^{n}(x)\cap \eta(x) &=\ \mathcal{P}^{n}(x)\cap (\mathcal{P}(x)\cap \eta(x))\\
&=\ \mathcal{P}^{n}(x)\cap T^{-1}\eta(T(x)).
\end{align*}
Therefore, since $T$ is invertible,
\[
T(\mathcal{P}^{n}(x)\cap \eta(x))=T(\mathcal{P}^{n}(x)\cap T^{-1}\eta(T(x)))=T(\mathcal{P}^{n}(x))\cap \eta(T(x)),
\]
which implies that
\begin{equation}\label{P0n}
\mathcal{P}^{n-1}(T(x))\cap \eta(T(x))=T(\mathcal{P}^{n}(x))\cap \eta(T(x)).
\end{equation}
Since the conditional measure $\mu^\eta_{T(x)}$ is carried by $\eta(T(x))$, by \eqref{P0n} we have for $\mu$-a.e $x$,
\begin{align*}
\mu^\eta_{T(x)}(T(\mathcal{P}^{n}(x)))&=\ \mu^\eta_{T(x)}(T(\mathcal{P}^{n}(x))\cap \eta(T(x)))\\
&=\ \mu^\eta_{T(x)}(\mathcal{P}^{n-1}(T(x))\cap \eta(T(x)))\\
&=\ \mu^\eta_{T(x)}(\mathcal{P}^{n-1}(T(x))).\numberthis\label{eTP}
\end{align*}
Since $\mathcal{P}^{n}(x)\cap \mathcal{P}(x)=\mathcal{P}^{n}(x)$, by applying \eqref{inid} for $B\in \mathcal{P}^{n}$ and then by applying \eqref{eTP} we have for $\mu$-a.e. $x\in B$ (so that $ \mathcal{P}^{n}(x)=B$) and $n\ge 2$,
\begin{align*}
 \frac{\mu^\eta_{x}(\mathcal{P}^{n}(x))}{\mu^\eta_{x}(\mathcal{P}(x))}&=\ \frac{\mu^\eta_{x}(\mathcal{P}^{n}(x)\cap \mathcal{P}(x))}{\mu^\eta_{x}(\mathcal{P}(x))}\\
 &=\ \frac{\mu^\eta_{x}(B\cap \mathcal{P}(x))}{\mu^\eta_{x}(\mathcal{P}(x))}\\
  &=\ \mu^\eta_{T(x)}(T(B))\\
 &=\ \mu^\eta_{T(x)}(T(\mathcal{P}^{n}(x)))\\
 &=\ \mu^\eta_{T(x)}(\mathcal{P}^{n-1}(T(x))).\numberthis\label{zmin}
\end{align*}
Taking over all $B\in \mathcal{P}^{n}$ we obtain that for $\mu$-a.e. $x\in X$ and $n\ge 2$,
\[
\mu^\eta_{x}(\mathcal{P}^{n}(x))=\mu^\eta_{x}(\mathcal{P}(x))\cdot \mu^\eta_{T(x)}(\mathcal{P}^{n-1}(T(x))).
\]
By iteration we have for $\mu$-a.e. $x\in X$ and $n\ge 1$,
\begin{equation}\label{meP}
\mu^\eta_{x}(\mathcal{P}^{n}(x))=\prod_{k=0}^{n-1}\mu^\eta_{T^k(x)}(\mathcal{P}(T^k(x))).
\end{equation}
Recall that the conditional information is defined as
\[
I_\mu(\mathcal{P}|\widehat{\eta})(x)=-\log \mu^\eta_{x}(\mathcal{P}(x)).
\]
Then by \eqref{meP} we have for $\mu$-a.e. $x\in X$ and $n\ge 1$,
\[
\frac{-\log \mu^\eta_{x}(\mathcal{P}^{n}(x))}{n}=\frac{1}{n}\sum_{k=0}^{n-1}I_\mu(\mathcal{P}|\widehat{\eta})\circ T^k(x).
\]
Then by Birkhoff ergodic theorem we have for $\mu$-a.e. $x$,
\begin{align*}
\lim_{n\to\infty}\frac{-\log \mu^\eta_{x}(\mathcal{P}^{n}(x))}{n}&=\ \mathbb{E}_{\mu}( I_\mu(\mathcal{P}|\widehat{\eta}) |\mathcal{I})(x).
\end{align*}
which yields the conclusion.
\end{proof}

\subsection{Morse covering and dominated convergence of conditional information}\label{mct}

Later in Section \ref{mcfm} when dealing with the local dimension of product measures, it would be more convenient to work with sets that are slightly more general than balls. In particular we shall consider the so-called $\gamma$-Morse sets. Assume that $Y=\mathbb{R}^k$ is a Euclidean space of dimension $k\ge 1$. Let $\phi:X\mapsto Y$ be a measurable map and let $E=\phi(X)\subset \mathbb{R}^k$ be a bounded set. Following \cite[Section 1.2]{FL07} for $\gamma\ge 1$ we say that the family $\mathcal{U}=\{U\}$ of nonempty subsets of $\mathbb{R}^k$ is a fine $\gamma$-Morse covering of $E$ if for each $y\in E$:
\begin{itemize}
\item[(i)] there exists a $U\in \mathcal{U}$ such that there exists $r>0$ such that
\[
B(y,r)\subset U\subset B(y,\gamma r)
\]
and for each $a\in B(y,r)$, $b\in U$ we have $pa+(1-p)b\in U$ for all $p\in (0,1)$.
\item[(ii)] let $\mathcal{U}_y\subset \mathcal{U}$ be the collection of elements in $\mathcal{U}$ satisfying (i) for $y$, then $\inf\{|U|:U\in\mathcal{U}_y\}=0$, where $|U|$ stands for the diameter of $U$.
\end{itemize}
For example, when $k=2$, a family of rectangles whose aspect ratios are uniformly bounded is a fine $\gamma$-Morse covering of $E$ for some $\gamma\ge 1$ if for each $y\in E$ there is a sequence of rectangles in the family centred at $y$ whose diameter decreases to $0$.

Let $\xi$ be a measurable partition of $X$. Note that, for $\mu$-typical $x\in X$ at which the conditional measure $\mu^{\xi}_{x}$ is well-defined, the fibre measure space $(\xi(x),\mathcal{B}|_{\xi(x)}, \mu^{\xi}_{x})$ is also a Lebesgue space. Therefore we may apply Rokhlin's conditional measure theorem to $\mu^{\xi}_{x}$ with respect to the measurable partition $\eta=\{\phi^{-1}(y):y\in E\}$ restricted to $\xi(x)$.  Let $ \widehat{\xi}\vee \widehat{\eta}$ denote the smallest sub $\sigma$-algebra containing both $\widehat{\xi}$ and $\widehat{\eta}$, which is equal to $\widehat{\xi\vee \eta}$. We have the following extension of \cite[Lemma 3.3, Proposition 3.5]{FH09}.
\begin{proposition}\label{P3.5}
Let $(X,\mathcal{B},\mu)$ be a Lebesgue space. Let $\mathcal{U}=\{U\}$ be a fine $\gamma$-Morse covering of $E=\phi(X)$ for some $\gamma\ge 1$. Let $\mathcal{P}$ be a countable measurable partition of $X$. Then
\begin{itemize}
\item[(1)] Let $A\in\mathcal{B}$. For $\mu$-a.e. $x\in X$, for any subsequence $\{U_n\}\subset\mathcal{U}_{\phi(x)}$ with $|U_n|\searrow 0$ as $n\to\infty$,
\[
\lim_{n\to\infty} \frac{\mu^{\xi}_{x}(\phi^{-1}(U_n)\cap A)}{\mu^{\xi}_{x}(\phi^{-1}(U_n))}=\mathbb{E}_{\mu^{\xi}_{x}}(\mathbf{1}_A|  \widehat{\eta})(x)=\mathbb{E}_\mu(\mathbf{1}_A|\widehat{\xi\vee \eta})(x),
\]
and
\[
\lim_{n\to \infty} -\log \frac{\mu^{\xi}_{x}(\phi^{-1}(U_n)\cap \mathcal{P}(x))}{\mu^{\xi}_{x}(\phi^{-1}(U_n))}=I_{\mu^{\xi}_{x}}(\mathcal{P}|\widehat{\eta})(x)=I_\mu(\mathcal{P}|\widehat{\xi\vee \eta})(x).
\]

\item[(2)] Set
\[
g(x)=\sup_{U\in\mathcal{U}_{\phi(x)}}-\log \frac{\mu^{\xi}_{x}(\phi^{-1}(U)\cap \mathcal{P}(x))}{\mu^{\xi}_{x}(\phi^{-1}(U))}
\]
and assume $H_\mu(\mathcal{P})<\infty$. Then $g\ge 0$ and $\int_X g \,\mathrm{d}\mu \le H_\mu(\mathcal{P})+C<\infty$, where the constant $C$ only depends on $k$ and $\gamma$.
\end{itemize}
\end{proposition}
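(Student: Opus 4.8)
The plan is to transport the proof of \cite[Lemma 3.3, Proposition 3.5]{FH09} to our setting by replacing every use of the differentiation theory of measures, the Besicovitch covering lemma and the Hardy--Littlewood maximal inequality \emph{for balls} by the corresponding statements \emph{for fine $\gamma$-Morse coverings}, which are supplied by \cite[Section 1.2]{FL07}. The purely measure-theoretic half of \cite[Lemma 3.3]{FH09} --- that for $\mu$-a.e.\ $x$ the conditional measure $\mu^{\xi\vee\eta}_x$ equals the Rokhlin conditional measure of the fibre Lebesgue space $(\xi(x),\mathcal{B}|_{\xi(x)},\mu^\xi_x)$ with respect to $\eta|_{\xi(x)}$, so that $\mathbb{E}_\mu(\mathbf{1}_A\,|\,\widehat{\xi\vee\eta})(x)=\mathbb{E}_{\mu^\xi_x}(\mathbf{1}_A\,|\,\widehat\eta)(x)$ and $I_\mu(\mathcal{P}\,|\,\widehat{\xi\vee\eta})(x)=I_{\mu^\xi_x}(\mathcal{P}\,|\,\widehat\eta)(x)$ --- involves no balls and carries over verbatim, so I will simply quote it. All the ratios below are considered for the full-measure set of $x$ at which $\phi(x)$ lies in the topological support of $\phi_*\mu^\xi_x$; then $\mu^\xi_x(\phi^{-1}(U))\ge\mu^\xi_x(\phi^{-1}(B(\phi(x),r)))>0$ for every $U\in\mathcal{U}_{\phi(x)}$ (which is nonempty by condition (i)), so the quantities are well defined.

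For part (1), fix $A\in\mathcal{B}$; I would first push the fibre measures forward under $\phi$, setting $\nu_x=\phi_*\mu^\xi_x$ and $\nu_{x,A}=\phi_*(\mathbf{1}_A\mu^\xi_x)$, which are finite Radon measures on $\mathbb{R}^k$ with $\nu_{x,A}\le\nu_x$, $\nu_{x,A}\ll\nu_x$ and $\nu_x(\mathbb{R}^k)=1$. Since $\mathcal{U}$ is a fine $\gamma$-Morse covering of $E=\phi(X)$ it restricts to a fine $\gamma$-Morse covering of $\operatorname{supp}\nu_x$, so by the Lebesgue--Besicovitch differentiation theorem for $\gamma$-Morse coverings \cite[Section 1.2]{FL07} applied to $\nu_x$ and the density $f_x=\mathrm{d}\nu_{x,A}/\mathrm{d}\nu_x$, for $\nu_x$-a.e.\ $y$ and along \emph{any} $\{U_n\}\subset\mathcal{U}_y$ with $|U_n|\searrow0$ one has $\nu_{x,A}(U_n)/\nu_x(U_n)\to f_x(y)$. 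Pulling back through $\phi$, and using that conditioning on $\widehat\eta$ is conditioning on $\phi^{-1}(\mathcal{B}_Y)$ so that $f_x(\phi(x))=\mathbb{E}_{\mu^\xi_x}(\mathbf{1}_A\,|\,\widehat\eta)(x)$, this reads: for $\mu^\xi_x$-a.e.\ $x$, along any such $\{U_n\}$,
\[
\frac{\mu^\xi_x(\phi^{-1}(U_n)\cap A)}{\mu^\xi_x(\phi^{-1}(U_n))}\longrightarrow\mathbb{E}_{\mu^\xi_x}(\mathbf{1}_A\,|\,\widehat\eta)(x)=\mathbb{E}_\mu(\mathbf{1}_A\,|\,\widehat{\xi\vee\eta})(x);
\]
integrating the fibrewise exceptional sets out via $\mu=\int\mu^\xi_x\,\mathrm{d}\mu$ (the set where the limit fails has $\mu^\xi_x$-measure $0$ for a.e.\ $x$) promotes this to $\mu$-a.e.\ $x$. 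The conditional-information statement is then obtained by taking $A=B$ for each $B\in\mathcal{P}$: on $\{x\in B\}$ one has $\mathcal{P}(x)=B$, the ratio tends to $\mu^{\xi\vee\eta}_x(B)=\mu^{\xi\vee\eta}_x(\mathcal{P}(x))$, hence minus its logarithm tends to $I_\mu(\mathcal{P}\,|\,\widehat{\xi\vee\eta})(x)$, and since $\mathcal{P}$ is countable the union of the countably many $\mu$-null exceptional sets is $\mu$-null.

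For part (2), $g\ge0$ is immediate since the ratio defining it never exceeds $1$. For the $L^1$ bound I would argue atom by atom, proving a weak-type estimate fibrewise. Fix $B\in\mathcal{P}$ and $t>0$. On a fixed atom $\xi(x)$ we have $\mu^\xi_{x'}=\mu^\xi_x$, so with $\nu_x,\nu_{x,B}$ as above, a point $x'\in\xi(x)\cap B$ with $g(x')>t$ satisfies $\phi(x')\in A_{x,t}:=\{y\in\operatorname{supp}\nu_x:\exists\,U\in\mathcal{U}_y,\ \nu_{x,B}(U)<e^{-t}\nu_x(U)\}$. Choosing such a $U_y$ for each $y\in A_{x,t}$ and applying the Besicovitch covering theorem for $\gamma$-Morse sets \cite[Section 1.2]{FL07} to split a countable subcover of $A_{x,t}$ into $N=N(k,\gamma)$ pairwise-disjoint subfamilies $\mathcal{G}_1,\dots,\mathcal{G}_N$ gives the maximal-inequality bound
\[
\nu_{x,B}(A_{x,t})\le\sum_{j=1}^{N}\sum_{U\in\mathcal{G}_j}\nu_{x,B}(U)<e^{-t}\sum_{j=1}^{N}\sum_{U\in\mathcal{G}_j}\nu_x(U)\le N\,e^{-t}\nu_x(\mathbb{R}^k)=N\,e^{-t}.
\]
Hence $\mu^\xi_x(\{x'\in\xi(x)\cap B:g(x')>t\})\le\min\{\mu^\xi_x(B),Ne^{-t}\}$, and disintegrating, $\mu(\{x\in B:g(x)>t\})\le\int_X\min\{\mu^\xi_x(B),Ne^{-t}\}\,\mathrm{d}\mu$. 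Applying the layer-cake identity $\int_Bg\,\mathrm{d}\mu=\int_0^\infty\mu(\{x\in B:g(x)>t\})\,\mathrm{d}t$, Tonelli's theorem, and the elementary evaluation $\int_0^\infty\min\{c,Ne^{-t}\}\,\mathrm{d}t=-c\log c+c(1+\log N)$ valid for $0<c\le1\le N$, then summing over $B\in\mathcal{P}$, yields
\[
\int_Xg\,\mathrm{d}\mu\le\int_X\Big(\sum_{B\in\mathcal{P}}-\mu^\xi_x(B)\log\mu^\xi_x(B)\Big)\,\mathrm{d}\mu+(1+\log N)=H_\mu(\mathcal{P}\,|\,\widehat\xi)+1+\log N\le H_\mu(\mathcal{P})+C
\]
with $C=1+\log N(k,\gamma)$, depending only on $k$ and $\gamma$.

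The main obstacle I anticipate is not any isolated deep step but the careful transfer of the Morse-covering machinery of \cite{FL07} from a single Radon measure on $\mathbb{R}^k$ to the measurable family $\{\nu_x=\phi_*\mu^\xi_x\}_x$: one has to check that $\nu_x$, $\nu_{x,A}$, the densities $f_x$, the sets $A_{x,t}$ and the function $g$ all depend measurably on $x$; that the ``for $\mu$-a.e.\ $x$, then for \emph{every} admissible sequence $U_n$'' order of quantifiers genuinely comes out of the Morse differentiation theorem (so that the $\mu$-null exceptional set is selected before the sequence); and that after the fibrewise covering argument the weak-type constant really collapses to a function of $k$ and $\gamma$ alone. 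The residual steps --- the transitivity of conditional measures, the layer-cake computation, and $H_\mu(\mathcal{P}\,|\,\widehat\xi)\le H_\mu(\mathcal{P})$ --- are routine.
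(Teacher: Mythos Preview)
Your proposal is correct and follows exactly the approach the paper takes: the paper's own proof simply states that one reproduces \cite[Lemma 3.3, Proposition 3.5]{FH09} verbatim, substituting the differentiation theorem and Besicovitch/Morse covering theorem for $\gamma$-Morse sets from \cite[Section 1.2]{FL07} (specifically Theorems 1.144 and 1.153 and Remark 1.154) in place of their ball-based counterparts, and notes that the Hardy--Littlewood maximal inequality then follows from the Morse covering theorem as in \cite[Theorem 2.19(2)]{Mattila}. You have spelled out the fibrewise push-forward, the weak-type/layer-cake computation, and the measurability checks in more detail than the paper does, but the strategy is identical.
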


\begin{remark}
In the case when $\gamma=1$, i.e., when $\mathcal{U}$ is a family of balls centred in $E$, we can work on a more general metric space rather than Euclidean spaces. In particular we may take $Y$ to be a Besicovitch space. All the statements in Proposition \ref{P3.5} still hold in this case. See \cite[Lemma 2.5]{Feng20}. 
\end{remark}

\begin{proof}[Proof of Proposition \ref{P3.5}]
The proof is almost identical to that of \cite[Lemma 3.3, Proposition 3.5]{FH09}. We only need the analogues of the differentiation theorem of measures and Hardy-Littlewood maximal inequality in terms of $\gamma$-Morse sets instead of balls. The former can be found in \cite[Theorem 1.153 \& Remark 1.154]{FL07}. The later is an easy consequence (see for example the proof of \cite[Theorem 2.19 (2)]{Mattila}) of Besicovitch covering theorem for $\gamma$-Morse covering, namely the Morse covering theorem \cite[Theorem 1.144]{FL07}.
\end{proof}

In the last we state a variant of Maker's ergodic theorem, see \cite[Theorem 1]{Mak} and \cite[Corollary 1.6 p.96]{Mane}.

\begin{theorem}\label{met}
Let $(X,\mathcal{B},\mu,T)$ be a m.p.d.s.. If the functions $f_{k}(x)$ $(k=1,2,\cdots)$, $f$ and the positive function $g(x)$ are in $L^1(X,\mathcal{B},\mu)$ with $|f_{k}(x)| \le g(x)$ and $\lim_{k\to\infty} f_{k}(x)=f(x)$ for $\mu$-a.e. $x\in X$, then
\[
\lim_{n\to\infty}\frac{1}{n}\sum_{k=0}^{n-1} f_{k}(T^kx)=\lim_{n\to\infty}\frac{1}{n}\sum_{k=0}^{n-1} f_{n-k}(T^kx)=\mathbb{E}_{\mu}(f|\mathcal{I})(x)
\]
for $\mu$-a.e. $x\in X$, where $\mathcal{I}=\{B\in\mathcal{B}: T^{-1}B=B\}$.
\end{theorem}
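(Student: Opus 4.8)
The plan is to reduce the entire statement to the Birkhoff ergodic theorem by a truncation argument driven by the tail supremum $H_m:=\sup_{k\ge m}|f_k-f|$. First I would record the elementary facts: since $f_k\to f$ $\mu$-a.e.\ and $|f_k|\le g$, we have $|f|\le g$ $\mu$-a.e., so $H_m$ is measurable with $0\le H_m\le 2g\in L^1$, and $H_m\downarrow 0$ $\mu$-a.e.; hence $\int_X H_m\,\mathrm{d}\mu\to 0$ by dominated convergence. Consequently $\mathbb{E}_\mu(H_m|\mathcal{I})$ is a non-negative sequence, decreasing in $m$, with $\int_X\mathbb{E}_\mu(H_m|\mathcal{I})\,\mathrm{d}\mu=\int_X H_m\,\mathrm{d}\mu\to 0$, so $\mathbb{E}_\mu(H_m|\mathcal{I})\to 0$ $\mu$-a.e.\ by monotone convergence. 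Applying Birkhoff to the countably many functions $g$, $f$, $H_m$ $(m\ge1)$ and using that each $f_k\circ T^k\in L^1$, I would then work on a single full-measure set $X_0$ on which all the Cesàro averages below converge and all fixed-index values are finite.

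For the forward average, write $f_k=f+h_k$ with $h_k:=f_k-f$. Since $\frac1n\sum_{k=0}^{n-1}f(T^kx)\to\mathbb{E}_\mu(f|\mathcal{I})(x)$, it remains to show $\frac1n\sum_{k=0}^{n-1}h_k(T^kx)\to 0$. Fixing $m$ and taking $n>m$, split the sum at index $m$: the block $0\le k<m$ is a fixed finite sum of finite numbers, so after dividing by $n$ it tends to $0$; the block $m\le k<n$ has $|h_k(T^kx)|\le H_m(T^kx)$, so its modulus is at most $\frac1n\sum_{k=0}^{n-1}H_m(T^kx)\to\mathbb{E}_\mu(H_m|\mathcal{I})(x)$. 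Therefore $\limsup_n\big|\frac1n\sum_{k=0}^{n-1}h_k(T^kx)\big|\le\mathbb{E}_\mu(H_m|\mathcal{I})(x)$ for every $m$, and letting $m\to\infty$ gives the limit $0$; adding back the Birkhoff limit of $\frac1n\sum f(T^kx)$ gives the first assertion.

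For the backward average I would proceed in the same spirit: after subtracting $f$ it suffices that $\frac1n\sum_{k=0}^{n-1}h_{n-k}(T^kx)\to 0$. Fixing $m$, split the indices into $\{0\le k\le n-m-1\}$, where $n-k\ge m+1$ so $|h_{n-k}(T^kx)|\le H_m(T^kx)$ and this block is again at most $\frac1n\sum_{k=0}^{n-1}H_m(T^kx)\to\mathbb{E}_\mu(H_m|\mathcal{I})(x)$, and the $m$ remaining ``moving'' indices $\{n-m\le k\le n-1\}$, where I use $|h_{n-k}|\le 2g$ together with $\frac1n\sum_{k=n-m}^{n-1}g(T^kx)=\frac1n\sum_{k=0}^{n-1}g(T^kx)-\frac1n\sum_{k=0}^{n-m-1}g(T^kx)\to 0$ (both Cesàro sums converging to $\mathbb{E}_\mu(g|\mathcal{I})(x)$). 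As before this yields $\limsup_n\big|\frac1n\sum_{k=0}^{n-1}h_{n-k}(T^kx)\big|\le\mathbb{E}_\mu(H_m|\mathcal{I})(x)$ for all $m$, hence the limit is $0$, and adding back $\frac1n\sum_{k=0}^{n-1}f(T^kx)\to\mathbb{E}_\mu(f|\mathcal{I})(x)$ finishes the proof.

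The only step that is not routine bookkeeping with Birkhoff's theorem is the passage through the tail supremum: the heart of the matter is the observation that a.e.\ convergence $f_k\to f$ combined with uniform $L^1$ domination forces $\mathbb{E}_\mu(H_m|\mathcal{I})\to 0$ a.e., which is precisely what upgrades the ``pointwise plus dominated'' hypothesis to the asserted Cesàro convergence. Once that is in hand, the forward and backward cases differ only in how one disposes of the finitely many boundary terms, handled by the two elementary bounds above.
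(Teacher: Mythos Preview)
Your proof is correct. Note, however, that the paper does not actually prove this theorem: it merely states it as a known variant of Maker's ergodic theorem and cites \cite[Theorem 1]{Mak} and \cite[Corollary 1.6, p.~96]{Mane} for a proof. Your argument via the tail supremum $H_m=\sup_{k\ge m}|f_k-f|$, reducing everything to Birkhoff's theorem applied to $f$, $g$, and the countably many $H_m$, is exactly the classical approach found in those references, so there is nothing to compare beyond observing that you have reconstructed the standard proof.
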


\section{Symbolic space, dimension of fibre measures and Mandelbrot cascades}\label{sec3}
	
\subsection{Two-sided symbolic space}\label{sds}
For an integer $a\ge 2$ denote by $\Sigma_a=\{1,\ldots,a\}^\mathbb{Z}$ the two-sided symbolic space of $a$ letters. For an infinite word  $\bm{i}=(i_n)_{n\in\mathbb{Z}} \in\Sigma_a$ and $-\infty\le m\le n\le \infty$ denote by
\[
\bm{i}|_m^n=(i_k)_{k=m}^n
\]
the sequence of letters of $\bm{i}$ from position $m$ to $n$. In particular we denote by
\[
\bm{i}^-=\bm{i}|_{-\infty}^0 \text{ and }  \bm{i}^+=\bm{i}|_1^\infty 
\]
its one-sided infinite words. We may view $\pi^\pm:\bm{i}\mapsto \bm{i}^\pm$ as the canonical mapping from $\Sigma_a$ onto the one-sided symbolic space
\[
\Sigma_a^\pm=\{\bm{i}^\pm:\bm{i}\in\Sigma_a\}.
\]
We may write $\Sigma_a$ as $\Sigma_a^-\times\Sigma_a^+$ and $\bm{i}\in\Sigma_a$ as $(\bm{i}^-,\bm{i}^+)$. 

For $\bm{i}\in\Sigma_a$ and $n\ge 0$ we denote by
\[
\bm{i}^-|_n=\bm{i}|_{-n+1}^0  \text{ and }  \bm{i}^+|_n=\bm{i}|_1^n,
\]
with the convention that $\bm{i}^\pm|_0=\emptyset$ denotes the empty word. Let
\[
\Sigma_a^{\pm,*}=\{\bm{i}^\pm|_{n}: \bm{i}\in\Sigma_a, n\ge 0\}
\]
be the set of all finite words in $\Sigma_a^\pm$. Note that $\Sigma_a^{\pm,*}$ is countable. For $u=u_m\cdots u_0\in \Sigma_a^{-,*}$ define the cylinder in $\Sigma_a^-$ at $u$ by
\[
[u]^-=\{\bm{i}^-\in\Sigma_a^-: i_k=u_k \text{ for } k=m,\cdots,0\},
\]
and for $v=v_1\cdots v_n\in \Sigma_a^{+,*}$ define the cylinder in $\Sigma_a^+$ at $v$ by
\[
[v]=\{\bm{i}^+\in\Sigma_a^+: i_k=v_k \text{ for } k=1,\cdots, n\}
\]
with the convention that $[\emptyset]=\Sigma_a^+$ and $[\emptyset]^-=\Sigma_a^-$.

Let $\Sigma_a^*=\Sigma_a^{-,*}\times \Sigma_a^{+,*}$. For $(u,v)\in \Sigma_a^*$ the product set $[u]^-\times[v]$ is called a cylinder of $\Sigma_a$ at $(u,v)$. Let $\mathcal{B}_a$ the Borel $\sigma$-algebra of $\Sigma_a$ generated by cylinders. Also we denote by $\mathcal{B}_a^\pm$ the sub Borel $\sigma$-algebra of $\mathcal{B}_a$ restricted to $\Sigma_a^\pm$. It is easy to see that $\mathcal{B}_a^\pm$ is the $\sigma$-algebra generated by the cylinders in $\Sigma_a^\pm$.

Given $\delta\in(0,1)$, we may equip $\Sigma_a^\pm$ with a metric $d_\delta^\pm$ defined as
\[
d_{\delta}^\pm(\bm{i}^\pm,\bm{j}^\pm)=\delta^{\inf\{n\ge0: \bm{i}^\pm|_n=\bm{j}^\pm|_n \}},\;\; \bm{i}^\pm,\bm{j}^\pm\in\Sigma^\pm_a.
\]
Note that $\mathcal{B}_a^\pm$ is the Borel $\sigma$-algebra of $\Sigma_a^\pm$ with respect to the metric $d_\delta^\pm$. Finally we may define a metric $d_\delta$ on $\Sigma_a$ by
\[
d_\delta(\bm{i},\bm{j})=\max\{d_\delta^-(\bm{i}^-,\bm{j}^-),d_\delta^+(\bm{i}^+,\bm{j}^+)\}, \ \bm{i},\bm{j}\in\Sigma_a.
\]
It is easy to see that $(\Sigma_a,d_\delta)$ is a complete separable metric space, hence any Borel probability measure on it is isomorphic to a Lebesgue space.

\subsection{Natural extension}\label{necm}

Let $\sigma(\bm{i})=(i_{n+1})_{n\in\mathbb{Z}}$ denote the left-shift operator on $\Sigma_a$. A probability measure $\mu$ on $(\Sigma_a,\mathcal{B}_a)$ is $\sigma$-invariant if $\mu=\mu\circ\sigma^{-1}$. In such a case $(\Sigma_a,\mathcal{B}_a,\sigma,\mu)$ forms an invertible m.p.d.s..

The left-shift operator induces the left-shift operator on $\Sigma_a^+$, which is also denoted by $\sigma$. Let $\mu^+$ be the push-forward measure on $\Sigma_a^+$ of $\mu$ through $\pi^+$, that is,
\[
\mu^+(B)=\mu\circ (\pi^+)^{-1}(B)=\mu(\Sigma_a^-\times B),\ B\in\mathcal{B}_a^+.
\]
Then $\mu^+$ is $\sigma$-invariant and $(\Sigma_a^+,\mathcal{B}_a^+,\sigma,\mu^+)$ is a factor of $(\Sigma_a,\mathcal{B}_a,\sigma,\mu)$.

By Rokhlin's natural extension theorem \cite{Roh60} we may turn this the other way around: starting with a one-sided m.p.d.s.  $(\Sigma_a^+,\mathcal{B}_a^+,\sigma,\mu^+)$, there exists a unique $\sigma$-invariant probability measure $\mu$ on the two-sides symbolic space $(\Sigma_a,\mathcal{B}_a)$ such that $\mu^+=\mu\circ (\pi^+)^{-1}$. More precisely, consider the space
\[
\widecheck{\Sigma}^+_a=\{\widecheck{\bm{i}}=(\ldots,\bm{i}^+_{-1},\bm{i}^+_{0},\bm{i}^+_{1},\bm{i}^+_{2},\ldots)\in (\Sigma_a^+)^\mathbb{Z}: \sigma(\bm{i}^+_{n})=\bm{i}^+_{n+1} \text{ for all } n\in\mathbb{Z}\}.
\]
Let $\widecheck{\mathcal{B}}_a$ be the smallest $\sigma$-algebra containing sets of the form $\{\widecheck{\bm{i}}\in \widecheck{\Sigma}^+_a: \bm{i}^+_{k} \in B\}$ for $k\le 1$ and $B\in \mathcal{B}_a^+$. Let $\widecheck{\sigma}(\widecheck{\bm{i}})=(\bm{i}^+_{n+1})_{n\in \mathbb{Z}}$  be the lifted left-shift operator on $\widecheck{\Sigma}^+_a$. Then $(\widecheck{\Sigma}^+_a,\widecheck{\mathcal{B}}_a,\widecheck{\sigma})$ forms a two-sided shift dynamical system. Define the projection $\mathrm{Proj}_1: \widecheck{\Sigma}^+_a \to \Sigma_a^+$ by
\[
\mathrm{Proj}_1(\widecheck{\bm{i}})=\bm{i}^+_1.
\]
Then $\mathrm{Proj}_1 \circ \widecheck{\sigma}=\sigma\circ \mathrm{Proj}_1$. By Rokhlin \cite{Roh60} there exists a unique invariant probability measure $\widecheck{\mu}$ on $(\widecheck{\Sigma}^+_a,\widecheck{\mathcal{B}}_a,\widecheck{\sigma})$ such that
\begin{equation}\label{cp}
\widecheck{\mu} \circ (\mathrm{Proj}_1)^{-1}=\mu^+.
\end{equation}
Furthermore, we have
\begin{equation}\label{lift}
\widecheck{\mu}(\{\widecheck{\bm{i}}\in \widecheck{\Sigma}_a: \bm{i}_k^+\in B_k \text{ for } k=m,m+1,\ldots,0,1\})=\mu^+\big(\bigcap_{k=m,\ldots,1} \sigma^{m-k} B_k\big)
\end{equation}
for all $n\le 1$ and $B_k\in\mathcal{B}_a^+$ for $k=m,\ldots,1$. Moreover, $\widecheck{\mu}$ is ergodic if and only if $\mu^+$ is ergodic. We may identify the natural extension $\widecheck{\Sigma}^+_a$ by $\Sigma_a$: for $\widecheck{\bm{i}}=(\ldots,\bm{i}^+_{-1},\bm{i}^+_{0},\bm{i}^+_{1},\bm{i}^+_{2},\ldots)\in \widecheck{\Sigma}^+_a$ with $\bm{i}^+_{1}=\bm{i}^+=i_1i_2\cdots\in\Sigma_a^+$, for all $k\ge 2$ we have that $\bm{i}^+_k=\sigma^k(\bm{i}^+)=i_ki_{k+1}\cdots$ is uniquely determined by $\bm{i}^+$. On the other hand, since $\sigma(\bm{i}^+_{0})=\bm{i}^+_{1}$, there is an $i_0\in \{1,\ldots,a\}$ such that $\bm{i}^+_{0}=i_0i_1i_2\cdots$, thus $i_0$ and $\bm{i}^+$ determines $\bm{i}^+_{0}$. Similarly, since $\sigma(\bm{i}^+_{-1})=\bm{i}^+_{0}$, there is an $i_{-1}\in\{1,\ldots,a\}$ such that $\bm{i}^+_{-1}=i_{-1}\bm{i}_{0}$, thus $i_{-1}$, $i_0$ and $\bm{i}^+$ determines $\bm{i}^+_{-1}$, and so on. Hence there is a one-to-one correspondence between two-sided infinite word $\bm{i}=\cdots i_{-1}i_0 i_1i_2\cdots\in \Sigma_a$ and the sequence $\widecheck{\bm{i}}\in \widecheck{\Sigma}^+_a$. Correspondingly, we may identify the $\sigma$-algebra  $\widecheck{\mathcal{B}}_a$ as the Borel $\sigma$-algebra $\mathcal{B}_a$ on $\Sigma_a$, and identify the natural extension measure $\widecheck{\mu}$ as a $\sigma$-invariant measure $\mu$ on $\Sigma_a$. Furthermore, for $-\infty<m<0<n<\infty$ and $u=u_m\cdots u_0\in \Sigma_a^{-,*}$, $v=v_1\cdots v_n\in\Sigma_a^{+,*}$, by applying \eqref{lift} with $B_k=[u_k]$ for $k=m,\ldots,0$ and $B_1=[v_1\cdots v_n]$ we obtain
\begin{equation}\label{cyl}
\mu([u]^-\times[v])=\mu^+([uv]),
\end{equation}
where $uv=u_m\cdots u_0v_1\cdots v_n$ is the concatenation of $u$ and $v$.

\subsection{Fibre measures and dimension}\label{Fim}
Let $(\Sigma_a,\mathcal{B}_a,\sigma,\mu)$ be a m.p.d.s.. Recall the canonical mapping $\pi^-:\Sigma_a\mapsto \Sigma_a^-$. We have that $(\Sigma_a,\mathcal{B}_a,\mu)$ is a Lebesgue space, and $(\Sigma_a^-,d_\delta^-)$ is a complete separable ultra-metric space. Therefore the family
\[
\eta=\{(\pi^-)^{-1}(\bm{i}^-)\}_{\bm{i}^-\in \Sigma_a^-}
\]
forms a measurable partition. We denote by $\mu^{\eta}_{\bm{i}}$ the conditional measure of $\mu$ with respect to $\eta$ at $\mu$-a.e. $\bm{i}\in \Sigma_a$, which is supported by the fibre
\[
\eta(\bm{i})=(\pi^-)^{-1}(\bm{i}^-)=\{\bm{i}^-\}\times\Sigma_a^+.
\]
These conditional measures are defined ``for $\mu$-a.e. $\bm{i}=(\bm{i}^-,\bm{i}^+)\in \Sigma_a$", but since the measurable partition only depends on $\bm{i}^-$, it is equivalent to ``for $\mu^-$-a.e. $\bm{i}^-\in \Sigma_a^-$", where $\mu^-:=\mu\circ (\pi^-)^{-1}$ is the projection of $\mu$ to $\Sigma_a^-$. The push-forward measure
\[
\mu_{\bm{i}^-}^+:=\mu^{\eta}_{\bm{i}}\circ (\pi^+)^{-1}
\]
is then the same probability measure but defined on $\Sigma_a^+$. By Theorem \ref{Rohthm} we have
\begin{equation}\label{cly2}
\int_{\Sigma_a} f(\bm{i}) \, \mu(\mathrm{d}\bm{i})=\int_{\Sigma_a^-}\int_{\Sigma_a^+}  f(\bm{i}^-,\bm{i}^+) \, \mu_{\bm{i}^-}^+(\mathrm{d}\bm{i}^+)\, \mu^-(\mathrm{d}\bm{i}^-)
\end{equation}
for measurable functions $f$ on $(\Sigma_a,\mathcal{B}_a)$. In particular for any $B\in \mathcal{B}_a^+$,
\begin{equation}\label{fdm}
\mu^+(B)=\int_{\Sigma_a^-} \mu_{\bm{i}^-}^+(B)\, \mu^-(\mathrm{d}\bm{i}^-).
\end{equation}

In \cite[equation (10.1)]{HS12} Hochman and Shmerkin used a folklore result that $\mu^-$-typical $\mu_{\bm{i}^-}^+$ and $\mu^+$ have the same exact dimension when $\mu^+$ is ergodic. For readers' convenience we shall present a proof here.

\begin{lemma}\label{lem1}
Let $(\Sigma_a^+,\mathcal{B}_a^+,\sigma,\mu^+)$ be a one-sided m.p.d.s. and let $(\Sigma_a,\mathcal{B}_a,\sigma,\mu)$ be its natural extension. If $\mu^+$ is exact-dimensional on $(\Sigma_a^+,d_\delta^+)$, in particular if $\mu^+$ is ergodic, then for $\mu^-$-a.e. $\bm{i}^-\in\Sigma_a^-$, the fibre measure $\mu_{\bm{i}^-}^+$ is exact-dimensional on $(\Sigma_a^+,d_\delta^+)$ with the same dimension.
\end{lemma}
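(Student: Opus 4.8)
The plan is to reduce the statement to an entropy/Birkhoff-average computation on the two-sided system, via the Shannon--McMillan--Breiman machinery already set up. Write $\mathcal{P}=\{[1],\dots,[a]\}$ for the partition of $\Sigma_a$ into time-$0$ cylinders, and let $\eta=\{(\pi^-)^{-1}(\bm{i}^-)\}$ be the measurable partition into left-infinite fibres. Observe first that the metric $d_\delta^+$ makes cylinders into balls: $[\bm{i}^+|_n]$ is exactly the ball $B_{d_\delta^+}(\bm{i}^+,\delta^n)$ (up to the usual boundary ambiguity), so that the local dimension of any measure $\lambda$ on $(\Sigma_a^+,d_\delta^+)$ at $\bm{i}^+$ equals $\lim_n \frac{-\log\lambda([\bm{i}^+|_n])}{-n\log\delta}$. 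Thus for $\mu_{\bm{i}^-}^+$ I must compute $\lim_n\frac{-\log \mu_{\bm{i}^-}^+([\bm{i}^+|_n])}{-n\log\delta}$, and for $\mu^+$ the analogous limit; the claim is that both exist and coincide $\mu$-a.e.

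Next I would verify the hypothesis \eqref{TetaP} of Proposition \ref{SMB} for this $\mathcal{P}$ and $\eta$: since $T=\sigma$ is invertible on $\Sigma_a$, we have $T^{-n}(\eta(T^n\bm{i}))=\{\bm{j}:\bm{j}|_{-\infty}^n=\bm{i}|_{-\infty}^n\}$, which is precisely $\mathcal{P}^n(\bm{i})\cap\eta(\bm{i})=\{\bm{j}:\bm{j}^-=\bm{i}^-,\ \bm{j}|_1^n=\bm{i}|_1^n\}$; and $H_\mu(\mathcal{P}\mid\widehat\eta)\le H_\mu(\mathcal{P})=\log a<\infty$. Note $\mu^\eta_{\bm{i}}(\mathcal{P}^n(\bm{i}))=\mu_{\bm{i}^-}^+([\bm{i}^+|_n])$ under the identification $\eta(\bm{i})\cong\{\bm{i}^-\}\times\Sigma_a^+$. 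Proposition \ref{SMB} then gives, for $\mu$-a.e. $\bm{i}$,
\[
\lim_{n\to\infty}\frac{-\log\mu_{\bm{i}^-}^+([\bm{i}^+|_n])}{n}=h_\mu(\mathcal{P}\mid\widehat\eta)(\bm{i})=\mathbb{E}_\mu\big(I_\mu(\mathcal{P}\mid\widehat\eta)\,\big|\,\mathcal{I}\big)(\bm{i}).
\]
On the other hand the ordinary SMB theorem (Theorem \ref{SMBinv}) applied to $(\Sigma_a^+,\sigma,\mu^+)$, together with the hypothesis that $\mu^+$ is exact-dimensional, forces $\lim_n\frac{-\log\mu^+([\bm{i}^+|_n])}{n}$ to equal a $\mu^+$-a.e. constant $c$ (when $\mu^+$ is merely exact-dimensional rather than ergodic, one uses that the SMB limit $h_{\mu^+}(\sigma,\mathcal{P}^+,\bm{i}^+)$ and the local dimension agree $\mu^+$-a.e., and exact-dimensionality makes the latter constant); so $c=(-\log\delta)\dim\mu^+$. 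It remains to identify $h_\mu(\mathcal{P}\mid\widehat\eta)(\bm{i})$ with this constant $c$ for $\mu$-a.e. $\bm{i}$.

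For that last identification I would integrate and use the push-down relation \eqref{fdm}: by the martingale/backward-filtration description, $I_\mu(\mathcal{P}\mid\widehat\eta)(\bm{i})=-\log\mu_{\bm{i}^-}^+([i_1])$ is the limit as $m\to\infty$ of $-\log\mu^+([i_1]\mid [i_{-m}\cdots i_0])$, i.e. the conditional information of $\mathcal{P}^+$ given the past, whose integral is the Kolmogorov--Sinai entropy $h_{\mu^+}(\sigma,\mathcal{P}^+)=h$. Thus $\mathbb{E}_\mu\big(I_\mu(\mathcal{P}\mid\widehat\eta)\big)=h$, and by the Birkhoff-average form above together with Theorem \ref{SMBinv} the $\mathcal{I}$-conditional expectation $\mathbb{E}_\mu(I_\mu(\mathcal{P}\mid\widehat\eta)\mid\mathcal{I})(\bm{i})$ equals $h_\mu(\sigma,\mathcal{P}^+,\bm{i}^+)$ $\mu$-a.e. — the same quantity whose $\mu^+$-a.e. value is the constant $c$ above. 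Hence $\lim_n\frac{-\log\mu_{\bm{i}^-}^+([\bm{i}^+|_n])}{-n\log\delta}=\frac{c}{-\log\delta}=\dim\mu^+$ for $\mu$-a.e. $\bm{i}$, which by Fubini \eqref{cly2} is exactly the assertion for $\mu^-$-a.e. $\bm{i}^-$. The main obstacle I anticipate is the bookkeeping in the non-ergodic (merely exact-dimensional) case: reconciling the pointwise SMB limit $h_{\mu^+}(\sigma,\mathcal{P}^+,\bm{i}^+)$, which need not be constant, with the hypothesis that the local dimension \emph{is} constant, and then tracking this through the conditional-expectation identity on $\Sigma_a$; once one knows $h_{\mu^+}(\sigma,\mathcal{P}^+,\bm{i}^+)=(-\log\delta)\dim\mu^+$ for $\mu^+$-a.e. $\bm{i}^+$, everything else is routine application of Proposition \ref{SMB} and Fubini.
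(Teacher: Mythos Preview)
Your approach is essentially the same as the paper's: both apply Proposition~\ref{SMB} to the pair $(\mathcal{P},\eta)$ to get $\lim_n \frac{-\log\mu^+_{\bm{i}^-}([\bm{i}^+|_n])}{n}=h_\mu(\mathcal{P}\mid\widehat\eta)(\bm{i})$, and both then need to identify this $\mathcal{I}$-measurable function with the (constant, by hypothesis) local dimension of $\mu^+$.

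The one place where the paper is more explicit than your sketch is precisely the identification you flag as ``the main obstacle''. You assert that $\mathbb{E}_\mu(I_\mu(\mathcal{P}\mid\widehat\eta)\mid\mathcal{I})(\bm{i})=h_{\mu^+}(\sigma,\mathcal{P}^+,\bm{i}^+)$ pointwise, citing ``Birkhoff together with Theorem~\ref{SMBinv}''; but Theorem~\ref{SMBinv} as stated only gives existence of the SMB limit and its integral, not this pointwise identity with the conditional-past information function. The paper closes this gap by writing the telescoping decomposition
\[
-\log\mu^+([\bm{i}^+|_n])=\sum_{k=0}^{n-1} f_k\circ\sigma^k(\bm{i}),\qquad f_k(\bm{i})=-\log\frac{\mu^+([i_{-k+1}\cdots i_0 i_1])}{\mu^+([i_{-k+1}\cdots i_0])},
\]
invoking Proposition~\ref{P3.5} to get $f_k\to I_\mu(\mathcal{P}\mid\widehat\eta)$ with an $L^1$ dominant, and then applying Maker's ergodic theorem (Theorem~\ref{met}) rather than ordinary Birkhoff. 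This yields $\lim_n\frac{-\log\mu^+([\bm{i}^+|_n])}{n}=\mathbb{E}_\mu(I_\mu(\mathcal{P}\mid\widehat\eta)\mid\mathcal{I})(\bm{i})$ directly, after which the exact-dimensionality hypothesis forces the right-hand side to be the constant $(-\log\delta)\dim\mu^+$. Your outline is correct, but to make it a proof within the paper's framework you should either invoke a reference for the pointwise SMB identity in terms of the infinite-past conditional information, or reproduce the Maker step.
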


\begin{proof}
Consider the finite partition $\mathcal{P}_a=\{\Sigma_a^-\times[i]\}_{i=1,\cdots,a}$ of $\Sigma_a$. For $\bm{i}\in\Sigma_a$ and $n\ge 1$ it is easy to see that
\begin{align*}
\mathcal{P}_a^{n}(\bm{i})&=\ \mathcal{P}_a(\bm{i})\vee \sigma^{-1}\mathcal{P}_a(\sigma(\bm{i}))\vee \cdots \vee \sigma^{-n+1}\mathcal{P}_a(\sigma^{n-1}(\bm{i}))\\
&=\ \Sigma_a^-\times[\bm{i}^+|_n].
\end{align*}
Therefore the unique element in $\sigma^{-n}\eta$ containing $\sigma^n(\bm{i})$ satisfies
\[
(\sigma^{-n}\eta)(\sigma^n(\bm{i}))=\{\bm{i}^-\}\times[\bm{i}^+|_n]=\mathcal{P}_a^{n}(\bm{i})\cap \eta(\bm{i}),
\]
meaning that $\eta$ and $\mathcal{P}_a$ satisfy \eqref{TetaP}. Note that $\sigma$ is invertible on $\Sigma_a$, so by applying Proposition \ref{SMB} we obtain that for $\mu$-a.e. $\bm{i}\in\Sigma_a$,
\begin{equation}\label{ld11}
\lim_{n\to\infty}\frac{-\log \mu^\eta_{\bm{i}}(\mathcal{P}_a^{n}(\bm{i}))}{n}=h_\mu(\mathcal{P}_a|\widehat{\eta})(\bm{i}).
\end{equation}
Note that, by \eqref{cly2}, for $\mu$-a.e. $\bm{i}\in\Sigma_a$,
\[
\mu^\eta_{\bm{i}}(\mathcal{P}_a^{n}(\bm{i}))=\mu_{\bm{i}^-}^+([\bm{i}^+|_n]).
\]
Then \eqref{ld11} implies that, for $\mu$-a.e. $\bm{i}\in\Sigma_a$, or equivalently for $\mu^-$-a.e. $\bm{i}^-\in\Sigma_a^-$, for $\mu_{\bm{i}^-}^+$-a.e. $\bm{i}^+\in \Sigma_a^+$,
\begin{equation}\label{ld12}
\lim_{n\to\infty}\frac{\log \mu_{\bm{i}^-}^+([\bm{i}^+|_n])}{\log |[\bm{i}^+|_n]|_{d_\delta^+}}=\frac{h_\mu(\mathcal{P}_a|\widehat{\eta})(\bm{i})}{-\log \delta}.
\end{equation}
Define the positive measurable functions $f(\bm{i})=-\log \mu^\eta_{\bm{i}}(\mathcal{P}_a(\bm{i}))$ and
\[
f_k(\bm{i})=-\log \frac{\mu(B_{\pi^-}(\bm{i},\delta^{k})\cap \mathcal{P}_a(\bm{i}))}{\mu(B_{\pi^-}(\bm{i},\delta^{k}))}
\]
for $k\ge 0$, where
\begin{equation*}\label{pbb}
B_{\pi^-}(\bm{i},\delta^{k})=(\pi^-)^{-1}(B_{d_\delta^-}(\pi^-(\bm{i}),\delta^{k}))=(\pi^-)^{-1}([\bm{i}^-|_{k}]^-)=[\bm{i}^-|_{k}]^-\times\Sigma_a^+.
\end{equation*}
By \eqref{cyl} we deduce that for $\bm{i}\in\Sigma_a$, $k\ge 1$ and $u\in \Sigma_a^*$, 
\begin{align*}
\frac{\mu(B_{\pi^-}(\bm{i},\delta^{k})\cap \mathcal{P}_a(\bm{i}))}{\mu(B_{\pi^-}(\bm{i},\delta^{k}))}&=\frac{\mu(B_{\pi^-}(\bm{i},\delta^{k})\cap (\Sigma_a^-\times [u]))}{\mu(B_{\pi^-}(\bm{i},\delta^{k}))}\\
&=\frac{\mu([\bm{i}^-|_{k}]^-\times [u])}{\mu([\bm{i}^-|_{k}]^-\times\Sigma_a^+)}\\
&=\frac{\mu^+([i_{-k+1}\cdots i_0u])}{\mu^+([i_{-k+1}\cdots i_0])},\numberthis \label{sp11}
\end{align*}
and $\frac{\mu(B_{\pi^-}(\bm{i},\delta^{0})\cap \mathcal{P}_a(\bm{i}))}{\mu(B_{\pi^-}(\bm{i},\delta^{0}))}=\mu^+([u])$.

For $n\ge 1$ and $\bm{i}^+=i_1i_2\cdots$ we may write
\[
\mu^+([\bm{i}^+|_n])=\prod_{k=0}^{n-1}\frac{\mu^+([i_1\cdots i_ki_{k+1}])}{\mu^+([i_1\cdots i_{k}])}
\]
with the convention that $\mu^+([i_1\cdots i_{0}])=1$. Note that by \eqref{sp11},
\[
f_k\circ \sigma^k(\bm{i})=-\log \frac{\mu^+([i_1\cdots i_k i_{k+1}])}{\mu^+([i_1\cdots i_{k}])}
\]
so that
\[
-\log \mu^+([\bm{i}^+|_n])=\sum_{k=0}^{n-1} f_{k}\circ \sigma^k(\bm{i}).
\]
By Proposition \ref{P3.5}(1) we have that $f_k(\bm{i})\to f(\bm{i})$ as $k\to\infty$ for $\mu$-a.e. $\bm{i}\in\Sigma_a$. Furthermore, by Proposition \ref{P3.5}(2), there is a measurable function $g\in L^1(\Sigma_a,\mathcal{B}_a,\mu)$ such that $f_k(\bm{i})\le g(\bm{i})$ for $\mu$-a.e. $\bm{i}$. 
Hence, by Theorem \ref{met}, for $\mu$-a.e. $\bm{i}\in \Sigma_a$,
\begin{align*}
\lim_{n\to \infty} \frac{-\log \mu^+([\bm{i}^+|_n])}{n} = &\ \lim_{n\to \infty} \frac{1}{n} \sum_{k=0}^{n-1} f_{k}\circ \sigma^k(\bm{i})\\
= &\ \mathbb{E}_{\mu}(f|\mathcal{I})(\bm{i}).
\end{align*}
Noting that the left-hand side of the above identity does not depend on $\bm{i}^-$, and $[\bm{i}^+|_n]=B_{d_\delta^+}(\bm{i}^+,\delta^n)$, so we deduce that for $\mu$-a.e. $\bm{i}\in\Sigma_a$, hence for $\mu^+$-a.e. $\bm{i}^+\in\Sigma_a^+$,
\[
\lim_{n\to \infty} \frac{\log \mu^+(B_{d_\delta^+}(\bm{i}^+,\delta^n))}{\log \delta^n} =  \frac{\mathbb{E}_{\mu}(f|\mathcal{I})(\bm{i})}{-\log \delta}.
\]
Finally, by assumption, $\mu^+$ is exact-dimensional on $(\Sigma_a^+,d_\delta^+)$ with dimension $\frac{h}{-\log \delta}$ for some $h\ge 0$, therefore we deduce that $ \mathbb{E}_{\mu}(f|\mathcal{I})(\bm{i})=h$ for $\mu$-a.e. $\bm{i}\in \Sigma_a$.
Noting that by the definition of $f$ we have $\mathbb{E}_{\mu}(f|\mathcal{I})(\bm{i})=h_\mu(\mathcal{P}_a|\widehat{\eta})(\bm{i})$ for $\mu$-a.e. $\bm{i}\in\Sigma_a$, therefore by \eqref{ld12} we deduce that for $\mu^-$-a.e. $\bm{i}^-$, $\mu_{\bm{i}^-}^+$ is exact-dimensional on $(\Sigma_a^+,d_\delta^+)$ with the same dimension $\frac{h}{-\log \delta}$.
\end{proof}

\subsection{Mandelbrot cascades measures and dimension}\label{sec:mcm}	
	
Let $V$ be a non-negative random variable defined on a probability space $(\Omega,\mathcal{F},\mathbb{P})$ with expectation $\mathbb{E}(V)=1$. Let $\{V_{u}: u\in \Sigma_a^{+,*}\}$ be a sequence of i.i.d. random variables with the same law as $V$, indexed by the countable set $\Sigma_a^{+,*}$. Let $\lambda$ be a Borel probability measure on $(\Sigma_a^+,\mathcal{B}_a^+)$. For $k\ge 1$ define a random measure $\widetilde{\lambda}_k$ on $\Sigma_a^+$ by
\begin{equation}\label{mcm}
\widetilde{\lambda}_{k}(\mathrm{d}\bm{i}^+)=Q^V_{\bm{i}^+|_k} \, \lambda(\mathrm{d}\bm{i}^+),\ \bm{i}^+\in\Sigma_a^+,
\end{equation}
where we denote by
\[
Q^V_{\bm{i}^+|_k}=Q^V_{i_1\cdots i_k}=V_{i_1}V_{i_1i_2}\cdots V_{i_1\cdots i_k}.
\]
By assumption $\{\widetilde{\lambda}_k\}_{k\ge 1}$ forms a measure-valued martingale, hence the limit
\[
\widetilde{\lambda}=\lim_{k\to\infty} \widetilde{\lambda}_k
\]
exists almost surely. But they may be degenerate, i.e., almost surely $\|\widetilde{\lambda}\|=\widetilde{\lambda}(\Sigma_a^+)=0$. When it is non-degenerate, we call $\widetilde{\lambda}$ a Mandelbrot cascade measure of $\lambda$. 

The non-degeneracy of $\widetilde{\lambda}$ depends on the dimension of $\lambda$ and the entropy of the random variable $V$, namely
\[
h_V=\mathbb{E}(V\log V).
\]
Denote by $\overline{\dim}_P $ the upper packing dimension.  We have the following theorem by Barral and Jin \cite{BJ21}.
\begin{theorem}[Theorem 2.3 in \cite{BJ21}]\label{BJ21}
On the metric space $(\Sigma_a^+,d_\delta^+)$, if $\un{\dim}_H\, \lambda > \frac{h_V}{-\log \delta}$, then $\widetilde{\lambda}$ is non-degenerate, i.e. $\mathbb{E}(\|\widetilde{\lambda}\|)=1$. Moreover, almost surely conditioned on $\|\widetilde{\lambda}\|>0$,
\[
\un{\dim}_H\, \lambda-\frac{h_V}{-\log \delta}\le \un{\dim}_H\, \widetilde{\lambda} \le \overline{\dim}_P\, \widetilde{\lambda} \le \overline{\dim}_P\, \lambda-\frac{h_V}{-\log \delta}.
\]
In particular, if $\lambda$ is exact-dimensional with dimension $\dim \lambda$ then almost surely conditioned on $\|\widetilde{\lambda}\|> 0$, $\widetilde{\lambda}$ is exact-dimensional with dimension $\dim \lambda-\frac{h_V}{-\log \delta}$.
\end{theorem}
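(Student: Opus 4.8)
The plan is to prove all three parts through the \emph{Peyri\`ere (spine) measure} attached to the measure-valued martingale $\{\widetilde\lambda_k\}$, the technique being forced by the fact that we only assume an $L\log L$-type bound $\un{\dim}_H\lambda>\frac{h_V}{-\log\delta}$ and not any $L^p$ moment of $V$, so a second-moment argument is unavailable. Set $M_n:=\|\widetilde\lambda_n\|=\int Q^V_{\bm{i}^+|_n}\,\lambda(\mathrm{d}\bm{i}^+)$, a non-negative $\mathbb{P}$-martingale with $M_0=1$, and note $\|\widetilde\lambda\|=\lim_n M_n$ $\mathbb{P}$-a.s.\ (no mass escapes since $\Sigma_a^+$ is compact). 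Writing $\mathcal{F}_n=\sigma(V_u:|u|\le n)$, on $\Sigma_a^+\times\Omega$ let $\mathcal{Q}$ be the unique probability measure with $\mathrm{d}\mathcal{Q}|_{\mathcal{B}_a^+\otimes\mathcal{F}_n}=Q^V_{\bm{i}^+|_n}\,\mathrm{d}(\lambda\otimes\mathbb{P})$ (these are consistent because $\mathbb{E}(V)=1$, so Kolmogorov applies). The structural facts I will use repeatedly: under $\mathcal{Q}$ the $\Sigma_a^+$-marginal is $\lambda$, and conditionally on $\bm{i}^+$ the ``spine weights'' $V_{\bm{i}^+|_1},V_{\bm{i}^+|_2},\dots$ are i.i.d.\ with the size-biased law $\mathrm{d}\widetilde V=v\,\mathrm{d}V(v)$ while all other weights remain i.i.d.\ $\sim V$ and independent of the spine; and, since $\mathbb{E}_{\mathcal Q}\log V_{\bm{i}^+|_1}=\mathbb{E}(V\log V)=h_V$ and $x(\log x)^-\le e^{-1}$ (so this expectation is finite), the strong law gives $\frac1k\log Q^V_{\bm{i}^+|_k}\to h_V$ $\mathcal{Q}$-a.s. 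Also, in the ultrametric $d_\delta^+$ one has $[\bm{i}^+|_k]=B_{d_\delta^+}(\bm{i}^+,\delta^k)$, so $\un{\dim}_H\lambda$ equals the essential infimum of $\liminf_k\frac{-\log\lambda([\bm{i}^+|_k])}{-k\log\delta}$; in particular this liminf is $\ge\un{\dim}_H\lambda$ for $\lambda$-, hence $\mathcal{Q}$-, a.e.\ $\bm{i}^+$.

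\textbf{Non-degeneracy.} By standard Radon--Nikodym/martingale theory, $\mathbb{E}(\|\widetilde\lambda\|)=1$ holds iff $M_n$ does not blow up under $\mathcal{Q}$: indeed $1/M_n$ is a non-negative $\mathcal{Q}$-martingale, hence converges $\mathcal{Q}$-a.s.\ to some $U\ge0$, and $\mathbb{E}_\mathbb{P}(\|\widetilde\lambda\|)=\mathcal{Q}(U>0)=\mathcal{Q}(\liminf_nM_n<\infty)$. I would establish $\liminf_n M_n<\infty$ $\mathcal{Q}$-a.s.\ by decomposing a word $u$ of length $n$ according to the last level at which it follows the spine $\bm{i}^+$:
\[
M_n=Q^V_{\bm{i}^+|_n}\lambda([\bm{i}^+|_n])+\sum_{k=0}^{n-1}Q^V_{\bm{i}^+|_k}R_{k,n},
\]
where $R_{k,n}$ is built only from off-spine weights and satisfies $\mathbb{E}_\mathbb{P}(R_{k,n}\mid\mathrm{spine})\le\lambda([\bm{i}^+|_k])$. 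Picking $\varepsilon>0$ with $h_V<(-\log\delta)(\un{\dim}_H\lambda-\varepsilon)$, the strong law and the cylinder-decay estimate give $Q^V_{\bm{i}^+|_k}\lambda([\bm{i}^+|_k])\le e^{-ck}$ eventually, $\mathcal{Q}$-a.s., for some $c>0$; hence $\sum_k\mathbb{E}_\mathbb{P}(Q^V_{\bm{i}^+|_k}R_{k,n}\mid\mathrm{spine})\le\sum_kQ^V_{\bm{i}^+|_k}\lambda([\bm{i}^+|_k])<\infty$, uniformly in $n$. Passing to the limit in $n$ along this decomposition (the off-spine pieces $R_{k,n}$ converge a.s., and the diagonal term tends to $0$), a conditional Fatou/Borel--Cantelli argument yields $\liminf_n M_n<\infty$ $\mathcal{Q}$-a.s., which is the assertion $\mathbb{E}(\|\widetilde\lambda\|)=1$.

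\textbf{Dimensions.} Once $\mathbb{E}(\|\widetilde\lambda\|)=1$, the identity $\mathcal{Q}(\mathrm{d}\bm{i}^+\mathrm{d}\omega)=\widetilde\lambda(\mathrm{d}\bm{i}^+)\,\mathbb{P}(\mathrm{d}\omega)$ holds (both sides are probability measures agreeing on all rectangles $[u]\times B$, $B\in\mathcal{F}_n$, because non-degeneracy forces each cylinder sub-cascade to be uniformly integrable). So any almost-sure statement about the local dimension of $\widetilde\lambda$ at a $\widetilde\lambda$-typical point, valid for $\mathbb{P}$-a.e.\ environment, is exactly a $\mathcal{Q}$-a.s.\ statement. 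Writing $\widetilde\lambda([\bm{i}^+|_n])=Q^V_{\bm{i}^+|_n}\,T_n$ with $T_n:=\widetilde\lambda([\bm{i}^+|_n])/Q^V_{\bm{i}^+|_n}$ the total mass of the sub-cascade rooted at $\bm{i}^+|_n$ (so $\mathbb{E}(T_n\mid\mathcal{F}_n)=\lambda([\bm{i}^+|_n])$),
\[
\frac{-\log\widetilde\lambda([\bm{i}^+|_n])}{-n\log\delta}=\frac{-\log Q^V_{\bm{i}^+|_n}}{-n\log\delta}+\frac{-\log T_n}{-n\log\delta},
\]
and the first summand tends to $-\tfrac{h_V}{-\log\delta}$ $\mathcal{Q}$-a.s. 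The upper estimate $T_n\le\lambda([\bm{i}^+|_n])e^{\varepsilon n}$ eventually is immediate from Markov ($\mathbb{P}(T_n>\lambda([\bm{i}^+|_n])e^{\varepsilon n}\mid\mathcal{F}_n)\le e^{-\varepsilon n}$) plus Borel--Cantelli; combined with the cylinder-decay lower bound for $\lambda$ it gives $\liminf_n\frac{-\log\widetilde\lambda([\bm{i}^+|_n])}{-n\log\delta}\ge\un{\dim}_H\lambda-\tfrac{h_V}{-\log\delta}$ for $\widetilde\lambda$-a.e.\ $\bm{i}^+$, whence $\un{\dim}_H\widetilde\lambda\ge\un{\dim}_H\lambda-\tfrac{h_V}{-\log\delta}$ by Billingsley's lemma; the inequality $\un{\dim}_H\le\overline{\dim}_P$ is general. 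For the remaining inequality $\overline{\dim}_P\widetilde\lambda\le\overline{\dim}_P\lambda-\tfrac{h_V}{-\log\delta}$ one needs the reverse estimate $T_n\ge\lambda([\bm{i}^+|_n])e^{-\varepsilon n}$ eventually, i.e.\ that the off-spine sub-cascades along $\bm{i}^+$ do not become nearly degenerate infinitely often; granting this, and combining with $\frac1n\log Q^V_{\bm{i}^+|_n}\to h_V$ and (when $\lambda$ is exact-dimensional) $\frac{-\log\lambda([\bm{i}^+|_n])}{-n\log\delta}\to\dim\lambda$, one obtains $\frac{-\log\widetilde\lambda(B_{d_\delta^+}(\bm{i}^+,\delta^n))}{-n\log\delta}\to\dim\lambda-\tfrac{h_V}{-\log\delta}$ for $\widetilde\lambda$-a.e.\ $\bm{i}^+$, i.e.\ exact-dimensionality with the claimed value.

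\textbf{Main obstacle.} The delicate point is the uniform \emph{lower} control of the sub-cascade masses $T_n$ (equivalently the inequality $\overline{\dim}_P\widetilde\lambda\le\overline{\dim}_P\lambda-\tfrac{h_V}{-\log\delta}$ and the exact-dimensional conclusion): unlike the one-line Markov bound from above, ruling out that $T_n$ is exponentially small along a subsequence requires real work, since the sub-cascades rooted along $\bm{i}^+$ are cascades over the varying base measures $\lambda([\bm{i}^+|_n\,\cdot])/\lambda([\bm{i}^+|_n])$ whose dimensions fluctuate; one must invoke the $L\log L$ hypothesis quantitatively, for instance via a small-deviation estimate on the sub-cascade total masses together with a summability argument along the spine, to push a Borel--Cantelli in the other direction. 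The need for the Peyri\`ere measure in the first place (in lieu of a moment method) is itself the conceptual crux of the non-degeneracy half.
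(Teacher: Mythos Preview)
The paper does not contain a proof of this statement: Theorem~\ref{BJ21} is quoted verbatim from \cite{BJ21} (Barral--Jin) and used as a black box throughout (see the sentence ``We have the following theorem by Barral and Jin \cite{BJ21}'' immediately preceding it, and its later invocations in Sections~\ref{ifsed}--\ref{mcfm}). So there is no in-paper proof to compare your proposal against.

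That said, your outline is the standard Peyri\`ere/spine approach to non-degeneracy and dimension of multiplicative cascades, and is consistent with how such results are proved in the literature (including \cite{KP76} and \cite{BJ21}). Your identification of the delicate step is accurate: the upper bound on the upper packing dimension, equivalently the eventual lower bound $T_n\ge\lambda([\bm{i}^+|_n])e^{-\varepsilon n}$, cannot be obtained by a one-line Markov/Borel--Cantelli and genuinely requires a small-ball estimate for the sub-cascade total masses along the spine; in \cite{BJ21} this is handled under the stated $L\log L$-type hypothesis, and your sketch does not supply that argument. If you intend this as a self-contained proof rather than a summary, that gap would need to be filled; otherwise your proposal is a faithful high-level account of the method behind the cited result.
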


For future reference we shall give a more precise definition of the probability space on which the i.i.d. sequence $\{V_u:u\in \Sigma_a^{+,*}\}$ is defined. First take a random variable $V$ that is defined on a standard probability space (i.e. a Lebesgue space) $(\Omega,\mathcal{F},\mathbb{P})$. We may assume that $\Omega$ is a Polish space and take a metric $d_\Omega$ on $\Omega$ under which $\Omega$ is complete and separable, and $\mathcal{F}$ is the Borel $\sigma$-algebra w.r.t. $d_\Omega$. Define the countable product space
\[
(\Omega_a,\mathcal{F}_a,\mathbb{P}_a)=\bigotimes_{u\in \Sigma_a^{+,*}}(\Omega^{u},\mathcal{F}^{u},\mathbb{P}^{u}),
\]
where $(\Omega^{u},\mathcal{F}^{u},\mathbb{P}^{u})=(\Omega,\mathcal{F},\mathbb{P})$ for each $u\in \Sigma_a^{+,*}$. We may equip $\Omega_a$ with the product metric $d_{\Omega_a}=\max\{d_{\Omega^u}:u\in\Sigma_a^{+,*}\}$. Then we have that $(\Omega_a,\mathcal{F}_a,\mathbb{P}_a)$ is also a Lebesgue space, $(\Omega_a,d_{\Omega_a})$ is complete and separable, and that $\mathcal{F}_a$ is the Borel $\sigma$-algebra w.r.t. $d_{\Omega_a}$. For $u\in \Sigma_a^{+,*}$ define the projection
\[
\pi_{a}^{u}:\Omega_a\to\Omega^{u}.
\]
By letting $V_{u}=V\circ\pi_{a}^{u}$ for $u\in\Sigma_a^{+,*}$, we obtain a sequence of i.i.d. random variables on $(\Omega_a,\mathcal{F}_a,\mathbb{P}_a)$ with the same law as $V$.

For the Mandelbrot cascade measure $\widetilde{\lambda}$ of a measure $\lambda$, when necessary, we will use the notation $\widetilde{\lambda}_{\bm{\omega}}$ to indicate its dependence on $\bm{\omega}\in\Omega_a$. Note that for an instance of a Mandelbrot cascade measure $\widetilde{\lambda}_{\bm{\omega}}$, when non-degenerate, i.e. $\|\widetilde{\lambda}_{\bm{\omega}}\|>0$ is positive, its total mass is not necessarily equal to $1$, hence we shall denote by
\[
\bar{\lambda}_{\bm{\omega}}=\frac{\widetilde{\lambda}_{\bm{\omega}}}{\|\widetilde{\lambda}_{\bm{\omega}}\|}
\]
its normalisation to a probability measure when $\|\widetilde{\lambda}_{\bm{\omega}}\|>0$.

\subsection{The Peyri\`ere measure}

Let $\mu^+$ be a $\sigma$-ergodic probability measure on $\Sigma_a^+$ and let $\mu$ be its natural extension to $\Sigma_a$. It is easy to check that $h_{\mu^+}(\sigma,\mathcal{P}_a^+)=h_{\mu}(\sigma,\mathcal{P}_a)$, where $\mathcal{P}^+_a=\{[i]\}_{i=1,\cdots ,a}$ is the first generation cylinder partition of $\Sigma_a^+$, and recall that $\mathcal{P}_a=\{\Sigma_a^-\times[i]\}_{i=1,\cdots,a}$. From now on we shall use
\begin{equation}\label{mtemu}
h_\mu=h_{\mu^+}(\sigma,\mathcal{P}_a^+)=h_{\mu}(\sigma,\mathcal{P}_a)
\end{equation}
to denote this entropy. By SMB Theorem we have that $\mu^+$ is exact-dimensional on $(\Sigma_a^+,d_\delta^+)$ with dimension
\[
\frac{h_\mu}{-\log \delta}.
\]
Then, by Lemma \ref{lem1}, for $\mu^-$-a.e. $\bm{i}^-\in\Sigma_a^-$, $\mu_{\bm{i}^-}^+$ is exact-dimensional with the same dimension $\frac{h_{\mu}}{-\log \delta}$ on $(\Sigma_a^+,d_\delta^+)$. Hence by Theorem \ref{BJ21} we have for $\mu^-$-a.e. $\bm{i}^-$, the Mandelbrot cascade measure $\widetilde{\mu}_{\bm{i}^-}$ is non-degenerate, and for $\mathbb{P}_a$-a.e. $\bm{\omega}\in \Omega_a$ with $\|\widetilde{\mu}_{\bm{i}^-,\bm{\omega}}^+\|> 0$,  $\widetilde{\mu}_{\bm{i}^-,\bm{\omega}}^+$ is exact-dimensional with  dimension
\[
\frac{h_{\mu}-h_V}{-\log \delta}.
\]

Let $\widetilde{\mathcal{A}}_a$ be a semi-algebra of the form 
\begin{align*}
\{(\bm{i},\bm{\omega}):\ &\bm{i}\in[u]^-\times [v],V_{v_1\cdots v_k}(\bm{\omega})\in B_{v_1\cdots v_k},\; k=1,\cdots,n\}, 
\end{align*}
where $u\in \Sigma_a^{-,*}$, $n\ge 1$, $v=v_1\cdots v_n\in\Sigma_a^{+,*}$ and $\{B_{v_1\cdots v_k}\}_{1\le k\le n}$ are Borel subsets of $[0,\infty)$. Let $\widetilde{\mathcal{F}}_a$ be the $\sigma$-algebra generated by $\widetilde{\mathcal{A}}_a$. Denote by $\widetilde{\Sigma}_a=\Sigma_a\times\Omega_a$ and define a probability measure $\mathbb{Q}_{\mu}$ on $(\widetilde{\Sigma}_a,\widetilde{\mathcal{F}}_a)$ as
\begin{equation}\label{pmq}
\int_{\widetilde{\Sigma}_a} f(\bm{i},\bm{\omega})\, \mathbb{Q}_{\mu}(\mathrm{d}(\bm{i},\bm{\omega}))
=\int_{\Omega_a}\int_{\Sigma_a^-}\int_{\Sigma_a^+} f(\bm{i}^-,\bm{i}^+,\bm{\omega}) \, \widetilde{\mu}_{\bm{i}^-,\bm{\omega}}^+(\mathrm{d}\bm{i}^+)\mu^-(\mathrm{d}\bm{i}^-)\mathbb{P}_a(\mathrm{d}\bm{\omega})
\end{equation}
for $\widetilde{\mathcal{F}}_a$-measurable functions $f$. We call $\mathbb{Q}_\mu$ the Peyri\`ere measure of $\mu$.

For $\bm{\omega}=(\omega_u)_{u\in\Sigma_a^{+,*}}\in\Omega_a$ we may write it as $\bm{\omega}=(\omega_i,\bm{\omega}_i)_{i=1,\cdots,a}$, where
\[
\bm{\omega}_i=(\omega_{iu})_{u\in\Sigma_a^{+,*}}.
\]
For $i=1,\cdots,a$ define the projection
\[
\kappa_i(\bm{\omega})=\bm{\omega}_i.
\]
We may define a skew product, denoted by $\widetilde{\sigma}$, on the product space $\widetilde{\Sigma}_a$ by
\[
\widetilde{\sigma}(\bm{i},\bm{\omega})=(\sigma(\bm{i}), \kappa_{i_1}(\bm{\omega})).
\]
Note that $\widetilde{\sigma}^2(\bm{i},\bm{\omega})=\widetilde{\sigma}(\sigma(\bm{i}), \kappa_{i_1}(\bm{\omega}))=(\sigma^2(\bm{i}),\kappa_{i_2}(\kappa_{i_1}(\bm{\omega}))$, and
\[
\kappa_{i_2}(\kappa_{i_1}(\bm{\omega})=\kappa_{i_2}((\omega_{i_1u})_{u\in\Sigma_a^{+,*}})=\kappa_{i_2}((\omega_{i_1i},(\omega_{i_1iu})_{u\in\Sigma_a^{+,*}})_{i=1,\cdots,a})=(\omega_{i_1i_2u})_{u\in\Sigma_a^{+,*}}.
\]
Therefore for $i_1\cdots i_n\in\Sigma_a^{+,*}$ and $\bm{\omega}=(\omega_u)_{u\in\Sigma_a^{+,*}}$ we may denote by
\begin{equation}\label{omegau}
\kappa_{i_n}\circ \cdots\circ \kappa_{i_1}(\bm{\omega})=(\omega_{i_1\cdots i_nu})_{u\in\Sigma_a^{+,*}}:=\bm{\omega}_{i_1\cdots i_n},
\end{equation}
so that for $\bm{i}=\cdots i_0i_1i_2\cdots \in \Sigma_a$,
\[
\widetilde{\sigma}^n(\bm{i},\bm{\omega})=(\sigma^n(\bm{i}),\bm{\omega}_{i_1\cdots i_n}).
\]
The skew product $\widetilde{\sigma}$ is not necessarily invertible, since for $i=1,\cdots, a$ the pre-image $\kappa_i^{-1}(\bm{\omega})$ takes the form
\[
\kappa_i^{-1}(\bm{\omega})=\{(\omega_j,\bm{\omega}_j)_{j=1,\cdots,a}:\omega_j\in \Omega,\bm{\omega}_j\in\Omega_a \text{ and }\bm{\omega}_j=\bm{\omega} \text{ if }j=i\}.
\]

First we have the following lemma about the invariance of Peyri\`ere measures.

\begin{lemma}\label{lminv}
The Peyri\`ere measure $\mathbb{Q}_{\mu}$ is $\widetilde{\sigma}$-invariant.
\end{lemma}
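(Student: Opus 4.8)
The plan is to verify $\mathbb{Q}_\mu \circ \widetilde\sigma^{-1} = \mathbb{Q}_\mu$ by testing against functions and, ultimately, against the generating semi-algebra $\widetilde{\mathcal{A}}_a$, using the defining formula \eqref{pmq} together with the martingale/disintegration structure of the fibre cascades. First I would reduce to showing $\int f\circ\widetilde\sigma \,\mathrm{d}\mathbb{Q}_\mu = \int f\,\mathrm{d}\mathbb{Q}_\mu$ for $f$ of the special product form appearing in the description of $\widetilde{\mathcal{A}}_a$, since a monotone class argument then extends this to all $\widetilde{\mathcal{F}}_a$-measurable $f$. Writing out the left-hand side via \eqref{pmq}, one has $f\circ\widetilde\sigma(\bm i,\bm\omega) = f(\sigma(\bm i),\kappa_{i_1}(\bm\omega))$, so the inner integral becomes $\int_{\Sigma_a^+} f(\sigma(\bm i^-,\bm i^+),\kappa_{i_1}(\bm\omega))\,\widetilde\mu^+_{\bm i^-,\bm\omega}(\mathrm d\bm i^+)$, and the key is to decompose this integral according to the first coordinate $i_1$ of $\bm i^+$.

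The heart of the argument is the following two structural facts, which I would isolate as intermediate claims. First, the fibre-disintegration relation: $\mu^-$ and the $\mu^+_{\bm i^-}$ are compatible with the shift in the sense that, upon integrating against $\mu^-(\mathrm d\bm i^-)$ and then decomposing $\mu^+_{\bm i^-}$ over the first symbol $i_1$, one can re-index so that $(i_1\bm i^-, \sigma(\bm i^+))$ runs, after pushing forward, over $\mu^-(\mathrm d\bm j^-)\,\mu^+_{\bm j^-}(\mathrm d\bm j^+)$; this is precisely the statement that $\mu$ on $\Sigma_a$ is $\sigma$-invariant, transported through the disintegration \eqref{cly2}–\eqref{fdm}. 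Second, the cascade-weight cocycle relation: by the very definition \eqref{mcm} of $\widetilde\mu^+_k$ and the notation $Q^V_{i_1\cdots i_k}=V_{i_1}V_{i_1i_2}\cdots V_{i_1\cdots i_k}$, one has, after conditioning on the value of $V_{i_1}(\bm\omega)$,
\[
\widetilde\mu^+_{\bm i^-,\bm\omega}\big([i_1]\cap \sigma^{-1}(\cdot)\big) = V_{i_1}(\bm\omega)\cdot \mu^+_{i_1\bm i^-}\text{-mass}\ \times\ \widetilde{(\cdot)}_{\,\kappa_{i_1}(\bm\omega)},
\]
i.e. the cascade measure built from $\bm\omega$, restricted to the cylinder $[i_1]$ and then shifted, is $V_{i_1}(\bm\omega)$ times a cascade measure built from the sub-environment $\kappa_{i_1}(\bm\omega)$ over the shifted base measure. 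The factor $V_{i_1}(\bm\omega)$, when integrated against $\mathbb{P}_a(\mathrm d\bm\omega)$, contributes exactly $\mathbb{E}(V)=1$ because $V_{i_1}$ is independent of $\kappa_{i_1}(\bm\omega)$ and of everything else in sight, and $\kappa_{i_1}$ pushes $\mathbb{P}_a$ forward to $\mathbb{P}_a$. Summing over $i_1=1,\dots,a$ and using the base-measure identity $\mu^+(B) = \sum_{i_1} \mu^+([i_1]\cap \sigma^{-1}B)$ at the level of fibre measures then collapses everything back to $\int f\,\mathrm d\mathbb{Q}_\mu$.

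I expect the main obstacle to be the bookkeeping in the first structural fact — making rigorous the re-indexing $\bm i^- \mapsto i_1\bm i^-$ under $\mu^-$ combined with the first-symbol decomposition of the fibre measures $\mu^+_{\bm i^-}$, i.e. showing that this operation exactly realizes the $\sigma$-invariance of $\mu$ on the two-sided space at the level of the disintegration. One has to be careful that the conditional measures $\widetilde\mu^+_{\bm i^-,\bm\omega}$ are only defined for $\mu^-$-a.e. $\bm i^-$ and $\mathbb{P}_a$-a.e. $\bm\omega$, so all identities hold only almost everywhere; Fubini–Tonelli (all integrands are nonnegative) lets one interchange the $\mathbb{P}_a$, $\mu^-$ and fibre integrals freely, and the independence structure of $\{V_u\}$ under $\mathbb{P}_a$ — specifically that $V_{i_1}$ and the sub-family indexed by words beginning with $i_1$ are independent, and that $\kappa_{i_1}$ is measure-preserving from $(\Omega_a,\mathbb{P}_a)$ to itself — is what makes the $V_{i_1}$-factor disappear cleanly. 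Once these two claims are in hand the computation is a routine, if slightly lengthy, chain of equalities, so I would present the two claims as displayed identities and then assemble them in three or four lines.
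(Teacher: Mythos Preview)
Your proposal is correct and follows essentially the same approach as the paper: decompose over the first symbol, use the cascade factorisation (which the paper states explicitly as $\widetilde{\mu}_{\bm{i}^-,\bm{\omega}}^+([uv])=Q^V_{u}(\bm{\omega})\,\mu_{\bm{i}^-}^+([u])\,\widetilde{\mu}_{\bm{i}^-u,\bm{\omega}_{u}}^{+}([v])$), exploit independence and $\mathbb{E}(V)=1$ to absorb the weight, and re-index via the fibre-disintegration identity. The only minor point is notational: in the paper's conventions the shifted negative word is $\bm{i}^- i_1$ (appending $i_1$ at position $0$), not $i_1\bm{i}^-$, and the key disintegration fact you call the ``main obstacle'' is made precise in the paper as the identity $\mu^{+}_{\bm{i}^-}([uv])/\mu^{+}_{\bm{i}^-}([u])=\mu^{+}_{\bm{i}^-u}([v])$, derived from the conditional-measure machinery of Proposition~\ref{SMB}.
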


\begin{proof}
Recall in Sections \ref{Fim} that $\eta=\{(\pi^-)^{-1}(\bm{i}^-)\}_{\bm{i}^-\in\Sigma_a^-}$ is a measurable partition, and that $\eta$ and $\mathcal{P}_a$ satisfy \eqref{TetaP}. Recall that for $n\ge 1$ and $\bm{i}\in\Sigma_a$,
\[
\mathcal{P}_a^{n}(\bm{i})=\Sigma_a^-\times[\bm{i}^+|_n].
\]
By \eqref{inid} we have that for any $B\in\mathcal{B}_a$, for $\mu$-a.e. $\bm{i}\in\Sigma_a$ and $n\ge 1$,
\[
\mu^{\eta}_{\sigma^n(\bm{i})}(\sigma^n(B))=\frac{\mu^\eta_{\bm{i}}(B\cap \mathcal{P}^{n}_a(\bm{i}))}{\mu^\eta_{\bm{i}}(\mathcal{P}^{n}_a(\bm{i}))}
\]
In particular if we choose $B=\Sigma_a^-\times [uv]$ for some $u=u_1\cdots u_n,v\in\Sigma_a^{+,*}$, then we have $\sigma^n(B)=[u]^-\times[v]$ and $B\cap \mathcal{P}_a^{n}(\bm{i})=B$ for $\bm{i}\in \Sigma_a^-\times [u]$, therefore, for $\mu$-a.e. $\bm{i}=\cdots i_0i_1\cdots \in\Sigma_a^-\times [u]$,
\[
\frac{\mu^{+}_{\bm{i}^-}([uv])}{\mu^{+}_{\bm{i}^-}([u])}=\frac{\mu^\eta_{\bm{i}}(B\cap \mathcal{P}^{n}_a(\bm{i}))}{\mu^\eta_{\bm{i}}(\mathcal{P}^{n}(\bm{i}))}=\mu^\eta_{\sigma^n(\bm{i})}(\sigma^n(B))=\mu^{+}_{\sigma^n(\bm{i})^-}([v])=\mu^{+}_{\bm{i}^-u}([v]).
\]
This implies that for $\mu^-$-a.e. $\bm{i}^-\in\Sigma_a^-$ and $u,v\in \Sigma_a^{+,*}$,
\begin{equation}\label{cdm12}
\frac{\mu^{+}_{\bm{i}^-}([uv])}{\mu^{+}_{\bm{i}^-}([u])}=\mu^{+}_{\bm{i}^-u}([v]).
\end{equation}
Together with \eqref{mcm} we obtain the following factorisation:  for $\mu^-$-a.e. $\bm{i}^-\in\Sigma_a^-$, for $\mathbb{P}_a$-a.e. $\bm{\omega}$, for $u,v\in\Sigma_a^{+,*}$,
\begin{equation}\label{ssss}
\widetilde{\mu}_{\bm{i}^-,\bm{\omega}}^+([u v])=Q^V_{u}(\bm{\omega})\cdot\mu_{\bm{i}^-}^+([u])\cdot \widetilde{\mu}_{\bm{i}^-u,\bm{\omega}_{u}}^{+}([v]).
\end{equation}
Taking $u=1,\cdots,a$ this implies that, for a measurable function $f$ on $(\widetilde{\Sigma}_a,\widetilde{\mathcal{F}}_a)$,
\begin{align*}
&\int_{\widetilde{\Sigma}_a} f\circ \widetilde{\sigma}(\bm{i},\bm{\omega})\, \mathbb{Q}_{\mu}(\mathrm{d}(\bm{i},\bm{\omega}))\\
=&\  \int_{\Omega_a}\int_{\Sigma_a^-}\int_{\Sigma_a^+}  f(\bm{i}^-i_1,\sigma\bm{i}^+,\kappa_{i_1}(\bm{\omega}))\, \widetilde{\mu}^+_{\bm{i}^-,\bm{\omega}}(\mathrm{d}\bm{i}^+)\mu^-(\mathrm{d}\bm{i}^-)\mathbb{P}_a(\mathrm{d}\bm{\omega})\\
=&\  \int_{\Omega_a}\int_{\Sigma_a^-}\int_{\Sigma_a^+} \sum_{u=1}^a f(\bm{i}^-u,\bm{j}^+,\bm{\omega}_u)\, \mu_{\bm{i}^-}([u])V_{u}\cdot \widetilde{\mu}_{\bm{i}^-u,\bm{\omega}_u}^{+}(\mathrm{d}\bm{j}^+)\mu^-(\mathrm{d}\bm{i}^-)\mathbb{P}_a(\mathrm{d}\bm{\omega})\\
=&\  \int_{\Omega_a}\int_{\Sigma_a^-}\int_{\Sigma_a^+} \sum_{u=1}^a  f(\bm{i}^-u,\bm{j}^+,\bm{\omega})\, \mu_{\bm{i}^-}([u])\widetilde{\mu}^+_{\bm{i}^-u,\bm{\omega}}(\mathrm{d}\bm{j}^+)\mu^-(\mathrm{d}\bm{i}^-)\mathbb{P}_a(\mathrm{d}\bm{\omega})\\
=&\  \int_{\Omega_a}\int_{\Sigma_a^-}\int_{\Sigma_a^+}f(\bm{j}^-,\bm{j}^+,\bm{\omega})\, \widetilde{\mu}^+_{\bm{j}^-,\bm{\omega}}(\mathrm{d}\bm{j}^+)\mu^-(\mathrm{d}\bm{j}^-)\mathbb{P}_a(\mathrm{d}\bm{\omega})\\
=&\ \int_{\widetilde{\Sigma}_a} f(\bm{i},\bm{\omega})\, \mathbb{Q}_{\mu}(\mathrm{d}(\bm{i},\bm{\omega})),
\end{align*}
where we have used the facts that $V_u$ has expectation $1$, $f(\bm{i}^-u,\bm{j}^+,\bm{\omega}_u)\,\widetilde{\mu}_{\bm{i}^-u,\bm{\omega}_u}^{+}(\mathrm{d}\bm{j}^+)$ is independent of  $V_u$ and has the same law as $f(\bm{i}^-u,\bm{j}^+,\bm{\omega})\widetilde{\mu}^+_{\bm{i}^-u,\bm{\omega}}(\mathrm{d}\bm{j}^+)$ under $\mathbb{P}_a$, and that, by \eqref{cdm12},
\[
\mu_{\bm{i}^-}([u])\widetilde{\mu}^+_{\bm{i}^-u,\bm{\omega}}(\mathrm{d}\bm{j}^+)\mu^-(\mathrm{d}\bm{i}^-)=\widetilde{\mu}^+_{\bm{j}^-,\bm{\omega}}(\mathrm{d}\bm{j}^+)\mu^-(\mathrm{d}\bm{j}^-)
\]
for $\bm{j}^-=\bm{i}^-u\in [u]^-$.
\end{proof}

Then we have the following lemma about the ergodicity.
\begin{lemma}\label{erg}
The Peyri\`ere measure $\mathbb{Q}_{\mu}$ is $\widetilde{\sigma}$-ergodic.
\end{lemma}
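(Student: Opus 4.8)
The plan is to deduce ergodicity of $\mathbb{Q}_\mu$ under $\widetilde\sigma$ from the ergodicity of $\mu$ under $\sigma$, exploiting the product structure of the Peyri\`ere measure and the i.i.d.\ structure of the randomness. The natural framework is a $0$-$1$ law argument combined with a tail/Kolmogorov-type triviality statement. First I would identify the relevant $\sigma$-algebras: the $\widetilde\sigma$-invariant $\sigma$-algebra $\widetilde{\mathcal I}=\{B\in\widetilde{\mathcal F}_a:\widetilde\sigma^{-1}B=B\}$, and observe that any invariant set is measurable with respect to the ``future'' $\sigma$-algebra generated by $\bm{i}^+$ and $(\omega_u)_{u\in\Sigma_a^{+,*}}$ up to $\mathbb{Q}_\mu$-null sets (since $\widetilde\sigma$ only reads forward coordinates and the base measure $\mu$ is the natural extension, the fibre over $\bm{i}^-$ depends on $\bm{i}^-$ only through the conditional measure $\mu^+_{\bm{i}^-}$).

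The key steps, in order: (1) Show that for $\mathbb{Q}_\mu$-a.e.\ point, the orbit under $\widetilde\sigma$ ``refreshes'' the $\Omega_a$-coordinates: by \eqref{omegau}, $\widetilde\sigma^n(\bm{i},\bm\omega)=(\sigma^n(\bm{i}),\bm\omega_{i_1\cdots i_n})$, and the families $\{\omega_{i_1\cdots i_n u}:u\in\Sigma_a^{+,*}\}$ for distinct prefixes are independent under $\mathbb{P}_a$; (2) Given an invariant set $A\in\widetilde{\mathcal I}$, use the factorisation \eqref{ssss} to write $\mathbf 1_A$ as (a.s.\ limit of) functions depending only on finitely many symbols of $\bm{i}^+$ and finitely many weights, i.e.\ approximate $A$ by cylinder-type sets in $\widetilde{\mathcal A}_a$; (3) Apply the SMB/Birkhoff machinery through Lemma \ref{lminv}: since $\mathbb{Q}_\mu$ is $\widetilde\sigma$-invariant, Birkhoff's theorem gives $\frac1n\sum_{k=0}^{n-1}\mathbf 1_A(\widetilde\sigma^k(\bm{i},\bm\omega))\to \mathbb{E}_{\mathbb{Q}_\mu}(\mathbf 1_A\mid\widetilde{\mathcal I})=\mathbf 1_A$ a.e.; (4) Conditioning first on $\bm{i}^-$ and $\bm{i}^+$ (i.e.\ on the base point $\bm{i}\in\Sigma_a$) and integrating out the i.i.d.\ weights, use that along the orbit the weights encountered are independent fresh copies, so by a Hewitt--Savage / Kolmogorov $0$-$1$ law in the $\Omega_a$-fibre the conditional probability $\mathbb{Q}_\mu(A\mid \bm{i})$ is $\{0,1\}$-valued for $\mu$-a.e.\ $\bm{i}$; (5) The resulting indicator $\bm{i}\mapsto \mathbb{Q}_\mu(A\mid\bm{i})$ is then a $\sigma$-invariant function on $(\Sigma_a,\mathcal B_a,\mu)$ (invariance inherited from $\widetilde\sigma$-invariance of $A$ together with \eqref{cdm12}), hence constant $\mu$-a.e.\ by ergodicity of $\mu$; combining with step (4) gives $\mathbb{Q}_\mu(A)\in\{0,1\}$.

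The main obstacle I anticipate is step (4)--(5): carefully disentangling the two sources of randomness so that the $0$-$1$ law applies cleanly. Concretely, one must check that the map $(\bm{i},\bm\omega)\mapsto$ (the germ of $A$) factors through the exchangeable structure of $\{V_u\}$ — the subtlety is that $\widetilde\sigma$ does not act on $\Omega_a$ by a shift but by the prefix-indexed projections $\kappa_i$, so the ``tail'' one must trivialise is the tail along a branch of the tree $\Sigma_a^{+,*}$ determined by $\bm{i}^+$, and one needs that for $\mu^+$-a.e.\ $\bm{i}^+$ this branch visits infinitely many distinct nodes (immediate) and that the conditional law of $A$ given $\bm{i}$ is governed only by the weights on and below that branch. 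A clean way to handle this is to pass to the Peyri\`ere measure's own disintegration over $\Sigma_a$: write $\mathbb{Q}_\mu = \int_{\Sigma_a}\mathbb{Q}_\mu^{\bm i}\,d\mu(\bm i)$, show each fibre measure is $\bm\omega$-tail-trivial by a Kolmogorov $0$-$1$ law adapted to the tree (this is where \eqref{ssss} does the work, expressing the measure of deep cylinders as a product over the branch), then invoke ergodicity of $(\Sigma_a,\sigma,\mu)$ to finish. Alternatively, and perhaps more robustly, one can verify ergodicity via the mixing-type criterion $\frac1n\sum_{k=0}^{n-1}\mathbb{Q}_\mu(\widetilde\sigma^{-k}A\cap B)\to\mathbb{Q}_\mu(A)\mathbb{Q}_\mu(B)$ for $A,B$ in the generating semi-algebra $\widetilde{\mathcal A}_a$, where for $k$ larger than the depth of the cylinders defining $B$ the weights appearing in $\widetilde\sigma^{-k}A$ are independent of those in $B$, reducing the statement to the analogous averaged asymptotic independence for $(\Sigma_a,\sigma,\mu)$ which holds by ergodicity of $\mu$.
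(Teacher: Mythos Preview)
Your ``alternative'' route at the end of the proposal is exactly what the paper does, and it is the cleanest path here. The paper checks ergodicity on the generating semi-algebra $\widetilde{\mathcal A}_a$: for cylinder-type sets $B_1,B_2$ of positive $\mathbb{Q}_\mu$-measure, once $k$ exceeds the depth of the words defining $B_2$, every weight $V_u$ appearing in $\widetilde\sigma^{-k}B_1$ has word length at least that depth plus one, hence is independent of the weights in $B_2$. One then gets the exact factorisation
\[
\mathbb{Q}_\mu(\widetilde\sigma^{-k}B_1\cap B_2)=\Big(\prod_{l=1,2}\prod_j\mathbb{E}(V\mathbf 1_{\{V\in B^l_j\}})\Big)\cdot\mu(\sigma^{-k}U_1\cap U_2),
\]
where $U_l$ is the projection of $B_l$ to $\Sigma_a$; ergodicity of $\mu$ then gives positivity for infinitely many $k$ (equivalently, Ces\`aro convergence to the product $\mathbb{Q}_\mu(B_1)\mathbb{Q}_\mu(B_2)$). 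So the paper does not need the more elaborate steps (1)--(5).

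Your primary approach via a fibrewise Kolmogorov $0$--$1$ law would work in principle, but the obstacle you flag in step (4) is real: under $\mathbb{Q}_\mu$ the conditional law of $\bm\omega$ given $\bm i$ is \emph{not} $\mathbb{P}_a$ but a size-biased version (the Peyri\`ere bias $\widetilde\mu^+_{\bm i^-,\bm\omega}(\mathrm d\bm i^+)$ couples $\bm i^+$ and $\bm\omega$), so one cannot invoke a standard product $0$--$1$ law directly. Disentangling this requires essentially re-deriving the factorisation \eqref{ssss} along the branch of $\bm i^+$ and showing tail-triviality of the biased law, which is more work than the semi-algebra argument and offers no extra payoff here.
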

\begin{proof}	
It is enough to prove that for $B_1,B_2\in\widetilde{\mathcal{A}}_a$ with $\mathbb{Q}_{\mu}(B_1)\cdot\mathbb{Q}_{\mu}(B_2)>0$, we have
	\begin{equation}
	\label{eqn:ergsuff1}
	\lim_{k\to\infty}\mathbb{Q}_{\mu}(\widetilde{\sigma}^{-k}(B_1)\cap B_2)>0.
	\end{equation}
Note that since the sets $B_1,B_2\in\widetilde{\mathcal{A}}_a$ are only involved with random variables $V_u$ with words $u\in\Sigma_a^{+,*}$ of bounded length, for $k$ large enough all the random variables involved in the sets $\widetilde{\sigma}^{-k}B_1$ and $B_2$ will be independent. More precisely, for $l=1,2$ take $(u^l,v^l)=(u_{m_l}^l\cdots u_{0}^l,v_1^l\cdots v_{n_l}^l)$ such that
\begin{align*}
B_l=\{(\bm{i},\bm{\omega})&:\bm{i}\in[u^l]^-\times[v^l],V_{v^l_1\cdots v^l_k}\in B_{v^l_1\cdots v^l_k},\; k=1,\dots,n_l\}.
\end{align*}
For $k>n_2$ we have that all the words related to the random variables in $\{V_u\}_{u\in \Sigma_a^{+,*}}$ appeared in $\widetilde{\sigma}^{-k}B_1$ have word lengths at least $n_2+1$, and therefore all the random variables appeared in $\widetilde{\sigma}^{-k}B_1$ are independent of $B_2$. This implies that  
\begin{align*}
\mathbb{Q}_{\mu}(\widetilde{\sigma}^{-k}(B_1)\cap B_2)=\prod_{l=1,2}\prod_{k=1}^{n_l}\mathbb{E}_{\mathbb{P}_a}(V_{v^l_1\cdots v^l_k}\mathbf{1}_{\{V_{v^l_1\cdots v^l_k}\in B_{v^l_1\cdots v^l_k}\}})\times\mu(\sigma^{-k}(U_1)\cap U_2),
\end{align*}
where for $l=1,2$, $U_l$ are the projection of $B_l$ to $\Sigma_a$. From $\mathbb{Q}_{\mu}(B_1)\cdot\mathbb{Q}_{\mu}(B_2)>0$ we know that for $l=1,2$,
\[
\prod_{k=1}^{n_l}\mathbb{E}_{\mathbb{P}_a}(V_{v^l_1\cdots v^l_k}\mathbf{1}_{\{V_{v^l_1\cdots v^l_k}\in B_{v^l_1\cdots v^l_k}\}})>0 \text{ and } \mu(U_l)>0.
\]
Then, since $\mu$ is $\sigma$-ergodic, we have that for $k$ large enough, $\mu(\sigma^{-k}(U_1)\cap U_2)>0$. Therefore, \eqref{eqn:ergsuff1} holds.
\end{proof}

\section{Iterated function system and exact-dimensionality}\label{ifsed} 

In this section we shall consider a general IFS $\bm{I}=\{f_i\}_{i=1}^a$ of the form
\[
f_i(x)=r_i x+t_i,\; r_i\in(0,1), t_i\in\mathbb{R},
\]
so that the exact-dimensional result in this section is proved in its full generality for self-similar IFSs. For a finite word $i_1\cdots i_k\in \{1,\ldots,a\}^k$ denote by $f_{i_1\cdots i_k}=f_{i_1}\circ\cdots\circ f_{i_k}$. We define the canonical mapping $\Phi_{\bm{I}}\colon \Sigma_a^+\to \mathbb{R}$ as 
\[
\Phi_{\bm{I}}(\bm{i}^+)=\lim_{k\to\infty}f_{\bm{i}^+|_k}(0).
\]
Denote by $\mathcal{B}_{\bm{I}}$ the $\sigma$-field generated by $\Phi_{\bm{I}}^{-1}(\mathcal{B}(\mathbb{R}))$. Let $A_{\bm{I}}=\Phi_{\bm{I}}(\Sigma_a^+)$ be the self-similar set (or the attractor) of $\bm{I}$. Let $R_{\bm{I}}=|A_{\bm{I}}|$ be the diameter of $A_{\bm{I}}$.
	
For a $\sigma$-ergodic measure $\mu^+$ on $\Sigma_a^+$, the push-forward measure
\[
\Phi_{\bm{I}}(\mu^+)=\mu^+\circ \Phi_{\bm{I}}^{-1}
\]
defines a probability measure on $A_{\bm{I}}$, which is called a self-similar measure if $\mu^+$ is a Bernoulli measure on $\Sigma_a^+$. The exact-dimensionality of $\Phi_{\bm{I}}(\mu^+)$ is proved by Feng and Hu \cite{FH09}, and in our notation the dimension is equal to
\[
\dim \Phi_{\bm{I}}(\mu^+)=\frac{h_\mu-h_{\mu,\bm{I}}}{\chi_{\mu,\bm{I}}},
\]
where $h_\mu$ is the measure-theoretic entropy, $h_{\mu,\bm{I}}$ is the conditional entropy that reflects how $\bm{I}$ overlaps with respect to $\mu$, and
\[
\chi_{\mu,\bm{I}}=-\sum_{i=1}^a \mu^+([i])\log r_i
\]
is the associated Lyapunov exponent. In general the conditional entropy $h_{\mu,\bm{I}}$ is implicit, but by the recent work of Jordan and Rapaport \cite{JRa21} we may compute this entropy when the IFS $\bm{I}$ satisfies the ESC.

The exact-dimensionality of $\Phi_{\bm{I}}(\widetilde{\mu}^+)$ when $\mu^+$ is a Bernoulli measure on $\Sigma_a^+$ is proved in \cite{FJ14}. Here we shall prove a more general result for $\Phi_{\bm{I}}(\widetilde{\mu}^+)$ by using the exact-dimensionality of $\Phi_{\bm{I}}(\widetilde{\mu}^+_{\bm{i}^-})$ for  $\mu^{-}$-a.e. $\bm{i}^-$, where $\mu$ is the natural extension of $\mu^+$ to $\Sigma_a$, $\mu^-=\mu\circ (\pi^-)^{-1}$ and $\mu^+_{\bm{i}^-}$ is the fibre measures on $\Sigma_a^+$.

\subsection{Exact-dimensionality of push-forward fibre cascade measures}

Recall the first generation cylinder partition $\mathcal{P}_a^+=\{[i]\}_{i=1,\cdots, a}$. Recall the conditional information
\[
I_{\lambda}(\mathcal{P}_a^+|\mathcal{B}_{\bm{I}})=\sum_{B\in\mathcal{P}_a^+}-\mathbf{1}_{B}\log \mathbb{E}_{\lambda}(\mathbf{1}_{B}|\mathcal{B}_{\bm{I}})
\]
of a measure $\lambda$ on $\Sigma_a^+$. Define
\begin{align*}
h_{\mu,V,\bm{I}}=&\ \int_{\Omega_a} \int_{\Sigma_a^-} \int_{\Sigma_a^+} I_{\bar{\mu}^+_{\bm{i}^-,\bm{\omega}}}(\mathcal{P}_a^+|\mathcal{B}_{\bm{I}})(\bm{i}^+) \,\widetilde{\mu}^+_{\bm{i}^-,\bm{\omega}}(\mathrm{d} \bm{i}^+) \mu^-(\mathrm{d}\bm{i}^-) \mathbb{P}_a(\mathrm{d}\bm{\omega})\\
=&\ \int_{\Omega_a} \int_{\Sigma_a^-} H_{\bar{\mu}^+_{\bm{i}^-,\bm{\omega}}}(\mathcal{P}_a^+|\mathcal{B}_{\bm{I}}) \cdot \|\widetilde{\mu}^+_{\bm{i}^-,\bm{\omega}}\| \, \mu^-(\mathrm{d}\bm{i}^-) \mathbb{P}_a(\mathrm{d}\bm{\omega}).
\end{align*}

\begin{theorem}\label{ed1}
For $\mu^{-}$-a.e. $\bm{i}^-\in \Sigma_a^-$, for $\mathbb{P}_a$-a.e. $\bm{\omega}\in \Omega_a$ with $\|\widetilde{\mu}_{\bm{i}^-,\bm{\omega}}^+\|>0$, $\Phi_{\bm{I}}(\widetilde{\mu}^+_{\bm{i}^-,\bm{\omega}})$  is exact dimensional with dimension
\[
D_{\mu,V,\bm{I}}=\frac{h_{\mu}-h_V-h_{\mu,V,\bm{I}}}{\chi_{\mu,\bm{I}}}.
\]
\end{theorem}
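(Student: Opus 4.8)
\medskip
\noindent\textbf{Plan of proof.} I would run the local-dimension argument of Feng and Hu \cite{FH09} for self-similar measures (adapted to Bernoulli Mandelbrot cascades in \cite{FJ14}) on the random extension $(\widetilde{\Sigma}_a,\widetilde{\mathcal{F}}_a,\mathbb{Q}_\mu,\widetilde{\sigma})$, which is $\widetilde{\sigma}$-invariant by Lemma \ref{lminv} and $\widetilde{\sigma}$-ergodic by Lemma \ref{erg}. Write $x=\Phi_{\bm{I}}(\bm{i}^+)$, $r_{\bm{i}^+|_n}=r_{i_1}\cdots r_{i_n}$ and $\nu=\Phi_{\bm{I}}(\widetilde{\mu}^+_{\bm{i}^-,\bm{\omega}})$, so that $\nu(B(x,r))=\widetilde{\mu}^+_{\bm{i}^-,\bm{\omega}}(\Phi_{\bm{I}}^{-1}(B(x,r)))$ and $\Phi_{\bm{I}}([\bm{i}^+|_n])\subset B(x,r_{\bm{i}^+|_n}R_{\bm{I}})$. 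Since the ratios $r_i$ lie in a compact subinterval of $(0,1)$, consecutive radii $r_{\bm{i}^+|_n}$ are comparable, so the local dimension of $\nu$ at $x$ may be evaluated along the subsequence $r=r_{\bm{i}^+|_n}R_{\bm{I}}$; as the ergodic theorem for $\mathbb{Q}_\mu$ gives $\tfrac1n(-\log r_{\bm{i}^+|_n})\to\chi_{\mu,\bm{I}}$, it suffices to prove that for $\mathbb{Q}_\mu$-a.e.\ $(\bm{i},\bm{\omega})$
\[
\lim_{n\to\infty}\frac{-\log\widetilde{\mu}^+_{\bm{i}^-,\bm{\omega}}\bigl(\Phi_{\bm{I}}^{-1}(B(x,r_{\bm{i}^+|_n}R_{\bm{I}}))\bigr)}{n}=h_\mu-h_V-h_{\mu,V,\bm{I}}.
\]
By the disintegration \eqref{pmq} of $\mathbb{Q}_\mu$, a property holds $\mathbb{Q}_\mu$-a.e.\ exactly when, for $\mu^-\otimes\mathbb{P}_a$-a.e.\ $(\bm{i}^-,\bm{\omega})$, it holds $\widetilde{\mu}^+_{\bm{i}^-,\bm{\omega}}$-a.e.\ (which forces $\|\widetilde{\mu}^+_{\bm{i}^-,\bm{\omega}}\|>0$); so, dividing by $\chi_{\mu,\bm{I}}$, the above is precisely the asserted per-fibre statement.

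The next step is to establish the splitting
\[
\log\nu(B(x,r_{\bm{i}^+|_n}R_{\bm{I}}))=\log\widetilde{\mu}^+_{\bm{i}^-,\bm{\omega}}([\bm{i}^+|_n])-\log\mathbb{E}_{\bar{\mu}^+_{\bm{i}^-,\bm{\omega}}}(\mathbf{1}_{[\bm{i}^+|_n]}|\mathcal{B}_{\bm{I}})(\bm{i}^+)+o(n)
\]
and to identify the limits of its two main terms. For the first, since cylinders are balls of $(\Sigma_a^+,d_\delta^+)$ for every $\delta\in(0,1)$, the exact-dimensionality of $\widetilde{\mu}^+_{\bm{i}^-,\bm{\omega}}$ coming from Lemma \ref{lem1} and Theorem \ref{BJ21} (applicable since $h_V<h_\mu$) gives $\tfrac1n(-\log\widetilde{\mu}^+_{\bm{i}^-,\bm{\omega}}([\bm{i}^+|_n]))\to h_\mu-h_V$ for $\mathbb{Q}_\mu$-a.e.\ $(\bm{i},\bm{\omega})$. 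For the second, I would use the identity $\mathcal{B}_{\bm{I}}\vee\mathcal{P}_a^+=\sigma^{-1}\mathcal{B}_{\bm{I}}\vee\mathcal{P}_a^+$, which holds because $\Phi_{\bm{I}}=f_i\circ\Phi_{\bm{I}}\circ\sigma$ on $[i]$ and $\mathcal{B}(\mathbb{R})$ is invariant under the affine maps $f_i$; iterating it yields $\bigl(\bigvee_{j=0}^{k-1}\sigma^{-j}\mathcal{P}_a^+\bigr)\vee\mathcal{B}_{\bm{I}}=\bigl(\bigvee_{j=0}^{k-1}\sigma^{-j}\mathcal{P}_a^+\bigr)\vee\sigma^{-k}\mathcal{B}_{\bm{I}}$, and combined with the chain rule for conditional information and the shift-compatibility \eqref{ssss}--\eqref{cdm12} of the fibre measures (so that the normalised $\sigma^k$-push-forward of $\bar{\mu}^+_{\bm{i}^-,\bm{\omega}}$ restricted to $[\bm{i}^+|_k]$ is $\bar{\mu}^+_{\bm{i}^-\bm{i}^+|_k,\,\bm{\omega}_{\bm{i}^+|_k}}$, i.e.\ the fibre measure read off at $\widetilde{\sigma}^k(\bm{i},\bm{\omega})$) it produces the exact telescoping
\[
-\log\mathbb{E}_{\bar{\mu}^+_{\bm{i}^-,\bm{\omega}}}(\mathbf{1}_{[\bm{i}^+|_n]}|\mathcal{B}_{\bm{I}})(\bm{i}^+)=\sum_{k=0}^{n-1}I_{\bar{\mu}^+_{\bm{i}^-\bm{i}^+|_k,\,\bm{\omega}_{\bm{i}^+|_k}}}(\mathcal{P}_a^+|\mathcal{B}_{\bm{I}})(\sigma^k\bm{i}^+).
\]
The summand is the $\widetilde{\sigma}$-orbit of the function $(\bm{j},\bm{\omega}')\mapsto I_{\bar{\mu}^+_{\bm{j}^-,\bm{\omega}'}}(\mathcal{P}_a^+|\mathcal{B}_{\bm{I}})(\bm{j}^+)$, whose $\mathbb{Q}_\mu$-integral is $h_{\mu,V,\bm{I}}$ by definition and which lies in $L^1(\mathbb{Q}_\mu)$ because $\mathbb{E}_{\mathbb{Q}_\mu}I_{\bar{\mu}^+}(\mathcal{P}_a^+|\mathcal{B}_{\bm{I}})=\int\|\widetilde{\mu}^+\|\,H_{\bar{\mu}^+}(\mathcal{P}_a^+|\mathcal{B}_{\bm{I}})\le\log a$; hence the Birkhoff ergodic theorem for the ergodic system $\mathbb{Q}_\mu$ gives $\tfrac1n\bigl(-\log\mathbb{E}_{\bar{\mu}^+_{\bm{i}^-,\bm{\omega}}}(\mathbf{1}_{[\bm{i}^+|_n]}|\mathcal{B}_{\bm{I}})(\bm{i}^+)\bigr)\to h_{\mu,V,\bm{I}}$. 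Inserting the two limits into the splitting yields the displayed limit $h_\mu-h_V-h_{\mu,V,\bm{I}}$.

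The main obstacle is the $o(n)$ error in the splitting, i.e.\ the geometric comparison between the Euclidean ball $B(x,r_{\bm{i}^+|_n}R_{\bm{I}})$ and the $\mathcal{B}_{\bm{I}}$-conditional measure of the cylinder $[\bm{i}^+|_n]$. Disintegrating $\widetilde{\mu}^+_{\bm{i}^-,\bm{\omega}}$ over $\nu$ through $\Phi_{\bm{I}}$ shows that $\nu(B(x,r_{\bm{i}^+|_n}R_{\bm{I}}))\cdot\mathbb{E}_{\bar{\mu}^+_{\bm{i}^-,\bm{\omega}}}(\mathbf{1}_{[\bm{i}^+|_n]}|\mathcal{B}_{\bm{I}})(\bm{i}^+)$ agrees with $\widetilde{\mu}^+_{\bm{i}^-,\bm{\omega}}([\bm{i}^+|_n])$ up to a factor $e^{o(n)}$, provided the conditional probability of $[\bm{i}^+|_n]$ given $\{\Phi_{\bm{I}}=y\}$ varies by no more than $e^{o(n)}$ as $y$ ranges over $\Phi_{\bm{I}}(\Sigma_a^+)\cap B(x,r_{\bm{i}^+|_n}R_{\bm{I}})$; over bounded weights this is \cite[Section 4]{FH09}. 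The new difficulty is that the cascade weights $Q^V_{\bm{i}^+|_n}$ are unbounded, so this comparison --- which interchanges the resolution limit $r\to0$ (identifying the $\mathcal{B}_{\bm{I}}$-conditional expectations via the differentiation statement of Proposition \ref{P3.5}(1), here in the case $\gamma=1$ of balls in $\mathbb{R}$) with the time average $n\to\infty$ --- must be controlled on the Peyri\`ere space; the dominating function supplied by Proposition \ref{P3.5}(2), whose $\mathbb{Q}_\mu$-integrability is again where the hypothesis $h_V<h_\mu$ enters through Theorem \ref{BJ21}, together with Maker's ergodic theorem (Theorem \ref{met}) applied to $(\widetilde{\Sigma}_a,\mathbb{Q}_\mu,\widetilde{\sigma})$, is what closes the gap. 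This is the step I expect to be the most delicate.
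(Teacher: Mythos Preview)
Your proposal is correct and lands on the same ingredients as the paper --- the skew product $(\widetilde{\Sigma}_a,\mathbb{Q}_\mu,\widetilde{\sigma})$, the shift-compatibility \eqref{ssss}, the domination from Proposition \ref{P3.5}(2), and Maker's theorem --- but the organization differs. You first pass to the limiting $\sigma$-algebra $\mathcal{B}_{\bm{I}}$, obtain the \emph{exact} telescoping
\[
-\log\mathbb{E}_{\bar{\mu}^+_{\bm{i}^-,\bm{\omega}}}(\mathbf{1}_{[\bm{i}^+|_n]}\mid\mathcal{B}_{\bm{I}})(\bm{i}^+)=\sum_{k=0}^{n-1}I_{\bar{\mu}^+_{\widetilde{\sigma}^k(\bm{i},\bm{\omega})}}(\mathcal{P}_a^+\mid\mathcal{B}_{\bm{I}})(\sigma^k\bm{i}^+),
\]
apply plain Birkhoff, and then relegate the comparison between the finite-radius ball and the $\mathcal{B}_{\bm{I}}$-conditional expectation to a separate $o(n)$ correction. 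The paper instead telescopes the finite-radius ratio $\widetilde{\mu}^+_{\bm{i}^-,\bm{\omega}}(B_{\bm{I}}(\bm{i}^+,n))/\widetilde{\mu}^+_{\bm{i}^-,\bm{\omega}}([\bm{i}^+|_n])$ directly, producing $\sum_{k=0}^{n-1}f_{n-k}\circ\widetilde{\sigma}^k$ with $f_n\to f=I_{\bar{\mu}^+}(\mathcal{P}_a^+\mid\mathcal{B}_{\bm{I}})$, and applies Maker once. Your $o(n)$ term is precisely $\sum_{k=0}^{n-1}(f_{n-k}-f)\circ\widetilde{\sigma}^k$, so controlling it via Maker duplicates the paper's main computation; the Birkhoff step you highlight as central is in fact subsumed. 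The paper's route is therefore shorter, while yours makes the Feng--Hu conditional-information structure more visible. One minor point: the $L^1(\mathbb{Q}_\mu)$-bound on the dominating function comes from $H_{\bar{\mu}^+}(\mathcal{P}_a^+)\le\log a$ uniformly, not from $h_V<h_\mu$; that hypothesis enters only through non-degeneracy of the cascade.
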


\begin{proof}

For $u\in\Sigma_a^{+,*}$ and a set $B\in \mathcal{B}_a^+$ we denote by
\[
uB=\{u\bm{i}^+:\bm{i}^+\in B\}.
\]
By using \cite[Lemma 3.7]{FJ14} we have that for $\bm{i}^+=i_1i_2\cdots\in\Sigma_a^+$ and $r>0$,
\begin{align}
\Phi_{\bm{I}}^{-1}(B(\Phi_{\bm{I}}(\bm{i}),r_{\bm{i}^+|_1}r))\cap \mathcal{P}_a^+(\bm{i}^+)=&\ \sigma^{-1}\Phi_{\bm{I}}^{-1}(B(\Phi_{\bm{I}}(\sigma\bm{i}^+),r))\cap \mathcal{P}_a^+(\bm{i}^+)\nonumber\\
=&\ i_1\Phi_{\bm{I}}^{-1}(B(\Phi_{\bm{I}}(\sigma\bm{i}^+),r)).\label{l3.7}
\end{align}
For $n\ge 0$ denote by
\[
B_{{\bm{I}}}(\bm{i}^+,n)=\Phi_{\bm{I}}^{-1}(B(\Phi_{\bm{I}}(\bm{i}^+),R_{\bm{I}}\cdot r_{\bm{i}^+|_n})),
\]
where $r_{\bm{i}^+|_n}=r_{i_1}\cdots r_{i_n}$ for $n\ge 1$ and $r_{\emptyset}=1$. Then by \eqref{l3.7} we have that for $n\ge 1$ and $0\le k\le n-1$,
\begin{align*}
B_{{\bm{I}}}(\sigma^k\bm{i}^+,n-k)\cap \mathcal{P}_a^+(\sigma^k\bm{i}^+)=&\ \sigma^{-1} B_{{\bm{I}}}(\sigma^{k+1}\bm{i}^+,n-k-1)\cap \mathcal{P}_a^+(\sigma^k\bm{i}^+)\\
=&\ i_{k+1}B_{{\bm{I}}}(\sigma^{k+1}\bm{i}^+,n-k-1).
\end{align*}
This implies
\begin{align*}
\frac{\bar{\mu}_{\bm{i}^-,\bm{\omega}}^+(\bm{i}^+|_kB_{{\bm{I}}}(\sigma^k\bm{i}^+,n-k))}{\bar{\mu}_{\bm{i}^-,\bm{\omega}}^+(\bm{i}^+|_{k+1}B_{{\bm{I}}}(\sigma ^{k+1}\bm{i},n-k-1))}=&\ \frac{\bar{\mu}_{\bm{i}^-,\bm{\omega}}^+(\bm{i}^+|_kB_{{\bm{I}}}(\sigma^k\bm{i}^+,n-k))}{\bar{\mu}_{\bm{i}^-,\bm{\omega}}^+(\bm{i}^+|_k(B_{{\bm{I}}}(\sigma^k\bm{i},n-k)\cap \mathcal{P}_a^+(\sigma^k\bm{i}^+)))}.
\end{align*}
For $u, v\in\Sigma_a^{+,*}$ with $\widetilde{\mu}^+_{\bm{i}^-,\bm{\omega}}([u])>0$, by \eqref{ssss} we have
\[
\bar{\mu}_{\bm{i}^-,\bm{\omega}}^+([uv])=\frac{\widetilde{\mu}^+_{\bm{i}^-,\bm{\omega}}([uv])}{\widetilde{\mu}^+_{\bm{i}^-,\bm{\omega}}([u])}=\frac{Q^V_u(\bm{\omega})\cdot \mu^+_{\bm{i}^-}([u])\cdot \widetilde{\mu}_{\bm{i}^-u,\bm{\omega}_u}^{+}([v])}{Q^V_u(\bm{\omega})\cdot \mu^+_{\bm{i}^-}([u])\cdot \widetilde{\mu}_{\bm{i}^-u,\bm{\omega}_u}^{+}(\Sigma_a^+)}=\bar{\mu}_{\bm{i}^-u,\bm{\omega}_u}^{+}([v]).
\]
This implies that for $\mu^{-}$-a.e. $\bm{i}^-$ and $\mathbb{P}_a$-a.e. $\bm{\omega}$ with $\widetilde{\mu}^+_{\bm{i}^-,\bm{\omega}}([\bm{i}|_n])>0$ we have
\begin{align}
\frac{\widetilde{\mu}^+_{\bm{i}^-,\bm{\omega}}(B_{{\bm{I}}}(\bm{i},n))}{\widetilde{\mu}^+_{\bm{i}^-,\bm{\omega}}([\bm{i}|_n])}=&\ \prod_{k=0}^{n-1} \frac{\bar{\mu}^+_{\bm{i}^-,\bm{\omega}}(\bm{i}^+|_kB_{{\bm{I}}}(\sigma^k\bm{i}^+,n-k))}{\bar{\mu}^+_{\bm{i}^-,\bm{\omega}}(\bm{i}^+|_{k+1}B_{{\bm{I}}}(\sigma ^{k+1}\bm{i}^+,n-k-1))}\nonumber\\
=&\ \prod_{k=0}^{n-1}\frac{\bar{\mu}^+_{\bm{i}^-,\bm{\omega}}(\bm{i}^+|_kB_{{\bm{I}}}(\sigma^k\bm{i}^+,n-k))}{\bar{\mu}^+_{\bm{i}^-,\bm{\omega}}(\bm{i}^+|_k(B_{{\bm{I}}}(\sigma^k\bm{i}^+,n-k)\cap \mathcal{P}_a^+(\sigma^k\bm{i}^+)))}\nonumber\\
=&\ \prod_{k=0}^{n-1}\frac{\bar{\mu}_{\bm{i}^-i_1\cdots i_k,\bm{\omega}_{i_1\cdots i_k}}^{+}(B_{{\bm{I}}}(\sigma^k\bm{i}^+,n-k))}{\bar{\mu}_{\bm{i}^-i_1\cdots i_k,\bm{\omega}_{i_1\cdots i_k}}^{+}(B_{{\bm{I}}}(\sigma^k\bm{i}^+,n-k)\cap \mathcal{P}_a^+(\sigma^k\bm{i}^+))}.\label{ddds}
\end{align}
For $n\ge 0$ define the non-negative measurable function on $\Sigma_a\times \Omega_a$:
\[
f_n(\bm{i},\bm{\omega})=-\log \frac{\bar{\mu}^+_{\bm{i}^-,\bm{\omega}}(B_{{\bm{I}}}(\bm{i}^+,n)\cap \mathcal{P}_a^+(\bm{i}^+))}{\bar{\mu}^+_{\bm{i}^-,\bm{\omega}}(B_{{\bm{I}}}(\bm{i}^+,n))},
\]
with the convention that $\frac{0}{0}=1$. For $k\ge 1$ we have for $(\bm{i},\bm{\omega})\in \Sigma_a\times \Omega_a$,
\[
\widetilde{\sigma}^k(\bm{i},\bm{\omega})=(\sigma^k\bm{i}, \bm{\omega}_{i_1\cdots i_k}).
\]
Therefore
\[
f_{n-k}\circ \widetilde{\sigma}^k(\bm{i},\bm{\omega})=\log \frac{\bar{\mu}_{\bm{i}^-i_1\cdots i_k,\bm{\omega}_{i_1\cdots i_k}}^{+}(B_{{\bm{I}}}(\sigma^k\bm{i}^+,n-k))}{\bar{\mu}_{\bm{i}^-i_1\cdots i_k,\bm{\omega}_{i_1\cdots i_k}}^{+}(B_{{\bm{I}}}(\sigma^k\bm{i}^+,n-k)\cap \mathcal{P}_a^+(\sigma^k\bm{i}^+))}.
\]
Thus by \eqref{ddds} we obtain that
\[
\log \frac{\widetilde{\mu}^+_{\bm{i}^-,\bm{\omega}}(B_{{\bm{I}}}(\bm{i},n))}{\widetilde{\mu}^+_{\bm{i}^-,\bm{\omega}}([\bm{i}|_n])} = \frac{1}{n}\sum_{k=0}^{n-1} f_{n-k}\circ \widetilde{\sigma}^k(\bm{i},\bm{\omega}).
\]
By Proposition \ref{P3.5} we have that for $\mu^{-}$-a.e. $\bm{i}^-$, for $\mathbb{P}_a$-a.e. $\bm{\omega}$ with $\|\widetilde{\mu}_{\bm{i}^-,\bm{\omega}}\|>0$, for $\bar{\mu}_{\bm{i}^-,\bm{\omega}}$-a.e. $\bm{i}^+$,
\[
\lim_{n\to\infty}f_n(\bm{i}^-,\bm{i}^+,\bm{\omega})=I_{\bar{\mu}^+_{\bm{i}^-,\bm{\omega}}}(\mathcal{P}_a^+|\mathcal{B}_{\bm{I}})(\bm{i}^-,\bm{i}^+),
\]
and there is a $L^1(\Sigma_a^+,\mathcal{B}_a^+,\bar{\mu}^+_{\bm{i}^-,\bm{\omega}})$ function $g=g(\bm{i}^-,\cdot,\bm{\omega})$ with $\int g \,\mathrm{d}\bar{\mu}^+_{\bm{i}^-,\bm{\omega}} \le H_{\bar{\mu}^+_{\bm{i}^-,\bm{\omega}}}(\mathcal{P}_a^+)+C\le a\log a+C$ such that $f_n\le g$ holds for all $n\ge 0$. Hence by the exact-dimensionality of $\mu^-$-a.e. $\widetilde{\mu}^+_{\bm{i}^-,\bm{\omega}}$ on $(\Sigma_a^+,d_\delta^+)$ from Theorem \ref{BJ21}, and then by Lemma \ref{erg} and Theorem \ref{met}, we have that for $\mathbb{Q}_\mu$-a.e. $(\bm{i},\bm{\omega})$, or equivalently, for $\mu^-$-a.e. $\bm{i}^-$, for $\mathbb{P}_a$-a.e. $\bm{\omega}$ with $\|\widetilde{\mu}^+_{\bm{i}^-,\bm{\omega}}\|>0$, for $\widetilde{\mu}^+_{\bm{i}^-,\bm{\omega}}$-a.e. $\bm{i}^+$,
\begin{align*}
\lim_{n\to\infty}\frac{-\log\widetilde{\mu}_{\bm{i}^-}(B_{{\bm{I}}}(\bm{i}^+,n))}{n}=&\ \lim_{n\to\infty}\frac{-\log \widetilde{\mu}^+_{\bm{i}^-}([\bm{i}^+|_n])}{n}-\frac{1}{n}\sum_{k=0}^{n-1} f_{n-k}\circ \widetilde{\sigma}_a^k(\bm{i}^-,\bm{i}^+,\bm{\omega})\\
=&\ h_{\mu}-h_V-h_{\mu,V,\bm{I}}.
\end{align*}
Finally, for the Lyapunov exponent, we have that, by Birkhoff ergodic theorem and \eqref{mucd}, $\mathbb{Q}_\mu$-a.s.,
\begin{align*}
\lim_{n\to\infty}\frac{-\log r_{\bm{i}|_n}}{n}=&\ \int_{\Omega_a}\int_{\Sigma_a^-}\int_{\Sigma_a^+} -\log r_{\bm{i}^+|_1} \, \widetilde{\mu}^+_{\bm{i}^-,\bm{\omega}}(\mathrm{d}\bm{i}^+)\mu^-(\mathrm{d}\bm{i}^-)\mathbb{P}_a(\mathrm{d}\bm{\omega})\\
=&\ \int_{\Sigma_a^+} -\log r_{\bm{i}^+|_1} \, \mu^+(\mathrm{d}\bm{i}^+)\\
=&\ \sum_{i=1}^a -\mu^+([i])\log r_i=\chi_{\mu,\bm{I}}.
\end{align*}
This yields the conclusion.
\end{proof}

\subsection{Exact-dimensionality of push-forward cascade measures}

Recall that by SMB Theorem we have that $\mu^+$ is exact-dimensional on $(\Sigma_a^+,d_\delta^+)$ with dimension
\[
\frac{h_\mu}{-\log \delta}.
\]
Hence by Theorem \ref{BJ21} we have that the Mandelbrot cascade measure $\widetilde{\mu}^+$ is non-degenerate, and for $\mathbb{P}_a$-a.e. $\bm{\omega}\in \Omega_a$ with $\|\widetilde{\mu}^+_{\bm{\omega}}\|>0$, $\widetilde{\mu}^+_{\bm{\omega}}$ is exact-dimensional on $(\Sigma_a^+,d_\delta^+)$ with the same dimension as the fibre cascade measures $\widetilde{\mu}^+_{\bm{i}^-}$, which is
\[
\frac{h_{\mu}-h_V}{-\log \delta}.
\]
We expect the same shall hold when we push these measures to the Euclidean space through the canonical mapping $\Phi_{\bm{I}}$. By \eqref{fdm} we may deduce that, by using the uniform integrable martingale convergence theorem (see for example the proof of Corollary 2.12 in \cite{BJ21}), for $\mathbb{P}_a$-a.e. $\bm{\omega}\in \Omega_a$ with $\|\widetilde{\mu}^+_{\bm{\omega}}\|>0$, for $u\in\Sigma_a^{+,*}$,
\begin{equation}\label{mucd}
\widetilde{\mu}_{\bm{\omega}}^+([u])=\int_{\Sigma_a^-} \widetilde{\mu}_{\bm{i}^-,\bm{\omega}}^+([u]) \, \mu^-(\mathrm{d} \bm{i}^-).
\end{equation}
Therefore, by the definition of lower Hausdorff dimension and Theorem \ref{ed1}, we have that for $\mathbb{P}_a$-a.e. $\bm{\omega}$ with $\|\widetilde{\mu}^+_{\bm{\omega}}\|>0$,
\[
\underline{\dim}_H \Phi_{\bm{I}}(\widetilde{\mu}^+_{\bm{\omega}})\ge \mathrm{essinf}_{\bm{i}^-\sim \mu^-} \underline{\dim}_H \Phi_{\bm{I}}(\widetilde{\mu}^+_{\bm{i}^-,\bm{\omega}})=D_{\mu,V,\bm{I}}.
\]
The other side of the inequality is much harder to derive. The following theorem is essential to our proofs. This result ensures that, using fibre measures is enough to provide sharp lower bound of the dimension. The proof follows \cite[Section 10.2]{LY85II}, see also \cite[Proof of (C2)]{FH09,Feng20}.

\begin{theorem}\label{ed22}
For $\mathbb{P}_a$-a.e. $\bm{\omega}$ with $\|\widetilde{\mu}^+_{\bm{\omega}}\|>0$, $\Phi_{\bm{I}}(\widetilde{\mu}^+_{\bm{\omega}})$  is exact-dimensional with dimension $D_{\mu,V,\bm{I}}$.
\end{theorem}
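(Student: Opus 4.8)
The plan is to follow Ledrappier and Young \cite[Section 10.2]{LY85II}, adapting the scheme of Feng and Hu \cite{FH09,Feng20}. The lower bound $\underline{\dim}_H\Phi_{\bm{I}}(\widetilde{\mu}^+_{\bm{\omega}})\ge D_{\mu,V,\bm{I}}$ is already established, and it gives $\underline{\dim}_{\mathrm{loc}}\Phi_{\bm{I}}(\widetilde{\mu}^+_{\bm{\omega}})(\Phi_{\bm{I}}(\bm{i}^+))\ge D_{\mu,V,\bm{I}}$ for $\widetilde{\mu}^+_{\bm{\omega}}$-a.e.\ $\bm{i}^+$; so only the matching upper bound on the local dimension is needed. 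Working along the symbolic balls $B_{\bm{I}}(\bm{i}^+,n)$, using that $\mathbb{Q}_{\mu}$ is $\widetilde{\sigma}$-ergodic (Lemma \ref{erg}), so that $-\log r_{\bm{i}^+|_n}/n\to\chi_{\mu,\bm{I}}$ by Birkhoff, and using the symbolic exact-dimensionality $-\log\widetilde{\mu}^+_{\bm{\omega}}([\bm{i}^+|_n])/n\to h_{\mu}-h_V$ furnished by Theorem \ref{BJ21} (applied to the exact-dimensional $\mu^+$ on $(\Sigma_a^+,d_\delta^+)$), then after the splitting
\[
-\log\widetilde{\mu}^+_{\bm{\omega}}(B_{\bm{I}}(\bm{i}^+,n))=-\log\widetilde{\mu}^+_{\bm{\omega}}([\bm{i}^+|_n])-\log\frac{\widetilde{\mu}^+_{\bm{\omega}}(B_{\bm{I}}(\bm{i}^+,n))}{\widetilde{\mu}^+_{\bm{\omega}}([\bm{i}^+|_n])}
\]
(note $[\bm{i}^+|_n]\subset B_{\bm{I}}(\bm{i}^+,n)$, so the last ratio is $\ge 1$) and an interpolation between the scales $R_{\bm{I}}r_{\bm{i}^+|_n}$ and $R_{\bm{I}}r_{\bm{i}^+|_{n+1}}$, everything reduces to the single estimate
\[
\liminf_{n\to\infty}\frac{1}{n}\log\frac{\widetilde{\mu}^+_{\bm{\omega}}(B_{\bm{I}}(\bm{i}^+,n))}{\widetilde{\mu}^+_{\bm{\omega}}([\bm{i}^+|_n])}\ \ge\ h_{\mu,V,\bm{I}}
\]
for $\widetilde{\mu}^+_{\bm{\omega}}$-a.e.\ $\bm{i}^+$ and $\mathbb{P}_a$-a.e.\ $\bm{\omega}$; the reverse inequality is equivalent to the lower bound already in hand.

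To prove this estimate I would use the disintegration \eqref{mucd}, $\widetilde{\mu}^+_{\bm{\omega}}=\int_{\Sigma_a^-}\widetilde{\mu}^+_{\bm{j}^-,\bm{\omega}}\,\mu^-(\mathrm{d}\bm{j}^-)$, together with the fibre-level fact extracted in the proof of Theorem \ref{ed1}, namely that $\frac{1}{n}\log\bigl(\widetilde{\mu}^+_{\bm{j}^-,\bm{\omega}}(B_{\bm{I}}(\bm{i}^+,n))/\widetilde{\mu}^+_{\bm{j}^-,\bm{\omega}}([\bm{i}^+|_n])\bigr)\to h_{\mu,V,\bm{I}}$ for $\mathbb{Q}_{\mu}$-a.e.\ $(\bm{j}^-,\bm{i}^+,\bm{\omega})$. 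Put $G_N(\bm{j}^-,\bm{i}^+,\bm{\omega})=\inf_{m\ge N}\frac{1}{m}\log\bigl(\widetilde{\mu}^+_{\bm{j}^-,\bm{\omega}}(B_{\bm{I}}(\bm{i}^+,m))/\widetilde{\mu}^+_{\bm{j}^-,\bm{\omega}}([\bm{i}^+|_m])\bigr)$; then $G_N\ge 0$, and $G_N$ increases $\mathbb{Q}_{\mu}$-a.e.\ to $h_{\mu,V,\bm{I}}$ (in particular $G_N$ is $\mathbb{Q}_{\mu}$-essentially bounded, so we may truncate it to be genuinely bounded). For $n\ge N$ one has $\widetilde{\mu}^+_{\bm{j}^-,\bm{\omega}}(B_{\bm{I}}(\bm{i}^+,n))\ge e^{nG_N(\bm{j}^-,\bm{i}^+,\bm{\omega})}\,\widetilde{\mu}^+_{\bm{j}^-,\bm{\omega}}([\bm{i}^+|_n])$, so integrating in $\mu^-(\mathrm{d}\bm{j}^-)$ and applying Jensen's inequality to the probability measure
\[
\widehat{\rho}^{\,(n)}_{\bm{i}^+,\bm{\omega}}(\mathrm{d}\bm{j}^-):=\frac{\widetilde{\mu}^+_{\bm{j}^-,\bm{\omega}}([\bm{i}^+|_n])}{\widetilde{\mu}^+_{\bm{\omega}}([\bm{i}^+|_n])}\,\mu^-(\mathrm{d}\bm{j}^-)
\]
gives $\frac{1}{n}\log\bigl(\widetilde{\mu}^+_{\bm{\omega}}(B_{\bm{I}}(\bm{i}^+,n))/\widetilde{\mu}^+_{\bm{\omega}}([\bm{i}^+|_n])\bigr)\ge\int_{\Sigma_a^-}G_N\,\mathrm{d}\widehat{\rho}^{\,(n)}_{\bm{i}^+,\bm{\omega}}$ for $n\ge N$.

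The hard part is to show $\liminf_n\int_{\Sigma_a^-}G_N\,\mathrm{d}\widehat{\rho}^{\,(n)}_{\bm{i}^+,\bm{\omega}}\ge h_{\mu,V,\bm{I}}-o_N(1)$ for $\widetilde{\mu}^+_{\bm{\omega}}\otimes\mathbb{P}_a$-a.e.\ $(\bm{i}^+,\bm{\omega})$, after which one lets $N\to\infty$. The key observation is that $\widehat{\rho}^{\,(n)}_{\bm{i}^+,\bm{\omega}}$ is exactly the Rokhlin conditional distribution of the coordinate $\bm{j}^-$ given $(\bm{i}^+|_n,\bm{\omega})$ under $\mathbb{Q}_{\mu}$; by martingale convergence of conditional measures along the increasing filtration generated by the cylinders $[\bm{i}^+|_n]$ (and $\bm{\omega}$), these distributions converge to the conditional distribution of $\bm{j}^-$ given $(\bm{i}^+,\bm{\omega})$, and combined with $G_N\nearrow h_{\mu,V,\bm{I}}$ this should yield the required asymptotics. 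This is precisely the point at which the non-invertibility of $\widetilde{\sigma}$ bites: in Lemma \ref{lem1} the corresponding passage from the fibre measures $\mu^+_{\bm{i}^-}$ to $\mu^+$ went through Proposition \ref{SMB}, which requires $\sigma$ to be invertible on $\Sigma_a$, whereas $\widetilde{\sigma}$ is not invertible and Proposition \ref{SMB} is unavailable, forcing the Ledrappier--Young conditional-measure argument instead. The most delicate point I anticipate is that $G_N$ depends on the whole word $\bm{i}^+$ while $\widehat{\rho}^{\,(n)}_{\bm{i}^+,\bm{\omega}}$ only sees $\bm{i}^+|_n$; I would address this by comparing $\int G_N\,\mathrm{d}\widehat{\rho}^{\,(n)}_{\bm{i}^+,\bm{\omega}}$ with the genuine $\sigma(\bm{i}^+|_n,\bm{\omega})$-conditional expectation of $G_N$ on $\widetilde{\Sigma}_a$ (the two differ only through averaging $G_N$ over the tail of $\bm{i}^+$ inside $[\bm{i}^+|_n]$, a perturbation controlled by the near-constancy of $\bm{i}^+\mapsto\widetilde{\mu}^+_{\bm{j}^-,\bm{\omega}}(B_{\bm{I}}(\bm{i}^+,m))$ modulo $[\bm{i}^+|_m]$), followed by a diagonal choice of $N$ and $n$.

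Assembling the pieces, $\limsup_n\frac{-\log\widetilde{\mu}^+_{\bm{\omega}}(B_{\bm{I}}(\bm{i}^+,n))}{n}\le(h_{\mu}-h_V)-h_{\mu,V,\bm{I}}=D_{\mu,V,\bm{I}}\,\chi_{\mu,\bm{I}}$ for $\widetilde{\mu}^+_{\bm{\omega}}$-a.e.\ $\bm{i}^+$; dividing by $-\log r_{\bm{i}^+|_n}\sim n\,\chi_{\mu,\bm{I}}$ and interpolating between consecutive scales gives $\overline{\dim}_{\mathrm{loc}}\Phi_{\bm{I}}(\widetilde{\mu}^+_{\bm{\omega}})(\Phi_{\bm{I}}(\bm{i}^+))\le D_{\mu,V,\bm{I}}$ for $\widetilde{\mu}^+_{\bm{\omega}}$-a.e.\ $\bm{i}^+$ and $\mathbb{P}_a$-a.e.\ $\bm{\omega}$. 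Together with the lower bound, this shows $\Phi_{\bm{I}}(\widetilde{\mu}^+_{\bm{\omega}})$ is exact-dimensional with dimension $D_{\mu,V,\bm{I}}$, as claimed.
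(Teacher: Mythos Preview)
Your reduction is correct: only the upper local-dimension bound remains, and the ingredients you list (cylinder entropy $h_\mu-h_V$ for both $\widetilde{\mu}^+_{\bm{\omega}}$ and $\widetilde{\mu}^+_{\bm{i}^-,\bm{\omega}}$ via Lemma~\ref{lem1} and Theorem~\ref{BJ21}, Lyapunov exponent via Birkhoff under $\mathbb{Q}_\mu$) are exactly the right ones. But the Jensen/disintegration route you then propose is \emph{not} the Ledrappier--Young argument, and it has a genuine gap at the very point you flag as delicate. Martingale convergence yields $\int f\,\mathrm{d}\widehat{\rho}^{\,(n)}_{\bm{i}^+,\bm{\omega}}\to\int f\,\mathrm{d}\widehat{\rho}^{\,(\infty)}_{\bm{i}^+,\bm{\omega}}$ only for a \emph{fixed} bounded measurable $f(\bm{j}^-)$, with an $f$-dependent exceptional set; here $f=G_N(\cdot,\bm{i}^+,\bm{\omega})$ changes with $\bm{i}^+$, so you are asking for a.e.\ convergence over an uncountable family of integrands. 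Your proposed patch---near-constancy of $\bm{i}^+\mapsto\widetilde{\mu}^+_{\bm{j}^-,\bm{\omega}}(B_{\bm{I}}(\bm{i}^+,m))$ on $[\bm{i}^+|_m]$---does not hold in general: two points of $[\bm{i}^+|_m]$ have $\Phi_{\bm{I}}$-images at distance up to $R_{\bm{I}}r_{\bm{i}^+|_m}$, which is the ball radius itself, so the two balls are only comparable through a doubling constant for $\Phi_{\bm{I}}(\widetilde{\mu}^+_{\bm{j}^-,\bm{\omega}})$, and no doubling is available. The ``diagonal choice of $N$ and $n$'' does not rescue this.

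What \cite[\S10.2]{LY85II} (and the paper) actually do is a counting argument by contradiction. Set $\bar{s}_0,\bar{s}_1$ to be the upper local dimensions of $\widetilde{\mu}^+_{\bm{\omega}}$ and $\widetilde{\mu}^+_{\bm{i}^-,\bm{\omega}}$ at $\Phi_{\bm{I}}(\bm{i}^+)$ and suppose $\bar{s}_0>\bar{s}_1$ on a $\mathbb{Q}_\mu$-positive set. By Egorov, pass to a set $\Delta$ on which, uniformly for $n\ge N_0$, one has $\widetilde{\mu}^+_{\bm{i}^-,\bm{\omega}}([\bm{i}^+|_n])\le e^{-n(h-\epsilon)}$, $\widetilde{\mu}^+_{\bm{\omega}}([\bm{i}^+|_n])\ge e^{-n(h+\epsilon)}$, $[\bm{i}^+|_n]\subset B_{\Phi_{\bm{I}}}(\bm{i}^+,e^{n(-\chi+2\epsilon)})$, and $\widetilde{\mu}^+_{\bm{i}^-,\bm{\omega}}(B_{\Phi_{\bm{I}}}(\bm{i}^+,e^{n(-\chi+2\epsilon)}))\ge e^{n(-\chi+2\epsilon)(s_1+\epsilon)}$. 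A density-point argument (Proposition~\ref{P3.5}(1)) then produces $(\bm{i}^-,\bm{i}^+)\in\Delta$ and $n$ with $\widetilde{\mu}^+_{\bm{i}^-,\bm{\omega}}(B\cap\Delta^+_{\bm{i}^-})\ge c\,\widetilde{\mu}^+_{\bm{i}^-,\bm{\omega}}(B)$ for $B=B_{\Phi_{\bm{I}}}(\bm{i}^+,e^{n(-\chi+2\epsilon)})$, and $\widetilde{\mu}^+_{\bm{\omega}}(2B)\le e^{n(-\chi+2\epsilon)(s_0-\epsilon)}$. Now count $n$-cylinders meeting $B\cap\Delta^+_{\bm{i}^-}$: there are at least $c\,e^{n(-\chi+2\epsilon)(s_1+\epsilon)+n(h-\epsilon)}$ of them, each contained in $2B$ with $\widetilde{\mu}^+_{\bm{\omega}}$-mass $\ge e^{-n(h+\epsilon)}$, forcing $s_0\le s_1$. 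The point is that the \emph{same} cylinder entropy $h=h_\mu-h_V$ governs both the fibre and the global measure; this equality is the rigidity that substitutes for your unavailable Jensen/martingale step.
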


\begin{proof}
We only need to show that $\overline{\dim}_H \Phi_{\bm{I}}(\widetilde{\mu}_{\bm{\omega}}^+) \le D_{\mu,V,\bm{I}}$. 

For $\bm{i}^+\in\Sigma^+_a$ and $r>0$ recall the notation $B_{\Phi_{\bm{I}}}(\bm{i}^+,r)=\Phi_{{\bm{I}}}^{-1}B(\Phi_{{\bm{I}}}(\bm{i}^+),r)$. For $(\bm{i}^-,\bm{i}^+,\bm{\omega})\in\Sigma_a\times\Omega_a$ define
\[
\bar{s}_{0}(\bm{i}^-,\bm{i}^+,\bm{\omega})=\limsup_{r\to 0} \frac{\log \widetilde{\mu}_{\bm{\omega}}^+(B_{\Phi_{\bm{I}}}(\bm{i}^+,r))}{\log r}
\]
and
\[
\bar{s}_{1}(\bm{i}^-,\bm{i}^+,\bm{\omega})=\limsup_{r\to 0} \frac{\log \widetilde{\mu}^+_{\bm{i}^-,\bm{\omega}}(B_{\Phi_{\bm{I}}}(\bm{i}^+,r))}{\log r}.
\]
Then, by Theorem \ref{ed1}, it is enough to show that for $\mu^-$-a.e. $\bm{i}^-$, for $\mathbb{P}_a$-a.e. $\bm{\omega}$ with $\|\widetilde{\mu}^+_{\bm{i}^-,\bm{\omega}}\|>0$, for $\widetilde{\mu}^+_{\bm{i}^-,\bm{\omega}}$-a.e. $\bm{i}^+$,
\[
\bar{s}_{0}(\bm{i}^-,\bm{i}^+,\bm{\omega})\le \bar{s}_{1}(\bm{i}^-,\bm{i}^+,\bm{\omega}).
\]

Write $h=h_{\mu}-h_V$ and $\chi=\chi_{\mu,\bm{I}}$. By Lemma \ref{lem1} and Theorem \ref{BJ21}, we have that for $\mathbb{P}_a$-a.e. $\bm{\omega}$ with $\|\widetilde{\mu}_{\bm{\omega}}^+\|>0$, 
\begin{equation}\label{x1}
\lim_{n\to\infty} \frac{-\log\widetilde{\mu}_{\bm{\omega}}^+([\bm{i}^+|_n])}{n}=h \text{ and } \lim_{n\to\infty}\frac{-\log r_{\bm{i}^+|n}}{n}=\chi \text{ for } \widetilde{\mu}_{\bm{\omega}}^+\text{-a.e. } \bm{i}^+\in\Sigma_a^+;
\end{equation}
and for $\mu^-$-a.e. $\bm{i}^-\in\Sigma_a^-$ with $\|\widetilde{\mu}^+_{\bm{i}^-,\bm{\bm{\omega}}}\|>0$,
\begin{equation}\label{x2}
\lim_{n\to\infty} \frac{-\log\widetilde{\mu}^+_{\bm{i}^-,\bm{\bm{\omega}}}([\bm{i}^+|_n])}{n}=h \text{ for } \widetilde{\mu}^+_{\bm{i}^-,\bm{\bm{\omega}}}\text{-a.e. } \bm{i}^+\in\Sigma_a^+.
\end{equation}
We claim that for $\mu^-$-a.e. $\bm{i}^-$, for $\mathbb{P}_a$-a.e. $\bm{\omega}$ with $\|\widetilde{\mu}^+_{\bm{i}^-,\bm{\omega}}\|>0$, for $\widetilde{\mu}^+_{\bm{i}^-,\bm{\omega}}$-a.e. $\bm{i}^+$,
\[
0=\frac{h-h}{-\chi}\ge \bar{s}_{0}(\bm{i}^-,\bm{i}^+,\bm{\omega})-\bar{s}_{1}(\bm{i}^-,\bm{i}^+,\bm{\omega}).
\]
Suppose that the above statement is not true. Then for $\mathbb{P}_a$-positive $\bm{\omega}$ there is a subset $U_{\bm{\omega}}\subset \Sigma_a$ with $\int \mathbf{1}_{U_{\bm{\omega}}}(\bm{i}^-,\bm{i}^+) \widetilde{\mu}^+_{\bm{i}^-,\bm{\omega}}(\mathrm{d}\bm{i}^+)\mu^-(\mathrm{d}\bm{i}^-)>0$ such that for $(\bm{i}^-,\bm{i}^+)\in U_{\bm{\omega}}$,
\begin{equation}\label{x3}
0< \bar{s}_{0}(\bm{i}^-,\bm{i}^+,\bm{\omega})-\bar{s}_{1}(\bm{i}^-,\bm{i}^+,\bm{\omega}).
\end{equation}
From now on we fix an instance of $\bm{\omega}$ such that all \eqref{x1}, \eqref{x2} and \eqref{x3} hold. The rest of the arguments will be deterministic and we shall omit the notation $\bm{\omega}$.

One can find constants $s_0$, $s_1$ such that $0<s_0-s_1$, and for any $\epsilon>0$ one can find a subset $U_\epsilon\subset U$ with $\int \mathbf{1}_{U_{\epsilon}}(\bm{i}^-,\bm{i}^+) \widetilde{\mu}^+_{\bm{i}^-}(\mathrm{d}\bm{i}^+)\mu^-(\mathrm{d}\bm{i}^-)>0$ such that for $(\bm{i}^-,\bm{i}^+)\in U_\epsilon$,
\[
\bar{s}_0(\bm{i}^-,\bm{i}^+)>s_0-\epsilon/2 \text{ and } \bar{s}_1(\bm{i}^-,\bm{i}^+) <s_1+\epsilon/2.
\]
Fix $\epsilon\in (0,\chi/3)$. There exists $n_0:U_\epsilon\to\mathbb{N}$ such that for $\mu^-$-a.e. $\bm{i}^-$, for $\widetilde{\mu}^+_{\bm{i}^-}$-a.e. $\bm{i}^+$ with $(\bm{i}^-,\bm{i}^+)\in U_\epsilon$ and $n>n_0(\bm{i}^-,\bm{i}^+)$,
\[\text{(C1): }
\begin{cases}
\frac{1}{n(-\chi+2\epsilon)}\log \widetilde{\mu}_{\bm{i}^-}^+(B_{\Phi_{{\bm{I}}}}(\bm{i}^+,e^{n(-\chi+2\epsilon)}))<s_1+\epsilon;\\
-\frac{1}{n}\log \widetilde{\mu}^+_{\bm{i}^-}([\bm{i}^+|_n])>h-\epsilon;\\
[\bm{i}^+|_n]\subset B_{\Phi_{{\bm{I}}}}(\bm{i}^+,e^{n(-\chi+2\epsilon)});\\
-\frac{1}{n}\widetilde{\mu}^+([\bm{i}^+|_n])<h+\epsilon.
\end{cases}
\]
Take $N_0$ such that the set $\Delta=\{(\bm{i}^-,\bm{i}^+)\in U_\epsilon: n_0(\bm{i}^-,\bm{i}^+)\le N_0\}$ satisfies
\[
\int \mathbf{1}_{\Delta}(\bm{i}^-,\bm{i}^+) \widetilde{\mu}^+_{\bm{i}^-}(\mathrm{d}\bm{i}^+)\mu^-(\mathrm{d}\bm{i}^-)>0.
\]
By Proposition \ref{P3.5}(1) there exists $c>0$ and $\Delta'\subset\Delta$ with $\int \mathbf{1}_{\Delta'}(\bm{i}^-,\bm{i}^+) \widetilde{\mu}^+_{\bm{i}^-}(\mathrm{d}\bm{i}^+)\mu^-(\mathrm{d}\bm{i}^-)>0$ such that for $(\bm{i}^-,\bm{i}^+)\in \Delta'$, there exists $n= n(\bm{i}^-,\bm{i}^+)\ge N_0$ such that
\[\text{(C2): }
\begin{cases}
\frac{\widetilde{\mu}^+_{\bm{i}^-}(B_{\Phi_{{\bm{I}}}}(\bm{i}^+,e^{n(-\chi+2\epsilon)})\cap \Delta_{\bm{i}^-}^+)}{\widetilde{\mu}^+_{\bm{i}^-}(B_{\Phi_{{\bm{I}}}}(\bm{i}^+,e^{n(-\chi+2\epsilon)}))}>c;\\
\frac{1}{n(-\chi+2\epsilon)}\log \widetilde{\mu}^+_{\bm{i}^-}(B_{\Phi_{{\bm{I}}}}(\bm{i}^+,2e^{n(-\chi+2\epsilon)}))>s_0-\epsilon;\\
\frac{\log(1/c)}{n}<\epsilon,
\end{cases}
\]
where we denote by $\Delta_{\bm{i}^-}^+=\pi^+(\Delta\cap \eta(\bm{i}))$.

Take $(\bm{i}^-,\bm{i}^+)\in \Delta'$ such that (C1) and (C2) are satisfied with $n=n(\bm{i}^-,\bm{i}^+)$. First we have
\[
\widetilde{\mu}^+_{\bm{i}^-}(B_{\Phi_{{\bm{I}}}}(\bm{i}^+,e^{n(-\chi+2\epsilon)})\cap \Delta^+_{\bm{i}^-}) \ge c\widetilde{\mu}^+_{\bm{i}^-}(B_{\Phi_{{\bm{I}}}}(\bm{i}^+,e^{n(-\chi+2\epsilon)})) \ge ce^{n(-\chi+2\epsilon)(s_1+\epsilon)}.
\]
But for each $(\bm{j}^-,\bm{j}^+)\in \Sigma_a^-\times B_{\Phi_{{\bm{I}}}}(\bm{i}^+,e^{n(-\chi+2\epsilon)})\cap \Delta$, we have
\[
\widetilde{\mu}^+_{\bm{j}^-}([\bm{j}^+|_n])<e^{-n(h-\epsilon)}.
\]
Therefore, for $\bm{j}^-=\bm{i}^-$, the number of distinct $[\bm{j}^+|_n]$ intersecting $B_{\Phi_{{\bm{I}}}}(\bm{i}^+,e^{n(-\chi+2\epsilon)})\cap \Delta_{\bm{i}^-}^+$ is larger than
\[
\widetilde{\mu}^+_{\bm{i}^-}(B_{\Phi_{{\bm{I}}}}(\bm{i}^+,e^{n(-\chi+2\epsilon)})\cap \Delta^+_{\bm{i}^-})\cdot e^{-n(h-\epsilon)}\ge ce^{n(-\chi+2\epsilon)(s_1+\epsilon)}e^{n(h-\epsilon)}.
\]
For each $(\bm{j}^-,\bm{j}^+)\in \Delta$ such that $[\bm{j}^+|_n]$ intersecting $B_{\Phi_{{\bm{I}}}}(\bm{i}^+,e^{n(-\chi+2\epsilon)})\cap \Delta^+_{\bm{i}^-}$ we have $|\Phi_{{\bm{I}}}(\bm{i}^+)-\Phi_{{\bm{I}}}(\bm{j}^+)|\le e^{n(-\chi+2\epsilon)})$ and $[\bm{j}^+|_n]\subset B_{\Phi_{{\bm{I}}}}(\bm{j}^+,e^{n(-\chi+2\epsilon)})$. This implies that
\[
[\bm{j}^+|_n]\subset B_{\Phi_{{\bm{I}}}}(\bm{i}^+,2e^{n(-\chi+2\epsilon)}).
\]
In the meantime, for such a $(\bm{j}^-,\bm{j}^+)$, by (C1),
\[
\widetilde{\mu}^+([\bm{j}^+|_n])\ge e^{-n(h+\epsilon)}.
\]
Hence
\begin{align*}
e^{n(-\chi+2\epsilon)(s_0-\epsilon)}\ge &\ \widetilde{\mu}^+(B_{\Phi_{{\bm{I}}}}(\bm{i}^+,2e^{n(-\chi+2\epsilon)}))\\
\ge &\ \#\{u\in\{1,\cdots,a\}^n: [u]\cap B_{\Phi_{{\bm{I}}}}(\bm{i}^+,e^{n(-\chi+2\epsilon)})\cap \Delta^+_{\bm{i}^-}\neq\emptyset\}\cdot e^{-n(h+\epsilon)}\\
\ge &\ ce^{n(-\chi+2\epsilon)(s_1+\epsilon)-n(h-\epsilon)}e^{-n(h+\epsilon)}\\
\ge &\ e^{-n\epsilon}e^{n(-\chi+2\epsilon)(s_1+\epsilon)}e^{n(h-\epsilon)}e^{-n(h+\epsilon)}.
\end{align*}
This yields that
\[
(-\chi+2\epsilon)(s_0-\epsilon)\ge -\epsilon+(-\chi+2\epsilon)(s_1+\epsilon)+(h-\epsilon)-(h+\epsilon).
\]
Letting $\epsilon\to0$ we obtain
\[
-\chi s_0\ge -\chi s_1,
\]
or equivalently $s_0\le s_1$ since $\chi>0$, which is contradict to $s_0-s_1>0$.
\end{proof}

\section{Skew product symbolic space and Mandelbrot cascades}\label{pir}

\subsection{Skew product space}\label{idsed}

Let $a, b\ge 2$ be two integers. Denote by $\Sigma_{a,b}=\Sigma_a\times\Sigma_b$. For $\bm{v}=(\bm{i},\bm{j})\in \Sigma_a\times\Sigma_b$ and $n\ge 0$ denote by
\[
\bm{v}^-=(\bm{i}^-,\bm{j}^-),\ \bm{v}^+=(\bm{i}^+,\bm{j}^+), \ \bm{v}^-|_n=(\bm{i}^-|_n,\bm{j}^-|_n) \text{ and } \bm{v}^+|_n=(\bm{i}^+|_n,\bm{j}^+|_n). 
\]
Sometimes we shall also write $\bm{v}=(\bm{v}^-,\bm{v}^+)$. Write $\Sigma_{a,b}^{\pm,*}=\{\bm{v}^\pm|_n: \bm{v}\in \Sigma_{a,b},\ n\ge 0\}$ the set of pairs of finite words in $\Sigma_{a,b}^\pm=\{\bm{v}^\pm:\bm{v}\in \Sigma_{a,b}\}$. For $u=(u^a,u^b)\in \Sigma^{-,*}_{a,b}$ denote by
\[
[u]^-=[u^a]^-\times[u^b]^-,
\]
and for $v=(v^a,v^b)\in \Sigma_{a,b}^{+,*}$ denote by
\[
[v]=[v^a]\times[v^b].
\]
For $u=(u^a,u^b), v=(v^a,v^b)\in \Sigma^{\pm,*}_{a,b}$ denote by $uv=(u^av^a,u^bv^b)$ its concatenation. For $u=(u^a,u^b)\in \Sigma^{\pm,*}_{a,b}$, for $\bm{v}^+=(\bm{i}^+,\bm{j}^+)\in \Sigma_{a,b}^+$ write $u\bm{v}^+=(u^a\bm{i}^+,u^b\bm{j}^+)$; and for $\bm{v}^-=(\bm{i}^-,\bm{j}^-)\in \Sigma_{a,b}^-$ write $\bm{v}^-u=(\bm{i}^-u^a,\bm{j}^-u^b)$.

Let $\delta,\rho\in (0,1)$.  Denote by
\[
\alpha=\log \rho/\log \delta.
\]
We assume that $\alpha$ is irrational. Without loss of generality we also assume that $\delta<\rho$ so that $\alpha\in(0,1)$. Let
\[
R(t)=t+\alpha \mod 1
\]
be the $\alpha$-rotation on $\mathbb{T}=[0,1] \mod 1$.

Let $X=\mathbb{T}\times \Sigma_{a,b}=\mathbb{T}\times \Sigma_{a,b}^-\times \Sigma_{a,b}^+$. We equip $\Sigma_{a,b}$ with the metric $d_{\delta,\rho}=\max\{d_\delta,d_\rho\}$ and then equip $X$ with the metric $d_X=\max\{d_\mathbb{T},d_{\delta,\rho}\}$, where $d_\mathbb{T}$ denotes the distance metric on $\mathbb{T}$. Let $\mathcal{B}_X=\mathcal{B}_\mathbb{T}\otimes\mathcal{B}_{a,b}$ denotes the corresponding Borel $\sigma$-algebras.

Let $A=[1-\alpha,1)$ and $A^c=\mathbb{T}\setminus A=[0,1-\alpha)$. For $t\in \mathbb{T}$ and $\bm{v}=(\bm{i},\bm{j})\in \Sigma_{a,b}$ define
\[
\sigma_t(\bm{v})=\left\{\begin{array}{ll}(\sigma(\bm{i}),\sigma(\bm{j})),& \text{if } t\in A;\\
(\bm{i},\sigma(\bm{j})),& \text{if } t\in A^c.
\end{array}\right.
\]
Then on $X$ define the skew product
\[
T(x)=(R(t),\sigma_t(\bm{v})) \text{ for } x=(t,\bm{v})\in X.
\]
Note that $T$ is invertible on $X$.

\subsection{Dimension of fibre measures} \label{ldfm} 

Let $\mu^+$ and $\nu^+$ be $\sigma$-ergodic measures on $\Sigma_a^+$ and $\Sigma_b^+$ respectively. Let $\mu$ and $\nu$ be their natural extensions to $\Sigma_a$ and $\Sigma_b$ respectively. Let $\ell$ denote the Lebesgue measure on $\mathbb{T}$. Define the product measure
\[
\Theta=\ell\times\mu\times\nu
\]
It is easy to verify that $\Theta$ is $T$-invariant.

Let $\phi$ be the canonical projection from $X$ to its subspace $\mathbb{T}\times \Sigma_{a,b}^-$. Let
\[
\eta=\{\phi^{-1}(t,\bm{v}^-)\}_{(t,\bm{v}^-)\in \mathbb{T}\times \Sigma_{a,b}^-}
\]
be its associated measurable partition. It is obvious to see that for $x=(t,\bm{v}^-,\bm{v}^+)\in X$ the unique element in $\eta$ containing $x$ is
\[
\eta(x)=\{t\}\times\{\bm{v}^-\}\times \Sigma_{a,b}^+.
\]
It is also easy to see that $d_X$ restricted on $\eta(x)$ is the isometry to the metric $d_{\delta,\rho}^+=\max\{d_\delta^+,d_\rho^+\}$ on $\Sigma_{a,b}^+$. Let $\Theta^\eta_x$ be the conditional measures of $\Theta$ with respect to the measurable partition $\eta$. We have the following lemma regarding the dimension of $\Theta^\eta_x$.

\begin{lemma}\label{tex}
For $\Theta$-a.e. $x\in X$, the conditional measure $\Theta^\eta_x$ is exact-dimensional on $(\eta(x),d_X)$ with dimension
\[
\frac{h_{\mu}}{-\log \delta}+\frac{h_{\nu}}{-\log \rho}.
\]
\end{lemma}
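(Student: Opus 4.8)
The plan is to reduce the statement to the already-established exact-dimensionality of the one-sided fibre measures of $\mu^+$ and $\nu^+$ (Lemma \ref{lem1}) together with a routine ``ball $=$ product of cylinders'' dictionary in the $\max$-metric $d_{\delta,\rho}^+$. First I would identify the conditional measure $\Theta^\eta_x$ explicitly. Since $\Theta=\ell\times\mu\times\nu$ is a product and $\eta$ is the partition induced by the projection $\phi\colon X\to\mathbb{T}\times\Sigma_a^-\times\Sigma_b^-$ that retains exactly the coordinates $(t,\bm{i}^-,\bm{j}^-)$, the disintegration decouples across the three factors. Using the fibre measures $\mu^+_{\bm{i}^-}$ and $\nu^+_{\bm{j}^-}$ from Section \ref{Fim} (the conditional measures of $\mu$, resp.\ $\nu$, with respect to the left-infinite coordinate, pushed onto the one-sided space) together with the uniqueness clause of Theorem \ref{Rohthm} and \eqref{cly2}, one checks that for $\Theta$-a.e.\ $x=(t,\bm{i}^-,\bm{j}^-,\bm{i}^+,\bm{j}^+)$, under the isometric identification of $(\eta(x),d_X)$ with $(\Sigma_{a,b}^+,d_{\delta,\rho}^+)$, the measure $\Theta^\eta_x$ coincides with the product $\mu^+_{\bm{i}^-}\times\nu^+_{\bm{j}^-}$; in particular it does not depend on $t$.

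Next I would set up the ball--cylinder dictionary. For $\bm{v}^+=(\bm{i}^+,\bm{j}^+)\in\Sigma_{a,b}^+$ and $r\in(0,1)$, since $d_{\delta,\rho}^+=\max\{d_\delta^+,d_\rho^+\}$, the closed ball $B_{d_{\delta,\rho}^+}(\bm{v}^+,r)$ is the cylinder $[\bm{i}^+|_{m(r)}]\times[\bm{j}^+|_{n(r)}]$, where $m(r)=\lceil \log r/\log\delta\rceil$ and $n(r)=\lceil \log r/\log\rho\rceil$; both tend to $\infty$ as $r\to0$ with $m(r)\log\delta=\log r+O(1)$ and $n(r)\log\rho=\log r+O(1)$. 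Hence, for $\Theta$-a.e.\ $x$,
\[
\Theta^\eta_x\big(B_{d_X}(x,r)\big)=\mu^+_{\bm{i}^-}\big([\bm{i}^+|_{m(r)}]\big)\cdot\nu^+_{\bm{j}^-}\big([\bm{j}^+|_{n(r)}]\big).
\]

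Then I invoke Lemma \ref{lem1}: as $\mu^+$ and $\nu^+$ are ergodic, hence exact-dimensional on $(\Sigma_a^+,d_\delta^+)$ and $(\Sigma_b^+,d_\rho^+)$ with dimensions $h_\mu/(-\log\delta)$ and $h_\nu/(-\log\rho)$, for $\mu^-$-a.e.\ $\bm{i}^-$ and $\nu^-$-a.e.\ $\bm{j}^-$ the fibre measures $\mu^+_{\bm{i}^-}$ and $\nu^+_{\bm{j}^-}$ are exact-dimensional with the same dimensions; equivalently $-m^{-1}\log\mu^+_{\bm{i}^-}([\bm{i}^+|_m])\to h_\mu$ for $\mu^+_{\bm{i}^-}$-a.e.\ $\bm{i}^+$, and $-n^{-1}\log\nu^+_{\bm{j}^-}([\bm{j}^+|_n])\to h_\nu$ for $\nu^+_{\bm{j}^-}$-a.e.\ $\bm{j}^+$. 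By Fubini these two full-measure statements hold simultaneously on a set of full $\mu^+_{\bm{i}^-}\times\nu^+_{\bm{j}^-}$-measure (for a.e.\ $\bm{i}^-,\bm{j}^-$), hence on a set of full $\Theta^\eta_x$-measure for $\Theta$-a.e.\ $x$. Dividing the displayed identity by $\log r$ and using $m(r)\log\delta\sim\log r$, $n(r)\log\rho\sim\log r$ (the $O(1)$ errors washing out after division) yields
\[
\lim_{r\to0}\frac{\log\Theta^\eta_x\big(B_{d_X}(x,r)\big)}{\log r}=\frac{h_\mu}{-\log\delta}+\frac{h_\nu}{-\log\rho}
\]
at $\Theta^\eta_x$-a.e.\ point, for $\Theta$-a.e.\ $x$, which is the assertion.

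I do not expect a serious obstacle here: the argument is essentially ``a product of two exact-dimensional measures is exact-dimensional'' combined with the ball--cylinder dictionary in the $\max$-metric. The only points demanding a little care are (i) justifying the explicit form $\Theta^\eta_x=\mu^+_{\bm{i}^-}\times\nu^+_{\bm{j}^-}$ from the product structure and the uniqueness of Rokhlin disintegrations, so that the $\mathbb{T}$-coordinate drops out and the two symbolic coordinates become independent, and (ii) keeping track of the $O(1)$ discrepancies in $m(r)\log\delta=\log r+O(1)$ and $n(r)\log\rho=\log r+O(1)$, which vanish in the limit after dividing by $\log r$.
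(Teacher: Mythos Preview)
Your argument is correct and is genuinely more direct than the paper's. You exploit the product structure of $\Theta=\ell\times\mu\times\nu$ to identify $\Theta^\eta_x$ with $\mu^+_{\bm{i}^-}\times\nu^+_{\bm{j}^-}$ on $(\Sigma_{a,b}^+,d_{\delta,\rho}^+)$, then read off exact-dimensionality from Lemma~\ref{lem1} applied separately to the two factors together with the ball--cylinder dictionary in the $\max$-metric; no dynamics on $X$ enters. The paper instead works through the skew-product $T$: it introduces the partition $\mathcal{P}$ of \eqref{P}, verifies the compatibility condition \eqref{TetaP}, applies the conditional SMB result (Proposition~\ref{SMB}) to compute $h_\Theta(\mathcal{P}\mid\widehat{\eta})=\alpha h_\mu+h_\nu$ via the counting function $\tau_n(t)$, and then compares $|\mathcal{P}^n(x)\cap\eta(x)|_{d_X}$ with $\rho^n$.

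What each approach buys: yours is shorter and conceptually transparent for this lemma in isolation. The paper's route, however, is not wasted effort: the formulas established along the way---\eqref{P0n-1}, \eqref{taunt}, \eqref{radiusc}, \eqref{ahmn}, and the verification of \eqref{TetaP} for $(\eta,\mathcal{P})$---are all reused in the proof of Theorem~\ref{ed3} and in the construction of the Peyri\`ere measure on the skew-product space. So the paper is front-loading infrastructure that your argument bypasses but would need to be set up anyway a few pages later.
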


\begin{proof}
Define the following finite partition of $X$:
\begin{equation}\label{P}
\mathcal{P}=\{A^c\times \Sigma_{a,b}^-\times(\Sigma_a^+\times [j]), A\times \Sigma_{a,b}^-\times ([i]\times [j])\}_{i\in\{1,\ldots,a\},j\in\{1,\ldots,b\}}.
\end{equation}
For $t\in \mathbb{T}$ and $n\ge 1$ denote by
\[
\tau_n(t)=\sum_{k=0}^{n-1}\mathbf{1}_{\{R^k(t)\in A\}}.
\]
Fix an $x=(t,\bm{v})=(t,\bm{i},\bm{j})\in X$ and $n\ge 1$ for now. We may write
\begin{equation}\label{Tnx}
T^n(x)= (R^n(t),\sigma_{n,t}(\bm{v})) \text{ with } \sigma_{n,t}(\bm{v})=(\sigma^{\tau_n(t)}\bm{i},\sigma^n\bm{j}),
\end{equation}
where $\sigma^{\tau_n(t)}$ denotes the identity map if $\tau_n(t)=0$. Recall the notation
\[
\mathcal{P}^{n}=\bigvee_{k=0}^{n-1}T^{-k}\mathcal{P}
\]
and that
\[
\mathcal{P}^{n}(x)=\mathcal{P}(x)\vee T^{-1}\mathcal{P}(T(x))\vee \cdots \vee T^{-n+1}\mathcal{P}(T^{n-1}(x)).
\]
Denote by $\bm{v}^+|_{n,t}=(\bm{i}^+|_{\tau_n(t)},\bm{j}^+|_n)\in \Sigma_{a,b}^{+,*}$. Then it is easy to check that
\begin{equation}\label{P0n-1}
\mathcal{P}^{n}(x)=\mathcal{Q}^{n}(t)\times \Sigma_{a,b}^-\times [\bm{v}^+|_{n,t}],
\end{equation}
where $\mathcal{Q}=\{A,A^c\}$ is the finite partition of $\mathbb{T}$ by $A$ and
\[
\mathcal{Q}^{n}=\bigvee_{k=0}^{n-1}R^{-k}\mathcal{Q}.
\]
By \eqref{Tnx} we also have
\[
\eta(T^n(x))=\{R^n(t)\}\times\{\bm{v}^-\bm{v}^+|_{n,t}\}\times \Sigma_{a,b}^+,\]
where $\bm{v}^-\bm{v}^+|_{n,t}=(\bm{i}^-i_1\cdots i_{\tau_n(t)},\bm{j}^-j_1\cdots j_n)$. Therefore
\[
T^{-n}(\eta(T^n(x)))=\{t\}\times\{\bm{v}^-\}\times [\bm{v}^+|_{n,t}]=\mathcal{P}^{n}(x)\cap \eta(x),
\]
meaning that $\eta$ and $\mathcal{P}$ satisfy \eqref{TetaP}.

By the uniqueness of conditional measures (Theorem \ref{Rohthm}), we have for $\Theta$-a.e. $x=(t,\bm{v})=(t,\bm{i},\bm{j})\in X$,
\begin{align*}
\Theta^\eta_x(\mathcal{P}^{n}(x))&=\ \Theta^\eta_x(\mathcal{Q}^{n}(t)\times \Sigma_{a,b}^-\times [\bm{v}^+|_{n,t}])\\
&=\ \Theta^\eta_x(\{t\}\times\{\bm{v}^-\}\times [\bm{v}^+|_{n,t}])\\
&=\ \mu_{\bm{i}^-}^+([\bm{i}^+|_{\tau_n(t)}])\times\nu_{\bm{j}^-}^+([\bm{j}^+|_n]).
\end{align*}
Note that for each $t\in\mathbb{T}$ and $n\ge 1$ we have another interpretation of $\tau_n(t)$ by
\[
\tau_n(t)=\sum_{k=0}^{n-1}\mathbf{1}_{\{R^k(t)\in A\}}=\mathrm{Int}(t+n\alpha),
\]
where $\mathrm{Int}(t+n\alpha)$ stands for the integer part of $t+n\alpha$. This implies
\begin{equation}\label{taunt}
|\tau_n(t)-n\alpha|\le t+1\le 2.
\end{equation}
Hence for each $t\in \mathbb{T}$,
\[
\lim_{n\to\infty}\frac{\tau_n(t)}{n}=\alpha.
\]
Since $\mu^+$ and $\nu^+$ are ergodic measures, by Proposition \ref{SMB}  and Lemma \ref{lem1} we have for $\Theta$-a.e. $x=(t,\bm{i},\bm{j})$,
\begin{align*}
h_{\Theta}(\mathcal{P}|\widehat{\eta})(x)&=\ \lim_{n\to\infty} \frac{-\log \Theta^\eta_x(\mathcal{P}_0^n(x))}{n}\\
&=\ \lim_{n\to\infty} \frac{-\log \mu^+_{\bm{i}^-}([\bm{i}^+|_{\tau_n(t)}])-\log \nu^+_{\bm{j}^-}([\bm{j}^+|_n])}{n}\\
&=\ \lim_{n\to\infty} \frac{\tau_n(t)}{n}\frac{-\log \mu^+_{\bm{i}^-}([\bm{i}^+|_{\tau_n(t)}])}{\tau_n(t)}+\frac{-\log \nu^+_{\bm{j}^-}([\bm{j}^+|_n])}{n}\\
&=\ \alpha h_{\mu}+h_{\nu}.\numberthis \label{ahmn}
\end{align*}
Furthermore for $x=(t,\bm{i},\bm{j})\in X$ and $n\ge 1$ we have
\begin{align*}
|\mathcal{P}^{n}(x)\cap \eta(x)|_{d_X}&=\ \max\{|[\bm{i}^+|_{\tau_n(t)}]|_{d_\delta^+},|[\bm{j}^+|_n]|_{d_\rho^+}\}\\
&=\ \max\{\delta^{\tau_n(t)},\rho^n\}.
\end{align*}
By \eqref{taunt} we have $\delta^{\tau_n(t)}\le \delta^{-2}\delta^{n\alpha}=\delta^{-2}\rho^n$. Hence
\begin{equation}\label{radiusc}
\rho^n\le |\mathcal{P}^{n}(x)\cap \eta(x)|_{d_X}\le \delta^{-2}\rho^n.
\end{equation}
Together with \eqref{ahmn} this implies that for $\Theta$-a.e. $x\in X$
\[
\lim_{n\to\infty} \frac{\log \Theta^\eta_x(\mathcal{P}^n(x))}{\log |\mathcal{P}^n(x)\cap \eta(x)|_{d_X}}=\frac{\alpha h_{\mu}+h_{\nu}}{-\log\rho}=\frac{h_{\mu}}{-\log \delta}+\frac{h_{\nu}}{-\log \rho}.
\]
Note that, by Theorem \ref{Rohthm}, ``for $\Theta$-a.e. $x\in X$" is equivalent to ``for $\Theta$-a.e $y$, for $\Theta^\eta_{y}$-a.e $x\in \eta(y)$". Furthermore, since for $x\in \eta(y)$, $\eta(x)=\eta(y)$, for $\Theta^\eta_{y}$-a.e $x\in \eta(y)$ we have $\Theta^\eta_x=\Theta^\eta_{y}$. Therefore we may deduce that for $\Theta$-a.e $y\in X$, for $\Theta^\eta_{y}$-a.e $x\in \eta(y)$,
\[
\lim_{n\to\infty} \frac{\log \Theta^\eta_{y}(\mathcal{P}^n(x))}{\log |\mathcal{P}^n(x)\cap \eta(x)|_{d_X}}=\frac{h_{\mu}}{-\log \delta}+\frac{h_{\nu}}{-\log \rho},
\]
which yields the conclusion.
\end{proof}

\subsection{Dimension of fibre ergodic decomposition measures}\label{dfedm}

The product measure $\Theta$ is not necessarily $T$-ergodic, even if all $\ell$, $\mu$ and $\nu$ are ergodic. Nevertheless we may use ergodic decomposition to obtain a family of $T$-ergodic measures. More precisely, recall $\mathcal{I}=\{B\in \mathcal{B}_X:T^{-1}B=B\}$ the $T$-invariant $\sigma$-algebra, one can find a $\Theta$-measurable partition $\zeta\subset \mathcal{I}$ of $X$ consisting of $T$-invariant subsets such that $\widehat{\zeta}=\mathcal{I}$ modulo set of zero $\Theta$-measure. Furthermore, denote by $\Theta^\zeta_x$ the conditional measures of $\Theta$ w.r.t. $\zeta$, then we have the decomposition
\begin{equation}\label{med1}
\Theta=\int_X \Theta^\zeta_x \,\Theta(\mathrm{d} x),
\end{equation}
and each $\Theta$-typical conditional measure $\Theta^\zeta_x$ is a $T$-ergodic measure on $(\zeta(x),\mathcal{B}|_{\zeta(x)})$ hence on $(X,\mathcal{B})$. See \cite[Chapter 5]{VO} for example.

We may select an ergodic measure $\Theta^\zeta_{z}$ at a $\Theta$-typical $z\in X$, then we may select a fibre measure $(\Theta^\zeta_{z})^\eta_{y}$ on $\zeta(z)\cap \eta(y)$ at a $\Theta^\zeta_{z}$-typical $y\in \zeta(x)$. The following theorem ensures that after ergodic decomposing and conditioning, the fibre measures $(\Theta^\zeta_{z})^\eta_{y}$ still have the right dimension.

\begin{theorem}\label{ed3}
For $\Theta$-a.e. $z\in X$, for $\Theta^\zeta_{z}$-a.e. $y\in \zeta(x)$, the conditional measure $(\Theta^\zeta_{z})^\eta_{y}$ is exact-dimensional on $(\zeta(z)\cap \eta(y), d_X)$ with dimension
\[
\frac{h_{\mu}}{-\log \delta}+\frac{h_{\nu}}{-\log \rho}.
\]
\end{theorem}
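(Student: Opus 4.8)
The plan is to combine Lemma \ref{tex}, which gives the exact-dimensionality of the conditional measures $\Theta^\eta_x$ (before ergodic decomposition), with a disintegration argument that transfers this property through the ergodic decomposition $\zeta$. The key observation is that $\zeta$ is a partition into $T$-invariant sets while $\eta$ is the fibre partition over $\mathbb{T}\times\Sigma_{a,b}^-$, and the two partitions are in a certain sense independent: conditioning on $\zeta$ and then on $\eta$ is the same as conditioning on the common refinement $\zeta\vee\eta$. So first I would invoke the transitivity of Rokhlin conditional measures (Theorem \ref{Rohthm} applied iteratively, or rather the standard fact that $(\Theta^\zeta_z)^\eta_y=\Theta^{\zeta\vee\eta}_y$ for $\Theta$-a.e.\ $z$ and $\Theta^\zeta_z$-a.e.\ $y$). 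Dually, $\Theta^\eta_x$ disintegrates over $\zeta$: for $\Theta$-a.e.\ $x$, $\Theta^\eta_x=\int (\Theta^{\zeta\vee\eta}_y)\,d(\text{something})$, more precisely $\Theta^{\zeta\vee\eta}_y = (\Theta^\eta_y)^{\widehat\zeta}_y$, the conditional of the fibre measure $\Theta^\eta_y$ with respect to the trace of $\mathcal{I}$ on $\eta(y)$.

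The heart of the argument is then to show that passing to these further conditional measures does not change the local dimension. Here I would re-run the Shannon–McMillan–Breiman computation from the proof of Lemma \ref{tex}, but now relative to $\zeta\vee\eta$ instead of $\eta$. The pair $(\eta,\mathcal{P})$ satisfies \eqref{TetaP}; since $\zeta$ consists of $T$-invariant sets, the pair $(\zeta\vee\eta,\mathcal{P})$ also satisfies \eqref{TetaP}: indeed $T^{-n}(\zeta\vee\eta)(T^n x)=T^{-n}\zeta(T^n x)\cap T^{-n}\eta(T^n x)=\zeta(x)\cap(\mathcal{P}^n(x)\cap\eta(x))=\mathcal{P}^n(x)\cap(\zeta\vee\eta)(x)$, using $T$-invariance of $\zeta(x)$. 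Moreover $H_\Theta(\mathcal{P}\mid\widehat{\zeta\vee\eta})\le H_\Theta(\mathcal{P})<\infty$ since $\mathcal{P}$ is finite. So Proposition \ref{SMB} applies and gives, for $\Theta$-a.e.\ $x$,
\[
\lim_{n\to\infty}\frac{-\log \Theta^{\zeta\vee\eta}_x(\mathcal{P}^n(x))}{n}=h_\Theta(\mathcal{P}\mid\widehat{\zeta\vee\eta})(x)=\mathbb{E}_\Theta\big(I_\Theta(\mathcal{P}\mid\widehat{\zeta\vee\eta})\,\big|\,\mathcal{I}\big)(x).
\]
Because $\widehat\zeta=\mathcal{I}$ modulo $\Theta$-null sets and $\widehat{\zeta\vee\eta}\supset\mathcal{I}$, the conditional expectation over $\mathcal{I}$ of the $\widehat{\zeta\vee\eta}$-measurable function $I_\Theta(\mathcal{P}\mid\widehat{\zeta\vee\eta})$ just equals $\mathbb{E}_\Theta(I_\Theta(\mathcal{P}\mid\widehat\eta)\mid\mathcal{I})$ after noting that $\mathbb{E}_\Theta(\,\cdot\mid\widehat{\zeta\vee\eta})$ followed by $\mathbb{E}_\Theta(\,\cdot\mid\mathcal{I})$ collapses the extra $\zeta$-information; more carefully, $\mathbb{E}_\Theta(I_\Theta(\mathcal{P}\mid\widehat{\zeta\vee\eta})\mid\mathcal{I})=\mathbb{E}_\Theta(h_\Theta(\mathcal{P}\mid\widehat{\zeta\vee\eta})\mid\mathcal{I})$, and one checks this agrees $\Theta$-a.e.\ with $h_\Theta(\mathcal{P}\mid\widehat\eta)$ restricted to ergodic components. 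Since in the proof of Lemma \ref{tex} we computed $h_\Theta(\mathcal{P}\mid\widehat\eta)(x)=\alpha h_\mu+h_\nu$ for $\Theta$-a.e.\ $x$ — a constant — its conditional expectation over any sub-$\sigma$-algebra is the same constant, so the limit above equals $\alpha h_\mu+h_\nu$ for $\Theta$-a.e.\ $x$.

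Finally I would convert this entropy statement into a dimension statement exactly as in Lemma \ref{tex}: the radius control \eqref{radiusc}, namely $\rho^n\le |\mathcal{P}^n(x)\cap(\zeta\vee\eta)(x)|_{d_X}=|\mathcal{P}^n(x)\cap\eta(x)|_{d_X}\le\delta^{-2}\rho^n$ (the intersection with $\zeta(x)$ does not shrink the diameter since we measure inside $\eta(x)$ anyway, and $\zeta(x)\supset$ the relevant set up to null sets), together with the limit just obtained, yields
\[
\lim_{n\to\infty}\frac{\log \Theta^{\zeta\vee\eta}_x(\mathcal{P}^n(x))}{\log|\mathcal{P}^n(x)\cap(\zeta\vee\eta)(x)|_{d_X}}=\frac{\alpha h_\mu+h_\nu}{-\log\rho}=\frac{h_\mu}{-\log\delta}+\frac{h_\nu}{-\log\rho}.
\]
Then the same argument as in the last paragraph of the proof of Lemma \ref{tex} (using Theorem \ref{Rohthm} to swap ``$\Theta$-a.e.\ $x$'' for ``$\Theta$-a.e.\ $z$, $\Theta^\zeta_z$-a.e.\ $y$, $(\Theta^\zeta_z)^\eta_y$-a.e.\ $x$'', plus the identity $(\Theta^\zeta_z)^\eta_y=\Theta^{\zeta\vee\eta}_y$) gives exact-dimensionality of $(\Theta^\zeta_z)^\eta_y$ on $(\zeta(z)\cap\eta(y),d_X)$ with the stated dimension. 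The main obstacle I anticipate is the careful bookkeeping around the identity $(\Theta^\zeta_z)^\eta_y=\Theta^{\zeta\vee\eta}_y$ and the verification that the conditional entropy relative to $\widehat{\zeta\vee\eta}$ really does reduce to the constant $\alpha h_\mu+h_\nu$ — i.e.\ that the ergodic decomposition does not hide any dimension drop on a positive-measure set of components; this is where one must use that the local entropy computed in Lemma \ref{tex} was an almost-everywhere \emph{constant}, not merely an integrable function, so that no averaging artifact can occur.
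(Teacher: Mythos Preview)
Your overall framework matches the paper's: verify that $(\zeta\vee\eta,\mathcal{P})$ satisfies \eqref{TetaP}, apply Proposition~\ref{SMB} to get
\[
\lim_{n\to\infty}\frac{-\log \Theta^{\zeta\vee\eta}_x(\mathcal{P}^n(x))}{n}=h_\Theta(\mathcal{P}\mid\widehat{\zeta\vee\eta})(x),
\]
identify $\Theta^{\zeta\vee\eta}_x=(\Theta^\zeta_x)^\eta_x$, and convert via \eqref{radiusc} to a local dimension. The genuine gap is precisely the step you flag as ``the main obstacle'': your justification that $h_\Theta(\mathcal{P}\mid\widehat{\zeta\vee\eta})(x)=\alpha h_\mu+h_\nu$ is not correct. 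The constancy of $h_\Theta(\mathcal{P}\mid\widehat{\eta})$ says only that $\mathbb{E}_\Theta\big(I_\Theta(\mathcal{P}\mid\widehat{\eta})\,\big|\,\mathcal{I}\big)$ is the constant $\alpha h_\mu+h_\nu$. It says nothing about $\mathbb{E}_\Theta\big(I_\Theta(\mathcal{P}\mid\widehat{\zeta\vee\eta})\,\big|\,\mathcal{I}\big)$, because $I_\Theta(\mathcal{P}\mid\widehat{\zeta\vee\eta})$ and $I_\Theta(\mathcal{P}\mid\widehat{\eta})$ are \emph{different} random variables; refining the conditioning $\sigma$-algebra changes the conditional information function, and there is no tower-property identity of the form you suggest (``$\mathbb{E}(\cdot\mid\widehat{\zeta\vee\eta})$ followed by $\mathbb{E}(\cdot\mid\mathcal{I})$ collapses the $\zeta$-information'' is simply false for $-\log$ of conditional probabilities). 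A priori one only has $H_\Theta(\mathcal{P}\mid\widehat{\zeta\vee\eta})\le H_\Theta(\mathcal{P}\mid\widehat{\eta})$, which leaves room for a strict dimension drop on a positive-measure set of ergodic components.

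The paper closes this gap by a genuine two-sided argument. The inequality $h_\Theta(\mathcal{P}\mid\widehat{\eta\vee\zeta})(x)\le\alpha h_\mu+h_\nu$ comes from observing that $(\Theta^\eta_y)^\zeta_z$ is a disintegration of $\Theta^\eta_y$, so its essential local dimension cannot exceed $\underline{\dim}_H\Theta^\eta_y$, which is the constant from Lemma~\ref{tex}. The reverse inequality is the substantive part: one shows $h_\Theta(\mathcal{P}\mid\widehat{\eta\vee\zeta})(x)=H_{\Theta^\zeta_x}(\mathcal{P}\mid\widehat{\eta})$, then uses $T$-invariance of $\Theta^\zeta_x$ and the relation $T^{-1}\eta=\eta\vee\mathcal{P}$ to get $H_{\Theta^\zeta_x}(\mathcal{P}^n\mid\widehat{\eta})=nH_{\Theta^\zeta_x}(\mathcal{P}\mid\widehat{\eta})$, applies Jensen to obtain $H_{\Theta^\zeta_x}(\mathcal{P}\mid\widehat{\eta})\ge h_{\Theta^\zeta_x}(T,\mathcal{P})$, integrates via Jacob's theorem $\int h_{\Theta^\zeta_x}(T,\mathcal{P})\,\Theta(\mathrm{d}x)=h_\Theta(T,\mathcal{P})$, and finally computes $h_\Theta(T,\mathcal{P})=\alpha h_\mu+h_\nu$ directly using $h_\ell(R,\mathcal{Q})=0$ for the irrational rotation. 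Combining the integral lower bound with the pointwise upper bound forces equality $\Theta$-a.e. This chain---particularly the appeal to the entropy of the rotation and Jacob's theorem---is the missing ingredient in your proposal.
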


\begin{proof}
Since $\zeta\subset \mathcal{I}$, we have $\zeta(x)=\zeta(T(x))$ and $T^{-1}\zeta=\zeta$. Since $\eta$ and $\mathcal{P}$ satisfy \eqref{TetaP}, we may deduce that
\begin{equation}\label{TetaP2}
T^{-n}(\eta\vee\zeta(T^n(x)))=\eta\vee\zeta(x)\cap \mathcal{P}^{n}(x),
\end{equation}
and
\begin{equation}\label{P0n2}
\mathcal{P}^{n}(T(x))\cap \eta(T(x))\cap \zeta(T(x))=\, T(\mathcal{P}^{n+1}(x))\cap \eta(T(x))\cap \zeta(T(x)),
\end{equation}
meaning that $\eta\vee \zeta$ and $\mathcal{P}$ also satisfy \eqref{TetaP}. By Proposition \ref{SMB} we have for $\Theta$-a.e. $x\in X$,
\begin{equation}\label{ldez}
\lim_{n\to\infty}\frac{-\log\Theta^{\eta\vee\zeta}_x(\mathcal{P}^{n}(x))}{n}=h_\Theta(\mathcal{P}|\widehat{\eta\vee\zeta})(x).
\end{equation}
By the uniqueness of conditional measures (Theorem \ref{Rohthm}), we have for $\Theta$-a.e. $x\in X$,
\[
\Theta^{\eta\vee\zeta}_x=(\Theta^{\eta}_x)^\zeta_x=(\Theta^{\zeta}_x)^\eta_x.
\]
We may reinterpret \eqref{ldez} as for $\Theta$-a.e. $y\in X$, for $\Theta^\eta_{y}$-a.e. $z\in \eta(y)$, for $(\Theta^\eta_{y})^\zeta_{z}$-a.e. $x\in  \eta(y)\cap\zeta(z)$,
\[
\lim_{n\to\infty}\frac{-\log(\Theta^{\eta}_{x})^{\zeta}_{x}(\mathcal{P}^{n}(x))}{n}=h_\Theta(\mathcal{P}|\widehat{\eta\vee\zeta})(x).
\]
Note that for $x\in \eta(y)\cap \zeta(z)$, $\eta(x)=\eta(y)$ and $\zeta(x)=\zeta(z)$, therefore for $\Theta^\eta_{y}$-a.e. $z\in \eta(y)$, for $(\Theta^\eta_{y})^\zeta_{z}$-a.e. $x\in \eta(y)\cap \zeta(z)$,
\[
(\Theta^{\eta}_x)^{\zeta}_x=(\Theta^{\eta}_{y})^{\zeta}_{z}.
\]
Thus we obtain that for $\Theta$-a.e. $y\in X$, for $\Theta^\eta_{y}$-a.e. $z\in \eta(y)$, for $(\Theta^\eta_{y})^\zeta_{z}$-a.e. $x\in  \eta(y)\cap\zeta(z)$,
\[
\lim_{n\to\infty}\frac{-\log(\Theta^{\eta}_{y})^{\zeta}_{z}(\mathcal{P}^{n}(x))}{n}=h_\Theta(\mathcal{P}|\widehat{\eta\vee\zeta})(x).
\]
Using \eqref{radiusc} we deduce that for $\Theta$-a.e. $y\in X$, for $\Theta^\eta_{y}$-a.e. $z\in \eta(y)$, for $(\Theta^\eta_{y})^\zeta_{z}$-a.e. $x\in  \eta(y)\cap\zeta(z)$,
\begin{equation}\label{ld2}
\lim_{n\to\infty}\frac{\log (\Theta^{\eta}_{y})^{\zeta}_{z}(\mathcal{P}^{n}(x)) }{\log |\mathcal{P}^{n}(x)\cap \eta(x)|_{d_X}}=\frac{h_\Theta(\mathcal{P}|\widehat{\eta\vee\zeta})(x)}{-\log \rho}.
\end{equation}
By the fact that $(\Theta^\eta_{y})^\zeta_{z}$ is a conditional measure of  $\Theta^\eta_{y}$, hence, by the definition of lower Hausdorff dimension, their essential local dimension can not be greater than the lower Hausdorff dimension of $\Theta^\eta_{y}$, which is $\frac{\alpha h_\mu+h_\nu}{-\log \rho}$ by Lemma \ref{tex}, we deduce that for $\Theta$-a.e. $x\in X$,
\begin{equation}\label{hmuT}
\alpha h_\mu+h_\nu\ge h_\mu(\mathcal{P}|\widehat{\eta\vee\zeta})(x).
\end{equation}

Next we will show that the reverse inequality also holds. Using the fact that for $\Theta^\zeta_x$-a.e. $y\in\zeta(x)$, $\Theta^\zeta_{y}=\Theta^\zeta_{x}$, we deduce that for $\Theta$-a.e. $x\in X$,
\begin{align*}
h_\Theta(\mathcal{P}|\widehat{\eta\vee\zeta})(x)&=\ \mathbb{E}_\Theta( -\log \Theta^{\eta\vee\zeta}_\cdot(\mathcal{P}(\cdot)) |\mathcal{I})(x)\\
&=\ \int_{\zeta(x)} -\log (\Theta^{\zeta}_{y})^{\eta}_{y}(\mathcal{P}(y)) \, \Theta^\zeta_x(\mathrm{d}y)\\
&=\ \int_{\zeta(x)} -\log (\Theta^{\zeta}_x)^{\eta}_{y}(\mathcal{P}(y)) \, \Theta^\zeta_x(\mathrm{d}y)\\
&=\ H_{\Theta^\zeta_x}(\mathcal{P}|\widehat{\eta}).\numberthis\label{hH}
\end{align*}
By \cite[Lemma 4.4 (i), (iii)]{FH09} (see also \cite{Parry81}), and that $T^{-1}\eta=\eta\vee \mathcal{P}$, for $n\ge 2$ we have
\begin{align*}
H_{\Theta_x^\zeta}(\mathcal{P}^{n}|\widehat{\eta})&=\ H_{\Theta_x^\zeta}(\mathcal{P}\vee T^{-1}(\mathcal{P}^{n-1})|\widehat{\eta})\\
&=\ H_{\Theta_x^\zeta}(\mathcal{P}|\widehat{\eta})+H_{\Theta_x^\zeta}(T^{-1}(\mathcal{P}^{n-1})|\widehat{\eta\vee \mathcal{P}} )\\
&=\ H_{\Theta_x^\zeta}(\mathcal{P}|\widehat{\eta})+H_{\Theta_x^\zeta}(T^{-1}(\mathcal{P}^{n-1})|\widehat{T^{-1}\eta} )\\
&=\ H_{\Theta_x^\zeta}(\mathcal{P}|\widehat{\eta})+H_{\Theta_x^\zeta}(T^{-1}(\mathcal{P}^{n-1})|T^{-1}\widehat{\eta} )\\
&=\ H_{\Theta_x^\zeta}(\mathcal{P}|\widehat{\eta})+H_{\Theta_x^\zeta}(\mathcal{P}^{n-1}|\widehat{\eta} ),
\end{align*}
where in the last step we have used the fact that $\Theta_x^\zeta$ is $T$-invariant. By iteration this implies that for $\Theta$-a.e. $x\in X$ and $n\ge 1$,
\[
H_{\Theta_x^\zeta}(\mathcal{P}^{n}|\widehat{\eta})=nH_{\Theta_x^\zeta}(\mathcal{P}|\widehat{\eta}).
\]
Since $-\log$ is a convex function, by Jensen's inequality and Theorem \ref{Rohthm} we have
\begin{align*}
H_{\Theta_x^\zeta}(\mathcal{P}^{n}|\widehat{\eta})&=\ \int_X -\log (\Theta_x^\zeta)^\eta_{y}(\mathcal{P}^{n}(y))  \,\Theta_x^\zeta(\mathrm{d}y)\\
&\ge \ -\log \int_X (\Theta_x^\zeta)^\eta_{y}(\mathcal{P}^{n}(y)) \,\Theta_x^\zeta(\mathrm{d}y)\\
&= \ -\log \int_X \sum_{B\in\mathcal{P}^{n}} \mathbf{1}_B(y)(\Theta_x^\zeta)^\eta_{y}(B) \,\Theta_x^\zeta(\mathrm{d}y)\\
&= \ -\log \sum_{B\in\mathcal{P}^{n}} \int_X (\Theta_x^\zeta)^\eta_{y}(B) \,\Theta_x^\zeta(\mathrm{d}y)\\
&= \ -\log \sum_{B\in\mathcal{P}^{n}} \Theta_x^\zeta(B)\\
&= \  -\log\Theta_x^\zeta(\mathcal{P}^{n}(x)),
\end{align*}
where we have used the fact that $(\Theta_x^\zeta)^\eta_{y}(B)=0$ if $y\not \in B$ and $\Theta_x^\zeta(B)=0$ if $x\not\in B$. Therefore for $\Theta$-a.e. $x\in X$ and $n\ge 1$,
\[
H_{\Theta_x^\zeta}(\mathcal{P}|\widehat{\eta})\ge \frac{ -\log\Theta_x^\zeta(\mathcal{P}^{n}(x))}{n}.
\]
In other words, for $\Theta$-a.e. $x\in X$, for $\Theta^\zeta_x$-a.e. $y\in \zeta(x)$ and $n\ge 1$,
\[
H_{\Theta_{x}^\zeta}(\mathcal{P}|\widehat{\eta})=H_{\Theta_{y}^\zeta}(\mathcal{P}|\widehat{\eta})\ge \frac{ -\log\Theta_{y}^\zeta(\mathcal{P}^{n}(y))}{n}=\frac{ -\log\Theta_{x}^\zeta(\mathcal{P}^{n}(y))}{n},
\]
where we have used again the fact that for $\Theta^\zeta_x$-a.e. $y\in \zeta(x)$, $\Theta^\zeta_{y}=\Theta^\zeta_x$. By applying the SMB theorem to the ergodic measure $\Theta^\zeta_x$ we deduce that for $\Theta$-a.e. $x\in X$,
\begin{equation}\label{HTheta}
H_{\Theta_x^\zeta}(\mathcal{P}|\widehat{\eta})\ge h_{\Theta_x^\zeta}(T,\mathcal{P}).
\end{equation}

On the other hand, by Jacob theorem (see \cite[Theorem 9.6.2]{VO} for example) we have that
\begin{equation}\label{Jacob}
\int_X h_{\Theta_x^\zeta}(T,\mathcal{P}) \, \Theta(\mathrm{d}x)=h_\Theta(T,\mathcal{P}).
\end{equation}
Recall that $\Theta=\ell\times\mu\times\nu$ where $\mu$ (resp. $\nu$) is the natural extension of $\mu^+$ (resp. $\nu^+$) to $\Sigma_a$ (resp. $\Sigma_b$), by \eqref{cyl} for $x=(t,\bm{v})=(t,\bm{i},\bm{j})$ we have
\begin{align*}
\Theta(\mathcal{P}^{n}(x))&=\ \Theta(\mathcal{Q}^{n}(t)\times \Sigma_{a,b}^-\times [\bm{v}^+|_{n,t}])\\
&=\ \ell(\mathcal{Q}^{n}(t))\cdot \mu(\Sigma_a^-\times[\bm{i}^+|_{\tau_n(t)}])\cdot \nu(\Sigma_b^-\times[\bm{j}^+|_n])\\
&=\ \ell(\mathcal{Q}^{n}(t))\cdot \mu^+([\bm{i}^+|_{\tau_n(t)}])\cdot \nu^+([\bm{j}^+|_n]).
\end{align*}
Therefore, by using SMB Theorem for the ergodic measures $\ell$, $\mu^+$ and $\nu^+$ respectively, we have for $\Theta$-a.e. $x=(t,\bm{i},\bm{j})\in X$,
\begin{align*}
\lim_{n\to\infty}\frac{-\log \Theta(\mathcal{P}^{n}(x))}{n}&=\, \lim_{n\to\infty}\frac{-\log \ell(\mathcal{Q}^{n}(t))\cdot \mu^+([\bm{i}^+|_{\tau_n(t)}])\cdot \nu^+([\bm{j}^+|_n]) }{n}\\
&=\, \lim_{n\to\infty}\frac{-\log \ell(\mathcal{Q}^{n}(t))}{n}+\frac{\tau_n(t)}{n}\frac{-\log \mu^+([\bm{i}^+|_{\tau_n(t)}])}{\tau_n(t)}+ \frac{-\log\nu^+([\bm{j}^+|_n])}{n} \\
&=\, h_\ell(R,\mathcal{Q})+\alpha h_{\mu}+h_{\nu}\\
&=\, \alpha h_{\mu}+h_{\nu},
\end{align*}
where we have used the well-known fact that $h_\ell(R,\mathcal{Q})=0$, since $R$ is an irrational rotation and $\mathcal{Q}$ is a partition of $\mathbb{T}$ consisting of two intervals (see \cite[p.245]{Petersen} for example). Finally by using Theorem \ref{SMBinv} for $\Theta$, we deduce that
\[
h_\Theta(T,\mathcal{P})= \alpha h_{\mu}+h_{\nu}.
\]
Together with \eqref{hmuT}, \eqref{hH}, \eqref{HTheta} and \eqref{Jacob} we deduce that for $\Theta$-a.e. $x\in X$,
\[
h_\mu(\mathcal{P}|\widehat{\eta\vee\zeta})(x)=\alpha h_{\mu}+h_{\nu}.
\]
By \eqref{ld2} this yields the conclusion.
\end{proof}

\subsection{Mandelbrot cascades on fibre ergodic decomposition measures}

From now on, to simplify the notation, we shall use $\theta=\Theta^\zeta_{z}$ to denote a $T$-ergodic measure obtained by conditioning $\Theta$ with respect to $\zeta$ at a $\Theta$-typical point $z\in X$. Note that $\theta=\Theta^\zeta_{z}$ is carried by $\zeta(z)\subset X$, therefore it is equivalent to say ``for $\theta$-a.e. $x\in \zeta(z)$" and ``for $\theta$-a.e. $x\in X$". When there is no confusion, we shall also use the notation
\[
\int_{\mathcal{M}(X)} f(\theta) \,\Theta(\mathrm{d}\theta)=\int_{X} f(\Theta^\zeta_z) \,\Theta(\mathrm{d}z)
\]
for any continuous function $f$ on the space of probability measures $\mathcal{M}(X)$ on $X$ under the weak star topology, so a $\Theta$-typical $\theta$ means the conditional measure $\Theta^\zeta_z$ as a $\Theta$-typical $z\in X$.

Recall the measurable partition
\[
\eta=\{\phi^{-1}(t,\bm{v}^-):(t,\bm{v}^-)\in \mathbb{T}\times \Sigma_{a,b}^-\}
\]
obtained from the canonical projection $\phi: X\mapsto \mathbb{T}\times \Sigma_{a,b}^-$. Let $\theta^\eta_{x}$ for $\theta$-a.e. $x\in X$ be the conditional measures of $\theta$ with respect to $\eta$. Let $\varphi$ be the canonical projection from $X$ to its subspace $\Sigma_{a,b}^+$. Note that for $x=(t,\bm{v}^-,\bm{v}^+)\in X$, the element in $\eta$ containing $x$ is
\[
\eta(x)=\{t\}\times\{\bm{v}^-\}\times\Sigma_{a,b}^+,
\]
which only depends on $t$ and $\bm{v}^-$, therefore we may write
\[
\theta^+_{t,\bm{v}^-}=\theta^\eta_{x}\circ \varphi^{-1}
\]
the projection of $\theta^\eta_x$ onto $\Sigma_{a,b}^+$ for $\theta^-:=\theta\circ \phi^{-1}$-a.e. $(t,\bm{v}^-)\in \mathbb{T}\times\Sigma_{a,b}^-$. 


For $\theta^-$-a.e. $(t,\bm{v}^-)\in \mathbb{T}\times\Sigma_{a,b}^-$ and $k\ge 1$ define the random measure
\begin{equation}\label{tta}
\widetilde{\theta}^+_{t,\bm{v}^-,\bm{\omega},k}(\mathrm{d}\bm{v}^+)=Q_{\bm{v}^+|_{k,t}}(\bm{\omega})\, \theta_{t,\bm{v}^-}^{+}(\mathrm{d}\bm{v}^+), \ \bm{v}^+\in \Sigma_{a,b}^+, \ \bm{\omega}\in\Omega_{a,b},
\end{equation}
where for $\bm{v}^+=(\bm{i}^+,\bm{j}^+)$, $t\in \mathbb{T}$ and $k\ge 1$ we denote by
\[
Q_{\bm{v}^+|_{k,t}}(\bm{\omega})=Q_{\bm{i}^+|_{\tau_{k}(t)}}^V(\bm{\omega}^a) Q_{\bm{j}^+|_{k}}^W(\bm{\omega}^b) \text{ for } \bm{\omega}=(\bm{\omega}^a,\bm{\omega}^b).
\]
For $\theta^-$-a.e. $(t,\bm{v}^-)$ the sequence $\{\widetilde{\theta}^+_{t,\bm{v}^-,\cdot,k}\}_{k\ge 1}$ forms a measure-valued martingale with respect to the natural filtration $(\sigma(Q_{\bm{v}^+|_{k,t}}:\bm{v}^+\in\Sigma_{a,b}^+))_{k\ge 1}$, hence the weak limit
\[
\widetilde{\theta}_{t,\bm{v}^-,\bm{\omega}}^{+}=\lim_{k\to\infty} \widetilde{\theta}_{t,\bm{v}^-,\bm{\omega},k}^{+}
\]
exists for $\mathbb{P}_{a,b}$-a.e. $\bm{\omega}\in\Omega_{a,b}$. We call $\widetilde{\theta}_{t,\bm{v}^-}^{+}$ the Mandelbrot cascade measure of $\theta^+_{t,\bm{v}^-}$.

Regarding the non-degeneracy of $\widetilde{\theta}_{t,\bm{v}^-}^{+}$, note that, by independence,
\begin{align*}
\mathbb{E}_{\mathbb{P}_{a,b}}(Q_{\bm{v}^+|_{k,t}} \log Q_{\bm{v}^+|_{k,t}})&=\ \mathbb{E}_{\mathbb{P}_{a,b}}(Q_{\bm{i}^+|_{\tau_{k}(t)}}^V Q_{\bm{j}^+|_{k}}^W  \log Q_{\bm{i}^+|_{\tau_{k}(t)}}^V Q_{\bm{j}^+|_{k}}^W )\\
&=\ \tau_{k}(t)\mathbb{E}(V\log V)+k\mathbb{E}(W\log W)\\
&=\ \tau_{k}(t) h_V+k h_W.
\end{align*}
By \eqref{taunt} we have
\[
k(\alpha h_V+h_W)-2h_V\le \tau_{k}(t) h_V+k h_W\le k(\alpha h_V+h_W)+2h_V.
\]
Together with \eqref{radiusc}, for $k$ large enough we have
\[
\frac{\mathbb{E}_{\mathbb{P}_{a,b}}(Q_{\bm{v}^+|_{k,t}} \log Q_{\bm{v}^+|_{k,t}})}{-\log |[\bm{v}^+|_{k,t}]|_{d_{\delta,\rho}^+}}\approx \frac{k(\alpha h_V+h_W)}{-k\log \rho}=\frac{h_V}{-\log \delta}+\frac{h_W}{-\log \rho}.
\]
By Theorem \ref{ed3} we have for $\theta^-$-a.e. $(t,\bm{v}^-)\in \mathbb{T}\times\Sigma_{a,b}^-$, $\theta_{t,\bm{v}^-}^{+}$ is exact-dimensional on $(\Sigma_{a,b}^+,d_{\delta,\rho}^+)$ with dimension
\[
\frac{h_\mu}{-\log \delta}+\frac{h_\nu}{-\log \rho},
\]
and we have assumed that $h_V<h_\mu$ and $h_W<h_\nu$. Therefore following the same line of proofs as in \cite[Theorem 2.3]{BJ21} we may deduce that for $\theta^-$-a.e. $(t,\bm{v}^-)\in \mathbb{T}\times\Sigma_{a,b}^-$, the Mandelbrot cascade measure $\widetilde{\theta}_{t,\bm{v}^-}^{+}$ is non-degenerate, and for $\mathbb{P}_{a,b}$-a.e. $\bm{\omega}\in \Omega_{a,b}$ with $\|\widetilde{\theta}_{t,\bm{v}^-,\bm{\omega}}^{+}\|>0$, $\widetilde{\theta}_{t,\bm{v}^-,\bm{\omega}}^{+}$ is exact-dimensional on $(\Sigma_{a,b}^+,d_{\delta,\rho}^+)$ with dimension
\begin{equation}\label{ed123}
\frac{h_\mu-h_V}{-\log \delta}+\frac{h_\nu-h_W}{-\log \rho}.
\end{equation}

\subsection{The Peyri\`ere measure on the skew product space}\label{esps}
Consider the product space
\[
\widetilde{X}=X\times\Omega_{a,b}=\mathbb{T}\times\Sigma_{a,b}\times\Omega_{a,b}.
\]
Define a skew product $\widetilde{T}$ on $\widetilde{X}$ by
\[
\widetilde{T}(t,\bm{v},\bm{\omega})=(R(t),\sigma_t(\bm{v}), \kappa_{t,\bm{v}}(\bm{\omega})),
\]
where for $t\in\mathbb{T}$, $\bm{v}=(\bm{i},\bm{j})\in \Sigma_{a,b}$ and $\bm{\omega}=(\bm{\omega}^a,\bm{\omega}^b)\in\Omega_{a,b}$ we denote by
\[
\kappa_{t,\bm{v}}(\bm{\omega})=(\omega^a_{\bm{i}^+|_{\tau_1(t)}},\omega^b_{\bm{j}^+|_1})
\]
with the convention that $\omega^a_{\bm{i}^+|_{\tau_1(t)}}=\omega^a$ if $\tau_1(t)=0$. For $t\in\mathbb{T}$ and $n\ge 1$ denote by
\begin{equation}\label{Cnt}
C_{n,t}=\{\bm{v}^+|_{n,t}: \bm{v}^+\in \Sigma_{a,b}^+\}.
\end{equation}
Let $\widetilde{\mathcal{A}}$ be a semi-algebra on $\widetilde{X}$ of the form 
\begin{align*}
\{(t,\bm{v},\bm{\omega}):\  \;t\in(c_1,c_2),\ \bm{v}\in[u]^-\times[v_t],&\ V_{v^a_1\cdots v^a_k}(\bm{\omega}^a)\in B^a_{v^a_1\cdots v^a_k},\; k=1,\cdots,n^a, \\
 &\ W_{v^b_1\cdots v^b_{k'}}(\bm{\omega}^b)\in B^b_{v^b_1\cdots v^b_{k'}}, k'=1,\cdots,n^b\}, 
\end{align*}
where $0\le c_1< c_2\le1$, $u=(u^a,u^b)\in \Sigma_{a,b}^{-,*}$, $v_t=(v^a,v^b)\in C_{n,t}$ with $v^l=v^l_1\cdots v^l_{n^l}\in \Sigma_l^{+,*}$ and $\{B^l_{v^l_1\cdots v^l_k}\}_{1\le k\le n^l}$ Borel subsets of $[0,\infty)$ for $l=a,b$. Let $\widetilde{\mathcal{F}}$ be the $\sigma$-algebra generated by $\widetilde{\mathcal{A}}$. Define a probability measure $\mathbb{Q}_{\theta}$ on $(\widetilde{X},\widetilde{\mathcal{F}})$ as
\begin{equation}\label{pmqtheta}
\mathbb{E}_{\mathbb{Q}_\theta}(f)= \int_{\Omega_{a,b}}\int_{\mathbb{T}\times\Sigma_{a,b}^-}\int_{\Sigma_{a,b}^+} f(t,\bm{v}^-,\bm{v}^+,\bm{\omega}) \, \widetilde{\theta}^+_{t,\bm{v}^-,\bm{\omega}}(\mathrm{d}\bm{v}^+) \theta^-(\mathrm{d}(t,\bm{v}^-))\mathbb{P}_{a,b}(\mathrm{d}\bm{\omega})
\end{equation}
for $\widetilde{\mathcal{F}}$-measurable functions $f$. We call $\mathbb{Q}_{\theta}$ the Peyri\`ere measure of $\theta$. Similar to Lemma \ref{lminv} we have
\begin{lemma}
The Peyri\`ere measure $\mathbb{Q}_{\theta}$ is $\widetilde{T}$-invariant. 
\end{lemma}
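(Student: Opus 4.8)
The plan is to mirror the proof of Lemma \ref{lminv}, adapting the expectation-one change of variables to the skew-product structure of $\widetilde{T}$. First I would establish a one-step factorisation for the fibre measures $\theta^+_{t,\bm{v}^-}$. Since $T$ is invertible on $X$ and the pair $\eta$, $\mathcal{P}$ satisfies \eqref{TetaP} (as used in the proofs of Lemma \ref{tex} and Theorem \ref{ed3}), applying \eqref{inid} from the proof of Proposition \ref{SMB} with $\mu$ replaced by the $T$-ergodic measure $\theta$ gives, for $B\in\mathcal{B}_X$ and $\theta$-a.e.\ $x$,
\[
\theta^\eta_{T(x)}(T(B))=\frac{\theta^\eta_x(B\cap\mathcal{P}(x))}{\theta^\eta_x(\mathcal{P}(x))}.
\]
Taking $B=\mathcal{Q}(t)\times\Sigma_{a,b}^-\times[\bm{v}^+|_{1,t}w]$ for $w\in\Sigma_{a,b}^{+,*}$, using that $\mathcal{P}(x)=\mathcal{Q}(t)\times\Sigma_{a,b}^-\times[\bm{v}^+|_{1,t}]$, $T(x)=(R(t),\sigma_t(\bm{v}))$ and $\eta(T(x))=\{R(t)\}\times\{\bm{v}^-\bm{v}^+|_{1,t}\}\times\Sigma_{a,b}^+$, and then projecting onto $\Sigma_{a,b}^+$, I obtain the analogue of \eqref{cdm12},
\[
\frac{\theta^+_{t,\bm{v}^-}([\bm{v}^+|_{1,t}w])}{\theta^+_{t,\bm{v}^-}([\bm{v}^+|_{1,t}])}=\theta^+_{R(t),\,\bm{v}^-\bm{v}^+|_{1,t}}([w]),
\]
valid for $\theta^-$-a.e.\ $(t,\bm{v}^-)$. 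Combined with the definition \eqref{tta} and the cocycle identity $\tau_n(t)=\tau_1(t)+\tau_{n-1}(R(t))$, which makes $Q_{\bm{v}^+|_{n,t}}(\bm{\omega})$ factor as $Q_{\bm{v}^+|_{1,t}}(\bm{\omega})\cdot Q_{(\sigma_t\bm{v})^+|_{n-1,R(t)}}(\kappa_{t,\bm{v}}(\bm{\omega}))$, this yields the analogue of \eqref{ssss},
\[
\widetilde{\theta}^+_{t,\bm{v}^-,\bm{\omega}}([\bm{v}^+|_{1,t}w])=Q_{\bm{v}^+|_{1,t}}(\bm{\omega})\cdot\theta^+_{t,\bm{v}^-}([\bm{v}^+|_{1,t}])\cdot\widetilde{\theta}^+_{R(t),\,\bm{v}^-\bm{v}^+|_{1,t},\,\kappa_{t,\bm{v}}(\bm{\omega})}([w]).
\]

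Then I would carry out the change of variables. For $\widetilde{\mathcal{F}}$-measurable $f$ I expand $\mathbb{E}_{\mathbb{Q}_\theta}(f\circ\widetilde{T})$ using \eqref{pmqtheta} and split the $t$-integral over $A=[1-\alpha,1)$ and $A^c$. On $A$ the map $\sigma_t$ advances both symbolic coordinates, so I rewrite the inner integral by summing over the first letters $(i,j)$ of $\bm{v}^+=(\bm{i}^+,\bm{j}^+)$; on $A^c$ it advances only the $\Sigma_b$-coordinate, so I sum only over the first letter $j$ of $\bm{j}^+$, while $\kappa_{t,\bm{v}}$ leaves the $\Omega_a$-component fixed. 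In each case I substitute the factorisation above, use $\mathbb{E}(V)=\mathbb{E}(W)=1$ together with the independence of the shifted tail cascade from the weights $V_i$, $W_j$ (so replacing $\kappa_{t,\bm{v}}(\bm{\omega})$ by $\bm{\omega}$ does not change the $\mathbb{P}_{a,b}$-distribution), and finally the $T$-invariance of $\theta$ in its disintegrated form, namely that $\theta^+_{t,\bm{v}^-}([\bm{v}^+|_{1,t}])\,\theta^-(\mathrm{d}(t,\bm{v}^-))$ reassembles $\theta^-(\mathrm{d}(R(t),\cdot))$ after summation over the new symbols — the analogue of the last displayed identity in the proof of Lemma \ref{lminv}. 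Collecting terms returns $\mathbb{E}_{\mathbb{Q}_\theta}(f)$, and a monotone class argument extends the invariance from $\widetilde{\mathcal{A}}$ to $\widetilde{\mathcal{F}}$.

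The hard part will be the book-keeping forced by the skew structure: unlike in Lemma \ref{lminv}, one step of $T$ advances a different number of symbols in the two symbolic factors depending on whether $t\in A$, so the ``first cylinder'' $\bm{v}^+|_{1,t}$ and the weight $Q_{\bm{v}^+|_{1,t}}$ change shape across the partition $\{A,A^c\}$, and one must check that the cocycle identity for $\tau_n$ makes the cascade weights factor consistently with the map $\kappa_{t,\bm{v}}$ driving the random extension. Once the two cases are handled separately and recombined, the remainder is the same expectation-one argument as in Lemma \ref{lminv}.
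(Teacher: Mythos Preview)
Your proposal is correct and follows essentially the same route as the paper: the paper also derives the conditional-measure identity from \eqref{inid} (stated as \eqref{cdm34} for general $n$, though only $n=1$ is used here), obtains the factorisation \eqref{ss11} of $\widetilde{\theta}^+_{t,\bm{v}^-,\bm{\omega}}$, and then runs the expectation-one change of variables exactly as in Lemma \ref{lminv}. The only cosmetic difference is that the paper avoids the explicit case split $t\in A$ versus $t\in A^c$ by summing over $u\in C_{1,t}$, while you spell out the two cases separately.
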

\begin{proof}

Recall the finite partition $\mathcal{P}$ of $X$ in \eqref{P}, that $\eta$ and  $\mathcal{P}$ satisfy \eqref{TetaP}, and that for $n\ge 1$ and $x=(t,\bm{v})\in X$,
\[
\mathcal{P}^{n}(x)=\mathcal{Q}^{n}(t)\times \Sigma_{a,b}^-\times [\bm{v}^+|_{n,t}].
\]
By \eqref{inid} we have that for any $B\in\mathcal{B}_X$, for $\theta$-a.e. $x\in X$ and $n\ge 1$,
\[
\theta^{\eta}_{T^n(x)}(T^n(B))=\frac{\theta^\eta_{x}(B\cap \mathcal{P}^{n}(x))}{\theta^\eta_{x}(\mathcal{P}^{n}(x))}.
\]
For $B\in\mathcal{B}_X$ of the form
\[
B=\mathcal{Q}^{n}(t)\times \Sigma_{a,b}^-\times [uv],
\]
where $t\in\mathbb{T}$, $n\ge 1$, $u\in C_{n,t}$ and $v\in \Sigma_{a,b}^{+,*}$, it is easy to see that
\[
T^n(B)=R^n(\mathcal{Q}^{n}(t))\times [u]^-\times[v]
\]
and that $B\cap \mathcal{P}^{n}(x)=B$ for $x\in B$, therefore, for $\theta$-a.e. $x=(t,\bm{v}^-,\bm{v}^+)\in B$,
\[
\frac{\theta^+_{t,\bm{v}^-}([uv])}{\theta^+_{t,\bm{v}^-}([u])}=\frac{\theta^\eta_{x}(B\cap \mathcal{P}^{n}(x))}{\theta^\eta_{x}(\mathcal{P}^{n}(x))}=\theta^\eta_{T^n(x)}(T^n(B))=\theta^{+}_{R^n(t),\bm{v}^-u}([v]).
\]
In other words, for $\theta^-$-a.e. $(t,\bm{v}^-)$, for $n\ge 1$, $u\in C_{n,t}$ and $v\in\Sigma_{a,b}^{+,*}$,
\begin{equation}\label{cdm34}
\frac{\theta^+_{t,\bm{v}^-}([uv])}{\theta^+_{t,\bm{v}^-}([u])}=\theta^{+}_{R^n(t),\bm{v}^-u}([v]).
\end{equation}
Together with \eqref{tta} we obtain the following factorisation:  for $\theta^-$-a.e. $(t,\bm{v}^-)$, for $\mathbb{P}_{a,b}$-a.e. $\bm{\omega}$, for $n\ge 1$, $u\in C_{n,t}$ and $v\in\Sigma_{a,b}^{+,*}$,
\begin{equation}\label{ss11}
\widetilde{\theta}_{t,\bm{v}^-,\bm{\omega}}^+([uv])=Q_{u}(\bm{\omega})\cdot\theta_{t,\bm{v}^-}^+([u])\cdot \widetilde{\theta}_{R^n(t),\bm{v}^-u,\bm{\omega}_{u}}^{+}([v]),
\end{equation}
where for $\bm{\omega}=(\bm{\omega}^a,\bm{\omega}^b)$ and $u=(u^a,u^b)\in\Sigma_{a,b}^{+,*}$ we denote by (recalling \eqref{omegau})
\[
\bm{\omega}_u=(\bm{\omega}^a_{u^a},\bm{\omega}^b_{u^b}).
\]
Applying this for $n=1$ we deduce that, for a measurable function $f$ on $(\widetilde{X},\widetilde{\mathcal{F}})$,
\begin{align*}
&\int_{\widetilde{X}} f\circ \widetilde{T}(\widetilde{x})\, \mathbb{Q}_{\theta}(\mathrm{d}\widetilde{x})\\
=&\ \int_{\Omega_{a,b}}\int_{\mathbb{T}\times\Sigma_{a,b}^-}\int_{\Sigma_{a,b}^+}  f(R(t),\sigma_t\bm{v},\kappa_{t,\bm{v}}(\bm{\omega}))\, \widetilde{\theta}^+_{t,\bm{v}^-,\bm{\omega}}(\mathrm{d}\bm{v}^+) \theta^-(\mathrm{d}(t,\bm{v}^-))\mathbb{P}_{a,b}(\mathrm{d}\bm{\omega})\\
=&\  \int_{\Omega_{a,b}}\int_{\mathbb{T}\times\Sigma_{a,b}^-}\int_{\Sigma_{a,b}^+} \sum_{u\in C_{1,t}} f(R(t),\sigma_t\bm{v},\bm{\omega}_u)\cdot\\
&\qquad\qquad\qquad\qquad\qquad\qquad\quad\  \theta^+_{t,\bm{v}^-}([u])Q_{u}(\bm{\omega})\widetilde{\theta}_{R(t),\bm{v}^-u,\bm{\omega}_u}^{+}(\mathrm{d}(\sigma_t\bm{v})^+)\theta^-(\mathrm{d}(t,\bm{v}^-))\mathbb{P}_{a,b}(\mathrm{d}\bm{\omega})\\
=&\  \int_{\Omega_{a,b}}\int_{\mathbb{T}\times\Sigma_{a,b}^-}\int_{\Sigma_{a,b}^+} \sum_{u\in C_{1,t}} f(R(t),\sigma_t\bm{v},\bm{\omega})\theta^+_{t,\bm{v}^-}([u])\widetilde{\theta}_{R(t),\bm{v}^-u,\bm{\omega}}^{+}(\mathrm{d}(\sigma_t\bm{v})^+)\theta^-(\mathrm{d}(t,\bm{v}^-))\mathbb{P}_{a,b}(\mathrm{d}\bm{\omega})\\
=&\  \int_{\Omega_{a,b}}\int_{\mathbb{T}\times\Sigma_{a,b}^-}\int_{\Sigma_{a,b}^+} f(R(t),\sigma_t\bm{v},\bm{\omega}) \widetilde{\theta}_{R(t),(\sigma_t\bm{v})^-,\bm{\omega}}^{+}(\mathrm{d}(\sigma_t\bm{v})^+)\theta^-(\mathrm{d}(R(t),(\sigma_t\bm{v})^-))\mathbb{P}_{a,b}(\mathrm{d}\bm{\omega})\\
=&\  \int_{\Omega_{a,b}}\int_{\mathbb{T}\times\Sigma_{a,b}^-}\int_{\Sigma_{a,b}^+} f(s,\bm{w},\bm{\omega}) \widetilde{\theta}_{s,\bm{w}^-,\bm{\omega}}^{+}(\mathrm{d}\bm{w}^+)\theta^-(\mathrm{d}(s,\bm{w}^-))\mathbb{P}_{a,b}(\mathrm{d}\bm{\omega})\\
=&\ \int_{\widetilde{X}} f(\widetilde{x})\, \mathbb{Q}_{\theta}(\mathrm{d}\widetilde{x}),
\end{align*}
where we have used the facts that $Q_u$ has expectation $1$,
\[
f(R(t),\sigma_t\bm{v},\bm{\omega}_u)\theta^+_{t,\bm{v}^-}([u])\widetilde{\theta}_{R(t),\bm{v}^-u,\bm{\omega}_u}^{+}(\mathrm{d}(\sigma_t\bm{v})^+)\theta^-(\mathrm{d}(t,\bm{v}^-))
\]
is independent of $Q_u$, and has the same law as
\[
f(R(t),\sigma_t\bm{v},\bm{\omega})\theta^+_{t,\bm{v}^-}([u])\widetilde{\theta}_{R(t),\bm{v}^-u,\bm{\omega}}^{+}(\mathrm{d}(\sigma_t\bm{v})^+)\theta^-(\mathrm{d}(t,\bm{v}^-))
\]
under $\mathbb{P}_{a,b}$, and that, by \eqref{ss11},
\[
\begin{split}
f(R(t),\sigma_t\bm{v},\bm{\omega})\theta^+_{t,\bm{v}^-}([u])&\widetilde{\theta}_{R(t),\bm{v}^-u,\bm{\omega}}^{+}(\mathrm{d}(\sigma_t\bm{v})^+)\theta^-(\mathrm{d}(t,\bm{v}^-))\\
&=\ f(R(t),\sigma_t\bm{v},\bm{\omega}) \widetilde{\theta}_{R(t),(\sigma_t\bm{v})^-,\bm{\omega}}^{+}(\mathrm{d}(\sigma_t\bm{v})^+)\theta^-(\mathrm{d}(R(t),(\sigma_t\bm{v})^-))
\end{split}
\]
for $(\sigma_t\bm{v})^-\in [u]^-$.
\end{proof}

Furthermore, since $\theta$ is $T$-ergodic, similar to Lemma \ref{erg}, we have
\begin{lemma}\label{erg2}
The Peyri\`ere measure  $\mathbb{Q}_{\theta}$ is $\widetilde{T}$-ergodic.
\end{lemma}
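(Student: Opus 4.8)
\textbf{Proof plan for Lemma \ref{erg2}.} The plan is to mirror the proof of Lemma \ref{erg}. Since $\widetilde{\mathcal{A}}$ is a semi-algebra generating $\widetilde{\mathcal{F}}$ and $\mathbb{Q}_{\theta}$ is $\widetilde{T}$-invariant by the preceding lemma, it is enough to show that for any $B_1,B_2\in\widetilde{\mathcal{A}}$ with $\mathbb{Q}_{\theta}(B_1)\cdot\mathbb{Q}_{\theta}(B_2)>0$ one has $\limsup_{k\to\infty}\mathbb{Q}_{\theta}(\widetilde{T}^{-k}(B_1)\cap B_2)>0$. Fix such $B_1,B_2$, written in the defining form of $\widetilde{\mathcal{A}}$: each constrains $t$ to a finite union of intervals, $\bm{v}$ to a cylinder, and the random variables $V_{u^a}(\bm{\omega}^a),W_{u^b}(\bm{\omega}^b)$ only for words $u^a,u^b$ of length at most some fixed $N$. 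The first step is to unwind $\widetilde{T}^{-k}(B_1)$ using the fact that $\widetilde{T}^{k}(t,\bm{v},\bm{\omega})$ has the form $(R^{k}(t),(\sigma^{\tau_k(t)}\bm{i},\sigma^{k}\bm{j}),\bm{\omega}')$, where $\bm{\omega}'$ is obtained from $\bm{\omega}$ by reindexing the $\Omega_{a,b}$-coordinate with the pair of words $\bm{v}^+|_{k,t}=(\bm{i}^+|_{\tau_k(t)},\bm{j}^+|_{k})$. Consequently every random variable constrained inside $\widetilde{T}^{-k}(B_1)$ carries an index word of length at least $\tau_k(t)\ge\alpha k-2$ in the $a$-coordinate and at least $k$ in the $b$-coordinate, by \eqref{taunt}; hence for all $k$ with $\alpha k-2>N$ these variables are disjoint from, and therefore $\mathbb{P}_{a,b}$-independent of, the ones appearing in $B_2$, while the constraint $t\in(c_1,c_2)$ of $B_1$ merely becomes $R^{-k}(c_1,c_2)$, still a finite union of intervals.

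The second step is to evaluate $\mathbb{Q}_{\theta}(\widetilde{T}^{-k}(B_1)\cap B_2)$ from the defining formula \eqref{pmqtheta}. The indicator of $\widetilde{T}^{-k}(B_1)\cap B_2$ splits into a function of $t$, a function of $(\bm{v}^-,\bm{v}^+)$ depending on finitely many generations, and a product of indicators of the ``shallow'' random variables coming from $B_2$ and the ``deep'' ones coming from $\widetilde{T}^{-k}(B_1)$. Applying the factorisation \eqref{ss11} to peel off the level-$k$ block $\bm{v}^+|_{k,t}$ (and then once more inside the re-rooted cascade), exactly as in the proof of the invariance lemma: the residual cascade total masses integrate to $1$ because $\mathbb{E}_{\mathbb{P}_{a,b}}\|\widetilde{\theta}^+_{t,\bm{v}^-,\cdot}\|=1$, the unconstrained cascade weights integrate to $1$ because $\mathbb{E}(V)=\mathbb{E}(W)=1$, the constrained weights of both $B_1$ and $B_2$ contribute finite products of expectations of the form $\mathbb{E}_{\mathbb{P}_{a,b}}(V_{u}\mathbf{1}_{\{V_u\in B\}})$ and $\mathbb{E}_{\mathbb{P}_{a,b}}(W_{u}\mathbf{1}_{\{W_u\in B\}})$, and the surviving $(t,\bm{v}^-,\bm{v}^+)$-integral collapses, using \eqref{cdm34}, to $\theta(T^{-k}(U_1)\cap U_2)$, where $U_l$ denotes the projection of $B_l$ onto $X$. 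Thus for all large $k$,
\[
\mathbb{Q}_{\theta}\big(\widetilde{T}^{-k}(B_1)\cap B_2\big)=c_V\cdot c_W\cdot\theta\big(T^{-k}(U_1)\cap U_2\big),
\]
where $c_V,c_W>0$ are $k$-independent finite products of the above expectations determined by $B_1$ and $B_2$. The same computation applied to $B_l$ alone shows that $\mathbb{Q}_{\theta}(B_l)$ equals $\theta(U_l)$ times a finite product of such expectations, so the hypothesis $\mathbb{Q}_{\theta}(B_1)\mathbb{Q}_{\theta}(B_2)>0$ forces $c_V,c_W>0$ and $\theta(U_1),\theta(U_2)>0$. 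Finally, since $\theta$ is $T$-ergodic, $\frac1n\sum_{k=0}^{n-1}\theta(T^{-k}(U_1)\cap U_2)\to\theta(U_1)\theta(U_2)>0$, so $\theta(T^{-k}(U_1)\cap U_2)>0$ for infinitely many $k$, which yields $\limsup_{k\to\infty}\mathbb{Q}_{\theta}(\widetilde{T}^{-k}(B_1)\cap B_2)>0$ and finishes the proof.

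I expect the main obstacle to be the bookkeeping in the first step rather than anything conceptual: because the $a$-component of the $\Omega_{a,b}$-coordinate is reindexed by $\tau_k(t)$ instead of $k$, one must check carefully that the ``deep'' index words still grow with $k$ — which they do at rate $\alpha$, uniformly in $t$, by \eqref{taunt} — and that the measurability and the independence splitting survive this $t$-dependence; once this is in place, the factorisation \eqref{ss11} and the $T$-ergodicity of $\theta$ finish the argument just as in Lemmas \ref{lminv}--\ref{erg}. Note that the non-invertibility of $\widetilde{T}$ causes no difficulty here, since only the preimages $\widetilde{T}^{-k}$ are used.
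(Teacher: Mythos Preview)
Your proposal is correct and takes essentially the same approach as the paper: reduce to the semi-algebra $\widetilde{\mathcal{A}}$, observe that for $k$ large the random-variable constraints in $\widetilde{T}^{-k}(B_1)$ and $B_2$ become independent, factor via \eqref{ss11}/\eqref{cdm34} to collapse onto $\theta(T^{-k}(U_1)\cap U_2)$, and invoke the $T$-ergodicity of $\theta$. The paper's own proof is a two-sentence sketch referring back to Lemma~\ref{erg}; your write-up is simply a more explicit version of that sketch, and your observation about the $\tau_k(t)$ bookkeeping via \eqref{taunt} is exactly the one extra check needed beyond the single-factor case.
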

\begin{proof}	
It is enough to prove that for $B_1,B_2\in \widetilde{\mathcal{A}}$ with $\mathbb{Q}_{\theta}(B_1),\mathbb{Q}_{\theta}(B_2)>0$, we have
\[
\lim_{k\to\infty}\mathbb{Q}_{\theta}(\widetilde{T}^{-k}(B_1)\cap B_2)>0.
\]
Note that since the sets $B_1,B_2\in\widetilde{\mathcal{A}}$ are only involved with random variables $V_u$, $W_v$ with words $u$, $v$ of bounded length, for $k$ large enough all the random variables involved in the sets $\widetilde{T}^{-k}B_1$ and $B_2$ will be independent. The rest of the proof is the same as Lemma \ref{erg}.
\end{proof}

\section{Exact-dimensionality of push-forward skewed fibre Mandelbrot cascade measures}\label{mcfm}

\subsection{Ergodic decomposition of the Peyri\`ere measure}

Define the probability measure $\mathbb{Q}_\Theta$ on $(\widetilde{X},\widetilde{\mathcal{F}})$ by $\mathbb{Q}_\Theta=\ell\times\mathbb{Q}_\mu\times\mathbb{Q}_\nu$, or equivalently,
\begin{equation}\label{QT}
\mathbb{E}_{\mathbb{Q}_\Theta}(f)=\int_{\widetilde{X}} f(t,\bm{i},\bm{j},\bm{\omega}) \,\ell(\mathrm{d}t)\widetilde{\mu}^+_{\bm{i}^-,\bm{\omega}^a}(\mathrm{d}\bm{i}^+)\mu^-(\mathrm{d}\bm{i}^-)\widetilde{\nu}^+_{\bm{j}^-,\bm{\omega}^b}(\mathrm{d}\bm{j}^+)\nu^-(\mathrm{d}\bm{j}^-)\mathbb{P}_{a,b}(\mathrm{d}\bm{\omega})
\end{equation}
for $\widetilde{\mathcal{F}}$-measurable function $f$ on $\widetilde{X}$. By using Lemma \ref{lminv} it is easy to see that $\mathbb{Q}_\Theta$ is $\widetilde{T}$-invariant, but it is not necessarily $\widetilde{T}$-ergodic. By using Lemma \ref{erg2} we have the following lemma about its ergodic decomposition.

\begin{lemma}\label{erg3}
$\mathbb{Q}_{\Theta^\zeta_x}$ for $\Theta$-a.e. $x\in X$ forms an ergodic decomposition of $\mathbb{Q}_{\Theta}$. In particular for  $f\in L^1(\widetilde{X},\widetilde{\mathcal{F}},\mathbb{Q}_\Theta)$ we have for $\mathbb{Q}_\Theta$-a.e. $\widetilde{x}=(x,\bm{\omega})\in \widetilde{X}$,
\[
\lim_{n\to\infty}\frac{1}{n}\sum_{k=0}^{n-1}f\circ \widetilde{T}(\widetilde{x})=\mathbb{E}_{\mathbb{Q}_{\Theta^\zeta_x}}(f).
\]
\end{lemma}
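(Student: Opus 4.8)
The goal is to exhibit $x\mapsto\mathbb Q_{\Theta^\zeta_x}$ as the ergodic decomposition of $\mathbb Q_{\Theta}$. Two ingredients suffice: (i) $\mathbb Q_{\Theta^\zeta_x}$ is $\widetilde T$-ergodic for $\Theta$-a.e.\ $x$, and (ii) $\mathbb Q_{\Theta}=\int_X\mathbb Q_{\Theta^\zeta_x}\,\Theta(\mathrm dx)$. Ingredient (i) is Lemma~\ref{erg2} applied to the $T$-ergodic measure $\theta=\Theta^\zeta_x$, together with the non-degeneracy of the cascades $\widetilde\theta^+_{t,\bm v^-}$ for $\Theta$-a.e.\ $x$ established in Section~\ref{ldfm} (which is what makes $\mathbb Q_{\theta}$ well defined). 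Granting (i) and (ii), the uniqueness of the ergodic decomposition (\cite[Chapter 5]{VO}) identifies the pushforward of $\Theta$ under $x\mapsto\mathbb Q_{\Theta^\zeta_x}$ with the ergodic decomposition of $\mathbb Q_{\Theta}$, and the displayed Birkhoff convergence follows from the ergodic theorem for $\mathbb Q_{\Theta}$ once the ergodic component of a $\mathbb Q_{\Theta}$-typical point is identified.

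\textbf{Proof of (ii).} It is enough to check the identity on the semi-algebra $\widetilde{\mathcal A}$, which is a $\pi$-system generating $\widetilde{\mathcal F}$. Fix $B\in\widetilde{\mathcal A}$ of the form described before \eqref{pmqtheta}, with $t\in(c_1,c_2)$, $\bm v\in[u]^-\times[v_t]$ (here $v_t=(v^a,v^b)\in C_{n,t}$), and constraints $V_{v^a_1\cdots v^a_k}\in B^a_{v^a_1\cdots v^a_k}$ ($k\le n^a$), $W_{v^b_1\cdots v^b_{k'}}\in B^b_{v^b_1\cdots v^b_{k'}}$ ($k'\le n^b$). For any $T$-invariant probability measure $\lambda$ on $X$ whose cascades $\widetilde\lambda^+_{t,\bm v^-}$ are non-degenerate for $\lambda^-$-a.e.\ $(t,\bm v^-)$, I claim that $\mathbb Q_{\lambda}(B)=c_B\cdot\lambda\big(\{(t,\bm v):t\in(c_1,c_2),\ \bm v\in[u]^-\times[v_t]\}\big)$, where $c_B=\prod_{k\le n^a}\mathbb E\big(V\mathbf 1_{\{V\in B^a_{v^a_1\cdots v^a_k}\}}\big)\prod_{k'\le n^b}\mathbb E\big(W\mathbf 1_{\{W\in B^b_{v^b_1\cdots v^b_{k'}}\}}\big)$ does not depend on $\lambda$. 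Indeed, by the factorisation \eqref{ss11} one has $\widetilde\lambda^+_{t,\bm v^-,\bm\omega}([v_t])=Q_{v_t}(\bm\omega)\,\lambda^+_{t,\bm v^-}([v_t])\,\|\widetilde\lambda^+_{R^{n}(t),\bm v^-v_t,\bm\omega_{v_t}}\|$; integrating over $\bm\omega\sim\mathbb P_{a,b}$, the residual-mass factor depends only on the weights indexed by words strictly extending $v_t$, hence is independent of all weights in $Q_{v_t}$ and has expectation $1$, while the expectation over the prefix weights of $v_t$ produces $c_B$ (the unconstrained prefixes contributing factors $\mathbb E(V)=\mathbb E(W)=1$, so that the $t$-dependence of $\tau_n(t)$ is immaterial); integrating $\lambda^+_{t,\bm v^-}([v_t])$ against $\lambda^-$ over the base event and using the disintegration $\lambda=\int\lambda^+_{t,\bm v^-}\,\lambda^-(\mathrm d(t,\bm v^-))$ (cf.~\eqref{cly2}) yields the claim. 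Thus $\lambda\mapsto\mathbb Q_{\lambda}(B)$ is affine; substituting $\Theta=\int_X\Theta^\zeta_x\,\Theta(\mathrm dx)$ (see \eqref{med1}) and using non-degeneracy of $\widetilde{(\Theta^\zeta_x)}^+$ for $\Theta$-a.e.\ $x$ gives $\mathbb Q_{\Theta}(B)=\int_X\mathbb Q_{\Theta^\zeta_x}(B)\,\Theta(\mathrm dx)$ for every $B\in\widetilde{\mathcal A}$, hence on all of $\widetilde{\mathcal F}$. (Here one uses that $\mathbb Q_{\Theta}$ from \eqref{QT} is the Peyri\`ere measure of $\Theta$ in the sense of \eqref{pmqtheta}: the product structure of $\Theta$ gives $\widetilde{\Theta}^+_{t,\bm v^-,\bm\omega}=\widetilde\mu^+_{\bm i^-,\bm\omega^a}\times\widetilde\nu^+_{\bm j^-,\bm\omega^b}$ and $\Theta^-=\ell\times\mu^-\times\nu^-$.)

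\textbf{The Birkhoff statement.} Each set $\zeta(x)\times\Omega_{a,b}$ is $\widetilde T$-invariant, since $T^{-1}\zeta(x)=\zeta(x)$ and $\widetilde T(t,\bm v,\bm\omega)=(T(t,\bm v),\kappa_{t,\bm v}(\bm\omega))$; these sets partition $\widetilde X$, and $\mathbb Q_{\Theta^\zeta_x}$ is carried by $\zeta(x)\times\Omega_{a,b}$ (its $X$-marginal is $\Theta^\zeta_x$). Consequently, for $\mathbb Q_{\Theta}$-a.e.\ $\widetilde x=(x,\bm\omega)$ the ergodic component of $\widetilde x$ in the decomposition just obtained is exactly $\mathbb Q_{\Theta^\zeta_x}$, and the ergodic theorem for $\mathbb Q_{\Theta}$ (valid for the one-sided m.p.d.s.\ $(\widetilde X,\widetilde{\mathcal F},\mathbb Q_{\Theta},\widetilde T)$) gives $\frac1n\sum_{k=0}^{n-1}f\circ\widetilde T^{k}(\widetilde x)\to\mathbb E_{\mathbb Q_{\Theta^\zeta_x}}(f)$ for $f\in L^1(\widetilde X,\widetilde{\mathcal F},\mathbb Q_{\Theta})$, which lies in $L^1(\mathbb Q_{\Theta^\zeta_x})$ for $\Theta$-a.e.\ $x$ by Fubini.

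\textbf{Main obstacle.} The delicate point is not conceptual but bookkeeping: because $\widetilde T$ is non-invertible one cannot invoke Shannon--McMillan--Breiman-type identities, but this is not needed here, since the affine representation of $\mathbb Q_{\lambda}(B)$ rests only on \eqref{ss11} and on the independence of the residual-mass factor from the constrained weights. The part requiring care is tracking the $t$-dependence of the word $v_t\in C_{n,t}$ and of $\tau_n(t)$ while keeping $c_B$ independent of both $t$ and $\lambda$; this is precisely where $\mathbb E(V)=\mathbb E(W)=1$ for the unconstrained prefix weights is used.
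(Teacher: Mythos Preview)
Your proof is correct and takes a genuinely different route from the paper's. The paper argues by first defining the partition $\widetilde\zeta=\{B\times\Omega_{a,b}:B\in\zeta\}$ and then identifying $\mathbb Q_{\Theta^\zeta_x}$ with the Rokhlin conditional measure $(\mathbb Q_\Theta)^{\widetilde\zeta}_{\widetilde x}$; this identification is carried out not at the level of the limiting cascades but via the approximating martingales $\mathbb Q_{\Theta,k}$ and $\mathbb Q_{\Theta^\zeta_x,k}$ (defined using the finite-stage cascades $\widetilde\theta^+_{t,\bm v^-,k}$), for which the conditional-measure identity is elementary, and then passed to the limit using weak convergence and non-degeneracy. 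Finally the paper invokes Birkhoff to conclude $\widehat{\widetilde\zeta}=\widetilde{\mathcal I}$ modulo $\mathbb Q_\Theta$-null sets. Your approach bypasses the martingale approximation entirely: you compute $\mathbb Q_\lambda(B)$ for $B\in\widetilde{\mathcal A}$ directly from the factorisation \eqref{ss11}, observe that it equals $c_B\cdot\lambda(\text{base set})$ with $c_B$ depending only on $B$, and hence that $\lambda\mapsto\mathbb Q_\lambda$ is affine on the relevant class of $T$-invariant measures. Substituting the ergodic decomposition of $\Theta$ then gives (ii) at once. Your argument is shorter and more transparent; the paper's approach has the minor advantage of making the identification $\mathbb Q_{\Theta^\zeta_x}=(\mathbb Q_\Theta)^{\widetilde\zeta}_{\widetilde x}$ explicit, which could be useful elsewhere. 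One small remark: in a fixed element of $\widetilde{\mathcal A}$ the word $v^a$ has fixed length $n^a$, so in fact every prefix weight appearing in $Q_{v_t}$ is constrained and there are no ``unconstrained prefixes'' to worry about; your aside about the $t$-dependence of $\tau_n(t)$ is therefore unnecessary (the semi-algebra element is only nonempty when $\tau_n$ is constant on $(c_1,c_2)$), though it does no harm.
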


\begin{proof}
Recall the measurable partition $\zeta$ of $X$ in \eqref{med1} that generates the $T$-invariant $\sigma$-algebra $\mathcal{I}$ module $\Theta$-measure zero sets. Define the measurable partition $\widetilde{\zeta}$ of $\widetilde{X}$ by
\[
\widetilde{\zeta}=\{B\times \Omega_{a,b}: B\in \zeta\},
\]
i.e., $\zeta$ with the trivial partition on the probability space side. Note that for $\widetilde{x}=(x,\bm{\omega})$ the element $\widetilde{\zeta}(\widetilde{x})=\zeta(x)\times\Omega_{a,b}$ only depends on $x$. By using the sequence measures $\widetilde{\theta}^+_{t,\bm{v}^-,k}$ and $\widetilde{\mu}^+_{\bm{i}^-,k}$, $\widetilde{\nu}^+_{\bm{j}^-,k}$ in \eqref{tta} and \eqref{mcm} instead of the limiting measures $\widetilde{\theta}^+_{t,\bm{v}^-}$ and $\widetilde{\mu}^+_{\bm{i}^-}$, $\widetilde{\nu}^+_{\bm{j}^-}$ in \eqref{pmqtheta} and \eqref{QT} we may define two sequences of measure-valued martingales $(\mathbb{Q}_{\theta,k})_{k\ge 1}$ and $(\mathbb{Q}_{\Theta,k})_{k\ge 1}$. By the the uniqueness of conditional measures it is easy to verify that for $k\ge 1$ and for $\mathbb{Q}_{\Theta,k}$-a.e. $(x,\bm{\omega})\in \widetilde{X}$, $\mathbb{Q}_{\Theta^\zeta_x,k}=(\mathbb{Q}_{\Theta,k})^{\widetilde{\zeta}}_{\widetilde{x}}$. Note that these conditional measures only depends on $x$, and the projection of $\mathbb{Q}_{\Theta^\zeta_x,k}$ to $X$ is $\Theta$, which is independent of $k$. Then by the weak convergence and the non-degeneracy of these Mandelbrot cascade measures we deduce that for $\Theta$-a.e. $x$, or equivalently for $\mathbb{Q}_\Theta$-a.e. $\widetilde{x}=(x,\bm{\omega})$,
\[
\mathbb{Q}_{\Theta^\zeta_x}=(\mathbb{Q}_\Theta)^{\widetilde{\zeta}}_{\widetilde{x}}.
\]
Since $\Theta^\zeta_x$ is $T$-ergodic, by Lemma \ref{erg2} we have that $\mathbb{Q}_{\Theta^\zeta_x}$ is $\widetilde{T}$-ergodic. By using Birkhoff's ergodic theorem for $\mathbb{Q}_{\Theta^\zeta_x}$ we deduce that for any measurable function $f$ on $(\widetilde{X},\widetilde{\mathcal{F}})$, for $\mathbb{Q}_\Theta$-a.e. $\widetilde{x}=(x,\bm{\omega})\in \widetilde{X}$, for $(\mathbb{Q}_\Theta)^{\widetilde{\zeta}}_{\widetilde{x}}=\mathbb{Q}_{\Theta^\zeta_x}$-a.e. $\widetilde{y}\in \widetilde{\zeta}(\widetilde{x})$,
\[
\lim_{n\to\infty}\frac{1}{n}\sum_{k=0}^{n-1}f\circ \widetilde{T}(\widetilde{y})=\mathbb{E}_{\mathbb{Q}_{\Theta^\zeta_x}}(f)=\mathbb{E}_{(\mathbb{Q}_\Theta)^{\widetilde{\zeta}}_{\widetilde{x}}}(f).
\]
Note that for $\widetilde{y}\in \widetilde{\zeta}(\widetilde{x})$ we have $\widetilde{\zeta}(\widetilde{y})=\widetilde{\zeta}(\widetilde{x})$, hence we deduce that for $\mathbb{Q}_\Theta$-a.e. $\widetilde{x}\in \widetilde{X}$, for $(\mathbb{Q}_\Theta)^{\widetilde{\zeta}}_{\widetilde{x}}$-a.e. $\widetilde{y}\in \widetilde{\zeta}(\widetilde{x})$,
\[
\lim_{n\to\infty}\frac{1}{n}\sum_{k=0}^{n-1}f\circ \widetilde{T}(\widetilde{y})=\mathbb{E}_{(\mathbb{Q}_\Theta)^{\widetilde{\zeta}}_{\widetilde{y}}}(f)=\mathbb{E}_{\mathbb{Q}_{\Theta}}(f | \widehat{\widetilde{\zeta}})(\widetilde{y}),
\]
where we have used Theorem \ref{Rohthm} for the last equality. Theorem \ref{Rohthm} also implies that ``for $\mathbb{Q}_\Theta$-a.e. $\widetilde{x}\in \widetilde{X}$, for $(\mathbb{Q}_\Theta)^{\widetilde{\zeta}}_{\widetilde{x}}$-a.e. $\widetilde{y}\in \widetilde{\zeta}(\widetilde{x})$" is the same as ``for $\mathbb{Q}_\Theta$-a.e. $\widetilde{y}\in \widetilde{X}$", therefore we derive that for $\mathbb{Q}_\Theta$-a.e. $\widetilde{y}\in \widetilde{X}$,
\begin{equation}\label{bet22}
\lim_{n\to\infty}\frac{1}{n}\sum_{k=0}^{n-1}f\circ \widetilde{T}(\widetilde{y})=\mathbb{E}_{\mathbb{Q}_{\Theta}}(f | \widehat{\widetilde{\zeta}})(\widetilde{y}).
\end{equation}
Comparing \eqref{bet22} with the Birkhoff's ergodic theorem for $\mathbb{Q}_{\Theta}$ we deduce that $\widehat{\widetilde{\zeta}}=\widetilde{\mathcal{I}}=\{B\in \widetilde{\mathcal{F}}: \widetilde{T}^{-1}(B)=B\}$ module $\mathbb{Q}_\Theta$-measure zero sets. Hence $(\mathbb{Q}_\Theta)^{\widetilde{\zeta}}_{\widetilde{x}}$ for $\mathbb{Q}_\Theta$-a.e. $\widetilde{x}\in \widetilde{X}$, or equivalently $\mathbb{Q}_{\Theta^\zeta_x}$ for $\Theta$-a.e. $x\in X$, is an ergodic decomposition of $\mathbb{Q}_\Theta$.
\end{proof}

\subsection{Canonical mapping and exact-dimensionality}

We may attach two IFSs $\bm{I}$ and $\bm{J}$ to the two symbolic spaces $\Sigma_a^+$ and $\Sigma_b^+$, where $\bm{I}=\{f_i\}_{i=1}^a$ takes the form
	\[
	f_i(x)=\delta x+t_i,\ t_i\in\mathbb{R},
	\]
	and $\bm{J}=\{g_j\}_{j=1}^b$ takes the form
	\[
	g_j(x)=\rho x+s_j, \ s_j\in \mathbb{R}.
	\]
Note that we do not need to assume any separation condition here. For convenience we shortly denote by
\[
\Phi=\Phi_{\bm{I}}\times \Phi_{\bm{J}}:\Sigma_a^+\times\Sigma_b^+\to \mathbb{R}^2
\]
and $E=\Phi(\Sigma_a^+\times\Sigma_b^+)$.

Let $\widetilde{\phi}$ be the canonical projection from $\widetilde{X}$ to its subspace $\mathbb{T}\times\Sigma_{a,b}^-\times\Omega_{a,b}$, and let $\widetilde{\varphi}$ be the canonical mapping from $\widetilde{X}$ to its subspace $\Sigma_{a,b}^+$.

Denote by $\mathbb{Q}_\Theta^-=\mathbb{Q}_\Theta\circ \widetilde{\phi}^{-1}$. By \eqref{QT} we have
\[
\mathbb{Q}_\Theta^-(\mathrm{d}(t,\bm{v}^-,\bm{\omega}))=\|\widetilde{\mu}^+_{\bm{i}^-,\bm{\omega}^a}\| \mu^-(\mathrm{d}\bm{i}^-)\|\widetilde{\nu}^+_{\bm{j}^-,\bm{\omega}^b}\| \nu^-(\mathrm{d}\bm{j}^-) \ell(\mathrm{d}t)\mathbb{P}_{a,b}(\mathrm{d}\bm{\omega})
\]
for $\bm{v}^-=(\bm{i}^-,\bm{j}^-)$ and $\bm{\omega}=(\bm{\omega}^a,\bm{\omega}^b)$, and we may write
\begin{equation}\label{PQT}
\mathbb{Q}_\Theta(\mathrm{d}\widetilde{x})=\bar{\lambda}^+_{\bm{v}^-,\bm{\omega}}(\mathrm{d}\bm{v}^+)\mathbb{Q}_\Theta^-(\mathrm{d}(t,\bm{v}^-,\bm{\omega})),
\end{equation}
for $\widetilde{x}=(t,\bm{v},\bm{\omega})\in \widetilde{X}$, where
\[
\bar{\lambda}^+_{\bm{v}^-,\bm{\omega}}(\mathrm{d}\bm{v}^+)=\bar{\mu}^+_{\bm{i}^-,\bm{\omega}^a}(\mathrm{d}\bm{i}^+)\bar{\nu}^+_{\bm{j}^-,\bm{\omega}^b}(\mathrm{d}\bm{j}^+)
\]
for $\bm{v}^-=(\bm{i}^-,\bm{j}^-)$ and $\bm{v}^+=(\bm{i}^+,\bm{j}^+)$. Similarly denote by $\mathbb{Q}_\theta^-=\mathbb{Q}_\theta\circ \widetilde{\phi}^{-1}$. By \eqref{pmqtheta} we have 
\[
\mathbb{Q}_\theta^-(\mathrm{d}(t,\bm{v}^-,\bm{\omega}))=\|\widetilde{\theta}^+_{t,\bm{v}^-,\bm{\omega}}\| \theta^-(\mathrm{d}(t,\bm{v}^-))\mathbb{P}_{a,b}(\mathrm{d}\bm{\omega}),
\]
and we may write
\begin{equation}\label{PQT2}
\mathbb{Q}_\theta(\mathrm{d}\widetilde{x})=\bar{\theta}^+_{t,\bm{v}^-,\bm{\omega}}(\mathrm{d}\bm{v}^+)\mathbb{Q}_\theta^-(\mathrm{d}(t,\bm{v}^-,\bm{\omega})),
\end{equation}
for $\widetilde{x}=(t,\bm{v},\bm{\omega})\in \widetilde{X}$.

By Lemma \ref{lem1}, Theorem \ref{BJ21} and \eqref{ed123} we have that for $\mathbb{Q}_\Theta^-$-a.e. (resp. for $\mathbb{Q}_\theta^-$-a.e.) $(t,\bm{v}^-,\bm{\omega})\in \mathbb{T}\times\Sigma_{a,b}^-\times\Omega_{a,b}$, the measure $\bar{\lambda}^+_{\bm{v}^-,\bm{\omega}}$ (resp. $\bar{\theta}^+_{t,\bm{v}^-,\bm{\omega}}$) is exact-dimensional on $(\Sigma_{a,b}^+,d_{\delta,\rho}^+)$ with dimension
\[
\frac{h_\mu-h_V}{-\log \delta}+\frac{h_\nu-h_W}{-\log \rho}.
\]
By Theorem \ref{ed1} and Theorem \ref{ed22} we have that for $\mathbb{P}_{a,b}$-a.e. $\bm{\omega}\in \Omega_{a,b}$, as product measures, both $\Phi(\bar{\lambda}^+_{\bm{v}^-,\bm{\omega}})=\Phi_{\bm{I}}(\bar{\mu}^+_{\bm{i}^-,\bm{\omega}^a})\times \Phi_{\bm{J}}(\bar{\nu}^+_{\bm{j}^-,\bm{\omega}^b})$ for $\mu^-\times\nu^-$-a.e. $\bm{v}^-\in\Sigma_{a,b}^-$ and $\Phi(\bar{\mu}\times\bar{\nu})=\Phi_{\bm{I}}(\bar{\mu})\times\Phi_{\bm{J}}(\bar{\nu})$ are exact-dimensional on $\mathbb{R}^2$ with the same dimension
\begin{equation}\label{ed34}
D_{\bar{\mu},\bar{\nu},\Phi}=D_{\mu,V,\bm{I}}+D_{\nu,W,\bm{J}}=\frac{h_{\mu}-h_V-h_{\mu,V,\bm{I}}}{-\log \delta}+\frac{h_{\nu}-h_W-h_{\nu,W,\bm{J}}}{-\log \rho}.
\end{equation}
We expect the same holds for $\bar{\theta}^+_{t,\bm{v}^-,\bm{\omega}}$, but this is highly not trivial when the IFSs do not satisfy the OSC. Note that since $\widetilde{T}$ is not invertible, we can not use Proposition \ref{SMB} here.

\begin{theorem}\label{ed2}
For $\mathbb{Q}_\theta^-$-a.e. $(t,\bm{v}^-,\bm{\omega})\in \mathbb{T}\times\Sigma_{a,b}^-\times\Omega_{a,b}$, the measure $\Phi(\bar{\theta}^+_{t,\bm{v}^-,\bm{\omega}})$ is exact-dimensional on $\mathbb{R}^2$ with dimension $D_{\bar{\mu},\bar{\nu},\Phi}$.
\end{theorem}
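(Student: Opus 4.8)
The plan is to follow the same skeleton as the proof of Theorem \ref{ed22}, i.e., to control the local dimension of $\Phi(\bar\theta^+_{t,\bm v^-,\bm\omega})$ from above by comparison with the fibre-of-the-fibre measures, and from below by an entropy/Lyapunov count, the difference being that we now work on the skew-product $\widetilde X$ with the non-invertible map $\widetilde T$ and so must replace every use of Proposition \ref{SMB} by a use of Maker's ergodic theorem (Theorem \ref{met}) applied along $\widetilde T$-orbits, exactly as was done in the proof of Theorem \ref{ed1}. First I would establish, by the variant of the ``adapted basis'' argument of \cite{FH09} (but with the family $\mathcal U$ of rectangles in $\mathbb R^2$ of bounded aspect ratio rather than balls, using Proposition \ref{P3.5} for the $\gamma$-Morse covering), a telescoping identity analogous to \eqref{ddds}: for $\bm v^+=(\bm i^+,\bm j^+)$ and a well-chosen sequence of rectangles $U_n\ni\Phi(\bm v^+)$ with $|U_n|\asymp\max\{\delta^{\tau_n(t)},\rho^n\}$,
\[
\frac{\widetilde\theta^+_{t,\bm v^-,\bm\omega}\big(\Phi^{-1}(U_n)\big)}{\widetilde\theta^+_{t,\bm v^-,\bm\omega}\big([\bm v^+|_{n,t}]\big)}
=\prod_{k=0}^{n-1}\frac{\bar\theta^+_{R^k(t),\bm v^-(\bm v^+|_{k,t}),\bm\omega_{\bm v^+|_{k,t}}}\big(U^{(k)}\big)}{\bar\theta^+_{R^k(t),\bm v^-(\bm v^+|_{k,t}),\bm\omega_{\bm v^+|_{k,t}}}\big(U^{(k)}\cap\mathcal P(\sigma^k\cdot)\big)},
\]
where $U^{(k)}$ is the ``deskewed'' rectangle at step $k$; this uses the factorisation \eqref{ss11} of the cascade and the cocycle identity \eqref{cdm34} for $\theta^+$, exactly as \eqref{ssss} and \eqref{cdm12} were used for $\widetilde\mu$. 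The logarithm of the $k$-th factor is $f_{n-k}\circ\widetilde T^k(t,\bm v,\bm\omega)$ for a suitable nonnegative $f_m$ that converges, by Proposition \ref{P3.5}(1), to $I_{\bar\theta^+_{t,\bm v^-,\bm\omega}}(\mathcal P\mid\widehat{\xi\vee\eta})$ where $\mathcal P$ is the partition \eqref{P} and $\xi$ the fibres of $\widetilde\phi$; Proposition \ref{P3.5}(2) gives the $L^1$ dominating function needed for Theorem \ref{met}.

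Next I would run Maker's theorem along the $\widetilde T$-orbit under the ergodic (by Lemma \ref{erg2}) Peyri\`ere measure $\mathbb Q_\theta$. Combined with: (i) the exact-dimensionality of $\bar\theta^+_{t,\bm v^-,\bm\omega}$ on $(\Sigma_{a,b}^+,d^+_{\delta,\rho})$ with dimension \eqref{ed123}, (ii) the relation $|U_n|\asymp\rho^n$ from \eqref{radiusc}, and (iii) the Lyapunov computation $\lim_n \tfrac{-\log|U_n|}{n}=-\log\rho$ together with $\frac{\tau_n(t)}{n}\to\alpha$ from \eqref{taunt}, this produces a $\mathbb Q_\theta$-a.s. value for $\lim_{r\to0}\frac{\log\widetilde\theta^+_{t,\bm v^-,\bm\omega}(\Phi^{-1}(U_{n(r)}))}{\log r}$ that equals
\[
\frac{\alpha(h_\mu-h_V-h_{\mu,V,\bm I})+(h_\nu-h_W-h_{\nu,W,\bm J})}{-\log\rho}=D_{\bar\mu,\bar\nu,\Phi},
\]
exactly the number in \eqref{ed34}. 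Passing from the discrete scales $|U_{n(r)}|$ to all radii $r$ is routine since consecutive scales are comparable. This gives exact-dimensionality along the rectangle family $\mathcal U$; converting this to exact-dimensionality for Euclidean balls is then immediate because $\mathcal U$ has uniformly bounded aspect ratio, so a ball of radius $r$ is sandwiched between two elements of $\mathcal U$ of comparable diameter.

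The upper bound $\overline{\dim}_H\Phi(\bar\theta^+_{t,\bm v^-,\bm\omega})\le D_{\bar\mu,\bar\nu,\Phi}$ — which is the only nontrivial inequality, since the lower bound follows from the fibre-of-the-fibre measures as in Theorem \ref{ed22} via Lemma \ref{erg3} and Theorem \ref{ed1}, combined with $\bar\lambda^+_{\bm v^-,\bm\omega}$ being a $\mathbb Q_\Theta^-$-typical component of $\bar\theta^+$ — is where I expect the main obstacle. It cannot be obtained from Proposition \ref{SMB} because $\widetilde T$ is not invertible, so I would instead run the covering/counting argument of Ledrappier--Young (\cite[Section 10.2]{LY85II}, as in the proof of Theorem \ref{ed22} and of (C2) in \cite{FH09,Feng20}): fix a generic $\bm\omega$; assume for contradiction that on a positive-$\mathbb Q_\theta$ set one has $\bar s_0>\bar s_1$, where $\bar s_0$ is the upper local dimension of $\Phi(\bar\theta^+_{t,\bm v^-,\bm\omega})$ and $\bar s_1$ that of the ``next fibre down'' measure (a $\mathbb Q_\Theta$-typical disintegration, which by Theorem \ref{ed1}/\ref{ed22} and Lemma \ref{erg3} has local dimension $D_{\bar\mu,\bar\nu,\Phi}$ a.e.); pass to a positive-measure subset $\Delta$ on which the convergences (C1)--(C2) are uniform, choose a scale $r=e^{n(-\chi+2\epsilon)}$ (with $\chi$ now the combined Lyapunov exponent $-\log\rho$, and the two coordinates handled simultaneously through the skew-product bookkeeping of $\tau_n(t)$), count cylinders $[\bm v^+|_{n,t}]$ inside a ball of radius $2r$ two ways — once via $\widetilde\theta^+$-mass and the lower bound on $\widetilde\theta^+\big(B(\cdot,r)\cap\Delta\big)$, once via the upper bound $e^{-n(h-\epsilon)}$ on the mass of each cylinder under the denser fibre measure — and let $\epsilon\to0$ to reach $\chi s_0\le\chi s_1$, contradicting $s_0>s_1$ since $\chi>0$. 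The one genuinely new technical point compared with \cite{LY85II,FH09} is that the ``balls'' here are the rectangles of $\mathcal U$ and the fibre structure is the three-layered $\widetilde\zeta\prec\widetilde\eta\prec(\text{points})$ coming from ergodic decomposition of the Peyri\`ere measure (Lemma \ref{erg3}); checking that the Besicovitch/maximal-inequality inputs still hold for $\mathcal U$ is exactly what Proposition \ref{P3.5} was set up to provide, so the argument goes through.
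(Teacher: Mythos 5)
Your first paragraph is essentially the paper's opening move: set up the rectangle family $\mathcal U$ as a $\gamma$-Morse covering, use \eqref{ss11} and \eqref{cdm34} to telescope, and apply Proposition~\ref{P3.5} and Maker's theorem (Theorem~\ref{met}) along the ergodic $\widetilde T$-orbit to get $\mathbb{Q}_\theta$-a.s. convergence of $\tfrac{-1}{n}\log\widetilde\theta^+_{t,\bm v^-,\bm\omega}(U^\Phi_{n,t}(\bm v^+))$. This already yields exact-dimensionality. But the limit you obtain is
\[
\alpha(h_\mu-h_V)+(h_\nu-h_W)-\mathbb{E}_{\mathbb{Q}_\theta}(F),
\]
not $D_{\bar\mu,\bar\nu,\Phi}$, and your proposal simply asserts the equality of these two quantities without an argument. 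Identifying $\mathbb{E}_{\mathbb{Q}_\theta}(F)$ with $\alpha h_{\mu,V,\bm I}+h_{\nu,W,\bm J}$ is where all the nontrivial work is, and your proposed route does not do it.

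The root of the trouble is that you have the disintegration hierarchy inverted. By \eqref{PQT3}, $\bar\lambda^+_{\bm v^-,\bm\omega}$ dominates $\bar\theta^+_{t,\bm v^-,\bm\omega}$: the conditional $\bar\theta^+$ is a $\Theta$-typical ergodic component of $\bar\lambda^+$, not the other way around. With the hierarchy stated correctly, the two tools you invoke pull in exactly the opposite directions from what you claim. The Ledrappier--Young counting argument of Theorem~\ref{ed22} establishes that the local dimension of the \emph{dominating} measure is bounded above by that of its conditional fibre; applied here that gives $D_{\bar\mu,\bar\nu,\Phi}=\dim\Phi(\bar\lambda^+)\le\dim\Phi(\bar\theta^+)$, a \emph{lower} bound on $\dim\Phi(\bar\theta^+)$, not the upper bound you were reaching for. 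Conversely, the ``dimension of a measure is at least the essential lower dimension of its disintegration'' fact gives $D_{\bar\mu,\bar\nu,\Phi}\ge\operatorname{essinf}\dim\Phi(\bar\theta^+)$, which, once Maker's theorem has told us each $\Phi(\bar\theta^+)$ is exact-dimensional, is the (relatively easy) \emph{upper} bound $\dim\Phi(\bar\theta^+)\le D_{\bar\mu,\bar\nu,\Phi}$ and is precisely what the paper uses to get $\mathbb{E}_{\mathbb{Q}_\theta}(F)\ge \alpha h_{\mu,V,\bm I}+h_{\nu,W,\bm J}$. You have reversed which bound is ``easy'' and which is the obstacle.

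The step that is genuinely missing from your proposal is the reverse inequality. The paper does \emph{not} run a Ledrappier--Young counting argument at this stage. Instead it exploits the concavity of the conditional entropy $\lambda\mapsto H_\lambda(\mathcal P_t\mid\mathcal B_{\bm I}\otimes\mathcal B_{\bm J})$ and Jensen's inequality, together with the ergodic-decomposition identity \eqref{PQT3} from Lemma~\ref{erg3}, to show
\[
\alpha h_{\mu,V,\bm I}+h_{\nu,W,\bm J}\ \ge\ \int_{\mathcal M(X)}\mathbb{E}_{\mathbb{Q}_\theta}(F)\,\Theta(\mathrm d\theta),
\]
i.e. an \emph{integrated} upper bound on $\mathbb{E}_{\mathbb{Q}_\theta}(F)$, in the spirit of \cite[Proposition 4.17]{FH09}. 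Combined with the pointwise lower bound from the previous paragraph this forces $\mathbb{E}_{\mathbb{Q}_\theta}(F)=\alpha h_{\mu,V,\bm I}+h_{\nu,W,\bm J}$ for $\Theta$-a.e. $\theta$, and only then does the Maker limit evaluate to $D_{\bar\mu,\bar\nu,\Phi}$. Nothing in your proposal plays this role; the covering/counting argument you sketch cannot be rerouted to deliver it, because (i) it is in the wrong direction, and (ii) there is no ``next fibre down'' below $\bar\theta^+$ to count against — $\bar\theta^+$ sits at the bottom of the conditioning tower $(\bar\lambda^+\succ\bar\theta^+)$, unlike $\widetilde\mu^+_{\bm\omega}$ in Theorem~\ref{ed22}, which sat \emph{above} $\widetilde\mu^+_{\bm i^-,\bm\omega}$.
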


\begin{proof}
For $t\in\mathbb{T}$, $\bm{v}^+=(\bm{i}^+,\bm{j}^+)\in \Sigma_{a,b}^+$ and $n\ge 0$ define the following rectangle centred at $\Phi(\bm{v}^+)\in \mathbb{R}^2$,
\[
U_{n,t}(\bm{v}^+)=B(\Phi_{\bm{I}}(\bm{i}^+),R_{\bm{I}}\delta^{\tau_n(t)})\times B(\Phi_{\bm{J}}(\bm{j}^+),R_{\bm{J}}\rho^{n}),
\]
where we set $\tau_0(t)=0$. By the fact that $|\tau_n(t)-n\alpha|\le 2$ we have for all $t\in\mathbb{T}$, $\bm{v}^+=(\bm{i}^+,\bm{j}^+)\in \Sigma_{a,b}^+$ and $n\ge 0$,
\[
B(\Phi(\bm{v}^+),r\cdot \rho^n )\subset U_{n,t}(\bm{v}^+)\subset B(\Phi(\bm{v}^+),R\cdot \rho^n),
\]
where $r=\delta^2\min\{R_{\bm{I}},R_{\bm{J}}\}$ and $R=\delta^{-2}\max\{R_{\bm{I}},R_{\bm{J}}\}$. Denote by
\[
\gamma=\frac{R}{r}=\delta^{-4}\frac{\max\{R_{\bm{I}},R_{\bm{J}}\}}{\min\{R_{\bm{I}},R_{\bm{J}}\}}\ge 1.
\]
Then it is easy to see that the family
\begin{equation}\label{Ug}
\mathcal{U}=\{U_{n,t}(\bm{v}^+): t\in\mathbb{T},\ \bm{v}^+=(\bm{i}^+,\bm{j}^+)\in \Sigma_{a,b}^+,\ n\ge 0\}
\end{equation}
is a fine $\gamma$-Morse covering of $E=\Phi(\Sigma_{a,b}^+)=\Phi_{\bm{I}}(\Sigma_a^+)\times\Phi_{\bm{J}}(\Sigma_b^+)$. For $U_{n,t}(\bm{v}^+)\in \mathcal{U}$ shortly denote by
\[
U^\Phi_{n,t}(\bm{v}^+)=\Phi^{-1}(U_{n,t}(\bm{v}^+))
\]
the corresponding set in $\Sigma_{a,b}^+$.

For $u\in \Sigma_{a,b}^{+,*}$ and $B\in \mathcal{B}_{a,b}$ denote by
\[
uB=\{u\bm{v}^+:\bm{v}^+\in B\}.
\]
Recall the notation $C_{n,t}$ in \eqref{Cnt}. For $t\in \mathbb{T}$ define $\mathcal{P}_t=\{[u]:u\in C_{1,t}\}$. Applying \cite[Lemma 3.7]{FJ14} to $(\Sigma_a^+, \Phi_{\bm{I}})$ and $(\Sigma_b^+, \Phi_{\bm{J}})$ respectively we obtain that for $n\ge 1$,
\[
U^\Phi_{n,t}(\bm{v}^+)\cap \mathcal{P}_t(\bm{v}^+)=\bm{v}^+|_{1,t}U^\Phi_{n-1,R(t)}(\sigma_t\bm{v}^+).
\]
By \eqref{ss11} we have
\begin{align*}
\widetilde{\theta}^+_{t,\bm{v}^-,\bm{\omega}}(\bm{v}^+|_{k,t}B)=Q_{\bm{v}^+|_{k,t}}(\bm{\omega})\cdot\theta_{t,\bm{v}^-}^+([\bm{v}^+|_{k,t}])\cdot \widetilde{\theta}_{R^k(t),\bm{v}^-\bm{v}^+|_{k,t},\bm{\omega}_{\bm{v}^+|_{k,t}}}^{+}(B)
\end{align*}
Therefore
\begin{align*}
&\ \frac{\widetilde{\theta}^+_{t,\bm{v}^-,\bm{\omega}}(\bm{v}^+|_{k+1,t}U^\Phi_{n-k-1,R^{k+1}(t)}(\sigma_{k+1,t}\bm{v}^+))}{\widetilde{\theta}^+_{t,\bm{v}^-,\bm{\omega}}(\bm{v}^+|_{k,t}U^\Phi_{n-k,R^k(t)}(\sigma_{k,t}\bm{v}^+))}\\
=&\ \frac{\widetilde{\theta}^+_{R^k(t),\bm{v}^-\bm{v}^+|_{k,t},\bm{\omega}_{\bm{v}^+|_{k,t}}}(\sigma_{k,t}\bm{v}^+|_{1,R^{k}t}U^\Phi_{n-k-1,R^{k+1}(t)}(\sigma_{k,t}\bm{v}^+))}{\widetilde{\theta}^+_{R^k(t),\bm{v}^-\bm{v}^+|_{k,t},\bm{\omega}_{\bm{v}^+|_{k,t}}}(U^\Phi_{n-k,R^k(t)}(\sigma_{k,t}\bm{v}^+))}\\
=&\ \frac{\bar{\theta}^+_{R^k(t),\bm{v}^-\bm{v}^+|_{k,t},\bm{\omega}_{\bm{v}^+|_{k,t}}}(U^\Phi_{n-k,R^k(t)}(\sigma_{k,t}\bm{v}^+)\cap \mathcal{P}_{R^k(t)}(\sigma_{k,t}\bm{v}^+))}{\bar{\theta}^+_{R^k(t),\bm{v}^-\bm{v}^+|_{k,t},\bm{\omega}_{\bm{v}^+|_{k,t}}}(U^\Phi_{n-k,R^k(t)}(\sigma_{k,t}\bm{v}^+))}\\
\end{align*}
Define
\[
F_n(t,\bm{v}^-,\bm{v}^+,\bm{\omega})=-\log \frac{\bar{\theta}^+_{t,\bm{v}^-,\bm{\omega}}(U^\Phi_{n,t}(\bm{v}^+)\cap \mathcal{P}_t(\bm{v}^+))}{\bar{\theta}^+_{t,\bm{v}^-,\bm{\omega}}(U^\Phi_{n,t}(\bm{v}^+))}
\]
Then we have
\[
\log \frac{\widetilde{\theta}^+_{t,\bm{v}^-,\bm{\omega}}(U^\Phi_{n,t}(\bm{v}^+))}{\widetilde{\theta}^+_{t,\bm{v}^-,\bm{\omega}}([\bm{v}^+|_{n,t}])}=\sum_{k=0}^{n-1}F_{n-k}\circ \widetilde{T}^k(t,\bm{v}^-,\bm{v}^+,\bm{\omega}),
\]
where we have used the fact that, by the choices of $R_{\bm{I}}$, $R_{\bm{J}}$, $U^\Phi_{0,t}(\bm{v}^+)=\Sigma_{a,b}^+$ is the whole set for all $t\in \mathbb{T}$ and $\bm{v}^+\in\Sigma_{a,b}^+$.

By Proposition \ref{P3.5}, for $\mathbb{Q}_\theta^-$-a.e. $(t,\bm{v}^-,\bm{\omega})$, for $\bar{\theta}^+_{t,\bm{v}^-,\bm{\omega}}$-a.e. $\bm{v}^+$, as $n\to\infty$, the sequence $F_n(t,\bm{v}^-,\bm{v}^+,\bm{\omega})$ converges to
\[
F(t,\bm{v}^-,\bm{v}^+,\bm{\omega})=I_{\bar{\theta}^+_{t,\bm{v}^-,\bm{\omega}}}(\mathcal{P}_t|\mathcal{B}_{\bm{I}}\otimes\mathcal{B}_{\bm{J}})(\bm{v}^+)
\]
and there exists a positive measurable function $G(t,\bm{v}^-,\cdot,\bm{\omega})\in L^1(\Sigma_{a,b}^+,\mathcal{B}_{a,b}^+,\bar{\theta}^+_{t,\bm{v}^-,\bm{\omega}})$ such that $F_n\le G$ and
\[
\int_{\Sigma_{a,b}^+} G(t,\bm{v}^-,\bm{v}^+,\bm{\omega})\,\bar{\theta}^+_{t,\bm{v}^-,\bm{\omega}}(\mathrm{d}\bm{v}^+) \le ab\log ab+C<\infty.
\]
By using the exact-dimensionality of $\widetilde{\theta}^+_{t,\bm{v}^-,\bm{\omega}}$ in \eqref{ed123} and using Theorem \ref{met} we deduce that for $\mathbb{Q}_\theta$-a.e. $(t,\bm{v},\bm{\omega})$, or equivalently (by \eqref{PQT2}), for $\mathbb{Q}_\theta^-$-a.e. $(t,\bm{v}^-,\bm{\omega})$, for $\bar{\theta}^+_{t,\bm{v}^-,\bm{\omega}}$-a.e. $\bm{v}^+$,
\begin{align*}
\lim_{n\to\infty}\frac{-\log \widetilde{\theta}^+_{t,\bm{v}^-,\bm{\omega}}(U^\Phi_{n,t}(\bm{v}^+))}{n}&=\ \lim_{n\to\infty}\frac{-\log\widetilde{\theta}^+_{t,\bm{v}^-,\bm{\omega}}([\bm{v}^+|_{n,t}]) }{n}-\frac{1}{n}\sum_{k=0}^{n-1} F_{n-k}\circ \widetilde{T}^k(t,\bm{v}^-,\bm{v}^+,\bm{\omega})\\
&=\ \alpha(h_\mu-h_V)+(h_\nu-h_W)-\mathbb{E}_{\mathbb{Q}_\theta}(F).\numberthis \label{ed56}
\end{align*}

Recall that $\theta=\Theta^\zeta_z$ is the ergodic decomposition of $\Theta$ at a $\Theta$-typical $z\in X$. By Lemma \ref{erg3} we may write
\[
\mathbb{E}_{\mathbb{Q}_\Theta}(f)=\int_{\mathcal{M}(X)}\mathbb{E}_{\mathbb{Q}_{\theta}}(f)\,\Theta(\mathrm{d}\theta)
\]
for any measurable function $f$ on $(\widetilde{X},\widetilde{\mathcal{F}})$. Hence by \eqref{PQT} and \eqref{PQT2} we have that
\begin{equation}\label{PQT3}
\begin{split}
\int_{\mathbb{T}\times\Sigma_{a,b}^-\times\Omega_{a,b}}\mathbb{E}_{\bar{\lambda}^+_{\bm{v}^-,\bm{\omega}}}(f^+)&\,\mathbb{Q}_\Theta^-(\mathrm{d}(t,\bm{v}^-,\bm{\omega}))\\
=&\ \int_{\mathcal{M}(X)}\int_{\mathbb{T}\times\Sigma_{a,b}^-\times\Omega_{a,b}}\mathbb{E}_{\bar{\theta}^+_{t,\bm{v}^-,\bm{\omega}}}(f^+)\,\mathbb{Q}_{\theta}^-(\mathrm{d}(t,\bm{v}^-,\bm{\omega})) \Theta(\mathrm{d}\theta)
\end{split}
\end{equation}
for any measurable function $f^+$ on $(\Sigma_{a,b}^+,\mathcal{B}_{a,b}^+)$. By the definition of lower Hausdorff dimension we deduce that the dimension of $\bar{\lambda}^+_{\bm{v}^-,\bm{\omega}}$ on $(\Sigma_{a,b}^+,d_{\delta,\rho}^+)$ can not be lower than the essential lower dimension of $\bar{\theta}^+_{t,\bm{v}^-,\bm{\omega}}$, which is 
\[
\frac{\alpha(h_\mu-h_V)+(h_\nu-h_W)-\mathbb{E}_{\mathbb{Q}_\theta}(F)}{-\log \rho}
\]
by \eqref{ed56}. Comparing this with \eqref{ed34} we deduce that
\begin{equation}\label{EQF}
\mathbb{E}_{\mathbb{Q}_\theta}(F)\ge \alpha h_{\mu,V,\bm{I}} + h_{\nu,W,\bm{J}}.
\end{equation}

On the other hand, by \eqref{PQT2} we have
\[
\mathbb{E}_{\mathbb{Q}_\theta}(F)=\int_{\Sigma_{a,b}^-\times\Omega_{a,b}} H_{\bar{\theta}_{t,\bm{v}^-,\bm{\omega}}}(\mathcal{P}_t|\mathcal{B}_{\bm{I}}\otimes\mathcal{B}_{\bm{J}}) \,\mathbb{Q}_\theta^-(\mathrm{d}(t,\bm{v}^-,\bm{\omega})).
\]
Similarly we have
\[
h_{\mu,V,\bm{I}}=\int_{\Sigma_a^-\times\Omega_a} H_{\bar{\mu}^+_{\bm{i}^-,\bm{\omega}^a}}(\mathcal{P}^+_a|\mathcal{B}_{\bm{I}}) \,\mathbb{Q}_\mu^-(\mathrm{d}(\bm{i}^-,\bm{\omega}^a));
\]
\[
h_{\nu,W,\bm{J}}=\int_{\Sigma_b^-\times\Omega_b} H_{\bar{\nu}^+_{\bm{j}^-,\bm{\omega}^b}}(\mathcal{P}^+_b|\mathcal{B}_{\bm{J}}) \,\mathbb{Q}_\nu^-(\mathrm{d}(\bm{j}^-,\bm{\omega}^b)).
\]
Note that $\mathcal{P}_t=\mathcal{P}^+_a\times\mathcal{P}^+_b$ if $t\in A$ and $\mathcal{P}_t=\{\emptyset,\Sigma^+_a\}\times\mathcal{P}^+_b$ if $t\in A^c$. Therefore
\begin{align*}
&\ \alpha h_{\mu,V,\bm{I}} + h_{\nu,W,\bm{J}}\\
=&\ \int_{\Sigma_{a,b}^-\times\Omega_{a,b}} H_{\bar{\mu}^+_{\bm{i}^-,\bm{\omega}^a}\times \bar{\nu}^+_{\bm{j}^-,\bm{\omega}^b}}(\mathcal{P}_t|\mathcal{B}_{\bm{I}}\otimes\mathcal{B}_{\bm{J}})\, \ell(\mathrm{d}t) \mathbb{Q}_\mu^-(\mathrm{d}(\bm{i}^-,\bm{\omega}^a))\mathbb{Q}_\nu^-(\mathrm{d}(\bm{j}^-,\bm{\omega}^b))\\
=&\ \int_{\Sigma_{a,b}^-\times\Omega_{a,b}} H_{\bar{\lambda}^+_{\bm{v}^-,\bm{\omega}}}(\mathcal{P}_t|\mathcal{B}_{\bm{I}}\otimes\mathcal{B}_{\bm{J}})\, \mathbb{Q}_\Theta^-(\mathrm{d}(t,\bm{v}^-,\bm{\omega})).
\end{align*}
Following the same approach as in the proof of \cite[Proposition 4.17, p.29]{FH09}, by the concavity of conditional entropy as a function of probability measures, and then by Jensen's inequality and \eqref{PQT3} we deduce that
\begin{align*}
\alpha h_{\mu,V,\bm{I}} + h_{\nu,W,\bm{J}}\ge &\ \int_{\mathcal{M}(X)}\int_{\Sigma_{a,b}^-\times\Omega_{a,b}} H_{\bar{\theta}^+_{t,\bm{v}^-,\bm{\omega}}}(\mathcal{P}_t|\mathcal{B}_{\bm{I}}\otimes\mathcal{B}_{\bm{J}})\, \mathbb{Q}_{\theta}^-(\mathrm{d}(t,\bm{v}^-,\bm{\omega})) \Theta(\mathrm{d}\theta)\\
=&\ \int_{\mathcal{M}(X)} \mathbb{E}_{\mathbb{Q}_{\theta}}(F)\, \Theta(\mathrm{d}\theta)
\end{align*}
Together with \eqref{EQF} we deduce that for $\Theta$-typical $\theta$,
\[
\mathbb{E}_{\mathbb{Q}_{\theta}}(F)=\alpha h_{\mu,V,\bm{I}} + h_{\nu,W,\bm{J}}.
\]
Then we get the conclusion by \eqref{ed56}.
\end{proof}

\section{Dimension of Projections}\label{DoP}

\subsection{Projections and entropy}
Recall that we denote by $\Pi$ the set of all orthogonal projections from $\mathbb{R}^2$ to its one-dimensional subspaces, with $\pi_1$, $\pi_2$ being the horizontal and vertical projections respectively. We alternate and parameterise $\Pi\backslash\{\pi_1,\pi_2\}$ by projections $\pi_s^\pm\colon\mathbb{R}^2\to\mathbb{R}$, $s\in\mathbb{R}$, given by
\[
\pi_s^{\pm}(x,y)=\delta^sx\pm y,\;\;(x,y)\in\mathbb{R}^2.
\]
We shall omit the notation $\pm$ for most of the time as it is irrelevant to the proofs. The \emph{r-scaling entropy} of a measure $\lambda$ on a Euclidean space is defined to be
\[
H_r(\lambda)=\int_{\text{supp}(\lambda)}-\log\lambda(B(x,r))\,\lambda(\mathrm{d}x).
\]
We know that $H_r(\lambda)$ is lower semicontinuous as it may be expressed a the limit of an increasing sequence of continuous functions of the form
\[
\lambda\to\int\min\Big\{k,-\log \big(\int f_k(x-y)\lambda(dy)\big)\Big\}\,\lambda(dx),
\]
where $f_k$ is a decreasing sequence of continuous functions approximating $\mathbf{1}_{B(0,r)}$.
We define the \emph{entropy dimension} of a measure $\lambda$ to be
\begin{equation}
\label{eqn:entdim}
\dim_e\lambda=\liminf_{r\to0}\frac{H_r(\lambda)}{-\log r}.
\end{equation}
It is well-known that for any measure $\underline{\dim}_H\lambda\le\dim_e\lambda$, with equality when the measure is exact-dimensional.

\subsection{Lower bound for the dimension of projections}
	
For $t\in\mathbb{R}$ define
\[
S_t\colon\mathbb{R}^2\ni(x,y)\to(\delta^{t}x,y).
\]
For $\widetilde{x}=(t,\bm{v}^-,\bm{v}^+,\bm{\omega})\in \widetilde{X}$ define
\[
M(\widetilde{x})=S_t\Phi(\bar{\theta}^+_{t,\bm{v}^-,\bm{\omega}}),
\]
For $q\ge 1$ define
\[
E_{q,\theta}(\pi_s)=\int_{\widetilde{X}}\frac{1}{q\log(1/\rho)}H_{\rho^q}(\pi_s M(\widetilde{x}))\, \mathbb{Q}_\theta(\mathrm{d}\widetilde{x}).
\]
We will prove the following theorem:
\begin{theorem}\label{thm6}
For $\mathbb{Q}_\theta^-$-a.e. $(t,\bm{v}^-,\bm{\omega})\in \mathbb{T}\times\Sigma_{a,b}^-\times\Omega_{a,b}$, 
\begin{equation}\label{eqn:lb6}
\underline{\dim}_H \pi_sS_t\Phi(\bar{\theta}^+_{t,\bm{v}^-,\bm{\omega}})\ge E_{q,\theta}(\pi_s)-O(1/q)\;\;\text{ for all } s\in\mathbb{R},
\end{equation}
where the implied constant in $O(1/q)$ depends only on $\delta$, $\rho$ and $E$.
\end{theorem}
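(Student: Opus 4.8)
## Proof Proposal for Theorem \ref{thm6}

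The plan is to follow the local entropy averages technique of Hochman and Shmerkin \cite{HS12}, adapted to the CP-chain built from the Peyri\`ere measure $\mathbb{Q}_\theta$. The key structural fact is that, thanks to \eqref{ss11} and the identity $U^\Phi_{n,t}(\bm{v}^+)\cap \mathcal{P}_t(\bm{v}^+)=\bm{v}^+|_{1,t}U^\Phi_{n-1,R(t)}(\sigma_t\bm{v}^+)$ already established in the proof of Theorem \ref{ed2}, the zooming-in dynamics of $\Phi(\bar{\theta}^+_{t,\bm{v}^-,\bm{\omega}})$ along the Morse rectangles $U_{n,t}(\bm{v}^+)$ is governed by $\widetilde{T}$ acting on $\widetilde{X}$, with $\mathbb{Q}_\theta$ the relevant invariant (and, by Lemma \ref{erg2}, ergodic) measure. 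So $M(\widetilde{x})=S_t\Phi(\bar{\theta}^+_{t,\bm{v}^-,\bm{\omega}})$ is, up to the linear map $S_t$ that compensates for the $\delta$ versus $\rho$ contraction mismatch, the "magnified'' measure at step zero of a CP-chain whose distribution is $\mathbb{Q}_\theta$.

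First I would record the Morse-rectangle analogue of the local entropy averages inequality: for a measure $\lambda$ on $\mathbb{R}^2$ supported on $E$ and a projection $\pi_s$, one has
\[
\frac{H_{\rho^{nq}}(\pi_s\lambda)}{nq\log(1/\rho)} \ge \frac{1}{n}\sum_{k=0}^{n-1}\frac{H_{\rho^q}\big(\pi_{s_k}(\text{renormalised piece of }\lambda\text{ in the }k\text{-th rectangle})\big)}{q\log(1/\rho)} - O(1/q),
\]
where $s_k$ is the parameter $s$ transported by the rotation $R$ (because passing through one generation of the skew product multiplies the horizontal scale by $\delta^{\tau}$, hence shifts $s$ by $\tau$, and the linear maps $S_t$ absorb exactly this shift). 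This is the content of \cite[Section 4]{HS12} with balls replaced by the $\gamma$-Morse rectangles $\mathcal{U}$ of \eqref{Ug}; since the rectangles have uniformly bounded eccentricity $\gamma$, the passage from rectangle-masses to ball-entropies costs only an additive $O(1)$ per scale, i.e. $O(1/q)$ after averaging over $n$ scales of size $q$. Next, because $\pi_{s_k}M(\widetilde{T}^k\widetilde{x})$ is precisely (up to the absorbed $S_t$ factors) the renormalised $k$-th piece, the right-hand side becomes a Birkhoff average $\frac1n\sum_{k=0}^{n-1} g_s(\widetilde{T}^k\widetilde{x})$ with $g_s(\widetilde{x})=\frac{1}{q\log(1/\rho)}H_{\rho^q}(\pi_{s}M(\widetilde{x}))$. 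By lower semicontinuity of $H_{\rho^q}$, $g_s$ is a bounded measurable (in fact l.s.c.) function, so Birkhoff's ergodic theorem for the ergodic measure $\mathbb{Q}_\theta$ gives, for $\mathbb{Q}_\theta$-a.e. $\widetilde{x}$, convergence of the average to $\int g_s\,d\mathbb{Q}_\theta=E_{q,\theta}(\pi_s)$; taking $n\to\infty$ yields $\dim_e \pi_sS_t\Phi(\bar{\theta}^+_{t,\bm{v}^-,\bm{\omega}})\ge E_{q,\theta}(\pi_s)-O(1/q)$, and since $\Phi(\bar\theta^+_{t,\bm v^-,\bm\omega})$ is exact-dimensional by Theorem \ref{ed2}, entropy dimension equals $\underline{\dim}_H$ for its linear images, giving \eqref{eqn:lb6} for a fixed $s$.

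The main obstacle is upgrading "for a.e.\ $(t,\bm v^-,\bm\omega)$, for each fixed $s$'' to "for a.e.\ $(t,\bm v^-,\bm\omega)$, for all $s$ simultaneously''. The standard fix in \cite{HS12} is that the exceptional $s$-set is controlled uniformly because $H_r(\pi_s\lambda)$ is continuous in $s$ (for fixed $r$) and the relevant dimension-type functions are monotone/semicontinuous in $s$, so one runs the argument on a countable dense set of $s$ and interpolates; I would need the same continuity estimate for $s\mapsto H_{\rho^q}(\pi_sM(\widetilde{x}))$ and the observation that the maps $s\mapsto s_k$ are rotations, hence a single full-measure set works for the whole orbit. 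A secondary technical point is verifying that the "cut-line'' enlargement used in \cite{FJ14} is genuinely avoidable here: because $\delta\ne\rho$, the two factors desynchronise and one cannot refine the symbolic partitions generation-by-generation in lockstep; instead the skew product $\widetilde{T}$ already encodes the correct (asynchronous) refinement via $\tau_n(t)$, and the bound $|\tau_n(t)-n\alpha|\le 2$ from \eqref{taunt} keeps all error terms uniformly bounded. I would also double-check that the $O(1/q)$ constant depends only on $\delta,\rho$ and $E$ (through $R_{\bm I},R_{\bm J}$ and $\gamma$), not on $\theta$ or $s$, which follows since all the comparisons between rectangles, balls, and shifted projections are purely metric.
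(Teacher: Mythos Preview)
Your proposal has a genuine gap at the step where you pass from an entropy-dimension lower bound to a Hausdorff-dimension lower bound. You write that ``since $\Phi(\bar\theta^+_{t,\bm v^-,\bm\omega})$ is exact-dimensional by Theorem \ref{ed2}, entropy dimension equals $\underline{\dim}_H$ for its linear images.'' This is not true in general: exact-dimensionality of a measure on $\mathbb{R}^2$ does \emph{not} imply exact-dimensionality (or even $\dim_e=\underline{\dim}_H$) for its one-dimensional projections, and the inequality $\underline{\dim}_H\le\dim_e$ goes the wrong way for what you need. Likewise, your ``Morse-rectangle analogue of the local entropy averages inequality'' as stated produces at best a bound on $\dim_e(\pi_s\lambda)$, not on $\underline{\dim}_H(\pi_s\lambda)$; the additivity of $r$-scaling entropy under zooming that you invoke does not hold for Euclidean measures without passing through a symbolic tree model.

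The paper does not attempt this shortcut. Instead, after the Birkhoff step (which is handled much as you describe, including the ``for all $s$'' upgrade via lower semicontinuity and a countable dense approximation), it builds a $\rho^q$-tree $Y_{q,t'}$ from the partitions $\mathcal{P}_{t'}^{qk}$, factors the Lipschitz map $\pi_s S_{t'}\Phi$ through a tree morphism and a $C$-faithful map using \cite[Theorem 5.4]{HS12}, and then invokes \cite[Theorem 4.4]{HS12} (local entropy averages on trees, which gives a \emph{Hausdorff} dimension lower bound) together with \cite[Proposition 5.2]{HS12} to transfer the bound across the faithful map. There is also a pigeonhole step over residues $l\in\{0,\dots,q-1\}$ to pass from the full Birkhoff average to a subsequence along which the tree argument applies. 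The $\rho$-tree machinery is precisely what converts the entropy averages into a lower bound on $\underline{\dim}_H$, and it cannot be replaced by an appeal to Theorem \ref{ed2}.
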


\begin{proof}
Recall the transformation
\[
\widetilde{T}(\widetilde{x})=(t,\sigma_t(\bm{v}),\kappa_{t,\bm{v}}(\bm{\omega})), \ \widetilde{x}=(t,\bm{v},\bm{\omega})\in \widetilde{X}.
\]
Then we have for $k\ge 1$,
\begin{align*}
M\circ \widetilde{T}^k(\widetilde{x})=&\ S_{R^k(t)}\Phi(\bar{\theta}^+_{R^k(t),\sigma_{k,t}\bm{v}^-,\bm{\omega}_{\bm{v}^+|_{k,t}}}).
\end{align*}
Let $\mathcal{M}$ be the set of all probability measures on the closed ball $B(0,R_E)$ in $\mathbb{R}^2$, where we denote by $R_E=|E|$. For any bounded continuous function $f$ on $\mathcal{M}$ (w.r.t the weak-topology), by Lemma \ref{erg} and Birkhoff's ergodic theorem we have for $\mathbb{Q}_\theta$-a.e. $\widetilde{x}\in\widetilde{X}$,
\begin{equation}\label{bet}
\lim_{n\to\infty}\frac{1}{n}\sum_{k=0}^{n-1}f(M\circ \widetilde{T}^k(\widetilde{x}))=\mathbb{E}_{\mathbb{Q}_\theta}(f(M)).
\end{equation}
In fact, since the space of continuous functions on $\mathcal{M}$ is separable under $\|\cdot \|_\infty$ norm, the above limit holds for all continuous functions on $\mathcal{M}$ simultaneously (see \cite[Proposition 4.1]{FJ14} for example). 

Same as in \cite[Lemma 7.1]{HS12}, for $q\ge 1$ we have
\[
\lim_{n\to\infty}\frac{1}{n}\sum_{k=0}^{n-1}f(M\circ \widetilde{T}^{qk}(\widetilde{x}))=\mathbb{E}_{\mathbb{Q}_{\theta,q}}(f(M))
\]
where $\mathbb{Q}_{\theta,q}$ is the ergodic decomposition of $\mathbb{Q}_{\theta}$ w.r.t. $\widetilde{T}^q$ hence
\[
\frac{1}{q}\sum_{l=0}^{q-1}\mathbb{Q}_{\theta,q}\circ \widetilde{T}^l=\mathbb{Q}_{\theta}
\]
Due to the lower semicontinuity of $H_{\rho^q}(\pi_s(\cdot))$, we may find a countable dense sequence of continuous functions $f_k$ on $\mathcal{M}$ that can approximate $H_{\rho^q}(\pi_s (\cdot))$ from below for all $s\in \mathbb{R}$. Thus we have for $l\in\{0,\cdots,q-1\}$, for $\mathbb{Q}_{\theta,q}$-a.e. $\widetilde{x}\in \widetilde{X}$, for all $s\in\mathbb{R}$,
\[
\liminf_{n\to\infty}\frac{1}{n}\sum_{k=0}^{n-1}\frac{1}{\log(1/\rho)}H_{\rho^q}(\pi_s (M\circ \widetilde{T}^{l+qk}(\widetilde{x})))\ge \int_{\widetilde{X}}\frac{1}{\log(1/\rho)}H_{\rho^q}(\pi_s M\circ \widetilde{T}^l(\widetilde{x}))\, \mathbb{Q}_{\theta,q}(\mathrm{d}\widetilde{x}).
\]
Since
\[
\frac{1}{q}\sum_{l=0}^{q-1} \int_{\widetilde{X}}\frac{1}{\log(1/\rho)}H_{\rho^q}(\pi_s M\circ \widetilde{T}^l(\widetilde{x}))\, \mathbb{Q}_{\theta,q}(\mathrm{d}\widetilde{x})=qE_{q,\theta}(\pi_s)
\]
for each $s\in\mathbb{R}$ there exists $l_s\in\{0,\cdots,q-1\}$ independent of $\widetilde{x}$ such that
\[
\int_{\widetilde{X}}\frac{1}{q\log(1/\rho)}H_{\rho^q}(\pi_s M\circ \widetilde{T}^{l_s}(\widetilde{x}))\, \mathbb{Q}_{\theta,q}(\mathrm{d}\widetilde{x})\ge E_{q,\theta}(\pi_s).
\]
This implies that for $\mathbb{Q}_{\theta,q}$-a.e. $\widetilde{x}\in \widetilde{X}$, for all $s\in\mathbb{R}$,
\[
\liminf_{n\to\infty}\frac{1}{n}\sum_{k=0}^{n-1}\frac{1}{q\log(1/\rho)}H_{\rho^q}(\pi_s (M\circ \widetilde{T}^{l_s+qk}(\widetilde{x})))\ge E_{q,\theta}(\pi_s).
\]
As $\mathbb{Q}_{\theta,q}$ is an ergodic decomposition of $\mathbb{Q}_{\theta}$, we finally derive that for $\mathbb{Q}_{\theta}$-a.e. $\widetilde{x}\in \widetilde{X}$, for all $s\in\mathbb{R}$,
\begin{equation}\label{eqn:lb2}
\liminf_{n\to\infty}\frac{1}{n}\sum_{k=0}^{n-1}\frac{1}{q\log(1/\rho)}H_{\rho^q}(\pi_s (M\circ \widetilde{T}^{l_s+qk}(\widetilde{x})))\ge E_{q,\theta}(\pi_s).
\end{equation}

%

We shall use the $\rho$-tree method in \cite{HS12} to obtain lower bounds for the dimensions of projections of measures. Fix an instance of $\widetilde{x}=(t,\bm{v},\bm{\omega})\in \widetilde{X}$, $s\in\mathbb{R}$ and $l_s\in\{0,\ldots,q-1\}$ such that \eqref{eqn:lb2} holds. Shortly denote by
\[
\bar{\theta}'=\bar{\theta}^+_{R^{l_s}(t),\sigma_{l_s,t}\bm{v}^-,\bm{\omega}_{\bm{v}^+|_{l_s,t}}}, \bm{w}=\sigma_{l_s,t}(\bm{v}^+), t'=R^{l_s}(t).
\]
For  $k\ge 1$ consider finite partition of $\Sigma_{a,b}^+$:
\[
\mathcal{P}_{t'}^{qk}=\{[\bm{v}^+|_{qk,t'}]=[\bm{i}^+|_{\tau_{qk}(t')}]\times[\bm{j}^+|_{qk}]: \bm{v}^+=(\bm{i}^+,\bm{j}^+)\in \Sigma_{a,b}^+\}.
\]
We may map $\mathcal{P}_{t'}^{qk}$ to $\mathbb{N}^k$ for each $k\ge 1$ in a way such that each element of $\mathcal{P}_{t'}^{qk}$ is represented by a word of length $k$, and for $u\in \mathbb{N}^k$ that represents an element $C_u\in \mathcal{P}_{t'}^{qk}$, and for $u'\in \mathbb{N}^{k+1}$ that represents an element $C_{u'}\in \mathcal{P}_{t'}^{q(k+1)}$, if $C_{u'}\subset C_{u}$, then $u'|_k=u$. All the words that represent $\{\mathcal{P}_{t'}^{qk}\}_{k\ge 1}$ then form the set of finite words of a tree in $\mathbb{N}^\mathbb{N}$ denoted by $Y_{q,t'}$. We equip $Y_{q,t'}$ with the metric $d_{\rho^q}$ so that it becomes a $\rho^q$-tree. Note also that for each $\bm{v}^+\in \Sigma_{a,b}^+$ there is a unique sequence $\{[\bm{v}^+|_{qk,t'}]\}_{k\ge1}$ in $\{\mathcal{P}_{t'}^{qk}\}_{k\ge 1}$ that converges to $\{\bm{v}^+\}$, therefore there is a one-to-one correspondence between $Y_{q,t'}$ and $\Sigma_{a,b}^+$. Hence we may identify $Y_{q,t'}$ with $\Sigma_{a,b}^+$, and for $k\ge 1$,  $[\bm{v}^+|_{qk,t'}]$ can be viewed as the cylinder of generation $k$ in the $\rho^q$-tree $Y_{q,t'}$ that contains $\bm{v}^+$.

For a measure $\lambda$ on $Y_{q,t'}$ and a finite word $u$ in $Y_{q,t'}$ with $\lambda([u])>0$ we denote by
\[
\lambda_{[u]}(B)=\frac{\lambda([u]\cap B)}{\lambda([u])},\ B\in \mathcal{B}_{Y_{q,t'}}
\]
the measure $\lambda$ conditioned on $[u]$, and
\[
\lambda^{[u]}(B)=\frac{\lambda(uB)}{\lambda([u])}, \ B\in \mathcal{B}_{Y_{q,t'}}.
\]
the zooming-in of $\lambda$ at $u$, where $uB=\{u\bm{w}:\bm{w}\in B\}$. Then by \eqref{ss11} we have
	\begin{align*}
M\circ \widetilde{T}^{l_s+qk}(\widetilde{x})=&\ S_{R^{qk}(t')}\Phi(\bar{\theta}^+_{t,\bm{v}^-\bm{v}^+|_{l_s,t}\bm{w}^+|_{t',qk},\bm{\omega}_{\bm{v}^+|_{l_s,t}\bm{w}|_{t',qk}}})\\
=&\ S_{\varphi^{qk}(t')}\Phi((\overline{\theta}')^{[\bm{w}|_{qk,t'}]}).
\end{align*}
Hence \eqref{eqn:lb2} implies for $\bar{\theta}'$-a.e. $\bm{w}\in Y_{q,t'}$, 
\begin{align*}
\liminf_{n\to\infty}\frac{1}{n}\sum_{k=0}^{n-1}\frac{1}{q\log(1/\rho)}H_{\rho^q}(\pi_s S_{\varphi^{qk}(t')}\Phi((\bar{\theta}')^{[\bm{w}|_{qk,t'}]})) \ge E_{q,\theta}(\pi_s) \numberthis\label{eqn:lb3}.
\end{align*}

By \eqref{taunt}, the canonical map $\Phi\colon (Y_{q,t'},d_{\rho^q})\to E$ is $c_E$-Lipschitz, where $c_E=R_E^2c_\delta$ for $c_\delta:=\sqrt{\delta^2+1}$. Indeed, let $\bm{v}^1=(\bm{i}^1,\bm{j}^1),\bm{v}^2=(\bm{i}^2,\bm{j}^2)\in Y_{q,t'}$ be such that $d_{\rho^q}(\bm{v}^1, \bm{v}^2)=\rho^{qk}$. This implies that their first $k$ letters must coincide, i.e., $\bm{v}^1|_{qk,t'}=\bm{v}^2|_{qk,t'}$. This means that $f_{\bm{i}^1|_{qk,t'}}=f_{\bm{i}^2|_{qk,t'}}$ and $g_{\bm{j}^1|_{qk}}=g_{\bm{j}^2|_{qk}}$ for $l=1,2$, and in particular they form the same translation on $\mathbb{R}^2$. Then, 
\begin{align*}
|\Phi(\bm{v}^1)-\Phi(\bm{v}^2)|&\le|\Phi([\bm{v}^1|_{qk,t'}])|\cdot |\Phi\circ \sigma_{qk,t'}(\bm{v}^1)-\Phi\circ \sigma_{qk,t'}(\bm{v}^2)|\\
&\le R_Ec_\delta\rho^{qk} \cdot R_E\\
&=R_E^2c_\delta\cdot d_{\rho^q}(\bm{v}^1,\bm{v}^2).
\end{align*}
For $k\ge 1$ denote by
\[
A^{qk}_{t'}=
\begin{pmatrix}
\delta^{\tau_{qk}(t')} & 0  \\
0 & \rho^{qk}  \\
\end{pmatrix}.
\]
Then using \eqref{taunt} we have for $\bm{v}^1,\bm{v}^2\in [\bm{w}|_{qk,t'}]$,
\begin{align*}
|\pi_sS_{R^{qk}(t')}\Phi\circ\sigma_{qk,t'}(\bm{v}^1)-\pi_sS_{R^{qk}(t')}\Phi\circ\sigma_{qk,t'}&(\bm{v}^2)|\\
&=|\pi_sS_{t'}(A^{qk}_{t'})^{-1}\Phi(\bm{v}^1)-\pi_sS_{t'}(A^{qk}_{t'})^{-1}\Phi(\bm{v}^2)|\\
&\le c_\delta \rho^{-qk}|\pi_sS_{t'}\Phi(\bm{v}^1)-\pi_sS_{t'}\Phi(\bm{v}^2)|.
\end{align*}
This tells us that for $\bm{v}^1,\bm{v}^2\in [\bm{w}|_{qk,t'}]$, if
\[
|\pi_sS_{t'}\Phi(\bm{v}^1)-\pi_sS_{t'}\Phi(\bm{v}^2)|\le c_\delta \rho^{q(k+1)},
\]
then
\[
|\pi_sS_{R^{qk}(t')}\Phi\circ\sigma_{qk,t'}(\bm{v}^1)-\pi_sS_{R^{qk}(t')}\Phi\circ\sigma_{qk,t'}(\bm{v}^2)|\le\rho^q.
\]
Therefore for $\bm{v}^1\in [\bm{w}|_{qk,t'}]$,
\begin{align*}
\{\bm{v}^2\in & [\bm{w}|_{qk,t'}]:|\pi_sS_{t'}\Phi(\bm{v}^1)-\pi_sS_{t'}\Phi(\bm{v}^2)|\le c_\delta \rho^{q(k+1)}\}\\
&\subset\{\bm{v}^2 \in  [\bm{w}|_{qk,t'}]:|\pi_sS_{R^{qk}(t')}\Phi\circ\sigma_{qk,t'}(\bm{v}^1)-\pi_sS_{R^{qk}(t')}\Phi\circ\sigma_{qk,t'}(\bm{v}^2)|\le\rho^q\}.
\end{align*}
This implies that
\begin{align}\nonumber
H_{\rho^q}(\pi_s S_{R^{qk}(t')}\Phi((\bar{\theta}')^{[\bm{w}|_{qk,t'}]}))=&\ H_{\rho^q}(\pi_sS_{R^{qk}(t')}(\Phi\circ\sigma_{qk,t'})(\bar{\theta}')_{[\bm{w}|_{qk,t'}]}))\\
\le&\ H_{c_\delta \rho^{q(k+1)}}(\pi_sS_{t'}\Phi((\overline{\theta}')_{[\bm{w}|_{qk,t'}]})),	\label{eqn:entbound}
\end{align}
where we have used the fact that, by definition,
\[
(\bar{\theta}')^{[\bm{w}|_{t',qk}]}=(\bar{\theta}'\circ \sigma_{qk,t'}^{-1})_{[\bm{w}|_{qk,t'}]}.
\]
At last we derive that for $\bar{\theta}'$-a.e. $\bm{w}\in Y_{q,t'}$,
\begin{align*}
\liminf_{n\to\infty}\frac{1}{n}\sum_{k=0}^{n-1}\frac{1}{q\log(1/\rho)}H_{c_\delta \rho^{q(k+1)}}(\pi_sS_{t'}\Phi((\overline{\theta}')_{[\bm{w}|_{qk,t'}]})) \ge E_{q,\theta}(\pi_s). \numberthis\label{eqn:lb4}
\end{align*}

The mapping $f_{q,t',s}:=\pi_sS_{t'}\Phi\colon(Y_{q,t'},d_{\rho^q})\to\mathbb{R}$ is $c_E$-Lipschitz. By \cite[Theorem 5.4]{HS12}, there exists a $\rho^q$-tree $(Y_{q,t',s},d_{\rho^q})$ and maps $Y_{q,t'}\xrightarrow{h_{q,t',s}}Y_{q,t',s}\xrightarrow{f'_{q,t',s}}\mathbb{R}$ such that $f_{q,t',s}=f'_{q,t',s}h_{q,t',s}$, where $h_{q,t',s}$ is a tree morphism and $f'_{q,t',s}$ is $C$-faithful (see \cite[Definition 5.1]{HS12}) for some constant $C$ depending only on $c_E$. Then applying \cite[Proposition 5.3]{HS12} to the $\rho^q$-tree $(Y_{q,t',s},d_{\rho^q})$ (for which $f'_{q,t',s}$ is $C$-faithful), and \cite[Lemma 3.5]{HS12}, there is a constant $C'$ depending only on $C$ and $c_\delta$, such that for all $k\ge1$,
\[
|H_{c_\delta \rho^{q(k+1)}}(f_{q,t',s}((\bar{\theta}')_{[\bm{w}|_{qk,t'}]})-H_{\rho^{q(k+1)}}(h_{q,t',s}((\bar{\theta}')_{[\bm{w}|_{qk,t'}]})|\le C'.
\]
Consequently, for $\bar{\theta}'$-a.e. $\bm{w}\in Y_{q,t'}$,
\begin{align*}
\frac{1}{q\log(1/\rho)}\liminf_{n\to\infty}\frac{1}{n}\sum_{k=0}^{n-1}H_{\rho^{q(k+1)}}(h_{q,t',s}((\bar{\theta}')_{[\bm{w}|_{qk,t'}]})\ge E_{q,\theta}(\pi_s)-O(1/q),\numberthis\label{eqn:lb5}
\end{align*}
where the constant in $O(1/q)$ only depends on $\rho$ and $C'$. Then, by \cite[Theorem 4.4]{HS12}, it follows that
\[
\underline{\dim}_H h_{q,t',s}(\bar{\theta}') \ge E_{q,\theta}(\pi_s)-O(1/q).
\]
Since $f'_{q,t',s}$ is $C$-faithful and $f'_{q,t',s}h_{q,t',s}=f_{q,s,t'}=\pi_sS_{t'}\Phi$, we get from \cite[Proposition 5.2]{HS12} that
\[
\underline{\dim}_H \pi_sS_{t'}\Phi(\bar{\theta}')\ge E_{q,\theta}(\pi_s)-O(1/q).
\]
Finally, as
\[
S_{t'}\Phi(\bar{\theta}')=S_{t'}\Phi(\bar{\theta}^+_{R^{l_s}(t),\sigma_{l_s,t}\bm{v}^-,\bm{\omega}_{\bm{v}^+|_{l_s,t}}})=S_{t}\Phi((\bar{\theta}^+_{t,\bm{v}^-,\bm{\omega}})^{[\bm{v}^+|_{l_s,t}]}),
\]
we deduce that for $\mathbb{Q}_\theta^-$-a.e. $(t,\bm{v}^-,\bm{\omega})\in \mathbb{T}\times\Sigma_{a,b}^-\times\Omega_{a,b}$, for all $s\in \mathbb{R}$ and $q\ge 1$,
\[
\underline{\dim}_H \pi_sS_{t}\Phi(\bar{\theta}^+_{t,\bm{v}^-,\bm{\omega}})\ge E_{q,\theta}(\pi_s)-O(1/q).
\]
\end{proof}	

\subsection{Projection theorems}\label{sec:projthm}
\begin{theorem}\label{thm:last}
The limit
\[
E_{\theta}(\pi_s)=\lim_{q\to\infty}E_{q,\theta}(\pi_s)
\]
exists for every $s\in\mathbb{R}$, and the mapping $E_{\theta}:\pi_s \to E_{\theta}(\pi_s)\in[0,2]$ is lower-semicontinuous. Moreover:
\begin{enumerate}[i)]
\item for a fixed $s\in\mathbb{R}$, for $\mathbb{Q}_\theta^-$-a.e. $(t,\bm{v}^-,\bm{\omega})\in \mathbb{T}\times\Sigma_{a,b}^-\times\Omega_{a,b}$,
\[
\dim_e \pi_sS_t\Phi(\bar{\theta}^+_{t,\bm{v}^-,\bm{\omega}})=\underline{\dim}_H \pi_sS_t\Phi(\bar{\theta}^+_{t,\bm{v}^-,\bm{\omega}})=E_{\theta}(\pi_s);
\]
\item for $\mathbb{Q}_\theta^-$-a.e. $(t,\bm{v}^-,\bm{\omega})\in \mathbb{T}\times\Sigma_{a,b}^-\times\Omega_{a,b}$,
\[
\underline{\dim}_H \pi_sS_t\Phi(\bar{\theta}^+_{t,\bm{v}^-,\bm{\omega}})\ge E_{\theta}(\pi_s)\;\;\;\text{ for all }s\in\mathbb{R}.
\]
\end{enumerate}
\end{theorem}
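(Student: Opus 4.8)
The plan is to follow the Hochman--Shmerkin scheme as adapted in \cite{FJ14}. First I would establish existence and lower-semicontinuity of $E_\theta(\pi_s)$. For existence, the key point is a subadditivity/superadditivity argument in $q$: writing $H_{\rho^{q_1+q_2}}$ in terms of a partition into $q_1$-blocks and $q_2$-blocks and using the near-multiplicativity of the conditioned measures $(\bar\theta^+)^{[\cdot]}$ coming from the factorisation \eqref{ss11}, one shows $q\mapsto qE_{q,\theta}(\pi_s)$ is almost superadditive, so that $\lim_q E_{q,\theta}(\pi_s)$ exists and equals $\sup_q E_{q,\theta}(\pi_s) - o(1)$; the $O(1/q)$ error terms in Theorem \ref{thm6} are exactly of the right order to make this work. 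Lower-semicontinuity of $\pi_s\mapsto E_\theta(\pi_s)$ follows because each $E_{q,\theta}(\pi_s)$ is lower-semicontinuous in $s$ (this is inherited from the lower-semicontinuity of $H_{\rho^q}(\pi_s(\cdot))$ noted in Section \ref{DoP}, integrated against the fixed measure $\mathbb{Q}_\theta$ via Fatou), and a supremum of lower-semicontinuous functions is lower-semicontinuous. That $E_\theta(\pi_s)\in[0,2]$ is immediate since all the measures live on a bounded subset of $\mathbb{R}^2$.

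Next I would prove part (i). Fix $s$. By Theorem \ref{ed2}, $\Phi(\bar\theta^+_{t,\bm{v}^-,\bm{\omega}})$ is exact-dimensional $\mathbb{Q}_\theta^-$-a.e.\ with dimension $D_{\bar\mu,\bar\nu,\Phi}$, and this is preserved by the bi-Lipschitz map $S_t$; by Marstrand's projection theorem applied to the measure $S_t\Phi(\bar\theta^+_{t,\bm{v}^-,\bm{\omega}}) = M(\widetilde{x})$, for Lebesgue-a.e.\ direction the projection $\pi_s M(\widetilde x)$ has dimension $\min\{1,D_{\bar\mu,\bar\nu,\Phi}\}$. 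Combined with Theorem \ref{thm6} and Fubini this pins down $E_{q,\theta}(\pi_s)\to\min\{1,D_{\bar\mu,\bar\nu,\Phi}\}$ for a.e.\ $s$, but the point of part (i) is the stronger statement that for a \emph{fixed} $s$, a.e.\ fibre realises $E_\theta(\pi_s)$. For the upper bound, $\dim_e\pi_sS_t\Phi(\bar\theta^+_{t,\bm{v}^-,\bm{\omega}})\le \dim_e M(\widetilde x)$ and averaging the scaling entropies gives $\limsup_q E_{q,\theta}(\pi_s)\le$ the $\mathbb{Q}_\theta$-average of $\dim_e\pi_s M(\widetilde x)$, which by the exact-dimensionality and the ergodicity of $\mathbb{Q}_\theta$ (Lemma \ref{erg2}) is a $\mathbb{Q}_\theta$-a.e.\ constant equal to $\dim_e\pi_sS_t\Phi(\bar\theta^+_{t,\bm{v}^-,\bm{\omega}})$; this uses that $\pi_s M(\widetilde x)$ is itself exact-dimensional as the $\pi_s$-projection of an exact-dimensional planar measure for a typical direction, or more robustly that $H_r(\pi_s M(\widetilde x))/(-\log r)$ converges in $L^1(\mathbb{Q}_\theta)$. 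The matching lower bound $\underline\dim_H\ge E_\theta(\pi_s)$ is Theorem \ref{thm6} after letting $q\to\infty$. Chaining $\underline\dim_H\le\dim_e$ (the inequality recalled after \eqref{eqn:entdim}) with these two bounds forces equality throughout, giving (i).

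Finally, part (ii) upgrades the ``fixed $s$, a.e.\ fibre'' statement of (i) to an ``a.e.\ fibre, all $s$'' statement, exactly as in \cite[Section 4]{FJ14} and \cite{HS12}. The mechanism is: Theorem \ref{thm6} already gives, for $\mathbb{Q}_\theta^-$-a.e.\ $(t,\bm{v}^-,\bm{\omega})$, the bound $\underline\dim_H\pi_sS_t\Phi(\bar\theta^+_{t,\bm{v}^-,\bm{\omega}})\ge E_{q,\theta}(\pi_s)-O(1/q)$ simultaneously for all $s$ and all $q$ (the exceptional null set there does not depend on $s$ or $q$, since the countable dense family $f_k$ and the Birkhoff theorem handle all $s$ at once). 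Letting $q\to\infty$ along this full-measure set yields $\underline\dim_H\pi_sS_t\Phi(\bar\theta^+_{t,\bm{v}^-,\bm{\omega}})\ge E_\theta(\pi_s)$ for all $s$, which is (ii). I expect the main obstacle to be the $L^1$-convergence argument underpinning the upper bound in part (i): one must show the averaged scaling entropies $\frac1{q\log(1/\rho)}H_{\rho^q}(\pi_s M(\widetilde x))$ converge to $\dim_e\pi_s M(\widetilde x)$ not just $\mathbb{Q}_\theta$-a.e.\ but in $L^1(\mathbb{Q}_\theta)$, so that the a.e.\ constant value of $\dim_e\pi_s M$ coincides with $\lim_q E_{q,\theta}(\pi_s)$; this requires a uniform integrability estimate for $H_{\rho^q}(\pi_s(\cdot))/(-\log\rho^q)$, which follows from the boundedness of $E$ together with a standard entropy bound (each rescaled projected measure lives on a bounded interval, so its $r$-scaling entropy is $O(\log(1/r))$ with a uniform constant), but needs to be written carefully to interchange the limit and the integral.
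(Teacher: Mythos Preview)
Your approach overcomplicates matters and, in part (i), has the inequality running the wrong way. The paper does not use any superadditivity in $q$; instead it obtains existence of the limit and part (i) simultaneously by a short Fatou sandwich. From Theorem~\ref{thm6} one has, for $\mathbb{Q}_\theta^-$-a.e.\ point, $\dim_e(\pi_sM(\widetilde x))\ge \underline{\dim}_H(\pi_sM(\widetilde x))\ge E_{q,\theta}(\pi_s)-O(1/q)$, hence $\dim_e\ge\limsup_q E_{q,\theta}(\pi_s)$ a.e. In the other direction, since $M(\widetilde x)$ depends only on $(t,\bm v^-,\bm\omega)$, the definition of $E_{q,\theta}$ reduces to a $\mathbb{Q}_\theta^-$-integral, and Fatou applied to the $\liminf$ defining $\dim_e$ gives $\int\dim_e\,\mathrm{d}\mathbb{Q}_\theta^-\le\liminf_q E_{q,\theta}(\pi_s)$. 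Chaining yields $\limsup_q\le\int\dim_e\le\liminf_q$, so the limit exists and equals $\int\dim_e$; since $\dim_e\ge E_\theta(\pi_s)$ a.e.\ with equality in mean, one gets $\dim_e=\underline{\dim}_H=E_\theta(\pi_s)$ a.e. Your ``upper bound'' step, $\limsup_q E_{q,\theta}\le\int\dim_e$, is the reverse of what Fatou gives and would require exactly the $L^1$-convergence you flagged as a worry; but that route is unnecessary.

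Two specific problems with your alternative arguments: first, the claim that $\pi_sM(\widetilde x)$ is exact-dimensional ``as the projection of an exact-dimensional planar measure'' is false for a \emph{fixed} $s$ in general, so it cannot be invoked here. Second, the ergodicity argument for $\dim_e\pi_sM(\widetilde x)$ being a.e.\ constant is not immediate: $\pi_sM\circ\widetilde T(\widetilde x)=\pi_{s}S_{R(t)}\Phi(\bar\theta^+_{R(t),\ldots})$ is a projection of a \emph{different} (zoomed-in) measure through a \emph{different} effective direction $\pi_{s+t+\alpha}$, so $\widetilde T$-invariance of $\dim_e\pi_sM$ needs justification. Finally, your lower-semicontinuity argument presumes $E_\theta=\sup_q(E_{q,\theta}-O(1/q))$ from the (unproved) superadditivity; the paper instead uses that for large $q$ one has $E_{q,\theta}(\pi_s)>E_\theta(\pi_s)-\epsilon/3$ and $O(1/q)<\epsilon/3$, combines the lower-semicontinuity of $E_{q,\theta}$ with Theorem~\ref{thm6} to get $\underline{\dim}_H\ge E_\theta(\pi_s)-\epsilon$ on a neighbourhood, and then invokes part (i) to convert this back to $E_\theta(\pi)\ge E_\theta(\pi_s)-\epsilon$. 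Your treatment of part (ii) is correct and matches the paper.
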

\begin{proof}
Fix $s\in\mathbb{R}$. We now know that for $\mathbb{Q}_\theta^-$-a.e. $(t,\bm{v}^-,\bm{\omega})\in \mathbb{T}\times\Sigma_{a,b}^-\times\Omega_{a,b}$,
\begin{align*}
\dim_e(\pi_sS_t\Phi(\bar{\theta}^+_{t,\bm{v}^-,\bm{\omega}}))&\ge\underline{\dim}_H(\pi_sS_t\Phi(\bar{\theta}^+_{t,\bm{v}^-,\bm{\omega}}))\\
&\ge E_{q,\theta}(\pi_s)-O(1/q).
\end{align*}
Since \eqref{eqn:lb6} is true for all $q\ge1$, we have that  for $\mathbb{Q}_\theta^-$-a.e. $(t,\bm{v}^-,\bm{\omega})\in \mathbb{T}\times\Sigma_{a,b}^-\times\Omega_{a,b}$,
\[
\dim_e\pi_sS_t\Phi(\bar{\theta}^+_{t,\bm{v}^-,\bm{\omega}})\ge\limsup_{q\to\infty}E_{q,\theta}(\pi_s).
\]
On the other hand, by Fatou's lemma we have
\begin{align*}
\int_{\mathbb{T}\times\Sigma_{a,b}^-\times\Omega_{a,b}} \dim_e\pi_sS_t\Phi(\bar{\theta}^+_{t,\bm{v}^-,\bm{\omega}}) \, \mathbb{Q}_\theta^-(\mathrm{d} (t,\bm{v}^-,\bm{\omega}))\le\liminf_{q\to\infty}E_{q,\theta}(\pi_s).
\end{align*}
Therefore
\begin{align*}
\limsup_{q\to\infty}E_{q,\theta}(\pi_s)
&\le\int_{\mathbb{T}\times\Sigma_{a,b}^-\times\Omega_{a,b}} \dim_e\pi_sS_t\Phi(\bar{\theta}^+_{t,\bm{v}^-,\bm{\omega}}) \, \mathbb{Q}_\theta^-(\mathrm{d} (t,\bm{v}^-,\bm{\omega}))\\
&\le\liminf_{q\to\infty}E_{q,\theta}(\pi_s).
\end{align*}
This shows that $\lim_{q\to\infty}E_{q,\theta}(\pi_s)$ exists for all $s\in\mathbb{R}$. Then $i),ii)$ follow immediately.
		
For the lower semicontinuity of $E_\theta$, fix $\pi_s$ and $\epsilon>0$. Since $E_{q,\theta}(\pi_s)\to E_\theta(\pi_s)$, we may find a $q$ large enough such that $E_{q,\theta}(\pi_s)>E_\theta(\pi_s)-\epsilon/3$ and that the error term in \eqref{eqn:lb6} satisfies $O(1/q)<\epsilon/3$. Then, since $E_{q,\theta}$ is lower semicontinuous, one can find an open neighbourhood $U(\pi_s)$ of $\pi_s$ such that for all $\pi\in U(\pi_s)$,
\[
E_{q,\theta}(\pi)>E_{q,\theta}(\pi_s)-\epsilon/3>E_\theta(\pi_s)-2\epsilon/3.
\]
Then by \eqref{eqn:lb6} we have for $\mathbb{Q}_\theta^-$-a.e. $(t,\bm{v}^-,\bm{\omega})\in \mathbb{T}\times\Sigma_{a,b}^-\times\Omega_{a,b}$, for all $\pi\in U(\pi_s)$,
\[
\underline{\dim}_H \pi S_t\Phi(\bar{\theta}^+_{t,\bm{v}^-,\bm{\omega}})\ge E_{q,\theta}(\pi)-O(1/q)>E_{q,\theta}(\pi_s)-\epsilon/3-O(1/q)>E_\theta(\pi_s)-\epsilon.
\]
By i) this implies that $E_\theta(\pi)>E_\theta(\pi_s)-\epsilon$ for all $\pi\in U(\pi_s)$, hence the lower semicontinuity of $E_\theta$.
\end{proof}
	
For each $(x,y)\in\mathbb{R}^2$ we have that
\begin{equation}\label{Sst}
\pi_sS_t(x,y)=\pi_s(\delta^{t}x,y)=\delta^{s+t}x\pm y=\pi_{s+t}(x,y).
\end{equation}
Recall that by \eqref{ed34} and Theorem \ref{ed2} both $\Phi(\bar{\mu}\times \bar{\nu})$ and $\Phi(\bar{\theta}^+_{t,\bm{v}^-,\bm{\omega}})$ are exact-dimensional with the same dimension
\[
D_{\bar{\mu},\bar{\nu},\Phi}=D_{\mu,V,\bm{I}}+D_{\nu,W,\bm{J}}=\frac{h_{\mu}-h_V-h_{\mu,V,\bm{I}}}{-\log \delta}+\frac{h_{\nu}-h_W-h_{\nu,W,\bm{J}}}{-\log \rho}.
\]
\begin{corollary}\label{com}
$\mathbb{P}_{a,b}$-a.s. conditioned on $\|\widetilde{\mu}\|\cdot\|\widetilde{\nu}\|>0$, for all $s\in\mathbb{R}$,
\[
\dim\pi_s\Phi(\bar{\mu}\times\bar{\nu})=\min\{1,D_{\bar{\mu},\bar{\nu},\Phi}\}.
\]
\end{corollary}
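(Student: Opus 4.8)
The plan is to prove the two inequalities separately: the upper bound is soft, and the lower bound is where the CP--chain machinery of Sections \ref{mcfm}--\ref{DoP} together with Marstrand's projection theorem is used. For the upper bound, recall that by \eqref{ed34}, Theorem \ref{ed1} and Theorem \ref{ed22} the measure $\Phi(\bar{\mu}\times\bar{\nu})$ is exact-dimensional on $\mathbb{R}^2$ with dimension $D_{\bar{\mu},\bar{\nu},\Phi}$, $\mathbb{P}_{a,b}$-a.s. conditioned on $\|\widetilde{\mu}\|\cdot\|\widetilde{\nu}\|>0$. Since each $\pi_s$ is Lipschitz and takes values in $\mathbb{R}$, the entropy dimension neither increases under $\pi_s$ nor exceeds $1$, so $\dim_e\pi_s\Phi(\bar{\mu}\times\bar{\nu})\le\min\{1,D_{\bar{\mu},\bar{\nu},\Phi}\}$ for every $s$; combined with the lower bound below and the elementary inequality $\un{\dim}_H\le\dim_e$, this identifies the common value and yields the statement (with exact-dimensionality additionally holding for Lebesgue-a.e.\ $s$ by Marstrand).

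For the lower bound I would start from the fibre measures. By Theorem \ref{ed2}, for $\mathbb{Q}_\theta^-$-a.e.\ $(t,\bm{v}^-,\bm{\omega})$ the measure $\Phi(\bar{\theta}^+_{t,\bm{v}^-,\bm{\omega}})$ is exact-dimensional of dimension $D_{\bar{\mu},\bar{\nu},\Phi}$, so Marstrand's projection theorem (in the exact-dimensional form used in \cite{HS12}) gives, for such a fibre, $\dim\pi_u\Phi(\bar{\theta}^+_{t,\bm{v}^-,\bm{\omega}})=\min\{1,D_{\bar{\mu},\bar{\nu},\Phi}\}$ for Lebesgue-a.e.\ direction $u$. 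Because $\pi_sS_t=\pi_{s+t}$ by \eqref{Sst} and, for a fixed fibre, $t$ is a fixed number, a Fubini argument in the pair $(s,t)$ (using that the Marstrand exceptional set of directions is Lebesgue-null for a.e.\ fibre) together with Theorem \ref{thm:last}(i) yields $E_\theta(\pi_s)=\min\{1,D_{\bar{\mu},\bar{\nu},\Phi}\}$ for Lebesgue-a.e.\ $s$; averaging over the ergodic decomposition of $\mathbb{Q}_\Theta$ (Lemma \ref{erg3}), the same holds for $E_\Theta(\pi_s):=\int_{\mathcal{M}(X)}E_\theta(\pi_s)\,\Theta(\mathrm{d}\theta)$, and since $E_\theta\le\min\{1,D_{\bar{\mu},\bar{\nu},\Phi}\}$ everywhere, for each such $s$ one in fact has $E_\theta(\pi_s)=\min\{1,D_{\bar{\mu},\bar{\nu},\Phi}\}$ for $\Theta$-a.e.\ $\theta$.

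The crucial step, and the one I expect to be the main obstacle, is upgrading this from Lebesgue-a.e.\ $s$ to \emph{all} $s$; this is exactly where the irrationality of $\log\delta/\log\rho$ enters. The point is that $E_{q,\theta}(\pi_s)$ is an average over $t$, which is $\ell$-distributed on $\mathbb{T}$ (the $\mathbb{T}$-factor of $\Theta$ is Lebesgue and $R$ is uniquely ergodic, so every ergodic component still projects to $\ell$ on $\mathbb{T}$), of the uniformly bounded quantity $\tfrac{1}{q\log(1/\rho)}H_{\rho^q}(\pi_{s+t}(\cdot))$, the bound $H_{\rho^q}=O(q)$ being elementary; hence $s\mapsto E_{q,\theta}(\pi_s)$ is continuous with a modulus of continuity independent of $q$, and the limit $E_\theta$ --- a priori only lower-semicontinuous by Theorem \ref{thm:last} --- is continuous, so being equal to $\min\{1,D_{\bar{\mu},\bar{\nu},\Phi}\}$ on a set of full Lebesgue measure it is identically $\min\{1,D_{\bar{\mu},\bar{\nu},\Phi}\}$. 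Feeding $E_\theta(\pi_s)\equiv\min\{1,D_{\bar{\mu},\bar{\nu},\Phi}\}$ back into Theorem \ref{thm:last}(ii) gives, for $\mathbb{Q}_\theta^-$-a.e.\ fibre, $\un{\dim}_H\pi_sS_t\Phi(\bar{\theta}^+_{t,\bm{v}^-,\bm{\omega}})\ge\min\{1,D_{\bar{\mu},\bar{\nu},\Phi}\}$ for all $s$ simultaneously.

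Finally I would transfer this to $\bar{\mu}\times\bar{\nu}$. By \eqref{mucd} and uniformly integrable martingale convergence, $\widetilde{\mu}^+_{\bm{\omega}^a}$ (resp.\ $\widetilde{\nu}^+_{\bm{\omega}^b}$) is the $\mu^-$- (resp.\ $\nu^-$-) average of the fibre cascades, so $\bar{\mu}_{\bm{\omega}^a}\times\bar{\nu}_{\bm{\omega}^b}$ is a probability-weighted mixture of the product fibre measures $\bar{\lambda}^+_{\bm{v}^-,\bm{\omega}}$ of \eqref{PQT}, and via Lemma \ref{erg3} (disintegrating $\mathbb{Q}_\Theta=\int\mathbb{Q}_\theta\,\Theta(\mathrm{d}\theta)$ over $\widetilde{\phi}$) each $\bar{\lambda}^+_{\bm{v}^-,\bm{\omega}}$ is in turn a mixture of the $\bar{\theta}^+_{t,\bm{v}^-,\bm{\omega}}$ over $\theta$. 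Since $\un{\dim}_H$ of a mixture of measures is at least the essential infimum of the $\un{\dim}_H$ of its components, and $S_t$ is an invertible linear map with $\pi_sS_t=\pi_{s+t}$, the for-all-$s$ bound of the previous paragraph propagates to $\un{\dim}_H\pi_s\Phi(\bar{\mu}\times\bar{\nu})\ge\min\{1,D_{\bar{\mu},\bar{\nu},\Phi}\}$ for all $s$, $\mathbb{P}_{a,b}$-a.s.\ on $\|\widetilde{\mu}\|\cdot\|\widetilde{\nu}\|>0$. Together with the upper bound this completes the proof. The delicate points are the continuity upgrade in the third paragraph and the bookkeeping of the mixture representation in the last one; everything else is a routine Fubini argument.
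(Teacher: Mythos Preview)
Your upper bound and your application of Marstrand at the fibre level match the paper. The gap is in your upgrade from ``Lebesgue-a.e.\ $s$'' to ``all $s$'' via a claimed continuity of $s\mapsto E_{q,\theta}(\pi_s)$ with a $q$-independent modulus. The integrand $H_{\rho^q}(\pi_{s+t}\Phi(\bar{\theta}^+_{t,\bm{v}^-,\bm{\omega}}))$ depends on $t$ both through the shift $s+t$ in the direction and through the measure $\bar{\theta}^+_{t,\bm{v}^-,\bm{\omega}}$ itself, so the $t$-integral is not a convolution in $s$ and there is no ``averaging smooths'' mechanism. Since $H_r(\cdot)$ is only lower semicontinuous in the measure argument, all one gets is that each $E_{q,\theta}$, and hence $E_\theta$ (Theorem \ref{thm:last}), is lower semicontinuous; and a lower-semicontinuous function equal to a constant a.e.\ can dip below that constant on a null set, so the upgrade does not follow.

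The paper avoids this by never upgrading $E_\theta$ at all. Instead it uses the $t$-integration built into the mixture representation. For a \emph{fixed} $s$, the disintegration \eqref{PQT3} together with the fact that the $\mathbb{T}$-marginal of every ergodic component $\theta$ is $\ell$ (by unique ergodicity of $R$) gives
\[
\underline{\dim}_H\pi_s\Phi(\bar{\mu}\times\bar{\nu})\ \ge\ \operatorname*{ess\,inf}_{\theta\sim\Theta,\ t\sim\ell,\ \bm{v}^-\sim\theta^-_t}\ \underline{\dim}_H\pi_{s-t}S_t\Phi(\bar{\theta}^+_{t,\bm{v}^-,\bm{\omega}})\ \ge\ \operatorname*{ess\,inf}_{\theta,\,t}\ E_\theta(\pi_{s-t}),
\]
the last inequality by Theorem \ref{thm:last}(ii) and $\pi_s=\pi_{s-t}S_t$. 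Since $E_\theta(\pi_{s'})=\min\{1,D_{\bar{\mu},\bar{\nu},\Phi}\}$ for Lebesgue-a.e.\ $s'$ and $s-t$ sweeps a Lebesgue-full set of directions as $t$ ranges over $(\mathbb{T},\ell)$, the essential infimum equals $\min\{1,D_{\bar{\mu},\bar{\nu},\Phi}\}$. Thus the ``for all $s$'' conclusion at the level of $\bar{\mu}\times\bar{\nu}$ comes for free from the $t$-average in the mixture, with no continuity of $E_\theta$ needed. Your final mixture step already contains this mechanism; you should drop the continuity paragraph and apply it directly with $E_\theta$ known only Lebesgue-a.e.
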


\begin{proof}
We shall fix an instance of $\bm{\omega}=(\bm{\omega}^a,\bm{\omega}^b)$ w.r.t. $\mathbb{P}_{a,b}$ such that the cascade measures $\bar{\mu}_{\bm{\omega}^a}\times\bar{\nu}_{\bm{\omega}^b}$ and $\bar{\theta}_{t,\bm{v}^-,\bm{\omega}}$ for $\theta$-a.e. $(t,\bm{v}^-)\in \mathbb{T}\times\Sigma_{a,b}^-$ are well-defined. The rest of the arguments are deterministic and we shall omit the notation $\bm{\omega}$. 

Note that, since $\theta$ is $T$-invariant, $\theta$ and hence $\theta^-$ projected to $\mathbb{T}$ is $R$-invariant. By unique ergodicity of irrational rotations, we get $\theta^-$ projected to $\mathbb{T}$ is equal to the Lebesgue measure $\ell$, hence we may write
\[
\theta^-(\mathrm{d}(t,\bm{v}^-))=\theta^-_t(\mathrm{d}\bm{v}^-)\ell(\mathrm{d} t),
\]
where we denote by $\theta^-_t$ the disintegration of $\theta^-$ w.r.t. $\ell$. 

By Hunt and Kaloshin's analogue of Marstrand's projection theorem for measures (\cite[Theorem 4.1]{HuntKaloshin}), we have that, for $\ell$-a.e. $t\in \mathbb{T}$, for $\theta^-_t$-a.e. $\bm{v}^-\in\Sigma_{a,b}^-$, for the measure $\bar{\theta}_{t,\bm{v}^-}$, for Lebesgue almost every $s\in\mathbb{R}$,
\[
\underline{\dim}_H\pi_{s}S_t\Phi(\bar{\theta}_{t,\bm{v}^-})=\min\{1,\underline{\dim}_HS_t\Phi(\bar{\theta}_{t,\bm{v}^-})\}.
\]
Together with Theorem \ref{thm:last} i) and Theorem \ref{ed2}, we obtain that for Lebesgue almost every $s\in\mathbb{R}$, $E_\theta(\pi_s)=\min\{1,D_{\bar{\mu},\bar{\nu},\Phi}\}$.
	
On the other hand, by \eqref{PQT3} we have for $B\in\mathcal{B}(\mathbb{R})$
\begin{align*}
\pi_s\Phi(\bar{\mu}\times\bar{\nu})(B)=&\ \int_{\mathcal{M}(X)}\int_{\mathbb{T}}\int_{\Sigma_{a,b}^-} \pi_s\Phi(\bar{\theta}_{t,\bm{v}^-})(B)\,\theta^-_t(\mathrm{d}\bm{v}^-)\ell(\mathrm{d}t)\Theta(\mathrm{d}\theta).
\end{align*}
By \eqref{Sst} and Theorem \ref{thm:last} ii) this implies that
\begin{align*}
\underline{\dim}_H \pi_s\Phi(\bar{\mu}\times\bar{\nu}) \ge&\ \mathrm{ess inf}_{\theta\sim\Theta,t\sim\ell,\bm{v}^-\sim \theta^-_t}\underline{\dim}_H \pi_s\Phi(\bar{\theta}_{t,\bm{v}^-})\\
=&\ \mathrm{ess inf}_{\theta\sim\Theta,t\sim\ell,\bm{v}^-\sim \theta^-_t}\underline{\dim}_H \pi_{s-t}S_t\Phi(\bar{\theta}_{t,\bm{v}^-})\\
\ge&\ \mathrm{ess inf}_{\theta\sim\Theta, t\sim\ell} E_\theta(\pi_{s-t})\\
=&\ \min\{1,D_{\bar{\mu},\bar{\nu},\Phi}\}.
\end{align*}
But, by the exact-dimensionality of $\Phi(\bar{\mu}\times \bar{\nu})$, $\min\{1,D_{\bar{\mu},\bar{\nu},\Phi}\}$ is also an upper bound for $\overline{\dim}_B \pi_s\Phi(\bar{\mu}\times\bar{\nu})$, hence the conclusion.
\end{proof}

\subsection{Extend to invariant measures}

So far we have proved Theorem \ref{mthm} for Mandelbrot cascades acting on ergodic measures $\mu^+$ and $\nu^+$. Now we need to extend Theorem \ref{thm1} to invariant measures.

Let $\mu^+$ be $\sigma$-invariant on $\Sigma_a^+$. Let $\zeta$ be a $\mu^+$-measurable partition generates a $\sigma$-algebra $\widehat{\zeta}$ that is equal to the $\sigma$-algebra $\mathcal{I}_a=\{B\in\mathcal{B}(\Sigma_a):\sigma^{-1}(B)=B\}$ modulo sets of zero $\mu$-measure. Then the conditional measures $\mu^\zeta_{\bm{i}^+}$ for $\mu^+$-a.e. $\bm{i}^+\in\Sigma_a^+$ form an ergodic decomposition of $\mu^+$. We have the following lemma which can be deduced from \cite{FH09}.

\begin{lemma}\label{dinv}
\[
\overline{\dim}_H \Phi_{\bm{I}}(\mu^+)=\mathrm{ess sup}_{\mu^+} \dim \Phi_{\bm{I}}(\mu^\zeta_{\bm{i}^+}) \text{ and } \underline{\dim}_H \Phi_{\bm{I}}(\mu^+)=\mathrm{ess inf}_{\mu^+} \dim \Phi_{\bm{I}}(\mu^\zeta_{\bm{i}^+})
\]
\end{lemma}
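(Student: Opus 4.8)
\textbf{Proof proposal for Lemma \ref{dinv}.}

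The plan is to combine the ergodic decomposition of $\mu^+$ with the exact-dimensionality result of Feng and Hu, together with the Rokhlin disintegration machinery already set up in Section \ref{sec:pre}. First I would recall that, by \cite{FH09}, for each ergodic component $\mu^\zeta_{\bm{i}^+}$ the push-forward $\Phi_{\bm{I}}(\mu^\zeta_{\bm{i}^+})$ is exact-dimensional, with dimension $\dim \Phi_{\bm{I}}(\mu^\zeta_{\bm{i}^+}) = (h_{\mu^\zeta_{\bm{i}^+}} - h_{\mu^\zeta_{\bm{i}^+},\bm{I}})/\chi_{\mu^\zeta_{\bm{i}^+},\bm{I}}$; this is the measurable function of $\bm{i}^+$ over which we take essential sup/inf. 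The push-forward of $\mu^+$ decomposes as $\Phi_{\bm{I}}(\mu^+) = \int \Phi_{\bm{I}}(\mu^\zeta_{\bm{i}^+}) \, \mu^+(\mathrm{d}\bm{i}^+)$, so the two dimension quantities to be compared sit on the same integral representation.

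For the lower-dimension identity, I would argue as follows. For the inequality $\underline{\dim}_H \Phi_{\bm{I}}(\mu^+) \ge \mathrm{ess\,inf}$, take any Borel set $E$ with $\Phi_{\bm{I}}(\mu^+)(E) > 0$; then $\Phi_{\bm{I}}(\mu^\zeta_{\bm{i}^+})(E) > 0$ on a set of positive $\mu^+$-measure, and on that set $\dim_H E \ge \dim \Phi_{\bm{I}}(\mu^\zeta_{\bm{i}^+}) \ge \mathrm{ess\,inf}_{\mu^+}\dim\Phi_{\bm{I}}(\mu^\zeta_{\bm{i}^+})$, using exact-dimensionality of the components. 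For the reverse inequality one needs to produce, for any $\epsilon>0$, a set of positive $\Phi_{\bm{I}}(\mu^+)$-measure of Hausdorff dimension at most $\mathrm{ess\,inf} + \epsilon$: restrict attention to the set $A_\epsilon = \{\bm{i}^+ : \dim\Phi_{\bm{I}}(\mu^\zeta_{\bm{i}^+}) < \mathrm{ess\,inf}+\epsilon\}$, which has positive $\mu^+$-measure, and consider the measure $\mu^+|_{A_\epsilon}$; by exact-dimensionality of each component and a standard Egorov/Fubini argument (the local dimension of $\Phi_{\bm{I}}(\mu^+|_{A_\epsilon})$ at $\Phi_{\bm{I}}(\bm{i}^+)$ is governed by that of $\Phi_{\bm{I}}(\mu^\zeta_{\bm{i}^+})$ since the conditional measures are mutually singular across components and $\mu^\zeta_{\bm{i}^+}$-a.e. point has the right local dimension) one gets a subset of positive measure with upper local dimension $\le \mathrm{ess\,inf}+\epsilon$ everywhere, hence Hausdorff dimension $\le\mathrm{ess\,inf}+\epsilon$ by the mass distribution principle / Frostman. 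Letting $\epsilon\to 0$ gives $\underline{\dim}_H\Phi_{\bm{I}}(\mu^+)\le\mathrm{ess\,inf}$. The upper-dimension identity is dual: $\overline{\dim}_H\Phi_{\bm{I}}(\mu^+)$ equals the $\Phi_{\bm{I}}(\mu^+)$-essential supremum of the upper local dimension, which by the disintegration equals the $\mu^+$-essential supremum of $\dim\Phi_{\bm{I}}(\mu^\zeta_{\bm{i}^+})$.

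The main obstacle I anticipate is the precise handling of the interaction between the ergodic decomposition (on the symbolic side) and the possible overlaps of $\bm{I}$ (on the Euclidean side): two distinct ergodic components $\mu^\zeta_{\bm{i}^+}, \mu^\zeta_{\bm{j}^+}$ are mutually singular as measures on $\Sigma_a^+$, but their images under the non-injective map $\Phi_{\bm{I}}$ need not be mutually singular, so one cannot naively say the local dimension of $\Phi_{\bm{I}}(\mu^+)$ at a $\Phi_{\bm{I}}(\mu^\zeta_{\bm{i}^+})$-typical point equals that of $\Phi_{\bm{I}}(\mu^\zeta_{\bm{i}^+})$ alone. The clean way around this is to work at the symbolic level: for the lower bound on $\overline{\dim}_H$ and the upper bound on $\underline{\dim}_H$ one only ever \emph{restricts} the measure $\mu^+$ to a $\widehat\zeta$-measurable set $A_\epsilon$ and pushes forward, and since $A_\epsilon$ is a union of ergodic components, $\Phi_{\bm{I}}(\mu^+|_{A_\epsilon})$ is itself a push-forward of an invariant (indeed, after normalisation, still decomposable) measure whose components all have dimension in the desired range — so the desired bound on its dimension follows from integrating the componentwise exact-dimensionality, exactly as in the proof of the corresponding statement for the fibre cascade measures in Section \ref{ifsed}. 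I would make this rigorous by quoting the relevant disintegration-of-dimension lemma of \cite{FH09} (their treatment of ergodic decomposition of self-similar measures) rather than redoing it, since the paper explicitly says the lemma "can be deduced from \cite{FH09}".
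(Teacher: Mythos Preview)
Your plan to lean on \cite{FH09} is right, but the specific disintegration argument you sketch does not close, and the paper's route through \cite{FH09} is different from what you seem to have in mind. The Egorov/Fubini step---``the local dimension of $\Phi_{\bm{I}}(\mu^+|_{A_\epsilon})$ at $\Phi_{\bm{I}}(\bm{i}^+)$ is governed by that of $\Phi_{\bm{I}}(\mu^\zeta_{\bm{i}^+})$''---is exactly the point at which overlaps bite: since $\Phi_{\bm{I}}$ is not injective, the ball $B(\Phi_{\bm{I}}(\bm{i}^+),r)$ may receive mass from many different ergodic components, and a single component $\zeta(\bm{i}^+)$ typically has $\mu^+$-measure zero, so you cannot bound $\Phi_{\bm{I}}(\mu^+|_{A_\epsilon})(B(\Phi_{\bm{I}}(\bm{i}^+),r))$ from below by anything coming from $\Phi_{\bm{I}}(\mu^\zeta_{\bm{i}^+})$ alone. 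You flag this obstacle but do not actually overcome it; invoking ``the disintegration-of-dimension lemma of \cite{FH09}'' is not specific enough, because there is no such lemma stated in those terms.

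What the paper does instead is quote two precise results from \cite{FH09} that together make the lemma a two-line computation. First, \cite[Theorem~2.8]{FH09} says that for a $\sigma$-\emph{invariant} (not necessarily ergodic) $\mu^+$, the local dimension $\dim_{loc}(\Phi_{\bm{I}}(\mu^+),\Phi_{\bm{I}}(\bm{i}^+))$ exists for $\mu^+$-a.e.\ $\bm{i}^+$ and equals $h_{\Phi_{\bm{I}}}(\sigma,\mu^+,\bm{i}^+)/(-\log\delta)$, where $h_{\Phi_{\bm{I}}}(\sigma,\mu^+,\bm{i}^+)$ is the \emph{local} projection entropy. This is the hard analytic fact---it already absorbs all the overlap issues, computed for the full measure $\Phi_{\bm{I}}(\mu^+)$ rather than a component. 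Second, \cite[Lemma~3.3]{FH09} (applied with the partition $\zeta$ in place of $\eta$) gives $h_{\Phi_{\bm{I}}}(\sigma,\mu^+,\bm{i}^+)=h_{\Phi_{\bm{I}}}(\sigma,\mu^\zeta_{\bm{i}^+})$, i.e.\ the local projection entropy of $\mu^+$ at $\bm{i}^+$ equals the projection entropy of the ergodic component through $\bm{i}^+$. Combining, $\dim_{loc}(\Phi_{\bm{I}}(\mu^+),\Phi_{\bm{I}}(\bm{i}^+))=\dim\Phi_{\bm{I}}(\mu^\zeta_{\bm{i}^+})$ for $\mu^+$-a.e.\ $\bm{i}^+$, and taking $\mu^+$-essential sup/inf immediately yields both identities. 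The point is that \cite{FH09} computes the local dimension of the \emph{aggregate} pushed measure directly, so you never have to compare it to a single component by hand.
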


\begin{proof}
By \cite[Theorem 2.8]{FH09} we have that, on $(\Sigma_a^+,d_\delta^+)$, for $\mu^+$-a.e. $\bm{i}^+\in\Sigma_a^+$,
\[
\dim_{loc}(\Phi_{\bm{I}}(\mu^+),\Phi_{\bm{I}}(\bm{i}^+)):=\lim_{r\to \infty} \frac{\log \mu^+\circ \Phi_{\bm{I}}^{-1}(B(\Phi_{\bm{I}}(\bm{i}^+),r))}{\log r}=\frac{h_{\Phi_{\bm{I}}}(\sigma,\mu^+,\bm{i}^+)}{-\log \delta},
\]
where $h_{\Phi_{\bm{I}}}(\sigma,\mu^+,\bm{i}^+)$ is the local projection entropy of $\mu^+$ at $\bm{i}^+$ under $\Phi_{\bm{I}}$ (see \cite[Definition 2.1]{FH09}). Denote by $h_{\Phi_{\bm{I}}}(\sigma,\mu^+)=\int_{\Sigma_a} h_{\Phi_{\bm{I}}}(\sigma,\mu^+,\bm{i}^+) \mu^+(\mathrm{d}\bm{i}^+)$. If $\mu^+$ is also ergodic then $\mu^+$ is exact-dimensional with dimension $\frac{h_{\Phi_{\bm{I}}}(\sigma,\mu^+)}{-\log \delta}$. By \cite[Lemma 3.3]{FH09} applying $\eta$ to $\zeta$ we have for $\mu^+$-a.e. $\bm{i}^+$,
\[
h_{\Phi_{\bm{I}}}(\sigma,\mu^+,\bm{i}^+) =h_{\Phi_{\bm{I}}}(\sigma,\mu^\zeta_{\bm{i}^+}).
\]
This implies that
\begin{align*}
\overline{\dim}_H \Phi_{\bm{I}}(\mu^+) & = \mathrm{ess sup}_{\mu^+} \dim_{loc}(\Phi_{\bm{I}}(\mu^+),\Phi_{\bm{I}}(\bm{i}^+)) \\
& = \mathrm{ess sup}_{\mu^+} \frac{h_{\Phi_{\bm{I}}}(\sigma,\mu^+,\bm{i}^+)}{-\log \delta} \\
& = \mathrm{ess sup}_{\mu^+} \frac{h_{\Phi_{\bm{I}}}(\sigma,\mu^\zeta_{\bm{i}^+})}{-\log \delta} \\
& = \mathrm{ess sup}_{\mu^+} \dim \Phi_{\bm{I}}(\mu^\zeta_{\bm{i}^+})
\end{align*}
and
\begin{align*}
\underline{\dim}_H \Phi_{\bm{I}}(\mu^+) & = \mathrm{ess inf}_{\mu^+} \dim_{loc}(\Phi_{\bm{I}}(\mu^+),\Phi_{\bm{I}}(\bm{i}^+)) \\
& = \mathrm{ess inf}_{\mu^+} \frac{h_{\Phi_{\bm{I}}}(\sigma,\mu^+,\bm{i}^+)}{-\log \delta} \\
& = \mathrm{ess inf}_{\mu^+} \frac{h_{\Phi_{\bm{I}}}(\sigma,\mu^\zeta_{\bm{i}^+})}{-\log \delta} \\
& = \mathrm{ess inf}_{\mu^+} \dim \Phi_{\bm{I}}(\mu^\zeta_{\bm{i}^+})
\end{align*}
\end{proof}

By Lemma \ref{dinv} and Corollary \ref{com} we can easily deduce Theorem \ref{thm1}.

\section{Application to Mandelbrot percolations}\label{sec:perc}

Recall $\Sigma_{a,p}^+$, the Mandelbrot $p$-percolation on $\Sigma_a^+$, that is the closed random subset
\[
\Sigma_{a,p}^+=\{\bm{i}^+\in\Sigma_a^+: V_{i_1}V_{i_1i_2}\cdots V_{i_1\cdots i_n} >0 \text{ for all } n\ge 1\}
\]
for the random variable $V$ of the form
\begin{equation*}\label{Vp}
V=\left\{\begin{array}{ll} 1/p, & \text{with probability } p;\\ 0, & \text{with probability } 1-p.\end{array}  \right.
\end{equation*}
Recall the IFS $\bm{I}=\{\delta x+t_i\}_{i=1}^a$ and its canonical mapping $\Phi_{\bm{I}}$. For a subset $X\subset\Sigma^+_a$ denote by $X_{\bm{I}}=\Phi_{\bm{I}}(X)$ its image through $\Phi_{\bm{I}}$, in particular we have the attractor of the IFS $A_{\bm{I}}=\Phi_{\bm{I}}(\Sigma_a)$. Denote by $X_p=X\cap \Sigma^+_{a,p}$ and $X_{\bm{I},p}=\Phi_{\bm{I}}(X_p)$.

In the case when $X$ is a $\sigma$-invariant closed subset, by Theorem \ref{ed22} we have the following lower bound of the Hausdorff dimension of $X_{\bm{I},p}$: almost surely conditioned on $X_p\neq \emptyset$,
\begin{equation}\label{vpc}
\underline{\dim}_H X_{\bm{I},p}\ge \sup_{\mu^+ \in \mathcal{M}_e(X)} \frac{h_\mu-h_{\mu,p,\bm{I}}}{-\log \delta}-\frac{\log p}{\log \delta},
\end{equation}
where $\mathcal{M}_e(X)$ is the set of all $\sigma$-ergodic measures supported by $X$ and we denote by $h_{\mu,p,\bm{I}}=h_{\mu,V,\bm{I}}$ for $V$ in \eqref{Vp}. By the proof of \cite[Theorem 8.1]{FH09}, one can find an ergodic measure $\mu$ supported by a $\sigma^k$-invariant subset $X^k$ of $X$ for some large integer $k$ such that the $k$-iterated IFS $\bm{I}^k$ restricted on $X^k$ satisfies the strong separation condition, hence $h_{\mu,p,\bm{I}^k}=0$; and its dimension through the canonical mapping $\Phi_{\bm{I}}$ can be arbitrarily close to $\overline{\dim}_B X_{\bm{I}}$. Therefore, by \eqref{vpc} we obtain a lower bound for $\underline{\dim}_H X_{\bm{I},p}$:
\begin{equation}\label{vpc2}
\underline{\dim}_H X_{\bm{I},p}\ge \overline{\dim}_B X_{\bm{I}}- \frac{\log p}{\log \delta}.
\end{equation}

One would expect that the supremum in \eqref{vpc} reaches the dimension, which is indeed the case when $X=\Sigma_a$ and $p=1$: By \cite[Theorem 2.13]{FH09} we have the variational principle that
\[
\underline{\dim}_H A_{\bm{I}}= \sup_{\mu\in \mathcal{M}_e(\Sigma^+_a)} \frac{h_\mu-h_{\mu,\bm{I}}}{-\log \delta},
\]
where recall that $h_{\mu,\bm{I}}=h_{\mu,1,\bm{I}}$, or in other words, the conditional entropy in the deterministic case when there is no percolation. However, it seems hard to obtain a sharp upper bound of the Hausdorff dimension of self-similar sets with percolations when there are non-trivial overlaps.

Recall for $X\subset \Sigma_a$ and $n\ge 1$,
\[
t_{X,\bm{I},n}=\sup_{x\in \mathbb{R}}\#\{u \in X_n: \Phi_{\bm{I}}([u])\cap B(x,\delta^n)\neq \emptyset\},
\]
and
\[
\gamma_{X,\bm{I}}=\limsup_{n\to\infty} \frac{\log t_{X,\bm{I},n}}{-n\log \delta}.
\]
In terms of $\gamma_{X,\bm{I}}$ we have the following upper bound for $\overline{\dim}_B X_{\bm{I},p}$.

\begin{lemma}\label{mpub}
For $X\subset \Sigma_a^+$ we have almost surely,
\[
\overline{\dim}_B X_{\bm{I},p}\le \overline{\dim}_B X_{\bm{I}} +\gamma_{X,\bm{I}}-\frac{\log p}{\log \delta}.
\]
\end{lemma}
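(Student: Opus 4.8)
The plan is to bound the number of $\delta^n$-mesh cubes meeting $X_{\bm{I},p}$ by a product of three independent-looking factors: the number of admissible cylinders of generation $n$ in $X$ (which is governed by $\overline{\dim}_B X_{\bm{I}}$ up to the overlap-multiplicity $t_{X,\bm{I},n}$), the survival rate of the percolation at generation $n$, and the multiplicity coming from overlaps. First I would fix $n$ and let $\{Q\}$ be the natural partition of $\mathbb{R}$ into intervals of length $\delta^n$ (or a covering by balls $B(x,\delta^n)$). By definition of $t_{X,\bm{I},n}$, any such interval $Q$ intersects $\Phi_{\bm{I}}([u])$ for at most $t_{X,\bm{I},n}$ words $u\in X_n$. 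Hence the number $N_n$ of intervals $Q$ meeting $X_{\bm{I},p}$ satisfies
\[
N_n \le t_{X,\bm{I},n}\cdot \#\{u\in X_n : \Phi_{\bm{I}}([u])\cap X_{\bm{I},p}\neq\emptyset\} \le t_{X,\bm{I},n}\cdot \#\{u\in X_n : [u]\cap X_p\neq\emptyset\},
\]
where the last inequality uses $\Phi_{\bm{I}}([u])\cap X_{\bm{I},p}\neq\emptyset \Rightarrow [u]\cap X_p\neq\emptyset$ since $X_{\bm{I},p}=\Phi_{\bm{I}}(X_p)$ and $X_p\subset X$ — that is, if the image cell meets the image of the surviving set, then $[u]$ must contain a surviving word (note we only need this easy direction, not the problematic converse discussed before Example \ref{example1}).

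Next I would estimate $\mathbb{E}\,\#\{u\in X_n : [u]\cap X_p\neq\emptyset\}$. A word $u\in X_n$ has $[u]\cap X_p\neq\emptyset$ only if $Q^V_u=V_{u_1}V_{u_1u_2}\cdots V_{u_1\cdots u_n}>0$, i.e. the percolation cell survives to generation $n$; for $V$ of the Bernoulli form in \eqref{Vp} this event has probability exactly $p^n$ and depends only on $\{V_w : w \preceq u\}$. Therefore
\[
\mathbb{E}\,\#\{u\in X_n : [u]\cap X_p\neq\emptyset\} \le \sum_{u\in X_n} \mathbb{P}(Q^V_u>0) = (\# X_n)\, p^n.
\]
Now $\# X_n$ is (essentially by definition, modulo the relation between box dimension of $X_{\bm{I}}$ and cylinder counts in $X$) at most $\delta^{-n(\overline{\dim}_B X_{\bm{I}}+\epsilon)}$ for $n$ large, once one notes that distinct $u\in X_n$ whose images $\Phi_{\bm{I}}([u])$ all lie in a common $\delta^n$-interval number at most $t_{X,\bm{I},n}$, so $\# X_n \le t_{X,\bm{I},n}\cdot N_n(X_{\bm{I}})$ where $N_n(X_{\bm{I}})$ is the $\delta^n$-covering number of $X_{\bm{I}}$; but here it is cleaner to just use $\# X_n \le t_{X,\bm{I},n} N_n(X_{\bm{I}})$ directly in the bound for $N_n$ above, giving $\mathbb{E} N_n \le t_{X,\bm{I},n}^2 N_n(X_{\bm{I}}) p^n$. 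Combining, $\frac{\log \mathbb{E} N_n}{-n\log\delta} \le 2\frac{\log t_{X,\bm{I},n}}{-n\log\delta} + \frac{\log N_n(X_{\bm{I}})}{-n\log\delta} - \frac{\log p}{\log\delta}$, whose $\limsup$ is at most $2\gamma_{X,\bm{I}} + \overline{\dim}_B X_{\bm{I}} - \frac{\log p}{\log\delta}$. To reach the claimed bound with a single $\gamma_{X,\bm{I}}$ rather than $2\gamma_{X,\bm{I}}$, I would instead only insert the multiplicity factor once, by bounding $N_n \le t_{X,\bm{I},n}\cdot \#\{u\in X_n: [u]\cap X_p\neq\emptyset\}$ and then bounding $\#\{u\in X_n: [u]\cap X_p\neq\emptyset\}$ in expectation by $(\# X_n)p^n$ with $\# X_n \le \delta^{-n(\overline{\dim}_B X_{\bm{I}}+\epsilon)}$ — this last inequality being exactly the content of the (essentially tautological) identification of $\overline{\dim}_B X_{\bm{I}}$ with the exponential growth rate of $\# X_n$ when $\bm{I}$ has contraction ratio $\delta$, which is where one uses that cylinders of generation $n$ have image diameter comparable to $\delta^n$.

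Finally, to pass from the expectation bound to an almost-sure bound on $\overline{\dim}_B X_{\bm{I},p}$, I would use a Borel--Cantelli / Markov argument: for fixed $\epsilon>0$, $\mathbb{P}(N_n \ge \delta^{-n(\overline{\dim}_B X_{\bm{I}}+\gamma_{X,\bm{I}}+2\epsilon-\log p/\log\delta)}) \le \delta^{n\epsilon}\to 0$ summably, so almost surely $N_n \le \delta^{-n(\overline{\dim}_B X_{\bm{I}}+\gamma_{X,\bm{I}}+2\epsilon-\log p/\log\delta)}$ for all large $n$, hence $\overline{\dim}_B X_{\bm{I},p} \le \overline{\dim}_B X_{\bm{I}}+\gamma_{X,\bm{I}}-\log p/\log\delta + 2\epsilon$; letting $\epsilon\to 0$ along a countable sequence finishes the proof. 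The main obstacle I anticipate is the bookkeeping around the $\limsup$ defining $\gamma_{X,\bm{I}}$: since $t_{X,\bm{I},n}$ only has controlled \emph{exponential} growth, one must be careful to insert the overlap-multiplicity factor exactly once (not twice) and to choose $n$ along a subsequence realizing the $\limsup$ while still getting an almost-sure statement for \emph{all} large $n$ — this is handled by the $\epsilon$-room in the exponent, but it requires that the covering-number bound $\# X_n \lesssim \delta^{-n(\overline{\dim}_B X_{\bm{I}}+\epsilon)}$ hold for all large $n$, which is automatic from the definition of upper box dimension. Everything else is routine.
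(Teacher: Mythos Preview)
Your attempted fix to get a single $\gamma_{X,\bm{I}}$ contains a genuine error. The claim that $\# X_n \le \delta^{-n(\overline{\dim}_B X_{\bm{I}}+\epsilon)}$ for all large $n$ is \emph{not} tautological and in fact fails whenever $\bm{I}$ has overlaps: $\# X_n$ controls the box dimension of $X$ in the symbolic metric $d_\delta^+$, not of $X_{\bm{I}}$ in $\mathbb{R}$. Take the paper's own Example~\ref{example1}: $f_1=f_2=x/2$, $f_3=x/2+1/2$, $X=\Sigma_3^+$. Then $\# X_n=3^n$ while $\delta^{-n\,\overline{\dim}_B X_{\bm{I}}}=2^n$, so your inequality is violated. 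The correct relation is exactly the one you wrote earlier, $\# X_n \le t_{X,\bm{I},n}\cdot N_n(X_{\bm{I}})$, and this is where the factor $t_{X,\bm{I},n}$ genuinely enters.

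The double-counting of $t_{X,\bm{I},n}$ comes instead from your \emph{first} step, where both the inequality and its justification are off. The fact ``each $Q$ meets at most $t_{X,\bm{I},n}$ cylinder images'' bounds cylinders by intervals, not intervals by cylinders; what you actually need is that each $\Phi_{\bm{I}}([u])$ has diameter $R_{\bm{I}}\delta^n$ and hence meets at most $C=C(R_{\bm{I}})$ mesh intervals of length $\delta^n$, giving $N_n\le C\cdot\#\{u\in X_n:[u]\cap X_p\neq\emptyset\}$ with an absolute constant. Combined with $\mathbb{E}\#\{u:[u]\cap X_p\neq\emptyset\}\le(\# X_n)p^n\le t_{X,\bm{I},n}\,N_n(X_{\bm{I}})\,p^n$ and your Markov/Borel--Cantelli step, this yields the stated bound with a single $\gamma_{X,\bm{I}}$. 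The paper organizes the same count slightly differently: it starts from an optimal $\delta^n$-cover $\{B(x_{n,i},\delta^n)\}_{i\le N_{X,\bm{I},n}}$ of $X_{\bm{I}}$ and directly bounds the expected number of incidence pairs $(B_i,u)$ with $\Phi_{\bm{I}}([u])\cap B_i\neq\emptyset$ and $[u]\cap\Sigma^+_{a,p}\neq\emptyset$ by $N_{X,\bm{I},n}\cdot t_{X,\bm{I},n}\cdot p^n$, which majorizes the number of balls $B_i$ meeting $X_{\bm{I},p}$. This avoids ever isolating $\# X_n$, but once your first step is corrected the two arguments are essentially the same.
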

\begin{proof}
For $n\ge 1$ let $\{B(x_{n,i},\delta^n)\}_{i=1}^{N_{X,\bm{I},n}}$ be an optimal covering of $\Phi_{\bm{I}}(X)$, i.e., $N_{X,\bm{I},n}$ is the smallest such number. We have that
\[
\overline{\dim}_B X_{\bm{I}}=\limsup_{n\to\infty} \frac{\log N_{X,\bm{I},n}}{-n\log \delta}.
\]
For $u\in \{1,\dots,a\}^n$ we have that
\[
\mathbb{E}_{\Sigma^+_{a,p}\neq \emptyset}(\mathbf{1}_{\{[u]\cap \Sigma^+_{a,p}\neq \emptyset\}})= \frac{1}{\mathbb{P}(\Sigma_{a,p}\neq \emptyset)}\mathbb{E}(\mathbf{1}_{\{Q_u^V\neq 0\}}\mathbf{1}_{\{\Sigma^{+,(u)}_{a,p}\neq \emptyset\}})=p^n,
\]
where $\Sigma^{+,(u)}_{a,p}$ denotes the percolation set obtained from the i.i.d. sequence $\{V_{uv}\}_{v\in\Sigma_{a}^{+,*}}$, which is independent of $Q_u^V$ and has the same law as $\Sigma_{a,p}^+$. By the definition of $t_{X_{\bm{I}},n}$ we have that every $B(x_{n,i},\delta^n)$ is intersected with at most $t_{X,\bm{I},n}$ many $\{\Phi_{\bm{I}}([u]):u\in X_n\}$. Hence we have
\[
\mathbb{E}_{\Sigma^+_{a,p}\neq \emptyset}(\sum_{i=1}^{N_{X,\bm{I},n}}\sum_{u\in X_n}\mathbf{1}_{\{\Phi_{\bm{I}}([u])\cap B(x_{n,i},\delta^n)\neq \emptyset,[u]\cap \Sigma^+_{a,p}\neq \emptyset\}}) \le t_{X,\bm{I},n} \cdot p^n\cdot N_{X,\bm{I},n}.
\]
For $\epsilon>0$, by Markov's inequality we have
\begin{align*}
\mathbb{P}_{\Sigma_{a,p}\neq \emptyset}\left(\frac{\sum_{i=1}^{N_{X,\bm{I},n}}\sum_{u\in X_n}\mathbf{1}_{\{\Phi_{\bm{I}}([u])\cap B(x_{n,i},\delta^n)\neq \emptyset,[u]\cap \Sigma^+_{a,p}\neq \emptyset\}}}{ t_{X,\bm{I},n} \cdot p^n\cdot N_{X,\bm{I},n}}>(1+\epsilon)^n \right)\le (1+\epsilon)^{-n}.
\end{align*}
Then by Borel-Cantelli, $\mathbb{P}_{\Sigma^+_{a,p}\neq \emptyset}$-almost surely, for all $n$ large enough,
\begin{align*}
\sum_{i=1}^{N_{X,\bm{I},n}}\mathbf{1}_{\{\exists u\in X_n: \Phi_{\bm{I}}([u])\cap B(x_{n,i},\delta^n)\neq \emptyset,[u]\cap \Sigma^+_{a,p}\neq \emptyset\}} &\le \sum_{i=1}^{N_{X,\bm{I},n}}\sum_{u\in X_n}\mathbf{1}_{\{\Phi_{\bm{I}}([u])\cap B(x_{n,i},\delta^n)\neq \emptyset,[u]\cap \Sigma^+_{a,p}\neq \emptyset\}}\\
&\le (1+\epsilon)^n\cdot  t_{X,\bm{I},n} \cdot p^n\cdot N_{X,\bm{I},n}.
\end{align*}
Hence 
\[
\limsup_{n\to\infty} \frac{\log\sum_{i=1}^{N_{X,\bm{I},n}}\mathbf{1}_{\{\exists u\in X_n: \Phi_{\bm{I}}([u])\cap B(x_{n,i},\delta^n)\neq \emptyset,[u]\cap \Sigma^+_{a,p}\neq \emptyset\}} }{-n\log \delta} \le \overline{\dim}_B X_{\bm{I}} + \gamma_{X,\bm{I}}- \frac{\log (1+\epsilon)p}{\log \delta}.
\]
Taking a countable sequence of $\epsilon$ tending to $0$, we obtain that $\mathbb{P}_{\Sigma^+_{a,p}\neq \emptyset}$-almost surely,
\[
\overline{\dim}_B X_{\bm{I},p}\le \overline{\dim}_B X_{\bm{I}} + \gamma_{X,\bm{I}}-\frac{\log p}{\log \delta}.
\]
\end{proof}

\begin{example}\label{example1}
The upper bound in Lemma \ref{mpub} and the lower bound \eqref{vpc2} are not optimal, here is an example: consider $\bm{I}=\{f_i(x)\}_{i=1}^3$, where
\[
f_1(x)=f_2(x)=2^{-1}x,\ f_3(x)=2^{-1}x+2^{-1}.
\]
Then $f_1$ and $f_2$ are exactly overlapping and it is easy to see that $\gamma_{\Sigma_3,\bm{I}}=1$. Therefore the upper bound in Lemma \ref{mpub} is
\[
1+1+\frac{\log p}{\log 2}.
\]
Consider a $p$-Mandelbrot percolation $\Sigma^+_{a,p}$ on $\Sigma_3$, then its image $\Phi_{\bm{I}}(\Sigma^+_{a,p})$ becomes a $(p',p)$-fractal percolation on the dyadic decomposition of $[0,1]$, where $p'=1-(1-p)^2=2p-p^2$. Its Hausdorff dimension is equal to (see \cite{FalGri92} for example)
\[
1+\frac{(\log p'+\log p)/2}{\log 2}=1+\frac{\log\sqrt{2-p}}{\log 2}+\frac{\log p}{\log 2},
\]
and clearly
\[
0<\frac{\log\sqrt{2-p}}{\log 2}<\frac{1}{2}<1.
\]
\end{example}

On the other hand, when $\gamma_{X,\bm{I}}=0$, by \eqref{vpc2} we have

\begin{corollary}\label{bdp}
If $\gamma_{X,\bm{I}}=0$, then $\mathbb{P}_{\Sigma^+_{a,p}\neq \emptyset}$-almost surely
\[
\overline{\dim}_B X_{\bm{I},p}=\underline{\dim}_H X_{\bm{I},p}= \dim X_{\bm{I}}-\frac{\log p}{\log \delta}.
\]
\end{corollary}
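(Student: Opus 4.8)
The plan is to combine the already-established lower bound \eqref{vpc2} with the upper bound of Lemma \ref{mpub}, specialised to the case $\gamma_{X,\bm{I}}=0$. The only genuine work is to verify that the lower bound \eqref{vpc2}, which is stated for $\sigma$-invariant closed subsets, can be applied here, and to check that the two bounds meet.

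First I would recall that by Lemma \ref{mpub}, for $\gamma_{X,\bm{I}}=0$ we have $\mathbb{P}_{\Sigma^+_{a,p}\neq\emptyset}$-a.s.
\[
\overline{\dim}_B X_{\bm{I},p}\le \overline{\dim}_B X_{\bm{I}}-\frac{\log p}{\log\delta}.
\]
For the matching lower bound I would invoke \eqref{vpc2}: $\underline{\dim}_H X_{\bm{I},p}\ge \overline{\dim}_B X_{\bm{I}}-\frac{\log p}{\log\delta}$, which was derived from Theorem \ref{mthm} (via \eqref{vpc}) together with the approximation argument of \cite[Theorem 8.1]{FH09}, producing for every $\epsilon>0$ an ergodic measure supported on a $\sigma^k$-invariant sub-subshift on which the iterated IFS separates (so its relevant conditional entropy vanishes) and whose pushforward dimension is within $\epsilon$ of $\overline{\dim}_B X_{\bm{I}}$. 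Chaining these two inequalities with the trivial $\underline{\dim}_H\le\overline{\dim}_B$ forces
\[
\underline{\dim}_H X_{\bm{I},p}=\overline{\dim}_B X_{\bm{I},p}=\overline{\dim}_B X_{\bm{I}}-\frac{\log p}{\log\delta},
\]
and it remains only to replace $\overline{\dim}_B X_{\bm{I}}$ by $\dim X_{\bm{I}}$. The latter is legitimate because $X$ is here taken to be a subshift (a $\sigma$-invariant closed set), so by the variational principle \cite[Theorem 2.13]{FH09} its image $X_{\bm{I}}=\Phi_{\bm{I}}(X)$ has equal Hausdorff and box-counting dimension, i.e. $\dim X_{\bm{I}}=\overline{\dim}_B X_{\bm{I}}$ exists as an exact dimension; this is precisely how the notation $\dim X_{\bm{I}}$ is used throughout (cf. Corollary \ref{co2}).

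The step I expect to require the most care is the justification of \eqref{vpc2}, i.e. that the approximating ergodic measures in \cite[Theorem 8.1]{FH09} can be chosen so that (i) the percolated conditional entropy $h_{\mu,p,\bm{I}^k}$ vanishes — which follows once the iterated IFS $\bm{I}^k$ restricted to $X^k$ satisfies strong separation, since then $\mathcal{B}_{\bm{I}^k}$ is trivial on the relevant cylinders regardless of the weights $V$ — and (ii) the push-forward dimension $\dim\Phi_{\bm{I}}(\mu)$ approaches $\overline{\dim}_B X_{\bm{I}}$. Both are contained in the cited argument, but I would spell out the transition from the $\sigma^k$-picture back to $\sigma$, noting that $h_{\mu^k}=k\,h_\mu$ and the Lyapunov exponent scales by $k$ as well, so the ratios defining the dimension are unchanged, and that $h_{\mu,p,\bm{I}^k}=0$ implies $h_{\mu,p,\bm{I}}=0$ by the subadditivity/superadditivity properties of conditional information under iteration. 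Once this is in place, everything else is a two-line comparison of inequalities.
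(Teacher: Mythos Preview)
Your proposal is correct and follows exactly the paper's approach: the corollary is obtained by combining the lower bound \eqref{vpc2} with Lemma \ref{mpub} (specialised to $\gamma_{X,\bm{I}}=0$) and the trivial inequality $\underline{\dim}_H\le\overline{\dim}_B$, together with the fact that $\dim X_{\bm{I}}$ exists for subshifts by the variational principle.

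One remark on your final paragraph: the extra care you propose to take with \eqref{vpc2} is not part of the corollary's proof---the paper establishes \eqref{vpc2} \emph{before} the corollary and simply cites it. Moreover, the route you sketch there (deducing $h_{\mu,p,\bm{I}}=0$ from $h_{\mu,p,\bm{I}^k}=0$ via ``subadditivity/superadditivity of conditional information under iteration'') is not how the paper argues and is not obviously valid in this random setting. The intended reading is rather that one applies the whole machinery (Theorem \ref{mthm}/\eqref{vpc}) directly at level $k$, i.e.\ to the $\sigma^k$-system with the iterated IFS $\bm{I}^k$ (noting $\Phi_{\bm{I}^k}=\Phi_{\bm{I}}$), where the separated measure lives; no passage back to the $\sigma$-picture is needed. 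But again, this concerns the derivation of \eqref{vpc2}, not the corollary itself.
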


Together with Theorem \ref{mthm} we obtain Corollary \ref{co2}.

\end{document}